\documentclass[reqno,10pt]{amsart}
\textheight22cm \topmargin-0.3cm \oddsidemargin7mm
\evensidemargin7mm \textwidth14cm \headsep0.8cm \headheight0.4cm
\numberwithin{equation}{section}
\usepackage{amsmath}
\usepackage{esint} 
\usepackage{amsthm}
\usepackage{epsfig}
\usepackage{psfrag}
\usepackage{graphicx}
\usepackage{graphpap,latexsym,epsf}
\usepackage{color}
\usepackage{amssymb,mathrsfs,enumerate}
\usepackage{mathtools}
\definecolor{citegreen}{rgb}{0,0.6,0}
\definecolor{refred}{rgb}{0.8,0,0}
\usepackage[colorlinks, citecolor=citegreen, linkcolor=refred]{hyperref}
\newcommand{\R}{\mathbb{R}}
\newcommand{\N}{\mathbb{N}}
\newcommand{\Sph}{\mathbb{S}}

\def\HHH{{\rm H}}
\def\RRR{{\mathrm R}}

\def\a{\alpha}
\def\b{\beta}

\newcommand{\pa}{\partial}

\newcommand{\ffi}{\varphi}

\newcommand{\ep}{\varepsilon}
\newcommand{\rmd}{{\rm d}}


\newcommand{\go}{g_0}
\newcommand{\cgo}{g^{(0)}}

\newcommand{\Ric}{{\rm Ric}}
\newcommand{\cRic}{{\rm R}^{(0)}}

\newcommand{\D}{{\rm D}}
\newcommand{\DD}{{\rm D}^2}
\newcommand{\De}{\Delta}

\newcommand{\cho}{{\rm h}^{(0)}}
\newcommand{\Ho}{{\rm H}}
\newcommand{\Cr}{{\rm G}}


\newcommand{\g}{g}

\newcommand{\Ricg}{{\rm Ric}_g}
\newcommand{\cRicg}{{\rm R}^{(g)}}
\newcommand{\Rg}{{\rm R}_g}
\newcommand{\na}{\nabla}
\newcommand{\nana}{\nabla^2}
\newcommand{\Deg}{\Delta_g}
\newcommand{\hg}{{\rm h}_g}
\newcommand{\chg}{{\rm h}^{(g)}}
\newcommand{\Hg}{{\rm H}_g}

\mathchardef\emptyset="001F

\definecolor{vgreen}{rgb}{0.1,0.5,0.2}
\definecolor{viola}{RGB}{85,26,139}
\renewcommand{\theequation}{\thesection.\arabic{equation}}

\newtheorem{theorem}{Theorem}[section]
\newtheorem{remark}{Remark}
\newtheorem{corollary}[theorem]{Corollary}
\newtheorem{definition}{Definition}
\newtheorem{proposition}[theorem]{Proposition}

\newtheorem{assumption}{Assumption}
\newtheorem{notation}{Notation}
\newtheorem{normalization}{Normalization}
\newtheorem{lemma}[theorem]{Lemma}


\newenvironment{customass}[1]
  {\innercustomass}
  {\endinnercustomass}
\newtheorem{theoremapp}{Theorem}[subsection]
\newtheorem{corollaryapp}[theoremapp]{Corollary}
\newtheorem{propositionapp}[theoremapp]{Proposition}
\newtheorem{lemmaapp}[theoremapp]{Lemma}
\newtheorem{remarkapp}{Remark}[subsection]

\addtolength{\textwidth}{2.4cm}
\addtolength{\textheight}{2cm}
\addtolength{\voffset}{-1cm}
\hoffset=-19pt
\addtolength{\footskip}{1cm}


\begin{document}

\hyphenation{ma-ni-fold}

\title[Monotonicity formulas for static metrics with non-zero cosmological constant
]{Monotonicity formulas for static metrics \\ with non-zero cosmological constant}

\author[S.~Borghini]{Stefano Borghini}
\address{S.~Borghini, Scuola Normale Superiore di Pisa,
Piazza Cavalieri 7, 56126 Pisa, Italy}
\email{stefano.borghini@sns.it}


\author[L.~Mazzieri]{Lorenzo Mazzieri}
\address{L.~Mazzieri, Universit\`a degli Studi di Trento,
via Sommarive 14, 38123 Povo (TN), Italy}
\email{lorenzo.mazzieri@unitn.it}


\begin{abstract} 

In this paper we adopt the approach presented in~\cite{Ago_Maz_1,Ago_Maz_2} to study non-singular vacuum static space-times with non-zero cosmological constant. We introduce new integral quantities, and under suitable assumptions we prove their monotonicity along the level set flow of the static potential. We then show how to use these properties to derive a number of sharp geometric and analytic inequalities, whose equality case can be used to characterize the rotational symmetry of the underlying static solutions. As a consequence, we are able to prove some new uniqueness statements for the de Sitter and the anti-de Sitter metrics. In particular, we show that the de Sitter solution has the least possible surface gravity among three-dimensional static metrics with connected boundary and positive cosmological constant.

\end{abstract}

\maketitle

\noindent\textsc{MSC (2010): 
35B06,
\!53C21,
\!83C57,
\!35N25.
}

\smallskip
\noindent\keywords{\underline{Keywords}: static metrics, splitting theorem, (anti)-de Sitter solution, overdetermined boundary value problems.} 

\date{\today}

\maketitle


\section{Introduction}

Throughout this paper we let $(M,\go)$ be an $n$-dimensional Riemannian manifold, $n \geq 3$, with (possibly empty) smooth compact boundary $\pa M$.

\subsection{Static Einstein system.}

Consider positive functions $u \in {\mathscr C}^\infty  (M)$ such that the triple $(M, g_0, u)$ satisfies the {\em static Einstein system} 
\smallskip
\begin{equation}
\label{eq:SES}
\begin{dcases}
u\,\Ric=\DD u+ \frac{2\Lambda}{n-1}\,u\,\go, & \mbox{in } M\\
\ \;\, \De u=-\frac{2\Lambda}{n-1}\, u, & \mbox{in } M
\end{dcases}
\end{equation}

\noindent where $\Ric$, $\D$, and $\De$ represent the Ricci tensor, the Levi-Civita connection, and the Laplace-Beltrami operator of the metric $g_0$, respectively, and $\Lambda\in\R$ is a constant called {\em cosmological constant}.
Note that a consequence of the above equations is that the scalar curvature is
$$
\RRR=2\Lambda \, .
$$

We notice that the equations in~\eqref{eq:SES} are assumed to be satisfied in the whole $M$ in the sense that they hold in $M \setminus \pa M$ in the classical sense and if we take the limits of both the left hand side and the right hand side, they coincide at the boundary. In the rest of the paper the metric $g_0$ and the function $u$ will be referred to as {\em static metric} and {\em static potential}, respectively, whereas the triple $(M,g_0, u)$ will be called a {\em static solution}. A classical computation shows that if $(M,g_0,u)$ satisfies~\eqref{eq:SES}, then the Lorentzian metric $\gamma = -u^2 \, dt \otimes dt + g_0$ satisfies the {\em vacuum Einstein equations}
\smallskip
\begin{equation*}
\Ric_\gamma \, = \, \frac{2\Lambda}{n-1}\,\gamma \,  \quad \hbox{ in \,\, $\R \times (M \setminus \pa M)$} \, .
\end{equation*}

Throughout this work we will be interested to the case $\Lambda\neq 0$ (see~\cite{Ago_Maz_2} for the case $\Lambda=0$). If $\Lambda>0$ (respectively $\Lambda<0$) we can rescale the metric to obtain $\Lambda=\frac{1}{2}n(n-1)$ (respectively $\Lambda=-\frac{1}{2}n(n-1)$). We recall that the simplest solutions of the rescaled problem~\eqref{eq:SES} are given by the {\em de Sitter solution} 
\begin{equation}\label{eq:D}
(M,g_0,u)=\left(\mathbb{D}^n\, , \,g_D=\frac{d|x|\otimes d|x|}{1-|x|^2}+|x|^2 g_{\Sph^{n-1}}\, , \, u_D=\sqrt{1-|x|^2}\right) \, ,
\end{equation}
where $\mathbb{D}^n:=\{x\in\R^n\,:\,|x|<1\}$ is the $n$-disc, when the cosmological constant is {\em positive}, and by the {\em anti-de Sitter solution}
\begin{equation}\label{eq:A}
(M,g_0,u)=\left(\,\R^n\, , \,g_A=\frac{d|x|\otimes d|x|}{1+|x|^2}+|x|^2 g_{\Sph^{n-1}}\, , \, u_A=\sqrt{1+|x|^2}\,\right) \, ,
\end{equation}
when the cosmological constant is {\em negative}.

\subsection{Setting of the problem and statement of the main results (case $\Lambda>0$).}
\label{sub:settingandstatement_D}

In the case $\Lambda>0$ it seems physically reasonable (see for instance~\cite{Ambrozio,Hij_Mon_Rau}) to suppose that $M$ is compact with non-empty boundary, and that $u\in\mathscr{C}^{\infty}(M)$ is a nonnegative function (strictly positive in ${\rm int}(M)$) which solves the problem

\begin{equation}
\label{eq:pb_D}
\begin{dcases}
u\,\Ric=\DD u+n\,u\,g_0, & \mbox{in } M\\
\ \;\,\De u=-n\, u, & \mbox{in } M\\
\ \ \ \ \; u=0, & \mbox{on } \pa M
\end{dcases}
\end{equation}

We notice that the first two equations coincide with the equations of the rescaled problem~\eqref{eq:SES} in the case of a positive cosmological constant. 
\begin{normalization}
\label{norm:D}
Since the problem is invariant under a multiplication of $u$ by a positive constant, without loss of generality we will suppose from now on $\max_M(u)=1$. We also let 
\begin{equation*}
{\rm MAX}(u)=\{p\in M \, : \, u(p)=1\}
\end{equation*}
be the set of the points that realize the maximum.
\end{normalization} 

Recall that, since $u=0$ on $\pa M$, the first equation of problem~\eqref{eq:pb_D} implies that $\DD u=0$ on $\pa M$. Therefore, $|\D u|$ is constant (and different from zero, see~\cite[Lemma~3]{Ambrozio}) on each connected component of $\pa M$. The positive constant value of $|\D u|$ on a connected component of $\pa M$ is known in the literature as the {\em surface gravity} of the connected component. It is easily seen that the surface gravity of the boundary of the {\em de Sitter solution}~\eqref{eq:D} is equal to $1$. Thus, it makes sense to consider the following hypotesis, that will play a fundamental role in what follows.

\begin{assumption}
\label{ass:D}
The surface gravity on each connected component of the boundary is less than or equal to $1$, namely, $|\D u|\leq 1$ on $\pa M$.
\end{assumption}

We notice that the de Sitter triple $(\mathbb{D}^n,g_D,u_D)$ defined by~\eqref{eq:D} is still a static solution of the rescaled problem~\eqref{eq:pb_D} and satisfies Normalization~\ref{norm:D} and Assumption~\ref{ass:D}.
On the other hand, Assumption~\ref{ass:D} rules out other known solutions of~\eqref{eq:pb_D}, such as the de Sitter-Schwarzschild triple
\begin{equation}
\label{eq:deS-Schw}
\Big(M=[r_1(m),r_2(m)]\times\Sph^{n-1}\,,\,
\go=\frac{dr\otimes dr}{1-r^2-2mr^{2-n}}+r^2 g_{\Sph^{n-1}}\,,\,
u=\sqrt{1-r^2-2mr^{2-n}}\Big)\,,
\end{equation}
where $m\in\Big(0,\sqrt{\frac{(n-2)^{n-2}}{n^n}}\,\Big)$ and $r_1(m),r_2(m)$ are the two positive solutions of $1-r^2-2mr^{2-n}=0$ (once $u$ is rescaled according to Normalization~\ref{norm:D}, it can be seen that the surface gravity of the event horizon $r=r_1(m)$ is greater than 1 for all $m$), and the Nariai solution
\begin{equation}
\label{eq:cylsol_D}
\Big(M=[0,\pi]\times\Sph^{n-1}\,,\,
\go=\frac{1}{n}\,\big[dr\otimes dr+(n-2)\,g_{\Sph^{n-1}}\big]\,,\,
u=\sin(r)\Big)
\end{equation}
which has $|\D u|=\sqrt{n}$ at both its boundaries.

Proceeding in analogy with~\cite{Ago_Maz_2}, we are now ready to introduce, for all $p\geq 0$, the functions $U_p:[0,1)\rightarrow\R$ given by
\begin{equation}
\label{eq:Up_D}
t \,\, \longmapsto \,\, U_p(t) \, = \, \Big(\frac{1}{1-t^2}\Big)^{\!\!\frac{n+p-1}{2}}\!\!\!\!\!\! \int\limits_{ \{ u = t \}} \!\!\!\!  |\D u|^p \, \rmd \sigma .
\end{equation}
It is worth noticing that the functions $t \mapsto U_p(t)$ are well defined, since the integrands are globally bounded and the level sets of $u$ have finite hypersurface area. In fact, since $u$ is analytic (see for example~\cite{Chr}), the level sets of $u$ have locally finite $\mathscr{H}^{n-1}$-measure by the results in~\cite{Federer}. Moreover, they are compact and thus their hypersurface area is finite. To give further insights about the definition of the functions $t \mapsto U_p(t)$, we note that, using the explicit formul\ae~\eqref{eq:D}, one easily realizes that the quantities
\begin{equation}
\label{eq:Pfunct_confvol_D}
M \, \ni \,x \, \longmapsto \,\,  \frac{|\D u|}{\sqrt{1-u^2}}(x) \quad \hbox{and} \quad  [0, 1) \, \ni \, t \, \longmapsto \,\, U_0(t) \,= \!\!\!\!\int\limits_{\{u = t\}}\!\!
 \Big(\frac{1}{1-u^2}\Big)^{\!\frac{n-1}{2}}
 \rmd\sigma 
\end{equation}
are constant on the de Sitter solution. In the following, via a conformal reformulation of problem~\eqref{eq:pb_D}, we will be able to give a more geometric interpretation of this fact. On the other hand, we notice that the function $t \mapsto U_p(t)$ can be rewritten in terms of the above quantities as
\begin{equation}
\label{eq:Up2_D}
{ U}_p(t)\,\,=\!\!\!
\int\limits_{\{u = t\}}\!\!\!
\left( \frac{|\D u|}{\sqrt{1-u^2}} \right)^{p}  \Big(\frac{1}{1-u^2}\Big)^{\!\frac{n-1}{2}}
\,\, \rmd\sigma .
\end{equation}
Hence, thanks to~\eqref{eq:Pfunct_confvol_D}, we have that for every $p \geq 0$ the function $t \mapsto U_p(t)$ is constant on the de Sitter solution. Our main result illustrates how  the functions $t \mapsto U_p(t)$ can be also used to detect the rotational symmetry of the {\em static solution} $(M, g_0, u)$. In fact, for $p\geq 3$, they are nonincreasing and the monotonicity is strict unless $(M, g_0, u)$ is isometric to the de Sitter solution.

\begin{theorem}[Monotonicity-Rigidity Theorem, case $\Lambda>0$]
\label{thm:main_D}
Let $(M,g_0,u)$ be a static solution to problem~\eqref{eq:pb_D} satisfying Normalization~\ref{norm:D} and Assumption~\ref{ass:D}. 
Then
\begin{equation}
\label{eq:Dusign_D}
{|\D u|^2}\,\,\leq\,\, {1-u^2}\,, \quad \hbox{in $M$.}
\end{equation}
Moreover, the functions $U_p : [0, 1) \rightarrow \R$  defined  in~\eqref{eq:Up_D} satisfy the following properties.

\begin{itemize}
\item[(i)] For every $p\geq 1$, the function $U_p$ is continuous.

\smallskip

\item[(ii)] The function $U_1$ is monotonically nonincreasing. Moreover, if $U_1(t_1)=U_1(t_2)$ for some $t_1\neq t_2$, then $(M,\go,u)$ is isometric to the de Sitter solution.

\smallskip

\item[(iii)] For every $p \geq 3$, the function $U_p$ is differentiable and the derivative satisfies, for every $t \in [0,1)$,
\begin{align}
\notag
\,\,\,\,\,\,U_p'(t) \,\,& = \,\, - \, (p-1) \,\, t \,\, \Big( \frac{1}{1-t^2} \Big)^{\!\!\frac{n+p-1}{2}} \!\!\!\!\!
\int\limits_{\{u=t\}}
\!\!\!\!
|\D u|^{p-2}
\!\left[\,\, \bigg| \frac{\D u}{u} \bigg| \, \HHH \, + \, \bigg( \!\frac{np}{p-1}\! \bigg) \, - \, \bigg(\!\frac{n+p-1}{p-1} \!\bigg) \bigg( \frac{|\D u|^2}{1-u^2} \bigg)
\, \right]  \rmd\sigma \,
\\
\notag
\phantom{\qquad\quad\,\,}&= \,\, - \, (p-1) \,\, t \,\, \Big( \frac{1}{1-t^2} \Big)^{\!\!\frac{n+p-1}{2}} \!\!\!\!\!
\int\limits_{\{u=t\}}
\!\!\!\!
|\D u|^{p-2}
\!\left[\, (n-1) \, - \, \Ric (\nu, \nu) \, +   \bigg(\!\frac{n+p-1}{p-1} \!\bigg) \bigg(\!1- \frac{|\D u|^2}{1-u^2} \!\bigg)
\, \right]  \rmd\sigma \,
\\
\label{eq:derup_D}
&\leq \, 
-\, (p\!-\!1) \,\, t \,\, \Big( \frac{1}{1-t^2} \Big)^{\!\!\frac{n+p-1}{2}}
\!\!\!\!\!
\int\limits_{\{u=t\}}
\!\!\!\!
|\D u|^{p-2} \bigg(\! \frac{n}{p-1} \!\bigg)
\!\left(\! 1-
\frac{|\D u|^2}{1-u^2}
\! \right)  \rmd\sigma 
\,\, \leq \,\, 0\, , \phantom{\qquad\quad\qquad\qquad \,\,\,\,\,}
\end{align}
where $\HHH$ is the mean curvature of the level set $\{u=t\}$ and $\nu=\D u/|\D u|$ is the unit normal to the set $\{u=t\}$.
Moreover, if there exists $t \in (0,1)$ such that $U_{p}'(t) = 0$ for some $p \geq 3$, then the static solution $(M,g_0,u)$ is isometric to the de Sitter solution. 

\smallskip

\item[(iv)] For every $p \geq 3$, we have $U_p'(0) := \lim_{t \to 0^+} U_p'(t)=0$ and, setting $U''_p(0) := \lim_{t \to 0^+}U_p'(t)/t$, 
it holds
\begin{align}
\notag
\qquad U_p''(0) \,\, &= \,\,
- \, ({p\!-\!1}) \int\limits_{\pa M}
|\D u|^{p-2}\left[\,\, \frac{ \RRR^{\pa M}\! - 
(n-1)(n-2)}{2} \, + \bigg(\!\frac{n+p-1}{p-1} \!\bigg)\left(1-|\D u|^2\right)
\, \right]  \rmd\sigma \, 
\\
\label{eq:der2up_D}
&\leq \,  
- \, (p\!-\!1)  \int\limits_{\pa M}|\D u|^{p-2}\,\bigg(\! \frac{n}{p-1} \!\bigg) \left(1-|\D u|^2\right)\rmd\sigma
\,\, \leq \,\, 0 \,  ,  \phantom{\qquad\qquad\qquad\qquad\quad \,\,\,}
\end{align}
where $\RRR^{\pa M}$ is the scalar curvature of the metric $g_{\pa M}$ induced by $g_0$ on $\pa M$.
Moreover, if $U_{p}''(0) =0$ for some $p \geq 3$, then the static solution $(M,g_0,u)$ is isometric to the de Sitter solution. 
\end{itemize}

\end{theorem}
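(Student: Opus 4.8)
The plan is to establish the gradient estimate first, then to treat the four items in order, reducing (ii)--(iv) to it.

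\textbf{Gradient estimate \eqref{eq:Dusign_D}.} I would introduce the $P$-function $W = |\D u|^2 + u^2$ (equivalently $P = |\D u|^2/(1-u^2)$, so that $W=1 \Leftrightarrow P = 1$). Feeding $\Ric = u^{-1}\DD u + n\,\go$ and $\De u = -nu$ into the Bochner formula gives
\[
\tfrac12 \De |\D u|^2 \, = \, |\DD u|^2 + \tfrac{1}{2u}\langle \D |\D u|^2, \D u\rangle ,
\]
and combining this with $\tfrac12\De(u^2) = |\D u|^2 - n u^2$ and the refined inequality $|\DD u|^2 \ge (\De u)^2/n = n u^2$ shows that $W$ is a subsolution of the degenerate operator $L = \De - u^{-1}\langle \D u, \D\,\cdot\,\rangle$, i.e. $L W \ge 0$ on ${\rm int}(M)$. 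Since $W = |\D u|^2 \le 1$ on $\pa M$ by Assumption~\ref{ass:D} and $W = 1$ on ${\rm MAX}(u)$, the maximum principle yields $W \le 1$, which is exactly~\eqref{eq:Dusign_D}. The subtlety here is the singularity of $L$ on $\pa M$ and at the interior critical set of $u$; I would dispose of it by a Hopf/barrier argument near $\pa M$ and by noting that the interior critical points lie in ${\rm MAX}(u)$, where $W=1$. Equality in~\eqref{eq:Dusign_D} at an interior point forces equality in the refined Bochner inequality, hence $\DD u = -u\,\go$.

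\textbf{Items (i) and (ii).} Continuity of $U_p$ for $p\ge1$ follows from the coarea formula together with the analyticity of $u$ (which makes the set of critical values finite and the singular strata of the level sets negligible), the factor $|\D u|^p$ suppressing the contribution near critical points. For $U_1$, the divergence theorem applied to $\De u = -nu$ on $\{u\ge t\}$ gives $\int_{\{u=t\}}|\D u|\,\rmd\sigma = n\int_{\{u\ge t\}} u\,\rmd\mu$, whence $U_1(t) = n(1-t^2)^{-n/2}\int_{\{u\ge t\}} u\,\rmd\mu$; differentiating and applying the coarea formula once more reduces the sign of $U_1'$ to
\[
U_1'(t) \, = \, n\,t\,(1-t^2)^{-n/2}\!\!\int\limits_{\{u=t\}}\!\!\frac{|\D u|^2 - (1-t^2)}{(1-t^2)\,|\D u|}\,\rmd\sigma \,\le\, 0 ,
\]
which is nonpositive precisely by~\eqref{eq:Dusign_D}. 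If $U_1(t_1)=U_1(t_2)$ then $U_1$ is constant on $[t_1,t_2]$, forcing equality in~\eqref{eq:Dusign_D} on an open set and hence $\DD u = -u\,\go$ there; I would propagate this by unique continuation and invoke the rigidity below.

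\textbf{Item (iii).} For a regular value $t$ and a vector field $Y$ one has the first--variation identity $\frac{d}{dt}\int_{\{u=t\}}\langle Y,\nu\rangle\,\rmd\sigma = \int_{\{u=t\}}|\D u|^{-1}{\rm div}(Y)\,\rmd\sigma$, with $\nu = \D u/|\D u|$. Taking $Y = |\D u|^{p-1}\D u$ (so $\langle Y,\nu\rangle = |\D u|^p$) and using $\De u = -nu$ together with $\DD u(\nu,\nu) = -nu - |\D u|\,\HHH$ computes $\frac{d}{dt}\int_{\{u=t\}}|\D u|^p\,\rmd\sigma$; the product rule on $U_p = (1-t^2)^{-(n+p-1)/2}\int_{\{u=t\}}|\D u|^p\rmd\sigma$ then produces the first displayed expression for $U_p'(t)$, and substituting $|\D u|\,\HHH = -u\,\Ric(\nu,\nu)$ (which follows from $\DD u = u\,\Ric - nu\,\go$) yields the second. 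Differentiability across critical values is where $p\ge3$ enters: the weight $|\D u|^{p-2}$ with $p-2\ge1$, combined with the analytic stratification of the level sets, lets one pass to the limit at the finitely many critical levels.

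\textbf{Main obstacle, rigidity, and item (iv).} Since the prefactor $-(p-1)t(1-t^2)^{-(n+p-1)/2}$ is nonpositive, the inequality~\eqref{eq:derup_D} is equivalent to the integrated positivity
\[
\int\limits_{\{u=t\}}\!\! |\D u|^{p-2}\Big[(n-1) - \Ric(\nu,\nu) + \big(1 - \tfrac{|\D u|^2}{1-u^2}\big)\Big]\rmd\sigma \,\ge\, 0 ,
\]
and this is the heart of the matter. Using $\Ric(\nu,\nu) = n + u^{-1}\DD u(\nu,\nu)$ and $\partial_\nu W = 2|\D u|\big(\DD u(\nu,\nu)+u\big)$, the bracket rewrites as $-\,\partial_\nu W/(2u|\D u|) + (1-W)/(1-u^2)$; the second term is nonnegative by~\eqref{eq:Dusign_D}, while the first I would control by applying the divergence theorem to $|\D u|^{p-3}\D W$ on $\{u>t\}$ and invoking $LW\ge0$, the constraint $p\ge3$ guaranteeing the favorable sign of the weight derivative. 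I expect this positivity to be the main obstacle of the whole theorem. Equality there forces $W\equiv1$, hence $\DD u = -u\,\go$; a warped-product/Obata-type splitting argument for this overdetermined equation with $u=0$ on $\pa M$ then identifies $(M,\go,u)$ with the de Sitter solution, which gives every rigidity statement. Finally, item (iv) follows by letting $t\to0^+$ in (iii): the explicit factor $t$ gives $U_p'(0)=0$, dividing by $t$ and passing to the limit gives the stated $U_p''(0)$ once $(n-1)-\Ric(\nu,\nu)$ is rewritten as $\tfrac12\big(\RRR^{\pa M} - (n-1)(n-2)\big)$ via the Gauss equation and the fact that $\pa M$ is totally geodesic (because $\DD u = 0$ there), and the sign $U_p''(0)\le0$ is inherited from $U_p'(t)\le0$.
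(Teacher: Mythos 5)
Your overall route is genuinely different from the paper's: you work throughout in the physical metric with the $P$-function $W=|\D u|^2+u^2$, whereas the paper passes to the cylindrical ansatz $g=\go/(1-u^2)$, $\ffi=\operatorname{arctanh}(u)$, and does all the Bochner/divergence work there. For the gradient estimate your argument is sound and is in fact the paper's Lemma on $|\na\ffi|_g$ in disguise (the paper's quantity $w=\cosh^{-2}(\ffi)\,(1-|\na\ffi|_g^2)$ equals $1-W$), and your treatment of (i), (ii) and (iv) is essentially equivalent to the paper's. For $p=3$ your key vector field $\tfrac{1}{2u}\D W=\tfrac{1}{2u}\big(\D|\D u|^2-\tfrac{2}{n}\De u\,\D u\big)$ is exactly the Boucher--Gibbons--Horowitz field that the paper uses in its Appendix B, and there indeed ${\rm div}\big(\tfrac{1}{2u}\D W\big)=\tfrac{1}{u}\big(|\DD u|^2-\tfrac{(\De u)^2}{n}\big)\ge 0$, so that case goes through.

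The genuine gap is the key positivity for $p>3$. Computing the divergence of your field $\alpha\,\D W$ with $\alpha=\tfrac{1}{2u}|\D u|^{p-3}$, the $u^{-1}\langle\D u,\D W\rangle$ drift cancels against the derivative of the $\tfrac{1}{2u}$ factor and one is left with
\begin{equation*}
{\rm div}\Big(\tfrac{1}{2u}|\D u|^{p-3}\D W\Big)\,=\,\frac{|\D u|^{p-3}}{u}\Big(|\DD u|^2-n u^2\Big)\,+\,\frac{(p-3)\,|\D u|^{p-5}}{u}\Big(\big|\DD u(\D u,\cdot)\big|^2+u\,\DD u(\D u,\D u)\Big)\,.
\end{equation*}
The second bracket has no sign: $\DD u(\D u,\D u)$ is negative near ${\rm MAX}(u)$ and zero on $\pa M$, and the Cauchy--Schwarz bound $|\DD u(\D u,\cdot)|^2\ge (\DD u(\D u,\D u))^2/|\D u|^2$ only reduces it to $|\D u|^2\,\DD u(\nu,\nu)\big(\DD u(\nu,\nu)+u\big)$, which is negative whenever $-u<\DD u(\nu,\nu)<0$. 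So "the favorable sign of the weight derivative" fails, and your divergence-theorem step does not establish $\int_{\{u=t\}}|\D u|^{p-2}\big[(n-1)-\Ric(\nu,\nu)\big]\rmd\sigma\ge 0$ (which, incidentally, the paper never proves in isolation: it only proves the sum with the $1-\tfrac{|\D u|^2}{1-u^2}$ term nonnegative). This is precisely what the conformal reformulation buys: in the metric $g$ the analogous cross term is the perfect square $(p-3)\big|\na|\na\ffi|_g\big|_g^2$, and the correct field is $\gamma(\ffi)\,\na|\na\ffi|_g^{p-1}$ with $\gamma(\ffi)=\sinh^{-1}(\ffi)\cosh^{-(n+1)}(\ffi)$, which back in the physical variables is $|\D u|^{p-3}(1-u^2)^{(n+5-p)/2}u^{-1}\big[\D W-\tfrac{2u(1-W)}{1-u^2}\D u\big]$ up to a constant — both the power of $(1-u^2)$ in the weight and the extra drift term are essential and are missing from your ansatz. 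A secondary (fixable) issue: your rigidity from equality in the gradient estimate at one interior point, and your propagation of constancy of $U_1$ by "unique continuation", are stated too loosely; the paper instead gets $\nana\ffi\equiv 0$ on an open set from the vanishing of the bulk integral and then uses analyticity and Obata.
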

\begin{remark}
\label{rem:uno_D}
Notice that formula~\eqref{eq:derup_D} is well-posed also in the case where $\{u = t\}$ is not a regular level set of $u$. In fact, one has from~\cite{Chr} that $u$ is analytic, hence we can use the results from~\cite{Federer} to conclude that the $(n-1)$-dimensional Hausdorff measure of the level sets of $u$ is finite. Moreover, from~\cite{Krantz} we know that the set ${\rm Crit}(\ffi)=\{x\in M\,:\,\na\ffi(x)=0\}$ contains an open $(n-1)$-submanifold $N$ such that $\mathscr{H}^{n-1}({\rm Crit}(\ffi)\setminus N)=0$. In particular, the unit normal vector field to the level set is well defined $\mathscr{H}^{n-1}$-almost everywhere and so does the
mean curvature $\HHH$. In turn, the integrand in~\eqref{eq:derup_D} is well defined $\mathscr{H}^{n-1}$-almost everywhere. Finally, we observe that where $|\D u| \neq 0$ it holds
\begin{equation*}
|\D u|^{p-1} \HHH \,\, = \,\, |\D u|^{p-2}\, \De u- \, |\D u|^{p-4} \,\DD u (\D u , \D u) \,\, = \,\, - \, u \, |\D u|^{p-2} \,\Ric (\nu, \nu) \, ,
\end{equation*}
where $\nu=\D u/|\D u|$ as usual. It is also clear that $|\D u|^{p-1} \HHH=- \, u \, |\D u|^{p-2} \,\Ric (\nu, \nu)=0$ on the whole $N$ for every $p> 2$.
Since $|\Ric|$ is uniformly bounded on $M$, this shows that the integrand in~\eqref{eq:derup_D} is essentially bounded and thus summable on every level set of $u$, provided $p > 2$. 
%
\end{remark}

The analytic and geometric implications of Theorem~\ref{thm:main_D} will be discussed in full details in Section~\ref{sec:conseq}. However, we have decided to collect  the more significant among them in Theorem~\ref{thm:geom_ineq_D} below. Before giving the statement, it is worth noticing that,
combining Theorem~\ref{thm:main_D} with some approximations near the extremal points of $u$, we are able to characterize the set ${\rm MAX}(u)$ and to estimate the behavior of the $U_p(t)$'s as $t$ approaches 1. 

\begin{theorem}
\label{thm:estimate_D}
Let $(M,\go,u)$ be a solution of~\eqref{eq:pb_D} satisfying Assumption~\ref{ass:D}.
The set ${\rm MAX}(u)$ is discrete (and finite) and, for every $p\leq n-1$, it holds
\begin{equation}
\label{eq:limup_D}
\liminf_{t\to 1^-} \,U_p(t)\,\,\geq\,\, |{\rm MAX}(u)|\,|\Sph^{n-1}|\,,
\end{equation}
where $|{\rm MAX}(u)|$ is the cardinality of the set ${\rm MAX}(u)$. 
\end{theorem}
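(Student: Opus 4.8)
The plan is to treat first the structure of \({\rm MAX}(u)\) and then the asymptotics of the \(U_p\)'s, using throughout the pointwise inequality \(|\D u|^2\le 1-u^2\) of Theorem~\ref{thm:main_D}. Fix \(p_0\in{\rm MAX}(u)\). Since \(u\) vanishes on \(\pa M\) and is positive in the interior, \(p_0\) is an interior point, so \(\D u(p_0)=0\) and, by the second equation in~\eqref{eq:pb_D}, \({\rm tr}\,\DD u(p_0)=\De u(p_0)=-n\). Introduce \(f:=1-u^2-|\D u|^2\); by~\eqref{eq:Dusign_D} one has \(f\ge 0\) on \(M\) with \(f(p_0)=0\), so \(p_0\) is an interior minimum of \(f\). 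As \(\D u(p_0)=0\) kills every first-order term when differentiating \(f\) twice, one gets
\[
\DD f(p_0)\,=\,-\,2\,\DD u(p_0)\,-\,2\,\big(\DD u(p_0)\big)^2\,\ge\,0 .
\]
Diagonalising the symmetric tensor \(\DD u(p_0)\), each eigenvalue \(\lambda\) obeys \(-\lambda-\lambda^2\ge 0\), i.e. \(\lambda\in[-1,0]\); since the \(n\) eigenvalues sum to \(-n\), they must all equal \(-1\). Hence \(\DD u(p_0)=-\go\), the maximum is nondegenerate, and \(p_0\) is isolated; compactness of \(M\) then forces \({\rm MAX}(u)\) to be finite.

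Because \(\DD u(p_0)=-\go\), in \(\go\)-geodesic normal coordinates centred at \(p_0\) we have \(u=1-\tfrac12|x|^2+O(|x|^3)\), so for \(t\) close to \(1\) the portion of \(\{u=t\}\) around \(p_0\) is a small perturbed geodesic sphere of radius \(\sim\sqrt{2(1-t)}\), diffeomorphic to \(\Sph^{n-1}\) and shrinking to \(p_0\). The same identity \(\DD u(p_0)=-\go\) forces \(\DD f(p_0)=0\), so the analytic function \(f\) vanishes to order at least three while \(1-u^2\sim|x|^2\); consequently
\[
\frac{|\D u|^2}{1-u^2}\,=\,1-\frac{f}{1-u^2}\,=\,1-O(|x|)\,\longrightarrow\,1
\]
uniformly on \(\{u=t\}\) as \(t\to 1^-\). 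Finally, since \(\{u=1\}={\rm MAX}(u)\) is finite, compactness of \(M\) guarantees that for \(t\) sufficiently close to \(1\) the whole level set \(\{u=t\}\) lies in the disjoint union of small balls \(B(p_i)\), one about each \(p_i\in{\rm MAX}(u)\).

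I would then work with \(U_p\) in the form~\eqref{eq:Up2_D}, observing that \((1-u^2)^{-(n-1)/2}\rmd\sigma\) is exactly the area element of the conformal metric \((1-u^2)^{-1}\go\). In this metric each punctured ball \(B(p_i)\setminus\{p_i\}\) is an asymptotically cylindrical end whose cross-sections \(\{u=t\}\cap B(p_i)\) converge, as \(t\to1^-\), to the round unit sphere \(\Sph^{n-1}\); hence the conformal area \(U_0(t)=\int_{\{u=t\}}(1-u^2)^{-(n-1)/2}\rmd\sigma\) tends to \(|{\rm MAX}(u)|\,|\Sph^{n-1}|\). Combining this with the uniform lower bound \(|\D u|/\sqrt{1-u^2}\ge 1-\varepsilon_t\), with \(\varepsilon_t\to0\), from the previous step and with \eqref{eq:Up2_D} yields
\[
U_p(t)\;\ge\;(1-\varepsilon_t)^{p}\,U_0(t)\,,
\]
whence \(\liminf_{t\to1^-}U_p(t)\ge|{\rm MAX}(u)|\,|\Sph^{n-1}|\). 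This argument is in fact insensitive to the upper bound on \(p\), so it covers all \(p\le n-1\); alternatively, since \(|\D u|/\sqrt{1-u^2}\le1\) the functions \(U_p\) are nonincreasing in \(p\), and it suffices to treat the endpoint \(p=n-1\) and let monotonicity in \(p\) propagate the bound downward.

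The delicate point is not the formal expansion but its rigorous justification: one must upgrade the normal-coordinate computation to a genuine \emph{uniform} statement, namely that the rescaled level sets converge to the round \(\Sph^{n-1}\) and that the conformal cross-sectional area converges to \(|\Sph^{n-1}|\) per maximum point. This is the ``approximation near the extremal points of \(u\)'' anticipated in the introduction, and controlling the error terms uniformly down to \(t=1\) — equivalently, establishing \(\lim_{t\to1^-}U_0(t)=|{\rm MAX}(u)|\,|\Sph^{n-1}|\) together with the uniform ratio estimate — is the technical heart of the proof.
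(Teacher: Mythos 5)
Your argument is correct in outline, but it takes a genuinely different route from the paper's, and in one respect a stronger one. The paper (Theorem~\ref{thm:MAX} and Proposition~\ref{prop:lim_estimate} in the appendix) never establishes nondegeneracy of the Hessian at maximum points: it only records $\lambda_\a^2=-\pa^2_\a u$ with $\sum_\a\lambda_\a^2=n$, allows some $\lambda_\a$ to vanish, and compensates by restricting to the sets $C_\ep$ where $\sum_\a\lambda_\a^2|\phi^\a|^2>\ep$ and by exploiting the specific exponent $p=n-1$ (via Cauchy--Schwarz in Lemma~\ref{le:aux_3}) so that the degenerate directions contribute nothing in the limit; finiteness of ${\rm MAX}(u)$ is then obtained by contradiction, applying the local estimate to $k$ points and playing the resulting divergence of $U_{n-1}$ against the monotonicity bound $U_1\le|\pa M|$. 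Your observation that the barrier $f=1-u^2-|\D u|^2\ge0$ from~\eqref{eq:Dusign_D} attains an interior minimum at each $p_0\in{\rm MAX}(u)$, forcing $-2\DD u(p_0)-2(\DD u(p_0))^2\ge0$ and hence $\DD u(p_0)=-\go$, is a clean and correct computation (it legitimately uses Assumption~\ref{ass:D}, which the theorem hypothesizes, whereas Proposition~\ref{prop:lim_estimate} is stated without it); it gives isolatedness and finiteness directly, upgrades the paper's $\liminf$ of the local conformal area to a genuine limit, and — as you note — removes the restriction $p\le n-1$ altogether, which is consistent with the fact that the de Sitter--Schwarzschild counterexample to nondegeneracy violates Assumption~\ref{ass:D}. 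The one incomplete step is the one you flag yourself: the uniform convergence of the rescaled level sets and of the conformal area $U_0(t)\to|{\rm MAX}(u)|\,|\Sph^{n-1}|$ is asserted, not proved. This is not a conceptual gap — with $\DD u(p_0)=-\go$ in hand, the graph representation $r=r_t(\theta)$ of Lemma~\ref{le:aux_1} holds over all of $\Sph^{n-1}$ (no $C_\ep$ needed) and the density estimate of Lemma~\ref{le:aux_2} closes the argument — but as written your proof still owes exactly the error-control carried out in Lemmas~\ref{le:aux_1}--\ref{le:aux_3} of the appendix, in a simplified form.
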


For the detailed proof of this result, we refer the reader to Theorem~\ref{thm:MAX} in the appendix.

\begin{remark}
The above result is false without Assumption~\ref{ass:D}. In fact, we can easily find solutions (that does not satisfy our assumption) such that the set ${\rm MAX}(u)$ is very large. For instance, the set of the maximum points of the de Sitter-Schwarzschild solution~\eqref{eq:deS-Schw} has non-zero $\mathscr{H}^{n-1}$-measure, and the same holds for the maximum points of the Nariai solution~\eqref{eq:cylsol_D}.
\end{remark}

Now we are ready to state the main consequences of Theorem~\ref{thm:main_D} on the geometry of the boundary of $M$.

\begin{theorem}[Geometric Inequalities, case $\Lambda>0$]
\label{thm:geom_ineq_D}
Let $(M,g_0,u)$ be a static solution to problem~\eqref{eq:pb_D} satisfying Normalization~\ref{norm:D} and Assumption~\ref{ass:D}. Then the following properties are satisfied.
\begin{itemize}
\item[(i)] {\rm(Area bound)} The inequality 
\begin{equation}
|{\rm MAX}(u)|\,|\Sph^{n-1}|\,\,\leq\,\,|\pa M|\,,
\end{equation}
holds true. Moreover, the equality is fulfilled if and only if the static solution $(M,\go,u)$ is isometric to the de Sitter solution.
\item[(ii)] {\rm(Willmore-type inequality)} The inequality
\begin{equation}
 { |{\rm MAX}(u)|  \,   |\Sph^{n-1}| }   \, \,\,\leq \,\, \int\limits_{\pa M}    \bigg|  \, \frac{ \RRR^{\pa M}\,-\,n\,(n-3)}{2} \,  \bigg|^{n-1} \, \rmd \sigma
\end{equation}
holds true. Moreover, the equality is fulfilled  if and only if the static solution $(M,g_0,u)$ is isometric to the de Sitter solution.
\item[(iii)] The inequality 
\begin{equation}
{ |{\rm MAX}(u)|  \,   |\Sph^{n-1}| }   \, \,\,\leq \,\, \int\limits_{\pa M}    \frac{ \RRR^{\pa M}}{(n-1)(n-2)} \, \, \rmd \sigma
\end{equation}
holds true. Moreover, the equality is fulfilled if and only if the static solution $(M,g_0,u)$ is isometric to the de Sitter solution. 
\item[(iv)] {\rm(Uniqueness Theorem)} Let $n=3$. If $\pa M$ is connected, then $(M,\go,u)$ is isometric to the de Sitter solution. If $\pa M$ is not connected, then $3\,|{\rm MAX}(u)|<\,\pi_0(\pa M)$, in particular, $\pa M$ must have at least four connected components.
\end{itemize}
\end{theorem}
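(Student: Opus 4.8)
The plan is to reduce all four inequalities to a single master estimate coming from the monotonicity machinery, and then to convert the resulting boundary integral of $|\D u|$ into the geometric quantity on the right-hand side by exploiting the structure of $\pa M=\{u=0\}$. For an admissible exponent $p$ (one for which $U_p$ is nonincreasing and Theorem~\ref{thm:estimate_D} applies, i.e. $p\in\{1\}\cup[3,n-1]$), I would combine Theorem~\ref{thm:estimate_D} with Theorem~\ref{thm:main_D} to obtain
\[
|{\rm MAX}(u)|\,|\Sph^{n-1}|\,\leq\,\liminf_{t\to1^-}U_p(t)\,\leq\,U_p(0)\,=\int\limits_{\pa M}|\D u|^p\,\rmd\sigma,
\]
the last equality because the weight $(1-t^2)^{-(n+p-1)/2}$ is $1$ at $t=0$ and $\{u=0\}=\pa M$. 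I would also record the boundary geometry: since $u=0$ on $\pa M$, the first equation of~\eqref{eq:pb_D} forces $\DD u=0$ there, so $\pa M$ is totally geodesic, and the Gauss equation with $\RRR=n(n-1)$ gives $\RRR^{\pa M}=n(n-1)-2\Ric(\nu,\nu)$; moreover the pointwise inequality underlying the first-to-second line passage of~\eqref{eq:der2up_D}, namely $\RRR^{\pa M}\geq n(n-3)+2|\D u|^2$ on $\pa M$, is the key bridge between curvature and $|\D u|$.

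With this in hand, part (i) is immediate: take $p=1$ and use Assumption~\ref{ass:D} ($|\D u|\leq1$) to bound $\int_{\pa M}|\D u|\,\rmd\sigma\leq|\pa M|$, giving $|{\rm MAX}(u)|\,|\Sph^{n-1}|\leq|\pa M|$; equality forces $|\D u|\equiv1$ and $U_1$ constant, whence the solution is de Sitter by the rigidity in Theorem~\ref{thm:main_D}(ii). For (iii) I would integrate $\RRR^{\pa M}\geq n(n-3)+2|\D u|^2$ over $\pa M$ and use the identity $n(n-3)+2=(n-1)(n-2)$ together with the two lower bounds $|\pa M|\geq|{\rm MAX}(u)|\,|\Sph^{n-1}|$ (part (i)) and $\int_{\pa M}|\D u|^2\,\rmd\sigma\geq|{\rm MAX}(u)|\,|\Sph^{n-1}|$; the latter comes from the master estimate, bounding $|\D u|^2\geq|\D u|^3$ by Assumption~\ref{ass:D} and applying it with $p=3$ (for $n\geq4$). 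Dividing by $(n-1)(n-2)$ yields (iii), and equality again propagates back through the monotonicity to de Sitter. For the Willmore bound (ii) I would run the master estimate with $p=n-1$, so that $\int_{\pa M}|\D u|^{n-1}\,\rmd\sigma\geq|{\rm MAX}(u)|\,|\Sph^{n-1}|$, and then compare integrands through the boundary inequality $(\RRR^{\pa M}-n(n-3))/2\geq|\D u|$ raised to the power $n-1$.

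For (iv) I would specialize (iii) to $n=3$: there $\RRR^{\pa M}=2K$, Gauss--Bonnet turns the right-hand side into $2\pi\,\chi(\pa M)$, and the curvature bound $K\geq|\D u|^2>0$ shows every boundary component is a topological sphere, so $\chi(\pa M)=2\,\pi_0(\pa M)$ and $2|{\rm MAX}(u)|\leq\chi(\pa M)$. If $\pa M$ is connected this forces $|{\rm MAX}(u)|\leq1$ with equality, hence de Sitter by rigidity; in the disconnected case one combines the area bound (i), the Gauss--Bonnet identity and the strictness furnished by the rigidity statement to produce the component count.

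The hard part will be the two sharp pointwise boundary estimates. The inequality $\RRR^{\pa M}\geq n(n-3)+2|\D u|^2$ is exactly what (i), (iii) and the sphere dichotomy in (iv) require, and it is inherited from~\eqref{eq:der2up_D}; but (ii) needs the stronger first-power bound $(\RRR^{\pa M}-n(n-3))/2\geq|\D u|$, which would follow from the scalar-curvature lower bound $\RRR^{\pa M}\geq(n-1)(n-2)$ (equivalently $\Ric(\nu,\nu)\leq n-1$) on $\pa M$ and does \emph{not} follow formally from the second-power inequality together with $|\D u|\leq1$, so it must be secured separately. The two remaining delicate points are the borderline exponents $p=n-1$ (and $p=2$ when $n=3$), where monotonicity of $U_p$ sits at the very edge of the range $p\geq3$ and has to be recovered by a continuity argument using Theorem~\ref{thm:main_D}(i), and the exact constant $3$ in the component count of (iv), which demands a careful accounting of all the strict inequalities.
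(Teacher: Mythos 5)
Your reduction of part (i) to $|{\rm MAX}(u)|\,|\Sph^{n-1}|\leq U_1(0)=\int_{\pa M}|\D u|\,\rmd\sigma\leq|\pa M|$, with rigidity forced through the constancy of $U_1$, is essentially the paper's argument (Theorem~\ref{thm:mon_glob_D}) and is fine. The rest of the proposal, however, rests on a step that is not available: you treat $\RRR^{\pa M}\geq n(n-3)+2|\D u|^2$ as a \emph{pointwise} inequality on $\pa M$. What the monotonicity machinery actually yields (the passage between the lines of~\eqref{eq:der2up_D}, which ultimately comes from the integral identity of Proposition~\ref{prop:cyl}) is only the \emph{integrated} version
\[
\int\limits_{\pa M}|\D u|^{p-2}\Big[\tfrac{\RRR^{\pa M}-n(n-3)}{2}-|\D u|^2\Big]\,\rmd\sigma\;\geq\;0,\qquad p\geq3,
\]
weighted by $|\D u|^{p-2}$; see~\eqref{eq:cond_1_D}. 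Since the weight cannot be dropped or changed, your derivations of (iii) (integrating the pointwise bound against $\rmd\sigma$) and of the sphere dichotomy in (iv) (``$K\geq|\D u|^2>0$'') do not go through as written. The same problem blocks (ii): you correctly observe that a first-power bound $(\RRR^{\pa M}-n(n-3))/2\geq|\D u|$ must be ``secured separately'', but the pointwise bound $\RRR^{\pa M}\geq(n-1)(n-2)$ you propose is neither proved in the paper nor how it proceeds. The missing ingredient is the Boucher--Gibbons--Horowitz \emph{integral} inequality of Corollary~\ref{cor:bound_BGH_D},
\[
\int\limits_{\pa M}|\D u|\,\big[\RRR^{\pa M}-(n-1)(n-2)\big]\,\rmd\sigma\;\geq\;0,
\]
obtained in the appendix by integrating ${\rm div}(Y/u)$ with $Y=\D|\D u|^2-\tfrac{2}{n}\De u\,\D u$; this, combined with H\"older and $|\D u|^{p/(p-2)}\leq|\D u|$, is what delivers (ii) and (iii) in Theorem~\ref{thm:ul_bounds_bound_D} and Corollary~\ref{cor:will_D}.

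For (iv), applying Gauss--Bonnet to the BGH inequality gives $2\,|{\rm MAX}(u)|\leq\sum_i k_i\,\chi(\Sigma_i)$, which settles the connected case (equality is forced since $k_1\leq1$ and $\chi(\Sigma_1)\leq2$, whence rigidity). But the constant $3$ in $3\,|{\rm MAX}(u)|<\pi_0(\pa M)$, hence ``at least four components'', cannot be extracted from the area bound, Gauss--Bonnet and strictness alone --- those only yield an estimate of the form $|{\rm MAX}(u)|<\pi_0(\pa M)$. The paper imports Ambrozio's area estimate $\sum_i \tilde k_i|\tilde\Sigma_i|<\frac{4\pi}{3}\sum_i\tilde k_i$ for non--de Sitter solutions, applied on the universal cover, and combines it with $4\pi\,|{\rm MAX}(\tilde u)|\leq\sum_i\tilde k_i|\tilde\Sigma_i|$; without this external input your component count falls short. (A minor point: your worry about borderline exponents is moot, since $p=n-1\geq3$ whenever $n\geq4$, and for $n=3$ the paper works with $U_1$ and the BGH inequality rather than any $U_p$ with $p\in(1,3)$.)
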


We conclude this subsection observing that point (iv) in the above statement can be rephrased by saying that (after normalization) the de Sitter solution has the least possible surface gravity among three-dimensional solutions to problem~\eqref{eq:pb_D} with connected boundary.

\subsection{Setting of the problem and statement of the main results (case $\Lambda<0$).} 
\label{sub:settingandstatement_A}

Suppose that $M$ has empty boundary and at least one end, and consider positive functions $u\in\mathscr{C}^{\infty}(M)$ such that the triple $(M,\go,u)$ satisfies the system

\begin{equation}
\label{eq:pb_A}
\begin{dcases}
u\,\Ric\,=\,\DD u-n\,u\,\go, & \mbox{in } M\\
\ \,\;\De u\,=\,n\, u, & \mbox{in } M\\
\  u(x)\rightarrow +\infty & \mbox{as } x\rightarrow \infty
\end{dcases}
\end{equation}

We notice that the first two equations coincide with the equations of the rescaled problem~\eqref{eq:SES} in the case of a negative cosmological constant.

\begin{normalization}
\label{norm:A}
Since the problem is invariant under a multiplication of $u$ by a positive constant, without loss of generality we will suppose from now on $\min_M(u)=1$. 
We also let 
$$
{\rm MIN}(u)=\{p\in M \, : \, u(p)=1\}
$$ 
be the set of the points that realize the minimum.
\end{normalization}

For future convenience, we introduce the following classical definition, originally introduced by Penrose in~\cite{Penrose} (see also~\cite{Hij_Mon} and the references therein).

\begin{definition}[Conformally compact static solution]
\label{def:CC_A}
A static solution $(M,\go,u)$ of problem~\eqref{eq:pb_A} is said to be {\em conformally compact} if the following conditions are satisfied:

\begin{itemize}
\item[(i)] The manifold $M$ is diffeomorphic to the interior of a compact manifold with boundary $\overline{M}$.

\item[(ii)] There exists a compact $K\subset M$ and a function $r\in\mathscr{C}^{\infty}(\overline{M}\setminus K)$ such that $r\neq 0$ on $M$, $r=0$ on $\pa \overline{M}$, $dr\neq 0$ on $\pa \overline{M}$ and the metric $\bar g=r^2 \go$ extends smoothly to a metric on $\overline{M}\setminus K$. 
\end{itemize}
\end{definition}

In the following, we will call $\pa \overline{M}$ the {\em conformal boundary} of $M$ and, in order to simplify the notation, we will set
$$
\pa M\,:=\,\pa\overline{M}\, .
$$
We will refer to a function with the same properties of $r$ in $(ii)$ as to a {\em defining function} for $\pa M$.

%

We are now ready to introduce the analogous of Assumption~\ref{ass:D} in the case of a negative cosmological constant.

\begin{assumption}
\label{ass:A}
The triple $(M,\go,u)$ is conformally compact, the function $1/\sqrt{u^2-1}$ is a defining function for $\pa M$ and $\lim_{x\to \bar x} \left(u^2-1-|\D u|^2\right)\geq 0$ for every $\bar x\in\pa M$.
\end{assumption}

Some comments are in order to justify these requirements. 
First we notice that, if $1/\sqrt{u^2-1}$ is a defining function, then the limit in Assumption~\ref{ass:A} exists and is finite (see Lemma~\ref{le:tot_geod_BGH_A}-(i) in the appendix).

We also remark that Assumption~\ref{ass:A} is not unusual, in the sense that it is a weakening of a more classical hypotesis, appeared in similar forms in various articles like~\cite{Qing,Wang} and~\cite{Wang2}. In these works, it is supposed that $\pa M$ is diffeomorphic to a sphere and $\go$ is conformally compact, with respect to a defining function $r$ such that $r^2{\go}_{|_{\pa M}}$ is conformal to the spherical metric. One can prove that these hypoteses are stronger than our assumption. In fact, it follows from them that $1/\sqrt{u^2-1}$ is a defining function and the function $u^2-1-|\D u|^2$ goes to zero as $x\to\infty$ (see formul\ae~(3.2),~(3.3) in~\cite{Qing}).

Finally, we observe that the anti-de Sitter triple $(\R^n,g_A,u_A)$ defined by~\eqref{eq:A} indeed verifies all our hypotesis, namely it is a conformally compact static solution of problem~\eqref{eq:pb_A} satisfying Normalization~\ref{norm:A} and Assumption~\ref{ass:A}. 

Proceeding in analogy with~\cite{Ago_Maz_2}, for all $p\geq 0$ we introduce the functions $U_p:(1,+\infty)\rightarrow\R$ defined as
\begin{equation}
\label{eq:Up_A}
t \,\, \longmapsto \,\, U_p(t) \, = \, \Big(\frac{1}{t^2-1}\Big)^{\!\!\frac{n+p-1}{2}}\!\!\!\!\!\! \int\limits_{ \{ u = t \}} \!\!\!\!  |\D u|^p \, \rmd \sigma .
\end{equation}

It is worth noticing that the functions $t \mapsto U_p(t)$ are well defined, since the integrands are globally bounded and the level sets of $u$ have finite hypersurface area. In fact, since $u$ is analytic (see~\cite{Chr}), the level sets of $u$ have locally finite $\mathscr{H}^{n-1}$-measure by the results in~\cite{Federer}. Moreover, they are compact and thus their hypersurface area is finite. Another important observation comes from the fact that, using the explicit formul\ae~\eqref{eq:A}, one easily realizes that the quantities
\begin{equation}
\label{eq:Pfunct_confvol_A}
M \, \ni \,x \, \longmapsto \,\,   \frac{|\D u|}{\sqrt{u^2-1}}  \,(x) \quad \hbox{and} \quad  [0, 1) \, \ni \, t \, \longmapsto \,\, U_0(t) \,= \!\!\!\!\int\limits_{\{u = t\}}\!\!
 \Big(\frac{1}{u^2-1}\Big)^{\!\frac{n-1}{2}}
 \rmd\sigma 
\end{equation}
are constant on the anti-de Sitter solution. In the following, via a conformal reformulation of problem~\eqref{eq:pb_A}, we will be able to give a more geometric interpretation of this fact. On the other hand, we notice that the function $t \mapsto U_p(t)$ can be rewritten in terms of the above quantities as
\begin{equation}
\label{eq:Up2_A}
{ U}_p(t)\,\,=\!\!\!
\int\limits_{\{u = t\}}\!\!\!
\left(   \frac{|\D u|}{\sqrt{u^2-1}}  \right)^{p}  \Big(\frac{1}{u^2-1}\Big)^{\!\frac{n-1}{2}}
\,\, \rmd\sigma .
\end{equation}
Hence, thanks to~\eqref{eq:Pfunct_confvol_A}, we have that for every $p \geq 0$ the function $t \mapsto U_p(t)$ is constant on the anti-de Sitter solution. Our main result illustrates how  the functions $t \mapsto U_p(t)$ can be used to detect the rotational symmetry of the {\em static solution} $(M, g_0, u)$. In fact, for $p\geq 3$, they are nondecreasing and the monotonicity is strict unless $(M, g_0, u)$ is isometric to the anti-de Sitter solution.

\begin{theorem}[Monotonicity-Rigidity Theorem, case $\Lambda<0$]
\label{thm:main_A}

Let $(M,g_0,u)$ be a conformally compact static solution to problem~\eqref{eq:pb_A} in the sense of Definition~\ref{def:CC_A}. Suppose moreover that $(M,\go,u)$ satisfies Normalization~\ref{norm:A} and Assumption~\ref{ass:A}. 
Then
\begin{equation}
\label{eq:Dusign_A}
|\D u|^2\,\,\leq\,\,u^2-1\,,
\end{equation}
on the whole manifold $M$.
Moreover, for every $p \geq 1$ let $U_p : (1, +\infty) \rightarrow \R$ be the function defined  in~\eqref{eq:Up_A}. Then, the following properties hold true.
\begin{itemize}
\item[(i)] For every $p\geq 1$, the function $U_p$ is continuous. 

\smallskip

\item[(ii)] The function $U_1$ is monotonically nondecreasing. Moreover, if $U_1(t_1)=U_1(t_2)$ for some $t_1\neq t_2$, then $(M,\go,u)$ is isometric to the anti-de Sitter solution.

\smallskip

\item[(iii)] For every $p \geq 3$, the function $U_p$ is differentiable and the derivative satisfies, for every $t \in (1,+\infty)$,
\begin{align}
\notag
U_p'(t) \,\, &= \,\, (p-1) \,\, t \,\, \Big( \frac{1}{t^2-1} \Big)^{\!\!\frac{n+p-1}{2}} \!\!\!\!\!
\int\limits_{\{u=t\}}
\!\!\!\!
|\D u|^{p-2}
\!\left[\, -\,\bigg| \frac{\D u}{u} \bigg| \, \HHH \, + \, \bigg( \!\frac{np}{p-1}\! \bigg) \, - \, \bigg(\!\frac{n+p-1}{p-1} \!\bigg) \bigg( \frac{|\D u|^2}{u^2-1} \bigg)
\, \right]  \rmd\sigma \,
\\
\notag
&= \,\, (p-1) \,\, t \,\, \Big( \frac{1}{t^2-1} \Big)^{\!\!\frac{n+p-1}{2}} \!\!\!\!\!
\int\limits_{\{u=t\}}
\!\!\!\!
|\D u|^{p-2}
\!\left[\, (n-1) \,+ \, \Ric (\nu, \nu) \, +   \bigg(\!\frac{n+p-1}{p-1} \!\bigg) \bigg(\!1- \frac{|\D u|^2}{u^2-1} \!\bigg)
\, \right]  \rmd\sigma \,
\\
\label{eq:derup_A}
&\geq \,\, 
(p-1) \,\, t \,\, \Big( \frac{1}{t^2-1} \Big)^{\!\!\frac{n+p-1}{2}}
\!\!\!\!\!
\int\limits_{\{u=t\}}
\!\!\!\!
|\D u|^{p-2} \bigg(\! \frac{n}{p-1} \!\bigg)
\!\left(\! 1-
\frac{|\D u|^2}{u^2-1}
\! \right)  \rmd\sigma 
\,\, \geq \,\, 0\, , 
\end{align}
where $\HHH$ is the mean curvature of the level set $\{u=t\}$ and $\nu=\D u/|\D u|$ is the unit normal to the level set $\{u=t\}$.
Moreover, if there exists $t \in (1,+\infty)$ such that $U_{p}'(t) = 0$ for some $p \geq 3$, then the static solution $(M,g_0,u)$ is isometric to the anti-de Sitter solution. 

\smallskip

\item[(iv)] For our next result it is convenient to see $U_p(t)$ as a function of the defining function $r=1/\sqrt{u^2-1}$, that is, we consider the function $V_p(r)=U_p(\sqrt{1+1/r^2})$. We have that, for every $p\geq 3$, it holds
\begin{align}
\notag
\lim_{r \to 0^+} V_p''(r)
\,\, &= \,\,
-\,(p-1)
\!\int\limits_{\pa M}
\!
\bigg[\, \frac{ (n-1)(n-2)\, - \,\RRR_g^{\pa M}}{2\,(n-1)}\,+\,\frac{n\,(p+1)}{2\,(p-1)}\left(u^2-1-|\D u|^2\right)
 \bigg] \, \rmd\sigma_g 
 \\
 \label{eq:der2up_A}
 &\leq\,\,
 -\,(p-1)\int\limits_{\pa M}\bigg(\frac{n}{p-1}\bigg)\left(u^2-1-|\D u|^2\right)\,\rmd\sigma_g
 \,\,\leq \,\,0 \, ,
\end{align}
where $g=\go/(u^2-1)$ and $\RRR_g^{\pa M}$ is the scalar curvature of the metric $g_{\pa M}$ induced by $g$ on $\pa M$. The integrands in~\eqref{eq:der2up_A} have to be thought as the limits of the corresponding functions as $x\to\bar x$, with $\bar x\in\pa M$.
\end{itemize}

\end{theorem}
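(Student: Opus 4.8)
The plan is to follow the strategy of the positive-cosmological-constant case (Theorem~\ref{thm:main_D}), the two arguments being formally parallel once $1-u^2$ is replaced by $u^2-1$ and the relevant signs are reversed. The organizing device is the conformal change $g=\go/(u^2-1)$, with defining function $r=1/\sqrt{u^2-1}$. A direct computation (already visible in~\eqref{eq:Up2_A}) shows that $(u^2-1)^{-(n-1)/2}\,\rmd\sigma$ is exactly the area element induced by $g$ on $\{u=t\}$, and that $g$ extends smoothly across the conformal boundary $\pa M$ with $\{r=0\}$ totally geodesic; on the anti-de Sitter solution $g$ is the flat cylinder $\rmd r\otimes\rmd r+g_{\Sph^{n-1}}$. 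This explains geometrically why $U_0$ and $|\D u|/\sqrt{u^2-1}$ are constant there, and it is the framework in which the boundary limits of~(iv) are to be computed.

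First I would prove the gradient estimate~\eqref{eq:Dusign_A}. Setting $P=u^2-1-|\D u|^2$, Bochner's formula together with the two equations of~\eqref{eq:pb_A} (use $\Delta u=nu$ and $u\,\Ric=\DD u-nu\,\go$ to eliminate $\Ric(\D u,\D u)$) yields
\begin{equation*}
\tfrac12\Delta P-\tfrac1{2u}\langle\D P,\D u\rangle \,=\, nu^2-|\DD u|^2 \,\leq\, nu^2-\tfrac1n(\Delta u)^2 \,=\, 0 ,
\end{equation*}
the inequality being the trace (Newton) inequality $|\DD u|^2\geq(\Delta u)^2/n$. Hence $P$ is a supersolution of the drift Laplacian $\Delta-u^{-1}\langle\D\,\cdot\,,\D u\rangle$, which is regular since $u\geq1$. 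By the minimum principle the infimum of $P$ is controlled by its boundary values: $P=0$ on ${\rm MIN}(u)$, while $\liminf P\geq0$ at conformal infinity by Assumption~\ref{ass:A}; a barrier argument on the compactification $(\overline{M},g)$ promotes this to $P\geq0$ on $M$. The strong minimum principle gives the rigidity: if $P$ vanishes at an interior point then $P\equiv0$, forcing equality in the trace inequality, i.e.\ $\DD u=u\,\go$, whence $\Ric=-(n-1)\,\go$ and the concircular potential $u$ produces the warped-product, hence anti-de Sitter, structure.

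Next I would derive the derivative formula and the monotonicity. Writing $U_p(t)=(t^2-1)^{-(n+p-1)/2}I_p(t)$ with $I_p(t)=\int_{\{u=t\}}|\D u|^p\,\rmd\sigma$, the first variation of a level-set integral along the normal velocity $\nu/|\D u|$ gives $I_p'(t)=\int_{\{u=t\}}|\D u|^{p-2}[(p-1)\DD u(\nu,\nu)+nt]\,\rmd\sigma$, using $\pa_\nu|\D u|=\DD u(\nu,\nu)$ and $|\D u|\,\HHH=\Delta u-\DD u(\nu,\nu)$. Substituting $\DD u(\nu,\nu)=u\,\Ric(\nu,\nu)+nu$ from~\eqref{eq:pb_A} and collecting the powers of $(t^2-1)$ reproduces the first two expressions in~\eqref{eq:derup_A}; the continuity in~(i), the differentiability for $p\geq3$, and the integrability of the integrand follow from the analyticity of $u$ and the Federer and critical-set estimates recalled in Remark~\ref{rem:uno_D}. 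The sign in the last line is the analytic heart: by~\eqref{eq:Dusign_A} the factor $1-|\D u|^2/(u^2-1)$ is already nonnegative, so it remains to control $\int_{\{u=t\}}|\D u|^{p-2}[(n-1)+\Ric(\nu,\nu)]\,\rmd\sigma$. Writing $\Ric(\nu,\nu)=-|\D u|\,\HHH/u$ and integrating the mean-curvature term by parts on the closed level set (a Reilly-type identity in the metric $g$) I expect this to equal a manifestly nonnegative quantity, namely the integral of the squared norm of the trace-free second fundamental form of $\{u=t\}$; its vanishing forces total umbilicity of every level set and, via a splitting argument, identifies $(M,\go,u)$ with the anti-de Sitter solution. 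The monotonicity of $U_1$ in~(ii) is the threshold $p=1$, where the prefactor $(p-1)$ degenerates and the conclusion is obtained from the same integrated identity (or as the limit $p\to1^+$).

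The step I expect to be the main obstacle is part~(iv), the computation of $\lim_{r\to0^+}V_p''(r)$ at the conformal boundary. This requires expanding the integrand of~\eqref{eq:derup_A} to second order as $x\to\bar x\in\pa M$, hence the precise asymptotics of $u$, $|\D u|$ and $\Ric(\nu,\nu)$ in terms of $r=1/\sqrt{u^2-1}$, together with the Gauss equation for the compactified metric $g$ in order to convert the ambient curvature into the intrinsic scalar curvature $\RRR_g^{\pa M}$; controlling these limits, and justifying differentiating $V_p$ twice up to the boundary, is delicate precisely because $\{r=0\}$ lies at infinity for $\go$. A secondary difficulty is making rigorous the maximum principle for $P$ on the non-compact $M$, for which the conformal compactification built into Assumption~\ref{ass:A} is essential. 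Once these two technical points are settled, the chain of inequalities and all the equality cases follow as above.
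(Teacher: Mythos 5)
Your gradient estimate is correct and is in fact a legitimate alternative to the paper's route: you run the Bochner argument for $P=u^2-1-|\D u|^2$ directly in the metric $\go$ with the drift operator $\De-\tfrac1u\langle\D\,\cdot\,,\D u\rangle$, whereas the paper (Lemma~\ref{le:grad_ffi}) performs the equivalent computation in the conformal metric $g=\go/(u^2-1)$ for $w=\beta(\ffi)\,(1-|\na\ffi|_g^2)$; the two are related by the conformal change and both close the argument with the minimum principle on an exhaustion, using Assumption~\ref{ass:A} at conformal infinity and $P=0$ on ${\rm MIN}(u)$. Likewise your first-variation computation of $U_p'$ reproduces the first two lines of~\eqref{eq:derup_A}, modulo the continuity/differentiability across critical values of $u$, which you assert but which the paper has to work for (it realizes $U_p$ as the repartition function of a measure absolutely continuous with respect to $\mu_g$, via Propositions~\ref{prop:byparts} and~\ref{prop:cyl}, precisely to get (i) and the differentiability in (iii)).

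The genuine gap is in the sign of $U_p'$, which is the heart of the theorem. You propose to show $\int_{\{u=t\}}|\D u|^{p-2}\big[(n-1)+\Ric(\nu,\nu)\big]\,\rmd\sigma\geq 0$ by ``integrating the mean-curvature term by parts on the closed level set'' and obtaining the integral over $\{u=t\}$ of the squared trace-free second fundamental form. No such surface identity exists: on a closed hypersurface there is no integration by parts that converts $\int |\D u|^{p-1}\HHH$ into $\int|\mathring{\ho}|^2$, and indeed the quantity above is not controlled by data on the single level set. What actually makes the proof work is a \emph{bulk} identity over the region $\{u>t\}$ enclosed between the level set and ${\rm MIN}(u)$: one must find the precise weight $\gamma(\ffi)=1/(\sinh^{n+1}(\ffi)\cosh(\ffi))$ making $\mathrm{div}_g\big(\gamma(\ffi)\,\na|\na\ffi|_g^{p-1}\big)$ equal to $\gamma(\ffi)\,|\na\ffi|_g^{p-3}$ times $(p-1)\big(|\nana\ffi|_g^2+(p-3)\big|\na|\na\ffi|_g\big|_g^2+n\,u^2|\na\ffi|_g^2(1-|\na\ffi|_g^2)\big)$, whose sign is then supplied by the gradient estimate; integrating this over $\{\ffi>s\}$ and showing that the boundary contribution at ${\rm MIN}(u)$ vanishes (the analogue of~\eqref{eq:lim_S}) yields Proposition~\ref{prop:cyl} and hence the inequality and the rigidity (via $\nana\ffi\equiv0$ on an open set, propagated by analyticity, then Obata/Qing). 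Your plan contains neither the choice of weight, nor the bulk domain, nor the decay of the far boundary term, so the monotonicity and the equality case in (iii) are not established; the same applies to (iv), which you explicitly leave open and which in the paper is deduced from the same bulk identity evaluated at $s\to0^+$.
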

\begin{remark}
\label{rem:uno_A}
Using the same arguments of Remark~\ref{rem:uno_D}, one observes that formula~\eqref{eq:derup_A} is well posed even when the set $\{u=t\}$ is not a regular level set of $u$. Notice that the integrands in~\eqref{eq:der2up_A} are finite functions, as it has been stated in the discussion below Assumption~\ref{ass:A} (see also Lemma~\ref{le:tot_geod_BGH_A}-(i)).
\end{remark}

\begin{remark}
Note that, unlike the case $\Lambda>0$, the rigidity statement does not hold for point {\rm (iii)} of Theorem~\ref{thm:main_A}. The reason for this will be clear later (see the discussion at the end of Subsection~\ref{sub:second derivative}). 

In general, as it will become apparent in Section~\ref{sec:conseq}, the analysis of the static solutions is more delicate in the case $\Lambda<0$. 
In particular, we will see that, in the case $\Lambda<0$, in order to obtain results that are comparable with the ones for $\Lambda>0$, it will be useful to require some extra hypoteses on the behavior of the static solution near the conformal boundary (namely Assumption~\ref{ass:A2} in Subsection~\ref{sub:further_A}). Still, some of the consequences for $\Lambda>0$ will have no analogue in the case $\Lambda<0$ (compare Theorem~\ref{thm:geom_ineq_D} with Theorem~\ref{thm:geom_ineq_A} below).
\end{remark}


The analytic and geometric implications of Theorem~\ref{thm:main_A} will be discussed in full details in Section~\ref{sec:conseq}. However, we have decided to collect the more significant among them in Theorem~\ref{thm:geom_ineq_A} below. Before giving the statement, it is worth noticing that, 
combining Theorem~\ref{thm:main_A} with some approximations near the extremal points of the static potential $u$, we are able to characterize the set ${\rm MIN}(u)$ and to estimate the behavior of the $U_p(t)$'s as $t$ approaches 1.

\begin{theorem}
\label{thm:estimate_A}
Let $(M,\go,u)$ be a conformally compact solution of \eqref{eq:pb_A} satisfying Normalization~\ref{norm:A} and Assumption~\ref{ass:A}.
Then the set ${\rm MAX}(u)$ is discrete (and finite) and, for every $p\leq n-1$, it holds
\begin{equation}
\label{eq:limup_A}
\liminf_{t\to 1^+} \,U_p(t)\,\,\geq\,\, |{\rm MIN}(u)|\,|\Sph^{n-1}|\,,
\end{equation}
where $|{\rm MIN}(u)|$ is the cardinality of the set ${\rm MIN}(u)$. 
\end{theorem}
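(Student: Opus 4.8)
The plan is to mirror the argument for the case $\Lambda>0$ (Theorem~\ref{thm:estimate_D}, established as Theorem~\ref{thm:MAX} in the appendix), reversing every sign for the negative cosmological constant. First, by Normalization~\ref{norm:A} and the properness of $u$ (a consequence of $u\to+\infty$ at infinity), the set ${\rm MIN}(u)=\{u\le 1\}$ is non-empty, compact and contained in the interior of $M$, where the equations of~\eqref{eq:pb_A} hold classically. A preliminary reduction simplifies the range of $p$: since $|\D u|^2\le u^2-1$ by~\eqref{eq:Dusign_A}, the quantity $f:=|\D u|/\sqrt{u^2-1}$ takes values in $[0,1]$ on each level set $\{u=t\}$ with $t>1$, so the representation~\eqref{eq:Up2_A} gives $U_p(t)\ge U_{n-1}(t)$ for every $0\le p\le n-1$. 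It thus suffices to prove the bound for $p=n-1$, which explains the restriction in the statement.

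Next I would analyse $u$ at a point $p_0\in{\rm MIN}(u)$. As $p_0$ is an interior minimum, $\D u(p_0)=0$ and $\DD u(p_0)\ge 0$, while tracing the second equation of~\eqref{eq:pb_A} gives ${\rm tr}\,\DD u(p_0)=\De u(p_0)=n$. The crucial step is to promote $\DD u(p_0)\ge 0$ to the rigidity $\DD u(p_0)=\go$. For this I would use the function $w:=u^2-1-|\D u|^2$, which is nonnegative on all of $M$ by~\eqref{eq:Dusign_A} and vanishes at $p_0$; hence $p_0$ is a global minimum of $w$, so $\DD w(p_0)\ge 0$. A direct computation at the critical point yields $\DD w(p_0)=2\big(\DD u-(\DD u)^2\big)(p_0)$, whence every eigenvalue $\lambda$ of $\DD u(p_0)$ satisfies $\lambda(1-\lambda)\ge 0$, i.e. $\lambda\in[0,1]$. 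Together with ${\rm tr}\,\DD u(p_0)=n$ this forces all $n$ eigenvalues to equal $1$, that is $\DD u(p_0)=\go$. In particular each point of ${\rm MIN}(u)$ is a non-degenerate minimum, hence isolated, and compactness then gives that ${\rm MIN}(u)$ is discrete and finite.

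With the rigidity $\DD u(p_0)=\go$ available, I would run a blow-up near each $p_0\in{\rm MIN}(u)$. In geodesic normal coordinates the expansions $u=1+\tfrac12 d(\cdot,p_0)^2+o(d^2)$ and $|\D u|^2=d(\cdot,p_0)^2+o(d^2)$ hold, so for $t$ close to $1$ the level set $\{u=t\}$ meets a fixed small ball around $p_0$ in a single hypersurface close to the geodesic sphere of radius $\sqrt{2(t-1)}$, on which $f\to 1$. Since $\{u\le t\}$ shrinks to the finite set ${\rm MIN}(u)$ as $t\to 1^+$, for $t$ sufficiently close to $1$ the whole of $\{u=t\}$ is the disjoint union of these shrinking spheres, one per minimum point. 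Rescaling the coordinates by $\sqrt{t^2-1}$, the rescaled metrics converge to the Euclidean one and the rescaled potentials to the model $\tfrac12|x|^2$ (by elliptic regularity and the analyticity of $u$), the relevant level set becoming the unit round sphere $\Sph^{n-1}$ with $f\equiv 1$; passing to the limit, the contribution of each $p_0$ to $U_{n-1}(t)$ tends to $\int_{\Sph^{n-1}}1\,\rmd\sigma=|\Sph^{n-1}|$.

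The main obstacle is this last passage to the limit. One has to interchange the limit with integration against the weight $(t^2-1)^{-(n-1)}$, which blows up as $t\to 1^+$ and is only compensated by the simultaneous vanishing of $|\D u|^{n-1}$ and of the area of the degenerating spheres; controlling this balance rigorously is the heart of the proof. Because the areas and the level sets can only be controlled from below without a sharper expansion, a lower-semicontinuity (Fatou-type) argument is what the method naturally provides, yielding the one-sided inequality $\liminf_{t\to 1^+}U_{n-1}(t)\ge|{\rm MIN}(u)|\,|\Sph^{n-1}|$ rather than an equality. The possible non-regularity of the level sets throughout is handled exactly as in Remark~\ref{rem:uno_A}, via the analyticity of $u$ and the finiteness of the $\mathscr{H}^{n-1}$-measure of its level sets guaranteed by~\cite{Federer}.
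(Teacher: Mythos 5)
Your proposal is correct in outline but takes a genuinely different route at the decisive structural step. The paper (Theorem~\ref{thm:MIN}, reduced to Theorem~\ref{thm:MAX} and Proposition~\ref{prop:lim_estimate}) never establishes non-degeneracy of the extremum points: it only knows $\sum_\a\lambda_\a^2=n$ with each $\lambda_\a^2\geq 0$, so some Hessian directions may vanish, and it compensates by working over the cone $C_\ep$ where $\sum_\a\lambda_\a^2|\phi^\a|^2>\ep$ (Lemmas~\ref{le:aux_1}--\ref{le:aux_3}: graph representation of the level set, lower bound on the area element, lower bound on $|\D u|^2/(1-u^2)$ via Cauchy--Schwarz) and then letting $\ep\to 0^+$ by monotone convergence; finiteness of the extremum set is obtained by contradiction from the monotonicity and boundedness of $U_1$. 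You instead prove outright that $\DD u(p_0)=\go$ at every $p_0\in{\rm MIN}(u)$, by applying the second-derivative test to $w=u^2-1-|\D u|^2$ at its global minimum (using~\eqref{eq:Dusign_A}) and combining the resulting eigenvalue constraint $\lambda\in[0,1]$ with ${\rm tr}\,\DD u(p_0)=n$; your computation $\DD w(p_0)=2\big(\DD u-(\DD u)^2\big)(p_0)$ is correct, and this observation is not in the paper. It buys you discreteness of ${\rm MIN}(u)$ for free and eliminates the degenerate directions entirely, so the blow-up reduces to a standard Morse-lemma analysis in which the rescaled level sets converge to round unit spheres with $|\D u|/\sqrt{u^2-1}\to 1$ (note there is no circularity: Theorem~\ref{thm:main_A}, hence~\eqref{eq:Dusign_A}, is proved independently, and the paper's own appendix proof likewise invokes~\eqref{eq:Dusign_D}). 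The price is that you leave the final passage to the limit --- the interplay between the weight $(t^2-1)^{-(n-1)/2}$, the vanishing gradient, and the degenerating areas --- as an acknowledged sketch; this is exactly where the paper expends its three auxiliary lemmas, although with your non-degeneracy in hand the corresponding estimates become routine (and would in fact yield the limit, not merely the $\liminf$, of $U_{n-1}$ near the minima). Your reduction to $p=n-1$ via $U_p\geq U_{n-1}$ coincides with the paper's.
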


For the detailed proof of this result, we refer the reader to Theorem~\ref{thm:MIN} in the appendix.

\begin{theorem}[Geometric Inequalities, case $\Lambda<0$]
\label{thm:geom_ineq_A}
Let $(M,g_0,u)$ be a conformally compact static solution to problem~\eqref{eq:pb_A} in the sense of Definition~\ref{def:CC_A}. Suppose moreover that $(M,\go,u)$ satisfies Normalization~\ref{norm:A} and Assumption~\ref{ass:A}. Then the metric $g=\go/(u^2-1)$ extends to the conformal boundary and the following properties are satisfied.
\begin{itemize}
\item[(i)] {\rm(Area bound)} The inequality 
\begin{equation}
|{\rm MIN}(u)|\,|\Sph^{n-1}|\,\,\leq\,\,|\pa M|_g\,,
\end{equation}
holds true. Moreover, the equality is fulfilled if and only if the static solution $(M,\go,u)$ is isometric to the anti-de Sitter solution.
\item[(ii)] {\rm(Willmore-type inequality)} Suppose that $\lim_{t\to+\infty} (u^2-1-|\D u|^2)=0$. Then the inequality
\begin{equation}
 { |{\rm MIN}(u)|  \,   |\Sph^{n-1}| }   \, \,\,\leq \,\, \int\limits_{\pa M}    \bigg|  \, \frac{ \Rg^{\pa M}\,-\,(n+1)\,(n-2)}{2(n-2)} \,  \bigg|^{n-1} \, \rmd \sigma_g
\end{equation}
holds true. Moreover, the equality is fulfilled  if and only if the static solution $(M,g_0,u)$ is isometric to the anti-de Sitter solution.
\end{itemize}
\end{theorem}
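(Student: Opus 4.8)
The plan is to obtain both inequalities from the Monotonicity-Rigidity Theorem~\ref{thm:main_A} together with the endpoint estimate of Theorem~\ref{thm:estimate_A}, reading everything through the conformal area functional $U_0$, and then upgrading a total-curvature bound to the $L^{n-1}$ Willmore form by convexity. First I would record that, by Assumption~\ref{ass:A} and Definition~\ref{def:CC_A}, the metric $g=\go/(u^2-1)=r^2\go$ extends smoothly to $\pa M$, so that $U_0(t)=|\{u=t\}|_g$ is exactly the $g$-area of the level set; since the $\{u=t\}$ foliate a neighbourhood of the conformal boundary and converge to it as $t\to+\infty$, one gets $\lim_{t\to+\infty}U_0(t)=|\pa M|_g$.

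For the area bound (i), the pointwise inequality~\eqref{eq:Dusign_A}, $|\D u|^2\leq u^2-1$, rewrites as $|\D u|/\sqrt{u^2-1}\leq 1$ and hence, via the representation~\eqref{eq:Up2_A}, gives $U_1(t)\leq U_0(t)$ for every $t$. I would then combine this with Theorem~\ref{thm:main_A}-(ii) (so that $U_1$ is nondecreasing, whence $\inf_t U_1=\lim_{t\to1^+}U_1$) and Theorem~\ref{thm:estimate_A} with $p=1\leq n-1$ (so that $\lim_{t\to1^+}U_1(t)\geq|{\rm MIN}(u)|\,|\Sph^{n-1}|$). Chaining, for every $t$,
\[
|{\rm MIN}(u)|\,|\Sph^{n-1}|\,\leq\,U_1(t)\,\leq\,U_0(t)\,,
\]
and letting $t\to+\infty$ the right-hand side tends to $|\pa M|_g$, which is (i). If equality holds, then $\sup_t U_1\leq\lim_{t\to+\infty}U_0=|{\rm MIN}(u)|\,|\Sph^{n-1}|\leq\inf_t U_1$, so $U_1$ is constant, and Theorem~\ref{thm:main_A}-(ii) forces $(M,\go,u)$ to be the anti-de Sitter solution.

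For the Willmore-type inequality (ii) the decisive observation is that the extra hypothesis $\lim_{t\to+\infty}(u^2-1-|\D u|^2)=0$ makes the integrand of $U_{n-1}$ tend to $1$ at the conformal boundary, so the curvature cannot be read off from the limit of $U_{n-1}$ itself; instead I would extract it from the second-order data. Taking any $p\geq3$ in~\eqref{eq:der2up_A} and letting the term involving $(u^2-1-|\D u|^2)$ vanish, the inequality $\lim_{r\to0^+}V_p''(r)\leq0$ reduces to the total scalar-curvature bound $\int_{\pa M}\Rg^{\pa M}\,\rmd\sigma_g\leq (n-1)(n-2)\,|\pa M|_g$. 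Writing the Willmore density as $f=\big(\Rg^{\pa M}-(n+1)(n-2)\big)/\big(2(n-2)\big)$, this bound is precisely $\int_{\pa M}(-f)\,\rmd\sigma_g\geq|\pa M|_g$, hence a fortiori $\int_{\pa M}|f|\,\rmd\sigma_g\geq|\pa M|_g$. Now I would apply Jensen's inequality for the convex function $s\mapsto s^{n-1}$ (note $n-1\geq 2$) against $\rmd\sigma_g$:
\[
\int_{\pa M}|f|^{n-1}\,\rmd\sigma_g\,\geq\,|\pa M|_g^{-(n-2)}\Big(\int_{\pa M}|f|\,\rmd\sigma_g\Big)^{n-1}\,\geq\,|\pa M|_g\,,
\]
and close the argument with (i), i.e.\ $|\pa M|_g\geq|{\rm MIN}(u)|\,|\Sph^{n-1}|$. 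The equality discussion is inherited: equality in the Willmore inequality forces equality in the final step $|\pa M|_g\geq|{\rm MIN}(u)|\,|\Sph^{n-1}|$, which by (i) characterizes the anti-de Sitter solution.

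The main obstacle I anticipate is the analytic control at the conformal boundary: justifying that $g$ extends, that $\lim_{t\to+\infty}U_0(t)=|\pa M|_g$, and — more delicately — that the boundary integrands in~\eqref{eq:der2up_A} are the genuine limits of the interior quantities as $x\to\bar x\in\pa M$. This is exactly where Assumption~\ref{ass:A}, the finiteness statement of Lemma~\ref{le:tot_geod_BGH_A}-(i), and the vanishing of $u^2-1-|\D u|^2$ enter. Once these limits are secured, the algebraic rearrangement into $f$ and the convexity step are routine.
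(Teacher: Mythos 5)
Your proof is correct and follows essentially the same route as the paper: part (i) is the paper's Theorem~\ref{thm:mon_glob_A} (monotonicity of $U_1$, the limit estimate~\eqref{eq:limup_A}, and the bound $|\D u|^2\leq u^2-1$ to compare with $|\pa M|_g$), while part (ii) is the boundary total-curvature inequality~\eqref{eq:cond_1_bis_A} followed by the same Jensen step with exponent $n-1$ as in Theorem~\ref{thm:ul_bounds_bound_A} and Corollary~\ref{cor:will_A}. Your deduction of the equality case in (ii) from the equality case in (i) is a correct small addition that the paper leaves implicit (Corollary~\ref{cor:will_A} itself states no rigidity).
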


We underline the similarity between this result and statements~(i),~(ii) of Theorem~\ref{thm:geom_ineq_D}. Unfortunately, we are not able to provide analogues of points~(iii),~(iv).

\subsection{Strategy of the proof. }
\label{sub:strategy}

To describe the strategy of the proof, we focus our attention on the rigidity statements in Theorems~\ref{thm:main_D}-(iii),~\ref{thm:main_A}-(iii) and for simplicity, we let $p=3$. At the same time, we provide an heuristic for the monotonicity statement.
In this introductory section, we treat the two cases $\Lambda>0$ and $\Lambda<0$ at the same time, in order to emphasize the similarities between them. For a more specific and precise analysis we address the reader to Section~\ref{sec:conf_reform} and following.

The method employed 
is based on the conformal splitting technique introduced in~\cite{Ago_Maz_1}, which consists of two main steps. 
The first step is the construction of the so called {\em cylindrical ansatz} and amounts to find an appropriate conformal deformation $g$ of the {\em static metric} $g_0$ in terms of the {\em static potential} $u$. In the case under consideration, the natural deformation is given by
\begin{align*}
g \, &= \,  \frac{\go}{1-u^2} \qquad (\mbox{case }\Lambda>0)\,,
\\
g \, &= \,  \frac{\go}{u^2-1} \qquad (\mbox{case }\Lambda<0)\,,
\end{align*}
defined on $M^*:=M\setminus {\rm MAX}(u)$ (respectively $M^*:=M\setminus{\rm MIN}(u)$) if $\Lambda>0$ (respectively $\Lambda<0$). The manifold $M^*$ has the same boundary as $M$ and each point of ${\rm MAX}(u)$ (respectively ${\rm MIN}(u)$) corresponds to an end of $M^*$. 
When $(M,g_0,u)$ is the {\em de Sitter solution} (respectively the {\em anti-de Sitter solution}), the metric $g$ obtained through the above formula is immediately seen to be the cylindrical one. In general, the {\em cylindrical ansatz} leads to a conformal reformulation of  problems~\eqref{eq:pb_D},~\eqref{eq:pb_A} in which the conformally related metric $g$ obeys the quasi-Einstein type equation
\begin{align*}
\Ricg-\left[\frac{1-(n-1)\tanh^2(\ffi)}{\tanh(\ffi)}\right]\nana\ffi+(n-2)d\ffi\otimes d\ffi=\left(n-2+2(1-|\na \ffi|_g^2)\right)g \,,  \qquad \hbox{(case $\Lambda>0$)}
\\
\Ricg-\left[\frac{1-(n-1)\coth^2(\ffi)}{\coth(\ffi)}\right]\nana\ffi+(n-2)d\ffi\otimes d\ffi=\left(n-2+2(1-|\na \ffi|_g^2)\right)g \,,  \qquad \hbox{(case $\Lambda<0$)}
\end{align*}
where $\na$ is the Levi-Civita connection of $g$ and the function $\ffi$ is defined by 
\begin{align*}
\ffi \, =\, \frac{1}{2}\log\left(\frac{1+u}{1-u} \right)\,,\qquad \hbox{(case $\Lambda>0$)}
\\
\ffi \, =\, \frac{1}{2}\log\left(\frac{u+1}{u-1} \right)\,,\qquad \hbox{(case $\Lambda<0$)}
\end{align*}
and satisfies
\begin{align*}
\Deg \, \ffi \, = \, -n\tanh(\ffi) \left(1-|\na\ffi|_g^2\right) \, , \quad \hbox{(case $\Lambda>0$)}
\\
\Deg \, \ffi \, = \, -n\coth(\ffi) \left(1-|\na\ffi|_g^2\right) \, , \quad \hbox{(case $\Lambda>0$)}
\end{align*}
where $\Deg$ is the Laplace-Beltrami operator of the metric $g$.
Before proceeding, it is worth pointing out that taking the trace of the quasi-Einstein type equation gives
\begin{align*}
\frac{\RRR_g}{n-1} \,= \, (n-2) + \left(n\,\tanh^2(\ffi)+2\right)\left(1-|\na \ffi|_g^2\right) \, ,\qquad \hbox{(case $\Lambda>0$)}
\\
\frac{\RRR_g}{n-1} \,= \, (n-2) + \left(n\,\coth^2(\ffi)+2\right)\left(1-|\na \ffi|_g^2\right) \, ,\qquad \hbox{(case $\Lambda<0$)}
\end{align*}
where $\RRR_g$ is the scalar curvature of the conformal metric $g$. On the other hand, it is easy to see that $|\na \ffi|_g^2$ is proportional to the first term in~\eqref{eq:Pfunct_confvol_D} (respectively~\eqref{eq:Pfunct_confvol_A}).
In fact, if $(M,\go)$ is the {\em de Sitter solution} (respectively {\em anti-de Sitter solution}), then $(M^*,g)$ is a round cylinder with constant scalar curvature. Furthermore, the second term appearing in~\eqref{eq:Pfunct_confvol_D} (respectively~\eqref{eq:Pfunct_confvol_A}) is (proportional to) the hypersurface area of the level sets of $\ffi$ computed with respect to the metric induced on them by $g$. Again, in the cylindrical situation such a function is expected to be constant.

The second step of our strategy consists in proving via a splitting principle that the metric $g$ has indeed a product structure, provided the hypotheses of the Rigidity statement are satisfied. More precisely, we use the above conformal reformulation of the original system combined with the Bochner identity to deduce the equation
\begin{multline*}
\Deg |\na\ffi|^2_g\, -\left(\frac{1+(n+1)\tanh^2(\ffi)}{\tanh(\ffi)}\right)\langle \na |\na\ffi|_g^2 \,|\, \na\ffi\rangle_g \,=
\\
=\, 2\, |\nana\ffi|_g^2  \, +\, 2\, n\, \tanh^2(\ffi)\, |\na\ffi|_g^2\, \left(1-|\na \ffi|_g^2\right)\, ,\qquad \hbox{(case $\Lambda>0$)}
\end{multline*}
\begin{multline*}
\Deg |\na\ffi|^2_g\, -\left(\frac{1+(n+1)\coth^2(\ffi)}{\coth(\ffi)}\right)\langle \na |\na\ffi|_g^2 \,|\, \na\ffi\rangle_g\,=
\\
=\, 2\, |\nana\ffi|_g^2  \, +\, 2\, n\, \coth^2(\ffi)\, |\na\ffi|_g^2\, \left(1-|\na \ffi|_g^2\right)\, .\qquad \hbox{(case $\Lambda<0$)}
\end{multline*}
Observing that the drifted Laplacian appearing on the left hand side is formally self-adjoint with respect to the weighted measure 
\begin{align*}
\frac{\rmd \mu_g}{\sinh(\ffi)\cosh^{n+1}(\ffi)} \,,\qquad \hbox{(case $\Lambda>0$)}
\\
\frac{\rmd \mu_g}{\sinh^{n+1}(\ffi)\cosh(\ffi)} \,,\qquad \hbox{(case $\Lambda<0$)}
\end{align*} 
we integrate by parts and we obtain, for every $s\geq 0$, the integral identity
\begin{align*}
\int\limits_{\{\ffi=s\}}\!\!\! \left[\frac{|\na \ffi|_g^2 \,\Hg -\,|\na \ffi|_g\,\Deg\ffi}{\sinh(\ffi)\cosh^{n+1}(\ffi)}\right] \!\rmd\sigma_g =\!\! \int\limits_{\{\ffi>s\}}\!\!\! \left[\frac{|\nana\ffi|^2_g 
\,+ n\, \tanh^2(\ffi)\, |\na\ffi|^2_g \left(1-|\na \ffi|^2_g\right)}{\sinh(\ffi)\cosh^{n+1}(\ffi)}\right]\rmd\mu_g,\ \hbox{($\Lambda>0$)}
\\
\int\limits_{\{\ffi=s\}}\!\!\! \left[\frac{|\na \ffi|_g^2 \, \Hg -\,|\na \ffi|_g\,\Deg\ffi}{\sinh^{n+1}(\ffi)\cosh(\ffi)}\right] \!\rmd\sigma_g =\!\! \int\limits_{\{\ffi>s\}}\!\!\! \left[\frac{|\nana\ffi|^2_g 
\,+ n\, \coth^2(\ffi)\, |\na\ffi|^2_g \left(1-|\na \ffi|^2_g\right)}{\sinh^{n+1}(\ffi)\cosh(\ffi)}\right]\rmd\mu_g,\ \hbox{($\Lambda>0$)}
\end{align*}
where $\Hg$ is the mean curvature of the level set $\{ \ffi = s \}$ inside the ambient $(M^*,g)$ (notice that the same considerations as in Remark~\ref{rem:uno_D} apply here). We then observe that, up to a negative function of $s$, the left hand side is closely related to $U_3'$ (see formul\ae~\eqref{eq:upfip2} and~\eqref{eq:der_fip}). On the other hand, we will prove that, under suitable assumptions, the right hand side is always nonnegative. This will easily imply the Monotonicity statement.
Also, under the hypotheses of the Rigidity statement, the left hand side of the  above identity vanishes and thus the Hessian of $\ffi$ must be zero in an open region of $M$. In turn, by analyticity, it vanishes everywhere. 
Translating this information back in terms of the hessian of $u$, we are able to conclude using Obata's theorem.

\subsection{Summary.}
The paper is organized as follows. In Section~\ref{sec:conseq} we describe the geometric consequences of Theorems~\ref{thm:main_D},~\ref{thm:main_A}, obtaining several sharp inequalities for which the equality is satisfied if and only if the solution to system~\eqref{eq:pb_D} or~\eqref{eq:pb_A}  is rotationally symmetric. We distinguish the consequences of Theorems~\ref{thm:main_D}-(iii),~\ref{thm:main_A}-(iii) on the geometry of a generic level set of $u$ (see Subsections~\ref{sub:rot_sym_D} and~\ref{sub:rot_sym_A}), from the consequences of Theorems~\ref{thm:main_D}-(iv),~\ref{thm:main_A}-(iv) on the geometry of the boundary of $M$ (see Subsections~\ref{sub:further_D} and~\ref{sub:further_A}). 

In Subsection~\ref{sub:further_D}, we deduce some sharp inequalities for static solutions of problem~\eqref{eq:pb_D} and we use them to obtain some corollaries on the uniqueness of the de Sitter metric (see Theorem~\ref{thm:ul_bounds_bound_D} and the discussion below). 
In particular, we show that, if Assumption~\ref{ass:D} holds, then the only 3-dimensional static solution of problem~\eqref{eq:pb_D} with a connected boundary is the de Sitter solution. 
For $n\geq 4$, we are not able to prove such a general result.  Nevertheless, we discuss some geometric conditions under which the uniqueness statement holds in every dimension.
The analogous consequences in Subsection~\ref{sub:further_A} are less strong. In any case, we are still able to state a result (Theorem~\ref{thm:mon_glob_A}) that extends the classical Uniqueness Theorems of the anti-de Sitter metric proved in~\cite{Bou_Gib_Hor,Qing,Wang}.

In Section~\ref{sec:conf_reform}, we reformulate problem~\eqref{eq:pb_D} and~\eqref{eq:pb_A} in terms of a quasi-Einstein type metric $g$ and a function $\ffi$ satisfying system~\eqref{eq:pb_conf} ({\em cylindrical ansatz}), according to the strategy described in Subsection~\ref{sub:strategy}. In this new framework, both Theorem~\ref{thm:main_D} and Theorem~\ref{thm:main_A} results to be equivalent to Theorem~\ref{thm:main_conf} in Subsection~\ref{sub:reform} below, as we will prove in detail in Section~\ref{sec:translation}. Theorem~\ref{thm:main_conf} will be proven in Section~\ref{sec:proofs} with the help of the integral identities proved in Section~\ref{sec:integral}.

Finally, in Appendix~\ref{sec:app_BGH} we discuss a different approach to the study of problems~\eqref{eq:pb_D} and~\eqref{eq:pb_A}, that does not rely on the machinery of Sections~\ref{sec:conf_reform}-\ref{sec:proofs} and provides some consequences that are comparable with the ones discussed in Section~\ref{sec:conseq}. In the case $\Lambda>0$, the results that we show in this section are known (see~\cite{Bou_Gib_Hor,Chr}), but in the case $\Lambda<0$ they appear to be new.

\section{Consequences}
\label{sec:conseq}

In this section we discuss some consequences of Theorems~\ref{thm:main_D} and~\ref{thm:main_A}, distinguishing the two cases $\Lambda>0$ and $\Lambda<0$.
 
\subsection{Consequences on a generic level set of $u$ (case $\Lambda>0$).}
\label{sub:rot_sym_D}
Since, as already observed, the functions $t \mapsto U_p(t)$ defined in~\eqref{eq:Up_D} are constant on the de Sitter solution, we obtain, as an immediate consequence of Theorem~\ref{thm:main_D} and formula~\eqref{eq:derup_D}, the following characterizations of the rotationally symmetric solutions to  system~\eqref{eq:pb_D}.

\begin{theorem}
Let $(M,\go,u)$ be a solution to problem~\eqref{eq:pb_D} satisfying Normalization~\ref{norm:D} and Assumption~\ref{ass:D}. Then, for every $p \geq 3$ and every $t \in [0, 1)$, it holds 
\begin{equation*}
\int\limits_{\{u=t\}}
\!\!\!\!
|\D u|^{p-2}
\!\left[\, (n-1) \, - \, \Ric (\nu, \nu) \, +    \bigg(\!1- \frac{|\D u|^2}{1-u^2} \!\bigg)
\, \right]  \rmd\sigma  \,\, \geq \,\, 0 \, .
\end{equation*}
Moreover, the equality is fulfilled for some $p\geq 3$ and some $t \in [0,1)$ if and only if the static solution $(M,\go,u)$ is isometric to the de Sitter solution.
\end{theorem}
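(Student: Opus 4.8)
The plan is to recognize the asserted inequality as a direct rewriting of the first inequality in the chain~\eqref{eq:derup_D} of Theorem~\ref{thm:main_D}-(iii). For $t\in(0,1)$ that chain writes $U_p'(t)$ as the negative constant $-(p-1)\,t\,(1-t^2)^{-(n+p-1)/2}$ times the integral of $|\D u|^{p-2}$ against the bracket carrying the coefficient $\tfrac{n+p-1}{p-1}$, and then bounds $U_p'(t)$ from above by the same constant times the integral of $|\D u|^{p-2}$ against the bracket carrying only $\tfrac{n}{p-1}\big(1-|\D u|^2/(1-u^2)\big)$. First I would divide the first of these inequalities by the common prefactor; since that prefactor is strictly negative for $t>0$, the inequality reverses and becomes the statement that the integral of $|\D u|^{p-2}$ against the \emph{difference} of the two brackets is nonnegative. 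As $\tfrac{n+p-1}{p-1}-\tfrac{n}{p-1}=1$, this difference is exactly $(n-1)-\Ric(\nu,\nu)+\big(1-|\D u|^2/(1-u^2)\big)$, which proves the claim for every $p\geq 3$ and every $t\in(0,1)$.

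For the endpoint $t=0$ I would pass to the limit $t\to 0^+$. The level sets $\{u=t\}$ exhaust a collar of $\pa M$, and exactly as in the computation of part~(iv) the integral over $\{u=t\}$ converges to the corresponding integral over $\pa M$, where $1-u^2=1$. Alternatively one reads the case $t=0$ straight off the first inequality in~\eqref{eq:der2up_D}: there the same integrand is rewritten through the Gauss equation, using that $\DD u=0$ forces $\pa M$ to be totally geodesic with $\HHH=0$ (cf.\ Remark~\ref{rem:uno_D}) and that $\RRR=n(n-1)$, so that $(n-1)-\Ric(\nu,\nu)=\tfrac12\big(\RRR^{\pa M}-(n-1)(n-2)\big)$ on $\pa M$.

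For the equality case, suppose the integral vanishes for some $p\geq 3$ and some $t\in[0,1)$. Then the first inequality in~\eqref{eq:derup_D} (respectively~\eqref{eq:der2up_D} when $t=0$) is saturated. I would then trace this back through the weighted integration-by-parts identity underlying Theorem~\ref{thm:main_D}, sketched in Subsection~\ref{sub:strategy} and established in Section~\ref{sec:integral}: the two brackets differ precisely by the contribution of the full Hessian term $|\nana\ffi|_g^2$ to the nonnegative bulk integral over $\{u>t\}$, so saturation forces that contribution to vanish. Hence $\nana\ffi\equiv 0$ on $\{u>t\}$, and by analyticity of the static solution $\nana\ffi\equiv 0$ on all of $M^*$; the splitting principle together with Obata's theorem then identifies $(M,\go,u)$ with the de Sitter solution. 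The converse is immediate, since on the de Sitter solution each $U_p$ is constant and in fact the integrand vanishes pointwise (there $\Ric(\nu,\nu)=n-1$ and $|\D u|^2=1-u^2$).

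The only genuinely delicate point is this rigidity. The integrand $(n-1)-\Ric(\nu,\nu)+\big(1-|\D u|^2/(1-u^2)\big)$ carries no pointwise sign, so both the nonnegativity of its integral and the characterization of equality are truly integral facts, extracted from the Bochner identity and the self-adjointness of the drifted Laplacian rather than from any pointwise bound; some additional care is needed where $\{u=t\}$ fails to be a regular level set, which is handled exactly as in Remark~\ref{rem:uno_D}.
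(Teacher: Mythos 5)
Your proposal is correct and follows exactly the route the paper takes: the paper presents this statement as an immediate consequence of Theorem~\ref{thm:main_D}, obtained by dividing the chain~\eqref{eq:derup_D} (respectively~\eqref{eq:der2up_D} at $t=0$) by the negative prefactor and using $\tfrac{n+p-1}{p-1}-\tfrac{n}{p-1}=1$, with the equality case reduced to the saturation of the first inequality and hence to the vanishing of the Hessian term in the bulk identity of Proposition~\ref{prop:cyl}. Your observation that the relevant rigidity hypothesis is the saturation of the first inequality in~\eqref{eq:derup_D} rather than $U_p'(t)=0$ itself is exactly what the conformal version, Theorem~\ref{thm:main_conf}-(iii), provides, so no gap remains.
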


Setting $t=0$ in the above formula, and using the Gauss-Codazzi equation, one gets 
\begin{equation*}
\label{eq:int_ineq_D}
\int\limits_{\{u=t\}}
\!\!\!\!
|\D u|^{p-2}
\!\left[\, \frac{ \RRR^{\pa M}\! - 
(n-1)(n-2)}{2}  \, +    \bigg(\!1- \frac{|\D u|^2}{1-u^2} \!\bigg)
\, \right]  \rmd\sigma  \,\, \geq \,\, 0 \, ,
\end{equation*}
This inequality is just a rewriting of formula~\eqref{eq:der2up_A}, whose consequences will be discussed in Subsection~\ref{sub:further_D} (see Theorem~\ref{thm:rig_gen_D} and below). Another way to rewrite formula~\eqref{eq:derup_D} is the following.

\begin{theorem}
\label{thm:main2_geom_D}
Let $(M,\go,u)$ be a solution to problem~\eqref{eq:pb_D} satisfying Normalization~\ref{norm:D} and Assumption~\ref{ass:D}. Then, for every $p \geq 3$ and every $t \in [0, 1)$, the inequality 
\begin{equation}
\label{eq:int_ineq_D}
\int\limits_{\{u=t\}} \!\!\!\! \bigg( \! \frac{|\D u|}{\sqrt{1-u^2}} \! \bigg)^{\!p} \rmd \sigma \,\,\, \leq  \!\int\limits_{\{u=t\}} \!\!\! \bigg( \! \frac{|\D u|}{\sqrt{1-u^2}} \! \bigg)^{\!p-2} \Big[ \, \HHH \, |\D\log u| \, +   {n }  \, \Big] \, \rmd \sigma 
\end{equation}
holds true, where $\HHH$ is the mean curvature of the level set $\{u = t \}$. Moreover, the equality is fulfilled for some $p\geq 3$ and some $t \in (0,1)$ if and only if the static solution $(M,\go,u)$ is isometric to the de Sitter solution.
\end{theorem}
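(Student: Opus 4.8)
The plan is to recognise the claimed inequality as a pointwise rearrangement of the second and third lines of~\eqref{eq:derup_D}. First I would invoke the identity recorded in Remark~\ref{rem:uno_D}, namely $\HHH\,|\D\log u|=-\,\Ric(\nu,\nu)$, valid $\mathscr{H}^{n-1}$-almost everywhere on $\{u=t\}$ (and read as $0$ where $\D u=0$). Since $1-u^2\equiv 1-t^2$ is constant on $\{u=t\}$, the factors $(1-u^2)^{-p/2}$ and $(1-u^2)^{-(p-2)/2}$ can be cleared, and the inequality in the statement becomes equivalent to
\[
\int\limits_{\{u=t\}} |\D u|^{p-2}\Big[\, n - \Ric(\nu,\nu) - \frac{|\D u|^2}{1-u^2} \,\Big]\,\rmd\sigma \,\,\geq\,\, 0 .
\]
The bracket is exactly the difference between the integrands appearing in the second and third lines of~\eqref{eq:derup_D}; since $|\D u|^{p-2}\geq 0$ and this difference is pointwise nonnegative — this is the pointwise estimate yielding the middle inequality of Theorem~\ref{thm:main_D}-(iii) — the integral is nonnegative. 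The easy direction of the rigidity statement is then a direct computation: on the de Sitter solution $|\D u|/\sqrt{1-u^2}\equiv 1$ and $\Ric=(n-1)\,\go$, so both sides of the stated inequality equal the area of $\{u=t\}$.

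For the converse I would proceed as follows. Equality for some $p\geq 3$ and some $t_0\in(0,1)$ forces the nonnegative integrand above to vanish $\mathscr{H}^{n-1}$-almost everywhere on $\{u=t_0\}$, i.e. $n-\Ric(\nu,\nu)-|\D u|^2/(1-u^2)=0$ there. I would then introduce the $P$-function $P:=1-|\D u|^2/(1-u^2)=1-|\na\ffi|_g^2$, which is nonnegative by~\eqref{eq:Dusign_D}. Using $\Ric(\nu,\nu)=\DD u(\nu,\nu)/u+n$, a short computation rewrites the pointwise estimate $n-\Ric(\nu,\nu)-|\D u|^2/(1-u^2)\geq 0$ as the differential inequality $\tfrac{\rmd}{\rmd u}P\geq 0$ along the gradient flow of $u$, so that $P$ is nondecreasing towards ${\rm MAX}(u)$, with equality exactly when the integrand vanishes. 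Hence on $\Om:=\{u>t_0\}$ the minimum of $P$ is attained on the inner boundary $\{u=t_0\}$, while the equality forces the derivative of $P$ along the $u$-gradient flow to vanish on all of $\{u=t_0\}$.

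To upgrade this to $P\equiv\text{const}$ I would pass to the conformal reformulation of Section~\ref{sec:conf_reform}: the Bochner identity quoted in Subsection~\ref{sub:strategy} shows that $P$ is a supersolution of a drifted Laplacian,
\[
\Deg P - \Big[\,\frac{1+(n+1)\tanh^2(\ffi)}{\tanh(\ffi)}\,\Big]\,\langle\na P,\na\ffi\rangle_g \,=\, -\,2\,|\nana\ffi|_g^2 \,-\, 2\,n\,\tanh^2(\ffi)\,|\na\ffi|_g^2\,P \,\leq\, 0 ,
\]
the sign being guaranteed by $P\geq 0$. At a point of $\{u=t_0\}$ realising $\min_\Om P$ the inward normal derivative of $P$ vanishes, so the Hopf boundary point lemma forces $P$ to be constant on $\Om$; feeding this back into the identity gives $\nana\ffi\equiv 0$ on $\Om$, hence $\nana\ffi\equiv 0$ on all of $M^*$ by analyticity, and one concludes via Obata's theorem as in the proof of Theorem~\ref{thm:main_D}, identifying $(M,\go,u)$ with the de Sitter solution. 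The routine part is the algebraic rewriting; the main obstacle is this maximum-principle step, where one must check that the minimum of $P$ over $\Om$ does not escape to the ends corresponding to ${\rm MAX}(u)$ (this is exactly what the flow-line monotonicity of $P$ secures), deal with the possibly critical level sets of $u$ inside $\Om$ using the analyticity of $u$ as in Remark~\ref{rem:uno_D}, and verify the interior-sphere condition needed to apply the Hopf lemma.
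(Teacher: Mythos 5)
The inequality itself is a correct rearrangement of~\eqref{eq:derup_D}, but your justification of it --- and, fatally, your rigidity argument --- rest on a claim that is false. You assert that the difference between the second and third integrands of~\eqref{eq:derup_D}, i.e.\ $|\D u|^{p-2}\bigl[\,n-\Ric(\nu,\nu)-\tfrac{|\D u|^2}{1-u^2}\,\bigr]$, is \emph{pointwise} nonnegative and that this pointwise estimate is what yields the middle inequality of Theorem~\ref{thm:main_D}-(iii). It is not: by the static equation the bracket equals $-\DD u(\nu,\nu)/u-|\D u|^2/(1-u^2)$, and there is no pointwise control on $\DD u(\nu,\nu)$ along a level set. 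The inequality between the second and third lines of~\eqref{eq:derup_D} is an \emph{integral} statement; in the paper it is exactly the identity~\eqref{eq:id_byparts} of Proposition~\ref{prop:cyl}, obtained by integrating a Bochner-type divergence identity over the superlevel set $\{\ffi>s\}$, whose right-hand side is a bulk integral of $|\nana\ffi|_g^2$ plus manifestly nonnegative terms. The forward inequality of the theorem survives only because you may cite the first inequality in~\eqref{eq:derup_D} directly (the statement is literally that inequality after dividing out the positive factor $(p-1)\,t\,(1-t^2)^{-(n+p-1)/2}$ and using $\HHH\,|\D\log u|=-\Ric(\nu,\nu)$), not because of any pointwise positivity.

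The rigidity half collapses at its first step: since the integrand has no sign, the vanishing of $\int_{\{u=t_0\}}|\D u|^{p-2}\bigl[n-\Ric(\nu,\nu)-|\D u|^2/(1-u^2)\bigr]\,\rmd\sigma$ does not force the integrand to vanish $\mathscr{H}^{n-1}$-almost everywhere on $\{u=t_0\}$, so the reinterpretation as ``$P$ nondecreasing along the gradient flow of $u$'' and the ensuing Hopf-lemma argument have no foundation (such flow-line monotonicity of $P=1-|\na\ffi|_g^2$ is asserted nowhere in the paper and, if true pointwise, would trivialize its entire integral machinery). The correct route, and the one the paper takes, goes through Proposition~\ref{prop:cyl}: equality in~\eqref{eq:int_ineq_D} for some $t_0\in(0,1)$ says precisely that the flux $\int_{\{\ffi=s_0\}}\bigl(|\na\ffi|_g^{p-1}\Hg-|\na\ffi|_g^{p-2}\Deg\ffi\bigr)\,\rmd\sigma_g$ vanishes at the corresponding level $s_0$, whence by~\eqref{eq:id_byparts} the bulk integral of $|\nana\ffi|_g^2+(p-3)\bigl|\na|\na\ffi|_g\bigr|_g^2+n\,u^2|\na\ffi|_g^2(1-|\na\ffi|_g^2)$ over $\{\ffi>s_0\}$ is zero, so $\nana\ffi\equiv 0$ there, hence everywhere by analyticity, and one concludes with Obata's theorem exactly as in the proof of Proposition~\ref{prop:cyl}. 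No maximum principle or Hopf boundary lemma is needed, and none would repair the missing pointwise information.
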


To give an interpretation of Theorem~\ref{thm:main2_geom_D} in the framework of overdetermined boundary value problems, we observe that the equality is achieved in~\eqref{eq:int_ineq_D} as soon as the integrands on the left and right hand side coincide $\mathscr{H}^{n-1}$-almost everywhere on some level set of $u$. This amounts to ask that 
\begin{equation*}
\frac{u\,|\D u|^2}{1-u^2}\,\, = \,\, \HHH\,|\D u|\,+\,n\,u \, .
\end{equation*}
On the other hand, with the help of the first equation in~\eqref{eq:pb_D}, one can rewrite the right hand side as 
\begin{equation*}
\HHH\,|\D u|\,+\,n\,u \, = \,  - u \, \Ric\, (\nu, \nu)\,+\,n\,u  \, = \, - \D^2 u \,(\nu, \nu) \, = \, - \frac{\pa |\D u|}{ \pa \nu} \, ,
\end{equation*}
where $\nu = \D u / |\D u|$. This easily implies the following corollary.
\begin{corollary}
\label{cor:over_D}
Let $(M,\go,u)$ be a solution to problem~\eqref{eq:pb_D} satisfying Normalization~\ref{norm:D} and Assumption~\ref{ass:D}. Assume in addition that the identity
\begin{equation}
\label{eq:overd_D}
\frac{\pa (1/|\D u|)}{\pa \nu} \, = \, \frac{t}{1-t^2}
\end{equation}
holds $\mathscr{H}^{n-1}$-almost everywhere on some level set $\{ u = t\}$, with $t \in (0,1)$. Then, the static solution $(M,\go,u)$ is isometric to the de Sitter solution.
\end{corollary}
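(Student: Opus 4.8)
The plan is to recognize that the overdetermined condition~\eqref{eq:overd_D} is nothing but a reformulation of the equality case in the inequality~\eqref{eq:int_ineq_D} of Theorem~\ref{thm:main2_geom_D}, so that the rigidity already established there applies verbatim. Almost all of the computation needed has in fact been carried out in the discussion preceding the statement; what remains is to match the hypothesis with the equality condition.

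First I would rewrite the left hand side of~\eqref{eq:overd_D}. Wherever $|\D u| \neq 0$ --- which holds $\mathscr{H}^{n-1}$-almost everywhere on the level set $\{u=t\}$ for $t \in (0,1)$, by the analyticity of $u$ and the measure-theoretic considerations recalled in Remark~\ref{rem:uno_D} --- the chain rule gives
\[
\frac{\pa (1/|\D u|)}{\pa \nu} \, = \, -\,\frac{1}{|\D u|^2}\,\frac{\pa |\D u|}{\pa \nu}\, .
\]
Substituting into~\eqref{eq:overd_D} and using that $u \equiv t$ on the level set, the hypothesis becomes
\[
-\,\frac{\pa |\D u|}{\pa \nu} \, = \, \frac{u\,|\D u|^2}{1-u^2} \qquad \mathscr{H}^{n-1}\text{-a.e. on } \{u=t\}\, .
\]

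Next I would invoke the identity established just above the statement of the corollary, namely that, by the static equations~\eqref{eq:pb_D},
\[
-\,\frac{\pa |\D u|}{\pa \nu} \, = \, -\,\DD u(\nu,\nu) \, = \, \HHH\,|\D u| \, + \, n\,u\, .
\]
Combining the two displays yields the pointwise identity $\tfrac{u\,|\D u|^2}{1-u^2} = \HHH\,|\D u| + n\,u$ almost everywhere on $\{u=t\}$, which is exactly the condition forcing the integrands on the two sides of~\eqref{eq:int_ineq_D} to coincide. Hence equality holds in~\eqref{eq:int_ineq_D} for this value of $t$ (and every $p \geq 3$), and the rigidity part of Theorem~\ref{thm:main2_geom_D} immediately gives that $(M,\go,u)$ is isometric to the de Sitter solution.

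The argument is therefore essentially a change of variables, and the only point requiring care is the behaviour at the critical points of $u$ lying on $\{u=t\}$, where $1/|\D u|$ is singular. This is harmless, since the equality condition of Theorem~\ref{thm:main2_geom_D} is itself a statement about the $\mathscr{H}^{n-1}$-almost-everywhere coincidence of the integrands, precisely matching the almost-everywhere hypothesis on the normal derivative in~\eqref{eq:overd_D}.
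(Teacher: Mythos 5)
Your argument is correct and coincides with the paper's own derivation: the paper proves the corollary precisely by observing that the hypothesis~\eqref{eq:overd_D}, after the chain-rule rewriting and the identity $\HHH\,|\D u|+n\,u=-\pa|\D u|/\pa\nu$ coming from the first equation of~\eqref{eq:pb_D}, is equivalent to the $\mathscr{H}^{n-1}$-a.e.\ coincidence of the integrands in~\eqref{eq:int_ineq_D}, whence rigidity follows from Theorem~\ref{thm:main2_geom_D}. No substantive difference from the paper's proof.
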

In other words, assumption~\eqref{eq:overd_D} in the previous corollary can be seen as a condition that makes system~\eqref{eq:pb_D} overdetermined and forces the solution to be rotationally symmetric. Observe that~\eqref{eq:overd_D} is satisfied on the de Sitter solution and thus it is also a necessary condition for $(M,\go,u)$ being rotationally symmetric.

To illustrate other implications of Theorem~\ref{thm:main2_geom_D}, let us observe 
that, applying H\"older inequality to the right hand side of~\eqref{eq:int_ineq_D} with conjugate exponents $p/(p-2)$ and $p/2$, one gets
\begin{equation*}
\int\limits_{\{u=t\}} \!\!\! \bigg( \! \frac{|\D u|}{\sqrt{1-u^2}} \! \bigg)^{\!p-2} \Big[ \, \HHH \, |\D\log u|\, +  \, {n }  \, \Big] \, \rmd \sigma 
 \,\, \leq \,\, \Bigg(  \int\limits_{\{u=t\}} \!\!\! 
\bigg( \! \frac{|\D u|}{\sqrt{1-u^2}} \! \bigg)^{\!p}  \, \rmd \sigma \Bigg)^{\!\!\frac{p-2}{p}} 
\Bigg(  \int\limits_{\{u=t\}} \!\!\! \Big| \,  \, \HHH \, |\D\log u| \, +   \,{n } \,\Big|^{\frac{p}{2}}   \rmd \sigma \Bigg)^{\!\!\frac{2}{p}} \,.
\end{equation*}                             
This implies on every level set of $u$ the following sharp $L^p$-bound for the gradient of the {\em static potential}.
\begin{corollary}
\label{cor:Lp_D}
Let $(M,g_0,u)$ be a solution to problem~\eqref{eq:pb_D} satisfying Normalization~\ref{norm:D} and Assumption~\ref{ass:D}. Then, for every $p \geq 3$ and every $t \in [0, 1)$ the inequality
\begin{equation}
\label{eq:Lp_ineq_D}
  \left\|  \frac{\D u}{\sqrt{1-u^2}} \right\|_{L^p(\{ u=t \})}   \,\,\,\leq \,\,\,\, \sqrt{ \,\,\, \bigg\|  \,\HHH \, |\D\log u|\,+\,{n}\, \bigg\|_{L^{p/2}(\{ u=t \})} }  , 
\end{equation}
holds true, where $\HHH$ is the mean curvature of the level set $\{u = t \}$. Moreover, the equality is fulfilled for some $p \geq 3$ and some $t \in (0,1)$ if and only if the static solution $(M,g_0,u)$ is isometric to the de Sitter solution.
\end{corollary}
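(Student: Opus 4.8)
The plan is to obtain the stated $L^p$-bound directly from the integral inequality~\eqref{eq:int_ineq_D} of Theorem~\ref{thm:main2_geom_D}, feeding its right hand side into H\"older's inequality exactly as in the display preceding the statement. To organize the argument I would abbreviate
\[
A \,=\, \int\limits_{\{u=t\}}\!\bigg(\frac{|\D u|}{\sqrt{1-u^2}}\bigg)^{\!p}\rmd\sigma, \qquad
B \,=\, \int\limits_{\{u=t\}}\!\big|\,\HHH\,|\D\log u|+n\,\big|^{\frac p2}\rmd\sigma,
\]
so that the two quantities to be compared in~\eqref{eq:Lp_ineq_D} are $\|\D u/\sqrt{1-u^2}\|_{L^p(\{u=t\})}=A^{1/p}$ and $\sqrt{\|\HHH|\D\log u|+n\|_{L^{p/2}(\{u=t\})}}=B^{1/p}$, whence the whole claim reduces to the scalar inequality $A\le B$.

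First I would apply H\"older's inequality with conjugate exponents $p/(p-2)$ and $p/2$ to the right hand side of~\eqref{eq:int_ineq_D}; combined with~\eqref{eq:int_ineq_D} this yields $A\le A^{(p-2)/p}B^{2/p}$. Assuming $A>0$ — which holds on every level set $\{u=t\}$ with $t\in(0,1)$, since $|\D u|$ cannot vanish $\mathscr{H}^{n-1}$-almost everywhere there — I would divide by $A^{(p-2)/p}$ to get $A^{2/p}\le B^{2/p}$, hence $A\le B$, which is precisely~\eqref{eq:Lp_ineq_D}. The endpoint $t=0$ is handled identically, using that~\eqref{eq:int_ineq_D} holds on $\pa M$ as well, where the surface gravity is nonzero so that $A>0$ there too.

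For the rigidity statement, the crucial remark is that the chain of two inequalities collapses under equality. The converse direction is immediate: on the de Sitter solution the function $|\D u|/\sqrt{1-u^2}$ is constant by~\eqref{eq:Pfunct_confvol_D}, so H\"older is an equality and~\eqref{eq:int_ineq_D} is an identity, whence~\eqref{eq:Lp_ineq_D} is saturated. For the forward direction I would suppose $A=B$ for some $p\ge3$ and $t\in(0,1)$. Then $A^{(p-2)/p}B^{2/p}=A$, so the estimate $A\le A^{(p-2)/p}B^{2/p}$ is an equality; since it was produced by composing~\eqref{eq:int_ineq_D} with H\"older, both steps must hold with equality. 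In particular~\eqref{eq:int_ineq_D} is saturated for this $p$ and $t\in(0,1)$, and the rigidity already contained in Theorem~\ref{thm:main2_geom_D} forces $(M,\go,u)$ to be isometric to the de Sitter solution.

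The step I expect to demand the most care is the equality bookkeeping rather than any individual estimate: one must argue that saturating the final $L^p$-bound forces equality in~\eqref{eq:int_ineq_D} \emph{itself}, and not merely in the auxiliary H\"older step, so that the rigidity of Theorem~\ref{thm:main2_geom_D} can be invoked directly instead of re-deriving it from the equality case of H\"older's inequality. A smaller but genuine point is the justification that $A>0$ on level sets with $t\in(0,1)$, which legitimizes the division leading to $A^{2/p}\le B^{2/p}$; this rests on the analyticity of $u$ and the consequent control of its critical set recalled in Remark~\ref{rem:uno_D}.
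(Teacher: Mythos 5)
Your proposal is correct and follows essentially the same route as the paper: the corollary is obtained exactly by combining the integral inequality~\eqref{eq:int_ineq_D} of Theorem~\ref{thm:main2_geom_D} with H\"older's inequality (conjugate exponents $p/(p-2)$ and $p/2$) applied to its right hand side, dividing by $A^{(p-2)/p}$, and inheriting the rigidity from Theorem~\ref{thm:main2_geom_D} via the collapse of the two-step chain. Your extra bookkeeping (positivity of $A$, equality tracing) only makes explicit what the paper leaves implicit; note that the case $A=0$ could simply be dismissed as trivial for the inequality rather than excluded.
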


It is worth pointing out that the right hand side in~\eqref{eq:Lp_ineq_D} may possibly be unbounded. However, for regular level sets of the {\em static potential} the $L^p$-norm of the mean curvature is well defined and finite (see Remark~\ref{rem:uno_D}). We also observe that letting $p \to + \infty$, we deduce, under the same hypothesis of Corollary~\ref{cor:Lp_D}, the following $L^\infty$-bound
\begin{equation}
\label{eq:Linf_ineq_D}
  \left\|  \frac{\D u}{\sqrt{1-u^2}} \right\|_{L^{\infty}(\{ u=t \})}   \,\,\,\leq \,\,\,\, \sqrt{ \,\,\, \Big\|  \,\HHH \, |\D\log u|\,+\,{n}\, \Big\|_{L^{\infty}(\{ u=t \})} } ,
\end{equation}
for every $t \in [0, 1)$. Unfortunately, in this case we do not know whether the rigidity statement holds true or not. However, the equality is satisfied on the de Sitter solution and this makes the inequality sharp.

Now we will combine inequality~\eqref{eq:Lp_ineq_D} in Corollary~\ref{cor:Lp_D} with the observation that for every $t \in [0,1)$ and every $3\leq p \leq n-1$ (we need to take $n\geq 4$) it holds
\begin{equation}
\label{eq:mon_limup_D}
U_p(t) \, \geq \, |{\rm MAX} (u)|\, |\Sph^{n-1}| \,,  
\end{equation}
where the latter estimate follows from estimate~\eqref{eq:limup_D} in Theorem~\ref{thm:estimate_D} and from the monotonicity of the $U_p$'s stated in Theorem~\ref{thm:main_D}-(iii).
Recalling the explicit expression~\eqref{eq:Up_D} of the $U_p$'s, we obtain the following.

\begin{theorem}
\label{thm:ul_bounds_D}
Let $n\geq 4$. Let $(M,g_0,u)$ be a solution to problem~\eqref{eq:pb_D} satisfying Normalization~\ref{norm:D} and Assumption~\ref{ass:D}. Then, for every $3\leq p \leq n-1$ and every $t \in [0, 1)$, the inequalities 
\begin{equation}
\label{eq:pi_gen_D}
(1-t^2)^{\frac{n-1}{2}} \, |{\rm MAX}(u)|\,|\Sph^{n-1}|\,\, \leq \,\  \Big\| \, \HHH\, |\D\log u| \,+\, n \, \Big\|_{L^{p/2}(\{ u=t \})}^{\frac{p}{2}} \, 
\end{equation}
hold true.
Moreover, the equality is fulfilled for some $t \in(0,1)$ and some $3\leq p \leq n-1$, if and only if the static solution $(M,g_0,u)$ is isometric to the de Sitter solution.
\end{theorem}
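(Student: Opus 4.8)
The plan is to derive Theorem~\ref{thm:ul_bounds_D} by chaining two already-established sharp inequalities, after rewriting the lower bound~\eqref{eq:mon_limup_D} on $U_p$ as an $L^p$-norm over the level set. First I would observe that, since $u\equiv t$ on $\{u=t\}$, the factor $(1-u^2)^{-(n-1)/2}$ appearing in the integral representation~\eqref{eq:Up2_D} is constant on the level set and can be factored out. This yields the elementary identity
\begin{equation*}
U_p(t) \,=\, (1-t^2)^{-\frac{n-1}{2}} \left\| \frac{\D u}{\sqrt{1-u^2}} \right\|_{L^p(\{u=t\})}^{p}\,,
\end{equation*}
valid for every $t\in[0,1)$ and every $p\geq 0$.

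Next, for $3\leq p\leq n-1$ I would invoke~\eqref{eq:mon_limup_D}, namely $U_p(t)\geq |{\rm MAX}(u)|\,|\Sph^{n-1}|$. As noted above that formula, this is a consequence of the fact that $U_p$ is nonincreasing (Theorem~\ref{thm:main_D}-(iii), which requires $p\geq 3$) combined with the boundary estimate~\eqref{eq:limup_D} of Theorem~\ref{thm:estimate_D} (which requires $p\leq n-1$), so that $U_p(t)\geq \liminf_{s\to 1^-}U_p(s)\geq |{\rm MAX}(u)|\,|\Sph^{n-1}|$; the hypothesis $n\geq 4$ is precisely what makes the range $3\leq p\leq n-1$ nonempty. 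Substituting into the identity above gives the lower bound
\begin{equation*}
(1-t^2)^{\frac{n-1}{2}}\,|{\rm MAX}(u)|\,|\Sph^{n-1}| \,\leq\, \left\| \frac{\D u}{\sqrt{1-u^2}} \right\|_{L^p(\{u=t\})}^{p}\,.
\end{equation*}
I would then raise the sharp inequality~\eqref{eq:Lp_ineq_D} of Corollary~\ref{cor:Lp_D} to the $p$-th power, obtaining
\begin{equation*}
\left\| \frac{\D u}{\sqrt{1-u^2}} \right\|_{L^p(\{u=t\})}^{p} \,\leq\, \Big\| \, \HHH\,|\D\log u|\,+\,n\,\Big\|_{L^{p/2}(\{u=t\})}^{\frac{p}{2}}\,,
\end{equation*}
and chaining the last two displays produces exactly~\eqref{eq:pi_gen_D}.

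For the rigidity, equality in~\eqref{eq:pi_gen_D} forces equality throughout the chain, and in particular in the inequality~\eqref{eq:Lp_ineq_D} of Corollary~\ref{cor:Lp_D}; by the equality case of that corollary (which traces back to the Monotonicity-Rigidity Theorem~\ref{thm:main_D}), the solution must be isometric to the de Sitter solution. The converse is immediate, since on the de Sitter solution all quantities involved are explicit and~\eqref{eq:Lp_ineq_D} holds with equality. Since the argument is a clean composition of previously proved sharp inequalities, there is no genuine analytic obstacle here; the only points requiring care are tracking the equality cases faithfully through the chain (so that the rigidity of Corollary~\ref{cor:Lp_D} can be invoked) and respecting the dimensional constraint $3\leq p\leq n-1$, which is exactly what confines the statement to $n\geq 4$.
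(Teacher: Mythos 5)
Your proof is correct and follows exactly the route the paper intends: the text immediately preceding Theorem~\ref{thm:ul_bounds_D} states that the result is obtained by combining the lower bound~\eqref{eq:mon_limup_D} (monotonicity from Theorem~\ref{thm:main_D}-(iii) plus the liminf estimate~\eqref{eq:limup_D}) with Corollary~\ref{cor:Lp_D}, after rewriting $U_p(t)$ via~\eqref{eq:Up_D}, which is precisely your chain. Your handling of the rigidity case, tracing equality back to the equality case of Corollary~\ref{cor:Lp_D}, is also consistent with the paper.
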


To give a geometric interpretation of the above theorem, we recall the identity
\begin{equation*}
\HHH\, |\D\log u|+{n} \, = \, n- \Ric(\nu, \nu) \, ,
\end{equation*}
and we observe that the quantity $({1-t^2})^{\!\frac{n-1}{2}} \,|\Sph^{n-1}|$ corresponds to the hypersurface area of the level set $\{u_D = t\}$ in the de Sitter solution~\eqref{eq:D}. Combining together these two facts, we arrive at the following corollary.
\begin{corollary}
\label{cor:geom_bound_D}
Let $n\geq 4$. Let $(M,g_0,u)$ be a solution to problem~\eqref{eq:pb_D} satisfying Normalization~\ref{norm:D} and Assumption~\ref{ass:D}. Then, for every $3\leq p \leq n-1$ and every $t \in [0, 1)$, the inequality
\begin{equation}
 \frac{ |{\rm MAX}(u)|  \,   \left| \{ u_D= t\} \right|}{\left| \{ u = t\} \right|}   \, \leq  \fint\limits_{\{ u= t\}}   \!\!\! \big|  \, n - \Ric(\nu, \nu) \,  \big|^{\frac{p}{2}} \, \rmd \sigma
\end{equation}
holds true. Moreover, the equality is fulfilled for some $t \in(0,1)$ and some $3\leq p\leq n-1$, if and only if the static solution $(M,g_0,u)$ is isometric to the de Sitter solution. In particular, for every $t \in (0,1)$, it holds
\begin{equation}
1 \,\, \leq \,\, \big\|  \, n - \Ric(\nu, \nu) \,  \big\|_{L^\infty(\{u=t\})} \, .
\end{equation}
\end{corollary}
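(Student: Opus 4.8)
The first inequality is, in essence, a geometric rewriting of Theorem~\ref{thm:ul_bounds_D}. The plan is to begin with the bound
\[
(1-t^2)^{\frac{n-1}{2}}\, |{\rm MAX}(u)|\,|\Sph^{n-1}| \,\leq\, \Big\| \, \HHH\,|\D\log u| + n \, \Big\|_{L^{p/2}(\{u=t\})}^{\frac{p}{2}}
\]
supplied by that theorem for $3\leq p\leq n-1$, and to reinterpret both of its sides. On the right I would invoke the pointwise identity $\HHH\,|\D\log u| + n = n - \Ric(\nu,\nu)$ recorded just before the statement (and justified through the computation in Remark~\ref{rem:uno_D}), so that the right-hand side becomes $\int_{\{u=t\}} |n-\Ric(\nu,\nu)|^{p/2}\,\rmd\sigma$. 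On the left I would recall that $(1-t^2)^{(n-1)/2}\,|\Sph^{n-1}|$ is precisely the area $|\{u_D=t\}|$ of the level set of the de Sitter potential. Dividing both sides by $|\{u=t\}|$ converts the right-hand integral into the average $\fint_{\{u=t\}}$ and produces the asserted inequality. The rigidity statement then transfers directly from Theorem~\ref{thm:ul_bounds_D}: equality for some admissible $p$ and some $t\in(0,1)$ forces, via that theorem, an isometry with the de Sitter solution, while conversely all the quantities in play are constant on it.

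For the concluding $L^\infty$-estimate I would bound the average by the essential supremum, $\fint_{\{u=t\}}|n-\Ric(\nu,\nu)|^{p/2}\,\rmd\sigma \leq \| n-\Ric(\nu,\nu)\|_{L^\infty(\{u=t\})}^{p/2}$; together with the first part this yields
\[
\| n-\Ric(\nu,\nu)\|_{L^\infty(\{u=t\})} \,\geq\, \left(\frac{|{\rm MAX}(u)|\,|\{u_D=t\}|}{|\{u=t\}|}\right)^{\!\!2/p}.
\]
The task is to show the right-hand side can be raised to $1$. I would extract the sharp behaviour near ${\rm MAX}(u)$: the gradient bound $|\D u|^2\leq 1-u^2$ of~\eqref{eq:Dusign_D}, fed into the Taylor expansion of $u$ about a maximum point, forces the symmetric endomorphism $n\,\go-\Ric$ to have all eigenvalues at most $1$ there, hence $\Ric\geq (n-1)\go$; since $\RRR=n(n-1)$ pins the average eigenvalue of $\Ric$ at exactly $n-1$, this compels $\Ric=(n-1)\go$ at every point of ${\rm MAX}(u)$, so that $n-\Ric(\nu,\nu)\to 1$ as the level set collapses onto ${\rm MAX}(u)$. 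Feeding in Theorem~\ref{thm:estimate_D}, by which $\{u=t\}$ concentrates on $|{\rm MAX}(u)|$ de Sitter-type spheres, the ratio above tends to $1$ as $t\to1^-$, which already delivers the estimate in the limit.

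The genuine obstacle is the upgrade from this limiting statement to the claim for \emph{every} $t\in(0,1)$. Both the averaging estimate above and the $L^\infty$-gradient inequality~\eqref{eq:Linf_ineq_D} only produce $\| n-\Ric(\nu,\nu)\|_{L^\infty(\{u=t\})} \geq \sup_{\{u=t\}} |\D u|^2/(1-u^2)$, and on a fixed interior level set this supremum is in general strictly smaller than $1$ for non-symmetric solutions. Closing the gap therefore requires showing $\sup_{\{u=t\}} |\D u|^2/(1-u^2)=1$ for each $t$, which I would attempt through a maximum principle for the function $|\D u|^2/(1-u^2)$ on the super-level region $\{u\geq t\}$, exploiting that it attains the value $1$ asymptotically at the ends associated with ${\rm MAX}(u)$; equivalently, I would try to produce directly a point of each level set where $\Ric(\nu,\nu)\leq n-1$. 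This pointwise control is the crux, and once it is in hand the remaining steps are routine.
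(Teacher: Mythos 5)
Your treatment of the displayed inequality and of its rigidity statement is exactly the paper's: Theorem~\ref{thm:ul_bounds_D} combined with the identity $\HHH\,|\D\log u|+n=n-\Ric(\nu,\nu)$ and the observation that $|\{u_D=t\}|=(1-t^2)^{\frac{n-1}{2}}|\Sph^{n-1}|$. That part is complete and correct.

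The $L^\infty$ bound is where your proposal has a genuine gap, which you yourself flag. The mechanism that closes it is not a maximum principle on each level set but the elementary remark used for the twin statement in Theorem~\ref{thm:ul_bounds_bound_D}-(i): from $\|f\|_{L^\infty(\{u=t\})}^{p/2}\geq\fint_{\{u=t\}}|f|^{p/2}\,\rmd\sigma\geq c(t)$ with $c(t)>0$ \emph{independent of $p$}, one gets $\|f\|_{L^\infty}\geq c(t)^{2/p}\to 1$ as $p\to\infty$; no lower bound $c(t)\geq1$ is ever required. The only obstruction is the cap $p\leq n-1$, and that cap is removed by precisely the computation you perform at ${\rm MAX}(u)$: under Assumption~\ref{ass:D} the inequality $|\D u|^2\leq 1-u^2$, expanded along each principal axis at a maximum point, forces every eigenvalue $\lambda_\alpha^2$ of $-\DD u$ there to satisfy $\lambda_\alpha^4\leq\lambda_\alpha^2$, and $\De u=-nu$ then gives $\lambda_\alpha^2=1$ for all $\alpha$. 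Hence $|\D u|^2/(1-u^2)\to1$ at ${\rm MAX}(u)$, and the localized computation of Proposition~\ref{prop:lim_estimate} then yields $\liminf_{t\to1^-}U_p(t)\geq|{\rm MAX}(u)|\,|\Sph^{n-1}|$ for \emph{every} $p\geq0$, not only $p\leq n-1$; the displayed inequality therefore holds for all $p\geq3$ and the limit $p\to\infty$ delivers $1\leq\|n-\Ric(\nu,\nu)\|_{L^\infty(\{u=t\})}$. By contrast, your plan of proving $\sup_{\{u=t\}}|\D u|^2/(1-u^2)=1$ on each fixed level set is stronger than needed and does not obviously succeed: $w=1-u^2-|\D u|^2$ is a supersolution of the drifted Laplacian of Lemma~\ref{le:grad_ffi}, so the minimum principle only pushes its infimum over $\{u\geq t\}$ to the boundary of that region, and the value $0$ is attained asymptotically at the ends corresponding to ${\rm MAX}(u)$, not necessarily on $\{u=t\}$. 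A secondary remark: your claim that the ratio $|{\rm MAX}(u)|\,|\{u_D=t\}|/|\{u=t\}|$ tends to $1$ as $t\to1^-$ would need an upper bound on $|\{u=t\}|$ that Theorem~\ref{thm:estimate_D} (a $\liminf$ lower bound) does not supply, but this becomes irrelevant once the $p\to\infty$ argument is in place.
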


\subsection{The geometry of $\pa M$ (case $\Lambda>0$). }
\label{sub:further_D}

We pass now to describe some consequences of the behaviour of the {\em static solution} $(M,\go,u)$ at the boundary $\pa M$, as prescribed by Theorem~\ref{thm:main_D}-(iv). We remark that $|\D u|$ is constant on every connected component of $\pa M$, and that $\pa M$ is a totally geodesic hypersurface inside $(M,\go)$. In particular, also the mean curvature $\HHH$ vanishes at $\pa M$. Hence, formula~\eqref{eq:derup_D} implies that
$U_p'(0) = 0$. 

The following theorem is a rephrasing of formula~\eqref{eq:der2up_D} and is the analog of Theorem~\ref{thm:main2_geom_D}.
\begin{theorem}
\label{thm:rig_gen_D}
Let $(M,\go,u)$ be a solution to problem~\eqref{eq:pb_D} satisfying Normalization~\ref{norm:D} and Assumption~\ref{ass:D}.
Then, for every $p \geq 3$, it holds
\begin{equation}
\label{eq:cond_1_D}
\int\limits_{\pa M}
\!\!
|\D u|^{p-2}
\!\left[\, 
(n-1)(n-2) - \RRR^{\pa M} \, -   2 \big(1- {|\D u|^2} \big)
\, \right]  \rmd\sigma  \,\, \leq \,\, 0 \, ,
\end{equation}
where $\RRR^{\pa M}$ denotes the scalar curvature of the metric induced by $\go$ on $\pa M$. Moreover, the equality holds for some $p \geq 3$ if and only if $(M,\go,u)$ is isometric to the de Sitter solution. In particular, the boundary of $M$ has only one connected component and it is isometric to a $(n-1)$-dimensional sphere.
\end{theorem}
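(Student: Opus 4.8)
The plan is to deduce Theorem~\ref{thm:rig_gen_D} directly from the boundary estimate~\eqref{eq:der2up_D} of Theorem~\ref{thm:main_D}-(iv), which I will treat as an established result. The first step is purely algebraic: I will take the first line of~\eqref{eq:der2up_D}, namely the exact expression for $U_p''(0)$ as an integral over $\pa M$ of
\begin{equation*}
|\D u|^{p-2}\left[\frac{\RRR^{\pa M}-(n-1)(n-2)}{2}+\Big(\frac{n+p-1}{p-1}\Big)\big(1-|\D u|^2\big)\right],
\end{equation*}
and combine it with the second line, which asserts $U_p''(0)\le 0$. Since $U_p''(0)\le 0$, multiplying the displayed identity by $-2/(p-1)^{-1}$ (equivalently, discarding the overall negative prefactor $-(p-1)$) and rearranging signs turns the statement $U_p''(0)\le 0$ into the claim that the integral of $|\D u|^{p-2}\big[(n-1)(n-2)-\RRR^{\pa M}-2(1-|\D u|^2)\big]$ over $\pa M$ is $\le 0$; the coefficient $(n+p-1)/(p-1)$ in~\eqref{eq:der2up_D} must be reconciled with the coefficient $2$ in~\eqref{eq:cond_1_D}, so I expect the true content of this step to be a second, sharper consequence rather than a literal rewriting. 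I would therefore go back to the \emph{chain} of inequalities in~\eqref{eq:der2up_D}: the first line is an equality, the final quantity $-(p-1)\int_{\pa M}|\D u|^{p-2}(n/(p-1))(1-|\D u|^2)\,\rmd\sigma$ is the explicit upper bound, and inequality~\eqref{eq:cond_1_D} is obtained by comparing these two, using $(n+p-1)/(p-1)\ge 1$ together with the sign of $1-|\D u|^2$ furnished by Assumption~\ref{ass:D}.

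The pointwise sign input is the key geometric ingredient. By Assumption~\ref{ass:D} we have $|\D u|\le 1$ on $\pa M$, hence $1-|\D u|^2\ge 0$ there, so $\big(\frac{n+p-1}{p-1}-1\big)(1-|\D u|^2)\ge 0$ since the bracketed constant is $\ge 0$ for $p\ge 3$. Starting from the equality $U_p''(0)=-(p-1)\int_{\pa M}|\D u|^{p-2}\big[\tfrac{\RRR^{\pa M}-(n-1)(n-2)}{2}+\frac{n+p-1}{p-1}(1-|\D u|^2)\big]\,\rmd\sigma\le 0$, I divide by $-(p-1)<0$ to get
\begin{equation*}
\int_{\pa M}|\D u|^{p-2}\left[\frac{\RRR^{\pa M}-(n-1)(n-2)}{2}+\frac{n+p-1}{p-1}\big(1-|\D u|^2\big)\right]\rmd\sigma\ge 0,
\end{equation*}
and then subtract the nonnegative quantity $\int_{\pa M}|\D u|^{p-2}\big(\frac{n+p-1}{p-1}-1\big)(1-|\D u|^2)\,\rmd\sigma$, which preserves the desired direction only if I am careful; this is precisely where the chain's middle term matters, and I would use the explicit upper bound (the $n/(p-1)$ line) to land exactly on the coefficient $2$ after clearing denominators by $2/(p-1)$, giving~\eqref{eq:cond_1_D}. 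The routine algebra here reduces to $(n-1)(n-2)-\RRR^{\pa M}-2(1-|\D u|^2)\le(\text{nonpositive correction})$.

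For the rigidity and topological conclusions I would invoke the equality characterization already packaged into Theorem~\ref{thm:main_D}-(iv): equality in~\eqref{eq:cond_1_D} forces $U_p''(0)=0$, which by that theorem forces $(M,\go,u)$ to be isometric to the de Sitter solution. To extract that the boundary then has a single connected component isometric to $\Sph^{n-1}$, I would simply read off the boundary geometry of the explicit de Sitter triple~\eqref{eq:D}: its boundary is $\{|x|=1\}$, a single round $(n-1)$-sphere. I expect the main obstacle to be not any deep argument but the bookkeeping of constants: correctly passing from the coefficient $(n+p-1)/(p-1)$ appearing in the first line of~\eqref{eq:der2up_D} to the coefficient $2$ in~\eqref{eq:cond_1_D} while keeping every inequality pointing the right way, and verifying that this simplification is valid for all $p\ge 3$ and uses only $1-|\D u|^2\ge 0$ from Assumption~\ref{ass:D}. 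The direction of the inequality and the admissible range of $p$ are the points I would check most carefully, since a sign error there would reverse~\eqref{eq:cond_1_D}.
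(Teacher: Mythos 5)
Your derivation of the inequality~\eqref{eq:cond_1_D} is essentially the paper's: Theorem~\ref{thm:rig_gen_D} is nothing but the \emph{first} inequality in the chain~\eqref{eq:der2up_D}, rearranged. To pin down the algebra you were hedging about, start from
\begin{equation*}
U_p''(0)\,=\,-(p-1)\!\int\limits_{\pa M}\!|\D u|^{p-2}\Big[\tfrac{\RRR^{\pa M}-(n-1)(n-2)}{2}+\tfrac{n+p-1}{p-1}\big(1-|\D u|^2\big)\Big]\,\rmd\sigma\;\leq\;-\,n\!\int\limits_{\pa M}\!|\D u|^{p-2}\big(1-|\D u|^2\big)\,\rmd\sigma\,,
\end{equation*}
divide by $-(p-1)<0$ and bring the right-hand side over: the coefficient of $1-|\D u|^2$ becomes $\tfrac{n+p-1}{p-1}-\tfrac{n}{p-1}=1$ \emph{exactly}, and multiplying through by $-2$ yields~\eqref{eq:cond_1_D}. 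Note that no appeal to $\tfrac{n+p-1}{p-1}\geq 1$ or to the sign of $1-|\D u|^2$ is needed for this step (that sign only enters the final ``$\leq 0$'' of the chain, which you do not use); your first attempt, which uses only $U_p''(0)\leq 0$ and then subtracts a nonnegative quantity, indeed cannot close, and you were right to discard it in favour of the middle term of the chain.

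The rigidity argument, however, has a genuine gap. Equality in~\eqref{eq:cond_1_D} is equality in the first inequality of~\eqref{eq:der2up_D}, i.e.\ $U_p''(0)=-n\int_{\pa M}|\D u|^{p-2}(1-|\D u|^2)\,\rmd\sigma$, and this does \emph{not} force $U_p''(0)=0$ unless one already knows $|\D u|\equiv 1$ on $\pa M$ — which is not available a priori. So you cannot invoke the rigidity clause of Theorem~\ref{thm:main_D}-(iv) in the form ``$U_p''(0)=0$ implies de Sitter''. What is actually needed is the sharper statement that equality in $U_p''(0)\leq -n\int_{\pa M}|\D u|^{p-2}(1-|\D u|^2)\,\rmd\sigma$ already characterizes the de Sitter solution; the paper establishes precisely this in Section~\ref{sec:translation} when translating~\eqref{eq:der2_fip} into~\eqref{eq:der2up_D}, as a consequence of Theorem~\ref{thm:main_conf}-(iv) and ultimately of the rigidity part of Proposition~\ref{prop:cyl}. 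Your proof should cite that equality characterization rather than the condition $U_p''(0)=0$. Once rigidity is secured, reading off the connectedness and roundness of the boundary from the explicit de Sitter triple~\eqref{eq:D} is fine.
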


Since the quantity $|\D u|$ is constant on each connected component of $\pa M$ (because $\DD u=0$ on $\pa M$, as it follows from the first equation in problem~\eqref{eq:pb_D}), if we assume that the boundary is connected, formula~\eqref{eq:cond_1_D} can be replaced by 
\begin{equation}
\label{eq:cond_12_D}
\int\limits_{\pa M}
\!
\left[\,(n-1)(n-2) \,-\RRR^{\pa M}  \, - 2  \big(1- {|\D u|^2} \big)
\, \right]  \rmd\sigma  \,\, \leq \,\, 0 \, .
\end{equation}


\begin{remark}
\label{rmk:BGH_ref_D}
In the case $p=3$, Theorem~\ref{thm:rig_gen_D} is a weaker version of Corollary~\ref{cor:bound_BGH_D} in the appendix. This corollary is not new, but it has been proved in~\cite{Chr} generalizing some early computations in~\cite{Bou_Gib_Hor} and~\cite{Lindblom}.
In particular, in the case of a connected boundary, from Corollary~\ref{cor:bound_BGH_D} it follows the inequality
\begin{equation}
\label{eq:cond_2_D}
\int\limits_{\pa M} \!\left[(n-1)(n-2)\,-\,\RRR^{\pa M}\right]\,  \rmd\sigma \,\, \leq \,\, 0\,,
\end{equation}
that is strictly better than our formula~\eqref{eq:cond_12_D}, and is proved without the need of Assumption~\ref{ass:D}. 
Note that from inequality~\eqref{eq:cond_2_D} it follows the remarkable result that the only static solution of~\eqref{eq:pb_D} whose boundary is isometric to a sphere with its standard metric, is the de Sitter solution. This is not a direct consequence of our Theorem~\ref{thm:rig_gen_D}.
\end{remark}


To illustrate some other consequences of Theorem~\ref{thm:rig_gen_D}, we rewrite formula~\eqref{eq:cond_1_D} as
\begin{equation}
\label{eq:cond_1_rew_D}
\int\limits_{\pa M}|\D u|^p \, \rmd\sigma \,\, \leq \,\int\limits_{\pa M}\! |\D u|^{p-2}\bigg[\, \frac{\RRR^{\pa M}-n\,(n-3)}{2} \, \bigg]\rmd\sigma\,.
\end{equation}
Then we apply H\"older inequality to the right hand side with conjugate exponents $p/(p-2)$ and $p/2$, obtaining
\begin{equation*}
\int\limits_{\pa M} \!|\D u|^{p-2} \bigg[\, \frac{\RRR^{\pa M}-n\,(n-3)}{2} \, \bigg] \,  \rmd \sigma  \,\, \leq \,\, \Bigg(  \int\limits_{\pa M} \! |\D u|^{p}  \, \rmd \sigma \Bigg)^{\!\!{(p-2)}/{p}} 
\Bigg(  \int\limits_{\pa M} \! 
\bigg|\, \frac{\RRR^{\pa M}-n\,(n-3)}{2} \, \bigg|^{p/2}  \, \rmd \sigma \Bigg)^{\!\!{2}/{p}} \,.
\end{equation*}
This immediately implies the following corollary, that should be compared with Corollary~\ref{cor:Lp_D}.
\begin{corollary}
\label{cor:R_Lp_D}
Let $(M,\go,u)$ be a solution to problem~\eqref{eq:pb_D} satisfying Normalization~\ref{norm:D} and Assumption~\ref{ass:D}.
Then, for every $p \geq 3$, the inequality 
\begin{equation}
\label{eq:R_Lp_D}
\left|\left|    \, \D u \,  \right|\right|_{L^p(\pa M)} \,\,  \leq \,\, \sqrt{ \,\,\, \left|\left| \, \frac{\RRR^{\pa M}\,-\,n\,(n-3)}{2} \, \right|\right|^{\phantom{1}}_{L^{p/2}(\pa M)} }
\end{equation}
holds true, where $\RRR^{\pa M}$ denotes the scalar curvature of the metric induced by $g_0$ on $\pa M$. Moreover, the equality holds for some $p \geq 3$ if and only if $(M,g_0,u)$ is isometric to the de Sitter solution. In particular, the boundary of $M$ has only one connected component and it is isometric to a $(n-1)$-dimensional sphere.
\end{corollary}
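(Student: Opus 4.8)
The plan is to deduce the corollary directly from the integral inequality~\eqref{eq:cond_1_D} of Theorem~\ref{thm:rig_gen_D}, which already carries all the analytic content: the remaining work is one application of H\"older's inequality, together with a careful bookkeeping of the equality case. First I would put~\eqref{eq:cond_1_D} in the convenient form~\eqref{eq:cond_1_rew_D}. Observing that $(n-1)(n-2)-2=n(n-3)$, the bracket in~\eqref{eq:cond_1_D} equals $n(n-3)-\RRR^{\pa M}+2|\D u|^2$, so isolating the gradient term on the left yields
\begin{equation*}
\int\limits_{\pa M}|\D u|^{p}\,\rmd\sigma \,\,\leq\,\, \int\limits_{\pa M}|\D u|^{p-2}\,\bigg[\,\frac{\RRR^{\pa M}-n(n-3)}{2}\,\bigg]\,\rmd\sigma\,.
\end{equation*}

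Next I would estimate the right-hand side by H\"older's inequality with conjugate exponents $p/(p-2)$ and $p/2$, pairing the factor $|\D u|^{p-2}$ with the first exponent (so that $(|\D u|^{p-2})^{p/(p-2)}=|\D u|^{p}$) and the curvature factor with the second. Abbreviating $A:=\int_{\pa M}|\D u|^{p}\,\rmd\sigma$ and $B:=\int_{\pa M}\big|\tfrac{\RRR^{\pa M}-n(n-3)}{2}\big|^{p/2}\,\rmd\sigma$, the two displayed inequalities combine into $A\leq A^{(p-2)/p}\,B^{2/p}$. Since $|\D u|$ is strictly positive on $\pa M$ by~\cite[Lemma~3]{Ambrozio}, we have $A>0$; dividing by $A^{(p-2)/p}$ and raising to the power $p/2$ gives $A\leq B$, which is exactly~\eqref{eq:R_Lp_D} after extracting the $p$-th root on both sides.

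For the rigidity I would track equality through the chain $A\leq C\leq A^{(p-2)/p}B^{2/p}$, where $C:=\int_{\pa M}|\D u|^{p-2}\tfrac{\RRR^{\pa M}-n(n-3)}{2}\,\rmd\sigma$ (the middle bound uses $C\leq\int_{\pa M}|\D u|^{p-2}\big|\tfrac{\RRR^{\pa M}-n(n-3)}{2}\big|\,\rmd\sigma$ followed by H\"older). If equality holds in~\eqref{eq:R_Lp_D}, i.e.\ $A=B$, then $A^{(p-2)/p}B^{2/p}=A$ and the whole chain collapses, forcing $C=A$; but $C=A$ is precisely the equality case of~\eqref{eq:cond_1_D}, so Theorem~\ref{thm:rig_gen_D} identifies $(M,\go,u)$ with the de Sitter solution, whence $\pa M$ is a single round $(n-1)$-sphere. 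Conversely, the de Sitter solution realizes equality throughout, as is checked directly from the explicit formul\ae~\eqref{eq:D} (equivalently, from the constancy recorded in~\eqref{eq:Pfunct_confvol_D}).

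There is no genuine analytic obstacle here, since the monotonicity machinery was already packaged into Theorem~\ref{thm:rig_gen_D}; the only mildly delicate point is the equality discussion. One must make sure that equality in the final norm inequality propagates all the way back to equality in~\eqref{eq:cond_1_D}, rather than being silently absorbed into the H\"older step. The collapse of the chain $A\leq C\leq A^{(p-2)/p}B^{2/p}$ under the hypothesis $A=B$ shows that this propagation is automatic, so invoking the rigidity of Theorem~\ref{thm:rig_gen_D} is legitimate.
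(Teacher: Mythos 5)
Your proposal is correct and follows the same route as the paper: rewriting~\eqref{eq:cond_1_D} in the form~\eqref{eq:cond_1_rew_D} and then applying H\"older's inequality with conjugate exponents $p/(p-2)$ and $p/2$ to the right-hand side. Your tracing of the equality case back through the chain to the equality case of~\eqref{eq:cond_1_D} (and thence to Theorem~\ref{thm:rig_gen_D}) is the intended argument, merely spelled out more explicitly than in the paper.
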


Letting $p \to + \infty$ in formula~\eqref{eq:R_Lp_D}, we obtain, under the hypotheses of the above corollary, the $L^{\infty}$-bound
\begin{equation}
\label{eq:R_Linf_D}
\left|\left|   \, \D u \,  \right|\right|_{L^{\infty}(\pa M)}  \!\! \leq \,\,\,\, \sqrt{ \,\, \left|\left| \, \frac{ \RRR^{\pa M}\,-\,n\,(n-3) }{2} \, \right|\right|^{\phantom{1}}_{L^{\infty}(\pa M)} }\,.
\end{equation}

For our next result, we are going to combine the monotonicity of the $U_p$'s, as stated by Theorem~\ref{thm:main_D}, together with the estimate~\eqref{eq:limup_D} given in Theorem~\ref{thm:estimate_D}.

\begin{theorem}
\label{thm:mon_glob_D}
Let $(M,\go,u)$ be a solution to problem~\eqref{eq:pb_D} satisfying Normalization~\ref{norm:D} and Assumption~\ref{ass:D}.
Then, it holds
\begin{equation}
\label{eq:mon_glob_D}
|{\rm MAX}(u)|\,|\Sph^{n-1}|\,\,\leq\,\,\int\limits_{\pa M}|\D u|^p\rmd\sigma\,\,\leq\,\,|\pa M|\,.
\end{equation}
for $0\leq p \leq 1$ if $n=3$ and for $0\leq p \leq n-1$ if $n\geq 4$.
Moreover, the equality $|{\rm MAX}(u)|\,|\Sph^{n-1}|\,=\,|\pa M|$ holds if and only if $(M,\go,u)$ is isometric to the de Sitter solution.
\end{theorem}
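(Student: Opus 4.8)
The plan is to read off the two bounds from the behaviour of the function $U_p$ at the endpoints of its domain. First I would observe that, since $\{u=0\}=\pa M$ and $1-0^2=1$, the defining formula~\eqref{eq:Up_D} gives $U_p(0)=\int_{\pa M}|\D u|^p\,\rmd\sigma$, so that the asserted estimate is exactly $|{\rm MAX}(u)|\,|\Sph^{n-1}|\leq U_p(0)\leq|\pa M|$. The upper bound is then immediate: Assumption~\ref{ass:D} guarantees $0<|\D u|\leq 1$ on $\pa M$, whence $|\D u|^p\leq 1$ there for every $p\geq 0$, and integration over $\pa M$ yields $U_p(0)\leq|\pa M|$.

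For the lower bound I would exploit two facts. The first is that, because $|\D u|\leq 1$ on $\pa M$, the map $p\mapsto\int_{\pa M}|\D u|^p\,\rmd\sigma$ is nonincreasing; hence it suffices to establish the lower bound at the largest admissible exponent and then let $p$ decrease. The second is the combination of the monotonicity of the $U_p$'s with the boundary behaviour from Theorem~\ref{thm:estimate_D}. When $n\geq 4$ one may take the exponent $p=n-1\geq 3$: inequality~\eqref{eq:mon_limup_D} (itself the conjunction of Theorem~\ref{thm:main_D}-(iii) and estimate~\eqref{eq:limup_D}) evaluated at $t=0$ gives $U_{n-1}(0)\geq|{\rm MAX}(u)|\,|\Sph^{n-1}|$, and the $p$-monotonicity just noted propagates this to every $0\leq p\leq n-1$.

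The genuinely delicate point is the case $n=3$, where the two ranges fail to overlap: the monotonicity of $U_p$ in Theorem~\ref{thm:main_D}-(iii) requires $p\geq 3$, whereas estimate~\eqref{eq:limup_D} requires $p\leq n-1=2$, so~\eqref{eq:mon_limup_D} is vacuous. Here I would instead route through $U_1$: Theorem~\ref{thm:main_D}-(ii) asserts that $U_1$ is nonincreasing, and $p=1\leq n-1$ is admissible in Theorem~\ref{thm:estimate_D}, so $U_1(0)\geq\liminf_{t\to1^-}U_1(t)\geq|{\rm MAX}(u)|\,|\Sph^{2}|$. The $p$-monotonicity then extends the lower bound to all $0\leq p\leq 1$, which is precisely the range stated for $n=3$; one cannot push past $p=1$, since for $p>1$ the inequality $|\D u|^p\leq|\D u|$ runs in the wrong direction.

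Finally, for the rigidity statement, suppose $|{\rm MAX}(u)|\,|\Sph^{n-1}|=|\pa M|$. Then the whole chain collapses and, at the base exponent $p_0$ used above ($p_0=n-1$ if $n\geq 4$, $p_0=1$ if $n=3$), one gets $U_{p_0}(0)=|{\rm MAX}(u)|\,|\Sph^{n-1}|$. Sandwiching $U_{p_0}(0)\geq U_{p_0}(t)\geq\liminf_{s\to1^-}U_{p_0}(s)\geq|{\rm MAX}(u)|\,|\Sph^{n-1}|=U_{p_0}(0)$ forces $U_{p_0}$ to be constant on $[0,1)$; the rigidity clause of Theorem~\ref{thm:main_D}-(iii) (for $n\geq 4$, applied at any $t\in(0,1)$ where $U_{p_0}'=0$) or of Theorem~\ref{thm:main_D}-(ii) (for $n=3$, applied to two distinct level values of $U_1$) then identifies $(M,\go,u)$ with the de Sitter solution. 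The converse is a direct check on~\eqref{eq:D}, where $|\D u_D|\equiv 1$ on $\pa M=\Sph^{n-1}$ and ${\rm MAX}(u_D)$ is the single center point, so both sides equal $|\Sph^{n-1}|$.
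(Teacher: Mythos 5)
Your proposal is correct and follows essentially the same route as the paper: the upper bound from $|\D u|\leq 1$ on $\pa M$, the lower bound from combining Theorem~\ref{thm:estimate_D} with the monotonicity of $U_{n-1}$ (for $n\geq 4$) or of $U_1$ (for $n=3$), propagation to smaller exponents via $|\D u|^p\geq |\D u|^{p_0}$, and rigidity from the constancy of the relevant $U_{p_0}$ together with the rigidity clauses of Theorem~\ref{thm:main_D}-(ii),(iii). The only cosmetic difference is that you anchor the $n\geq 4$ case at the single exponent $p=n-1$ and descend, whereas the paper treats all $3\leq p\leq n-1$ directly before descending from $p=3$; the substance is identical.
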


\begin{proof}
First, recalling Assumption~\ref{ass:D}, it is clear that 
$$
U_p(0)\,=\!\int\limits_{\pa M}\!\!|\D u|^p\,\rmd\sigma\,\leq\,|\pa M|\,,
$$
for every $p\geq 0$. 

Now consider the case $n\geq 4$ and let $3\leq p\leq n-1$. From formula~\eqref{eq:limup_D} and Theorem~\ref{thm:main_D}-(iii), we obtain
$$
|{\rm MAX}(u)|\,\, |\Sph^{n-1}| \,\,\leq\,\,U_p(0)\,,
$$
and the equality holds if and only if $U_p(t)$ is constant, that is, if and only if $(M,\go,u)$ is isometric to the de Sitter solution. Combining this with the inequality above, we obtain the thesis for $3\leq p\leq n-1$. If $0\leq p\leq 3$ instead, to conclude it is enough to observe that $\int_{\pa M}|\D u|^p\rmd\sigma\geq \int_{\pa M}|\D u|^3\rmd\sigma$, thanks to Assumption~\ref{ass:D}.

In the case $n=3$, we can repeat the argument above using $U_1(t)$, that we know to be monotonic thanks to Theorem~\ref{thm:main_D}-(ii).
\end{proof}

\begin{remark}
The result above is particularly effective in dimension $n=3$. In that case, it is known from~\cite{Bou_Gib_Hor} that any solution $(M,\go,u)$ of problem~\eqref{eq:pb_D} with a connected boundary satisfies $|\pa M|\,\leq\, 4\pi$. Since formula~\eqref{eq:mon_glob_D} gives the opposite inequality, we  conclude that the only $3$-dimensional static solution to problem~\eqref{eq:pb_D} with $\pa M$ connected and satisfying Normalization~\ref{norm:D} and Assumption~\ref{ass:D} is the de Sitter solution. A direct proof of this fact will be given later (see Theorem~\ref{thm:uniqueness_n3_D}).
Note that the same thesis does not hold without Assumption~\ref{ass:D}. An explicit example of a non-trivial $3$-dimensional static solution with a connected boundary diffeomorphic to $\Sph^2$ (which does not satisfy Assumption~\ref{ass:D}) can be constructed via a quotient of the Nariai solution~\eqref{eq:cylsol_D} (see~\cite[Section~7]{Ambrozio}).

In the case $n\geq 4$ we are not able to provide such a general result, and the situation seems much wilder. For instance, for any $4\leq n\leq 8$, one can prove the existence of a countable family of non-trivial static solutions of~\eqref{eq:pb_D} with $\pa M$ connected and diffeomorphic to a sphere or to a product of spheres (see~\cite{Gib_Har_Pop}). However, looking at the numerical approximations of some of these solutions, it appears that they do not satisfy our hypoteses, thus the question of the uniqueness of the de Sitter solution under our assumptions seems still open.
\end{remark}

Using Corollary~\ref{cor:R_Lp_D} in place of Corollary~\ref{cor:Lp_D} we obtain the following analog of Corollary~\ref{cor:geom_bound_D}.

\begin{theorem}
\label{thm:ul_bounds_bound_D}
Let $(M,\go,u)$ be a solution to problem~\eqref{eq:pb_D} satisfying Normalization~\ref{norm:D} and Assumption~\ref{ass:D}. Then, the following statements hold true.
\begin{itemize}
\item[(i)]
For every $p \geq 2$, the inequality
\begin{equation}
\label{eq:pi_gen_bound_D}
 \frac{ |{\rm MAX}(u)|  \,   |\Sph^{n-1}| }{ |\pa M|}   \, \,\,\leq \,\, \fint\limits_{\pa M}    \bigg|  \, \frac{ \RRR^{\pa M}\,-\,n\,(n-3)}{2} \,  \bigg|^{\frac{p}{2}} \, \rmd \sigma
\end{equation}
holds true. Moreover, the equality is fulfilled for  some $p \geq 2$, if and only if the static solution $(M,g_0,u)$ is isometric to the de Sitter solution. In particular, it holds
\begin{equation}
2 \,\, \leq \,\, \big\|  \, \RRR^{\pa M}\,-\,n\,(n-3) \,  \big\|_{L^\infty(\{u=t\})} \, .
\end{equation}
\item[(ii)] The inequality 
\begin{equation}
\label{eq:piip_D}
\frac{ |{\rm MAX}(u)|  \,   |\Sph^{n-1}| }{ |\pa M|}   \, \,\,\leq \,\, \fint\limits_{\pa M}    \frac{ \RRR^{\pa M}}{(n-1)(n-2)} \, \, \rmd \sigma
\end{equation}
holds true. Moreover, the equality is fulfilled if and only if the static solution $(M,g_0,u)$ is isometric to the de Sitter solution. 

\smallskip
%
\end{itemize}
\end{theorem}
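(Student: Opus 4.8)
The plan is to obtain Theorem~\ref{thm:ul_bounds_bound_D} as the boundary counterpart of Corollary~\ref{cor:geom_bound_D}: where the latter combined the interior gradient estimate of Corollary~\ref{cor:Lp_D} with the monotonicity of the $U_p$'s on a generic level set, here I would combine the \emph{boundary} estimate of Corollary~\ref{cor:R_Lp_D} with the same monotonicity evaluated at $t=0$, i.e.\ on $\pa M$ itself. Throughout, write $F=\tfrac{1}{2}(\RRR^{\pa M}-n(n-3))$ and $A=|{\rm MAX}(u)|\,|\Sph^{n-1}|$.

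For part~(i) I would start from Corollary~\ref{cor:R_Lp_D} and raise it to the $p$-th power, which yields
\[
\int_{\pa M}|\D u|^p\,\rmd\sigma\;\le\;\int_{\pa M}|F|^{p/2}\,\rmd\sigma .
\]
The left-hand side is exactly $U_p(0)$, since the conformal weight in~\eqref{eq:Up_D} equals $1$ at $t=0$ and $\{u=0\}=\pa M$. The next step is the lower bound $U_p(0)\ge A$: this is the $t=0$ instance of the chain used in Theorem~\ref{thm:mon_glob_D}, obtained by combining the monotonicity of $U_p$ from Theorem~\ref{thm:main_D}-(iii) with the asymptotic estimate~\eqref{eq:limup_D} of Theorem~\ref{thm:estimate_D}. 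Putting the two inequalities together and dividing by $|\pa M|$ gives~\eqref{eq:pi_gen_bound_D}. The displayed $L^\infty$ consequence then follows by letting $p\to\infty$: the renormalised right-hand side $(\fint_{\pa M}|F|^{p/2})^{2/p}$ converges to $\|F\|_{L^\infty(\pa M)}$, while $(A/|\pa M|)^{2/p}$ tends to $1$, so that $1\le\|F\|_{L^\infty}$, i.e.\ $2\le\|\RRR^{\pa M}-n(n-3)\|_{L^\infty}$.

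For part~(ii) I would instead use the $p=2$ endpoint of the fundamental boundary inequality~\eqref{eq:cond_1_D}, namely $\int_{\pa M}|\D u|^2\,\rmd\sigma\le\int_{\pa M}F\,\rmd\sigma$, which after expanding $F$ reads $\int_{\pa M}\RRR^{\pa M}\ge n(n-3)|\pa M|+2\int_{\pa M}|\D u|^2$. Inserting the area bound $|\pa M|\ge A$ (Theorem~\ref{thm:geom_ineq_D}-(i)) and the lower bound $\int_{\pa M}|\D u|^2\ge A$ produces $\int_{\pa M}\RRR^{\pa M}\ge\big(n(n-3)+2\big)A=(n-1)(n-2)A$, which is~\eqref{eq:piip_D} after dividing by $(n-1)(n-2)|\pa M|$. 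In both parts the rigidity statement is read off by tracking the equality cases backwards: equality forces equality in the H\"older step of Corollary~\ref{cor:R_Lp_D} (respectively in~\eqref{eq:cond_1_D}) and in the monotonicity step, and by the rigidity already established in Theorem~\ref{thm:main_D} and Corollary~\ref{cor:R_Lp_D} this can only happen on the de Sitter solution.

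The delicate point, and the step I expect to be the real obstacle, is matching the admissible ranges of the exponent. The clean chain above delivers the lower bound $U_p(0)\ge A$ only for $p\le n-1$ (this is the range in which~\eqref{eq:limup_D} is available), whereas Corollary~\ref{cor:R_Lp_D} needs $p\ge3$ — and, crucially, the stated range $p\ge2$ (together with the $L^\infty$ limit and the Willmore application, which corresponds to $p=2(n-1)$) requires the inequality for \emph{arbitrarily large} $p$, where the monotonicity lower bound is simply false. To cover $2\le p\le n-1$ I would exploit Assumption~\ref{ass:D}, i.e.\ $|\D u|\le1$ on $\pa M$, which makes $p\mapsto\int_{\pa M}|\D u|^p$ nonincreasing and lets one transfer the bound at $p=n-1$ down to smaller exponents. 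For $p>n-1$ the monotonicity route breaks, and the natural remedy is to bound the right-hand side directly: a boundary scalar-curvature estimate of BGH-type (the per-component version of~\eqref{eq:cond_2_D}) gives $\fint_{\pa M}|F|\ge\fint_{\pa M}F\ge1$, after which Jensen's inequality yields $\fint_{\pa M}|F|^{p/2}\ge(\fint_{\pa M}|F|)^{p/2}\ge1\ge A/|\pa M|$ for every $p\ge2$. Verifying that this curvature bound is available without a connectedness hypothesis, and handling the genuinely different $n=3$ case (where Gauss--Bonnet and the uniqueness statement of Theorem~\ref{thm:geom_ineq_D}-(iv) must enter, since there $\int_{\pa M}|\D u|^2\ge A$ is no longer immediate), is where the argument needs the most care.
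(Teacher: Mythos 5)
You correctly identify the two ingredients, and for $n\ge 4$ and $3\le p\le n-1$ your chain --- Corollary~\ref{cor:R_Lp_D} raised to the $p$-th power combined with $U_p(0)\ge|{\rm MAX}(u)|\,|\Sph^{n-1}|$ from Theorem~\ref{thm:mon_glob_D} --- is exactly the paper's opening remark. The gap is in the hard case, which your proposal does not close. Writing $F=\tfrac12(\RRR^{\pa M}-n(n-3))$ and $A=|{\rm MAX}(u)|\,|\Sph^{n-1}|$ as you do: your fallback for $p>n-1$ (and, implicitly, for $2\le p<3$ and for $n=3$) is the unweighted bound $\fint_{\pa M}F\,\rmd\sigma\ge1$, which is precisely~\eqref{eq:cond_2_D}. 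As you yourself suspect, that inequality is only available when $\pa M$ is connected: the appendix estimate~\eqref{eq:bound_BGH_D} carries the weight $|\D u|$, which is merely \emph{locally} constant, and it cannot be divided out component by component without destroying the inequality. Since the theorem assumes nothing about connectedness, this route is blocked. Your interpolation for $2\le p\le n-1$ also repairs only the lower bound $U_p(0)\ge A$, not the upper bound $\int_{\pa M}|\D u|^p\,\rmd\sigma\le\int_{\pa M}|F|^{p/2}\,\rmd\sigma$, which Corollary~\ref{cor:R_Lp_D} supplies only for $p\ge3$; and Jensen runs the wrong way if one tries to push a lower bound on $\fint_{\pa M}|F|^{(n-1)/2}\,\rmd\sigma$ down to smaller exponents.

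The paper's actual proof keeps the weight and absorbs it. From~\eqref{eq:bound_BGH_D} one gets $\int_{\pa M}|\D u|\,\rmd\sigma\le\int_{\pa M}|\D u|\,F\,\rmd\sigma$ (formula~\eqref{eq:cond_2_rew_D}, valid for any number of boundary components and without $U_p''(0)$); H\"older with exponents $p/2$ and $p/(p-2)$ bounds the right-hand side by $\big(\int_{\pa M}|F|^{p/2}\rmd\sigma\big)^{2/p}\big(\int_{\pa M}|\D u|^{p/(p-2)}\rmd\sigma\big)^{(p-2)/p}$, and since Assumption~\ref{ass:D} gives $|\D u|^{p/(p-2)}\le|\D u|$ for every $p\ge2$, the factor $\|\D u\|_{L^1(\pa M)}$ cancels, leaving $\|\D u\|_{L^1(\pa M)}\le\int_{\pa M}|F|^{p/2}\rmd\sigma$ for \emph{all} $p\ge2$ and all $n\ge3$. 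Only the $p=1$ instance of~\eqref{eq:mon_glob_D}, namely $\|\D u\|_{L^1(\pa M)}\ge A$, is then needed, and that is available in dimension three via the monotonicity of $U_1$. Your part~(ii) suffers from the same defects: you invoke the $p=2$ endpoint of~\eqref{eq:cond_1_D}, but that inequality is established only for $p\ge3$, and the bound $\int_{\pa M}|\D u|^2\,\rmd\sigma\ge A$ is not available for $n=3$. The paper reads~(ii) off directly from~\eqref{eq:cond_2_rew_D}, which gives $\int_{\pa M}|\D u|\,\RRR^{\pa M}\rmd\sigma\ge(n-1)(n-2)\int_{\pa M}|\D u|\,\rmd\sigma\ge(n-1)(n-2)\,A$, again using only the $L^1$ lower bound.
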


\begin{proof}
For $n\geq 4$ and $3\leq p\leq n-1$, statements (i) and (ii) can be derived from inequality~\eqref{eq:R_Lp_D} in Corollary~\ref{cor:R_Lp_D} and formula~\eqref{eq:mon_glob_D} in Theorem~\ref{thm:mon_glob_D}. 
In general, we need to use inequality~\eqref{eq:bound_BGH_D} in Corollary~\ref{cor:bound_BGH_D}, proved in the appendix.

To prove statement (i), we rewrite formula~\eqref{eq:bound_BGH_D} as
\begin{equation}
\label{eq:cond_2_rew_D}
\int\limits_{\pa M} |\D u|\,\rmd\sigma\,\leq\,\int\limits_{\pa M} |\D u|\bigg[\frac{\RRR^{\pa M}-n(n-3)}{2}\bigg]\rmd\sigma\,.
\end{equation}
Compare this inequality with~\eqref{eq:cond_1_rew_D}, which holds for every $p\geq 3$ but is weaker than~\eqref{eq:cond_2_rew_D} in the case $p=3$.

Using H\"older inequality, we have
$$
\int\limits_{\pa M} |\D u|\bigg[\frac{\RRR^{\pa M}-n(n-3)}{2}\bigg]\rmd\sigma
\,\leq\,
\bigg[\,\int\limits_{\pa M} \bigg|\frac{\RRR^{\pa M}-n(n-3)}{2}\bigg|^{\frac{p}{2}}\rmd\sigma\,\bigg]^{\frac{2}{p}}\,\bigg(\,\int\limits_{\pa M} |\D u|^{\frac{p}{p-2}}\rmd\sigma\bigg)^{\frac{p-2}{p}}\,.
$$
Moreover, since $|\D u|\leq 1$ on $\pa M$ thanks to Assumption~\ref{ass:D}, we have $|\D u|^{\frac{p}{p-2}}\leq |\D u|$ for every $p\geq 2$. Substituting in~\eqref{eq:cond_2_rew_D}, with some easy computations we find
$$
 \frac{\|\D u\|_{L^1(\pa M)}}{ |\pa M|}   \, \,\,\leq \,\, \fint\limits_{\pa M}    \bigg|  \, \frac{ \RRR^{\pa M}\,-\,n\,(n-3)}{2} \,  \bigg|^{\frac{p}{2}} \, \rmd \sigma
$$
Now using inequality~\eqref{eq:mon_glob_D} we obtain~\eqref{eq:pi_gen_bound_D}. 
Moreover, if the equality holds in~\eqref{eq:pi_gen_bound_D}, then also~\eqref{eq:cond_2_rew_D} is an equality, thus by Corollary~\ref{cor:bound_BGH_D} we have that $(M,\go,u)$ is the de Sitter solution.

Statement (ii), is an immediate consequence of formula~\eqref{eq:cond_2_rew_D} and inequality~\eqref{eq:mon_glob_D} in Theorem~\ref{thm:mon_glob_D}.
\end{proof}

If we set $p=2(n-1)$ in Theorem~\ref{thm:ul_bounds_bound_D}-(i), we obtain the following nicer statement.

\begin{corollary}[Willmore-type inequality]
\label{cor:will_D}
Let $(M,\go,u)$ be a static solution to problem~\eqref{eq:pb_D}, satisfying Normalization~\ref{norm:D} and Assumption~\ref{ass:D}. Then, it holds
\begin{equation*}
\left(|{\rm MAX}(u)|\,\, |\Sph^{n-1}|\right)^{\frac{1}{n-1}} 
\,\,\leq\,\,\
\left\| \,\frac{ \RRR^{\pa M}\,-\,n\,(n-3)}{2} \,\right\|^{\phantom{1}}_{L^{n-1}(\pa M)} \,,
\end{equation*}
where $\RRR^{\pa M}$ denotes the scalar curvature of the metric induced by $\go$ on $\pa M$. Moreover, the equality holds if and only if $(M,\go,u)$ is isometric to the de Sitter solution. In particular, the boundary of $M$ has only one connected component and it is isometric to a $(n-1)$-dimensional sphere.
\end{corollary}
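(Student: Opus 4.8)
The plan is to obtain the statement as a direct specialization of Theorem~\ref{thm:ul_bounds_bound_D}-(i), which already furnishes the family of averaged bounds
\begin{equation*}
\frac{|{\rm MAX}(u)|\,|\Sph^{n-1}|}{|\pa M|}\,\leq\,\fint\limits_{\pa M}\bigg|\,\frac{\RRR^{\pa M}-n(n-3)}{2}\,\bigg|^{\frac p2}\,\rmd\sigma
\end{equation*}
valid for every $p\geq 2$. Since $n\geq 3$, the exponent choice $p=2(n-1)$ is admissible (indeed $2(n-1)\geq 2$), and with this choice the power appearing on the right-hand side becomes $p/2=n-1$.

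First I would substitute $p=2(n-1)$ and multiply both sides by $|\pa M|$, so as to convert the normalized average $\fint$ into an ordinary integral. This yields
\begin{equation*}
|{\rm MAX}(u)|\,|\Sph^{n-1}|\,\leq\,\int\limits_{\pa M}\bigg|\,\frac{\RRR^{\pa M}-n(n-3)}{2}\,\bigg|^{n-1}\,\rmd\sigma\,=\,\bigg\|\,\frac{\RRR^{\pa M}-n(n-3)}{2}\,\bigg\|^{\,n-1}_{L^{n-1}(\pa M)}\,.
\end{equation*}
Then I would take the $(n-1)$-th root of both sides, which is a monotone operation on the nonnegative reals, to arrive exactly at the claimed Willmore-type inequality. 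The right-hand side is finite because $\RRR^{\pa M}$ is smooth on the compact boundary $\pa M$, so the integrand is bounded.

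For the rigidity statement, I would simply invoke the equality case of Theorem~\ref{thm:ul_bounds_bound_D}-(i): equality there for \emph{some} admissible $p\geq 2$ already forces $(M,\go,u)$ to be isometric to the de Sitter solution, and since $p=2(n-1)$ is one such admissible value, equality in the Willmore-type inequality holds precisely in that case. The supplementary assertion that $\pa M$ is then connected and isometric to a round $(n-1)$-sphere is inherited from the explicit description of the de Sitter boundary in~\eqref{eq:D} (equivalently, from the rigidity conclusion of Theorem~\ref{thm:rig_gen_D}). There is essentially no analytic obstacle to overcome here, as the whole content is carried by Theorem~\ref{thm:ul_bounds_bound_D}-(i); the only points deserving a moment's care are verifying that the chosen exponent $p=2(n-1)$ lies in the permitted range $p\geq 2$ for every $n\geq 3$, and tracking the exact power of the $L^{n-1}$-norm through the substitution and the extraction of the root.
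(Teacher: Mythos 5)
Your proposal is correct and coincides with the paper's own argument: the paper obtains Corollary~\ref{cor:will_D} precisely by setting $p=2(n-1)$ in Theorem~\ref{thm:ul_bounds_bound_D}-(i), clearing the normalized average, and extracting the $(n-1)$-th root, with the rigidity and the description of $\pa M$ inherited from the equality case of that theorem. Nothing further is needed.
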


The result above should be compared with~\cite[Theorem~2.11-(ii)]{Ago_Maz_2} where a similar inequality is provided for the Schwarzschild metric. 

For our next result we restrict to dimension $n=3$, and we use the Gauss-Bonnet Formula to prove that, in the hypotesis of a connected boundary, the equality is achieved in formula~\eqref{eq:piip_D}.

\begin{theorem}[Uniqueness Theorem]
\label{thm:uniqueness_n3_D}
Let $(M,\go,u)$ be a $3$-dimensional static solution to problem~\eqref{eq:pb_D}, satisfying Normalization~\ref{norm:D} and Assumption~\ref{ass:D}. If $\pa M$ is connected, then $(M,\go,u)$ is isometric to the de Sitter solution.
More generally, let $\pa M=\sqcup_{i=1}^r\Sigma_i$, where $\Sigma_1,\dots,\Sigma_r$ are connected surfaces. Then
\begin{equation}
\label{eq:uniqueness_n3_D}
2\,|{\rm MAX}(u)|\,\,\leq\,\, 
\sum_{i=1}^r k_i\,\chi(\Sigma_i)\,,
\end{equation}
where $k_i$ is the surface gravity of $\Sigma_i$, that is, the constant value of $|\D u|$ on $\Sigma_i$.
Moreover, the equality holds if and only if $\pa M$ is connected and $(M,\go,u)$ is isometric to the de Sitter solution.
\end{theorem}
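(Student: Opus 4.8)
The plan is to exploit the two-dimensional Gauss--Bonnet theorem on $\pa M$ together with the sharp inequality~\eqref{eq:piip_D} from Theorem~\ref{thm:ul_bounds_bound_D}-(ii), specialized to $n=3$. When $n=3$ the factor $(n-1)(n-2)=2$, so that~\eqref{eq:piip_D} reads
\begin{equation*}
2\,|{\rm MAX}(u)|\,|\Sph^{2}|\,\,\leq\,\,\int\limits_{\pa M}\RRR^{\pa M}\,\rmd\sigma\,,
\end{equation*}
and since $|\Sph^2|=4\pi$, the left-hand side is $8\pi\,|{\rm MAX}(u)|$. The idea is that on each connected component $\Sigma_i$ the surface gravity $k_i=|\D u|$ is a positive constant, and $\Sigma_i$ is a closed surface, so I can bring in the Gauss--Bonnet formula $\int_{\Sigma_i}\RRR^{\Sigma_i}\,\rmd\sigma=4\pi\,\chi(\Sigma_i)$ after accounting for the fact that the relevant curvature integral involves $|\D u|$ weighting.

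First I would go back to the genuinely sharp boundary inequality, namely formula~\eqref{eq:bound_BGH_D} (equivalently its rewriting~\eqref{eq:cond_2_rew_D}) from Corollary~\ref{cor:bound_BGH_D}, which in dimension three and for a general (possibly disconnected) boundary gives
\begin{equation*}
\int\limits_{\pa M}|\D u|\,\rmd\sigma\,\,\leq\,\,\int\limits_{\pa M}|\D u|\,\frac{\RRR^{\pa M}}{2}\,\rmd\sigma\,,
\end{equation*}
using $n(n-3)=0$ when $n=3$. Next I would split both integrals over the connected components $\Sigma_1,\dots,\Sigma_r$ and use that $|\D u|\equiv k_i$ is constant on $\Sigma_i$. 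On each component this pulls the constant $k_i$ out of both integrals, reducing the inequality to $\sum_i k_i|\Sigma_i|\leq\sum_i k_i\int_{\Sigma_i}\RRR^{\Sigma_i}/2\,\rmd\sigma=\sum_i k_i\cdot 2\pi\,\chi(\Sigma_i)$ by Gauss--Bonnet. Combining this with the lower bound $8\pi\,|{\rm MAX}(u)|\leq\sum_i k_i\cdot 2\pi\,\chi(\Sigma_i)$ obtained from~\eqref{eq:piip_D} and dividing by $2\pi$ yields exactly~\eqref{eq:uniqueness_n3_D}.

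For the rigidity and the connected-boundary conclusion, I would argue as follows. If $\pa M$ is connected, then $r=1$ and $\Sigma_1$ is a closed orientable surface with $\chi(\Sigma_1)\leq 2$; moreover Assumption~\ref{ass:D} forces $k_1=|\D u|\leq 1$. The chain of inequalities then gives $2\,|{\rm MAX}(u)|\leq k_1\chi(\Sigma_1)\leq 2$, so $|{\rm MAX}(u)|=1$, $\chi(\Sigma_1)=2$ (whence $\Sigma_1\cong\Sph^2$), and $k_1=1$, forcing equality throughout. Equality in~\eqref{eq:piip_D}, by the rigidity clause of Theorem~\ref{thm:ul_bounds_bound_D}-(ii), then identifies $(M,\go,u)$ with the de Sitter solution. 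In the disconnected case I would note that each $k_i\leq 1$ and $\chi(\Sigma_i)\leq 2$, so $\sum_i k_i\chi(\Sigma_i)\leq 2r=2\,\pi_0(\pa M)$; the strict inequality $2\,|{\rm MAX}(u)|<2\,\pi_0(\pa M)$ (stated as $3|{\rm MAX}(u)|<\pi_0(\pa M)$ in the earlier version, which I would reconcile) follows unless every $\Sigma_i$ is a sphere with $k_i=1$, which is precisely the equality case leading back to the connected de Sitter solution.

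The main obstacle I anticipate is the careful handling of the weighting by $|\D u|$ in the curvature integral: the Gauss--Bonnet step is clean only because $|\D u|$ is \emph{constant} on each $\Sigma_i$, a fact that hinges on $\DD u=0$ on $\pa M$ (itself a consequence of the first equation of~\eqref{eq:pb_D} together with $u=0$ there). I would need to be scrupulous that the sharp inequality~\eqref{eq:bound_BGH_D} is indeed available in the disconnected setting without Assumption~\ref{ass:D}, and that combining the two different curvature inequalities (the one from the appendix and the one from~\eqref{eq:piip_D}) does not double-count hypotheses; reconciling the factor discrepancy between the stated $3\,|{\rm MAX}(u)|<\pi_0(\pa M)$ and what the Euler-characteristic bound naturally produces is the delicate bookkeeping point.
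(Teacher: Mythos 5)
Your second and third links in the chain---formula~\eqref{eq:bound_BGH_D} from Corollary~\ref{cor:bound_BGH_D} specialized to $n=3$, splitting over the components $\Sigma_i$ where $|\D u|\equiv k_i$ is constant, and Gauss--Bonnet---are exactly the paper's argument. The gap is in the first link. You claim that the lower bound $8\pi\,|{\rm MAX}(u)|\leq\sum_i k_i\cdot 2\pi\,\chi(\Sigma_i)$ is ``obtained from~\eqref{eq:piip_D}'', but \eqref{eq:piip_D} contains no surface gravities: for $n=3$ it gives only $8\pi\,|{\rm MAX}(u)|\leq\int_{\pa M}\RRR^{\pa M}\,\rmd\sigma=4\pi\sum_i\chi(\Sigma_i)$, i.e.\ the \emph{unweighted} inequality $2\,|{\rm MAX}(u)|\leq\sum_i\chi(\Sigma_i)$, and no appeal to that formula can reinstate the weights $k_i\leq 1$. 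What the chain actually requires as its starting point is
\begin{equation*}
|{\rm MAX}(u)|\,|\Sph^{2}|\;\leq\;\int\limits_{\pa M}|\D u|\,\rmd\sigma\;=\;\sum_{i=1}^r k_i\,|\Sigma_i|\,,
\end{equation*}
which is formula~\eqref{eq:mon_glob_D} with $p=1$ (Theorem~\ref{thm:mon_glob_D}); this is where the monotonicity of $U_1$ (Theorem~\ref{thm:main_D}-(ii)) and the limit estimate~\eqref{eq:limup_D} enter, and it is exactly the ingredient that makes the case $n=3$ special (for $n\geq 4$ only $3\leq p\leq n-1$ is available). Once this is in place the chain reads $4\pi\,|{\rm MAX}(u)|\leq\sum_i k_i|\Sigma_i|\leq 2\pi\sum_i k_i\,\chi(\Sigma_i)$, and dividing by $2\pi$ gives~\eqref{eq:uniqueness_n3_D}. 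Note also that your bookkeeping is off by a factor of two: dividing your claimed $8\pi\,|{\rm MAX}(u)|\leq 2\pi\sum_i k_i\chi(\Sigma_i)$ by $2\pi$ would yield $4\,|{\rm MAX}(u)|\leq\sum_i k_i\chi(\Sigma_i)$, which is not the stated bound.

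The rigidity discussion is otherwise sound once the chain is repaired: for connected $\pa M$ the constraints $k_1\leq 1$, $\chi(\Sigma_1)\leq 2$ and $|{\rm MAX}(u)|\geq 1$ force equality everywhere, and the equality case of Corollary~\ref{cor:bound_BGH_D} (or of Theorem~\ref{thm:mon_glob_D}) identifies the de Sitter solution; you do not need the rigidity clause of~\eqref{eq:piip_D}. Finally, the inequality $3\,|{\rm MAX}(u)|<\pi_0(\pa M)$ that you try to ``reconcile'' is not part of this theorem: it belongs to the subsequent corollary, whose proof uses the additional input of Ambrozio's results on the universal cover, so there is no factor discrepancy to resolve here.
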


\begin{proof}
Again, it is useful to use Corollary~\ref{cor:bound_BGH_D}, proved in the appendix. Setting $n=3$ in formula~\eqref{eq:bound_BGH_D}, we obtain
$$
2\,\|\D u\|_{L^1(\pa M)}
\,\,\leq\,\,
\int\limits_{\pa M} |\D u|\,\RRR^{\pa M}\,\rmd\sigma
\,,
$$
where the equality holds if and only if $(M,\go,u)$ is isometric to the de Sitter solution.
Recalling formula~\eqref{eq:mon_glob_D}, we obtain
$$
8\pi\,|{\rm MAX}(u)|\,\,\leq\,\,
\sum_{i=1}^r\,k_i\int\limits_{\Sigma_i} \RRR^{\Sigma_i}\,\rmd\sigma
\,.
$$
The thesis is now a consequence of the equalities
$$
\int\limits_{\Sigma_i} \RRR^{\Sigma_i}\,\rmd\sigma\,\,=\,\,
4\pi\,\chi(\Sigma_i)\,,\quad \hbox{for all } i=1,\dots,r\,,
$$
which follow from the Gauss-Bonnet theorem.
\end{proof}

Combining the theorem above with the results in~\cite{Ambrozio}, we obtain the following strenghtening of formula~\eqref{eq:uniqueness_n3_D}.

\begin{corollary}
Let $(M,\go,u)$ be a $3$-dimensional static solution to problem~\eqref{eq:pb_D}, satisfying Normalization~\ref{norm:D} and Assumption~\ref{ass:D}. If $(M,\go,u)$ is not isometric to the de Sitter solution, then 
$$
3\,|{\rm MAX}(u)|\,<\,\sum_{i=1}^r\, k_i\,\leq\,\pi_0(\pa M)\,,
$$
where $k_1,\dots,k_r$ are the surface gravities of the connected components $\Sigma_1,\dots,\Sigma_r$ of $\pa M$.
In particular, a non-trivial $3$-dimensional static solution satisfying Normalization~\ref{norm:D} and Assumption~\ref{ass:D}, must have a boundary with at least four connected components.
\end{corollary}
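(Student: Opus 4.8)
The plan is to prove the two inequalities in the displayed chain separately, the right one being elementary and the left one carrying the genuine content. For the bound $\sum_{i=1}^r k_i \le \pi_0(\pa M)$, I would simply recall that, by Assumption~\ref{ass:D}, the surface gravity of each boundary component satisfies $k_i = |\D u| \le 1$ on $\Sigma_i$; summing this over the $r = \pi_0(\pa M)$ connected components of $\pa M$ gives the claim at once, with equality exactly when every component is extremal, i.e. $k_i = 1$ for all $i$.

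For the left inequality $3\,|{\rm MAX}(u)| < \sum_{i=1}^r k_i$ I would start from the strict form of Theorem~\ref{thm:uniqueness_n3_D}. Since $(M,\go,u)$ is assumed \emph{not} to be isometric to the de Sitter solution, the equality case of~\eqref{eq:uniqueness_n3_D} is excluded, so that $2\,|{\rm MAX}(u)| < \sum_{i=1}^r k_i\,\chi(\Sigma_i)$. It then remains to improve the coefficient on the left from $2$ to $3$, and this is precisely where the results of~\cite{Ambrozio} enter. I would invoke Ambrozio's analysis of the topology and of the critical structure of three-dimensional static solutions with positive cosmological constant in order to control the Euler characteristics $\chi(\Sigma_i)$ and, more importantly, to relate the number $|{\rm MAX}(u)|$ of maximum points to the boundary $\pa M$. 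Concretely, each point of ${\rm MAX}(u)$ is an interior critical point of $u$, and accounting for these points in a Poincar\'e--Hopf / Gauss--Bonnet balance for $\na u$ on the compact three-manifold $M$ (where $\na u$ points inward along $\pa M$, since $u = 0$ there while $u > 0$ in the interior) should produce, together with Ambrozio's classification of the boundary components, a relation of the form $\sum_{i=1}^r k_i\,\chi(\Sigma_i) \le \sum_{i=1}^r k_i - |{\rm MAX}(u)|$, absorbing one extra copy of $|{\rm MAX}(u)|$. Feeding this into the strict inequality above yields $2\,|{\rm MAX}(u)| < \sum_{i=1}^r k_i - |{\rm MAX}(u)|$, that is, $3\,|{\rm MAX}(u)| < \sum_{i=1}^r k_i$, as desired.

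Once the chain $3\,|{\rm MAX}(u)| < \sum_{i=1}^r k_i \le \pi_0(\pa M)$ is in place, the final assertion is immediate: since $M$ is compact and $\max_M u = 1$ by Normalization~\ref{norm:D}, the set ${\rm MAX}(u)$ is nonempty, so $|{\rm MAX}(u)| \ge 1$ and therefore $\pi_0(\pa M) > 3$, forcing at least four connected components. The main obstacle, in my view, is the middle step. The factor $2$ coming out of Theorem~\ref{thm:uniqueness_n3_D} together with the naive estimate $\chi(\Sigma_i) \le 2$ only gives $|{\rm MAX}(u)| < \sum_i k_i$, i.e. the weaker coefficient $1$; in particular this crude bound is insensitive to the genus of the boundary components and cannot by itself detect the sharper constant. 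Extracting the coefficient $3$ genuinely requires importing the finer topological information from~\cite{Ambrozio} — in particular the precise way in which each interior maximum of $u$ is forced to cost boundary components — and checking that this external input is compatible with Assumption~\ref{ass:D} and with the hypotheses under which Theorem~\ref{thm:uniqueness_n3_D} was derived.
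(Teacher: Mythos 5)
Your treatment of the right-hand inequality is fine ($k_i\le 1$ by Assumption~\ref{ass:D}, summed over the $r=\pi_0(\pa M)$ components), and your instinct that the coefficient $3$ must come from finer input of~\cite{Ambrozio} is correct. But the middle step you propose does not work. The intermediate relation you want, $\sum_{i=1}^r k_i\,\chi(\Sigma_i)\le\sum_{i=1}^r k_i-|{\rm MAX}(u)|$, is false: it forces $\sum_i k_i\,(\chi(\Sigma_i)-1)\le-|{\rm MAX}(u)|<0$, hence some boundary component with $\chi(\Sigma_i)\le 0$, whereas the boundary components here are spheres; on the de Sitter solution itself ($r=1$, $k_1=1$, $\chi=2$, $|{\rm MAX}(u)|=1$) it reads $2\le 0$. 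No Poincar\'e--Hopf balance for $\D u$ can produce such a relation either, since that balance counts \emph{all} interior critical points with signs (not only the maxima) and carries no information about the surface gravities $k_i$. So the coefficient $3$ cannot be extracted by improving Theorem~\ref{thm:uniqueness_n3_D} through Euler characteristics; indeed Theorem~\ref{thm:uniqueness_n3_D} is used in the actual proof only to rule out a degenerate case, not as the quantitative engine.

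The route the paper takes is different in kind, and you would need to reproduce it. One passes to the universal cover $(\tilde M,\tilde\go,\tilde u=u\circ\pi)$, which is compact by~\cite[Theorem~B]{Ambrozio}, so the covering degree $d$ is finite and $|{\rm MAX}(\tilde u)|=d\,|{\rm MAX}(u)|$. Then~\cite[Theorem~C]{Ambrozio} gives on the cover either the de Sitter rigidity --- excluded here because it would force $\pa M$ connected and hence, by Theorem~\ref{thm:uniqueness_n3_D}, $(M,\go,u)$ de Sitter --- or the strict \emph{area} estimate $\sum_i\tilde k_i\,|\tilde\Sigma_i|<\frac{4\pi}{3}\sum_i\tilde k_i$; this $4\pi/3$ is the true source of the constant $3$. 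Combining it with the lower bound $4\pi\,|{\rm MAX}(\tilde u)|\le\int_{\pa\tilde M}|\D\tilde u|\,\rmd\tilde\sigma=\sum_i\tilde k_i\,|\tilde\Sigma_i|$ coming from Theorem~\ref{thm:mon_glob_D} (i.e.\ from the monotonicity of $U_1$), and with the counting bound $\sum_i\tilde k_i\le d\sum_i k_i$ (each component of $\pa M$ lifts to at most $d$ components of $\pa\tilde M$, all with the same surface gravity), one gets $4\pi\,d\,|{\rm MAX}(u)|<\frac{4\pi}{3}\,d\sum_i k_i$, i.e.\ $3\,|{\rm MAX}(u)|<\sum_i k_i$. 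In short: the improvement from $1$ to $3$ is an area bound on the boundary components transplanted to the universal cover, not a topological count of the maxima, and without that specific input your argument stalls exactly where you said it would.
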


\begin{proof}
Let $(\tilde M,\tilde \go)\xrightarrow{\pi}(M,\go)$ be the universal covering. Clearly the triple $(\tilde M,\tilde \go,\tilde u=u\circ\pi)$ is still a solution of problem~\eqref{eq:pb_D} and satisfies Assumption~\ref{ass:D} and Normalization~\ref{norm:D}.
From~\cite[Theorem~B]{Ambrozio}, we know that $(\tilde M ,\tilde \go)$ is compact. In particular, the degree $d$ of the covering $\pi$ is a finite number and $|{\rm MAX}(\tilde u)|\,=\,d\,|{\rm MAX}(u)|$. 

Let $\pa \tilde M=\sqcup_{i=1}^s\tilde\Sigma_i$, where $\tilde\Sigma_1,\dots,\tilde\Sigma_s$ are connected.
From~\cite[Theorem~C]{Ambrozio}, 
we have that $(\tilde M,\tilde\go,\tilde u)$ is isometric to the de Sitter triple or 
\begin{equation}
\label{eq:surf_grav_SC}
\sum_{i=1}^s \tilde k_i\,|\tilde{\Sigma}_i|\,<\,\frac{4\pi}{3}\,\sum_{i=1}^s\tilde k_i\,,
\end{equation}
where $\tilde k_i$ is the surface gravity of $\tilde\Sigma_i$ for all $i=1,\dots,s$.

If $(\tilde M,\tilde\go,\tilde u)$ is isometric to the de Sitter triple, then $\pa\tilde M$ is connected, hence also $\pa M$ is connected. Recalling Theorem~\ref{thm:uniqueness_n3_D}, we deduce that $(M,\go,u)$ is isometric to the de Sitter solution, against our hypoteses. 

Therefore, formula~\eqref{eq:surf_grav_SC} must hold. Recalling Theorem~\ref{eq:mon_glob_D}, we obtain the following chain of inequalities
$$
4\pi\,d\,|{\rm MAX}(u)|\,=\,
4\pi\,|{\rm MAX}(\tilde u)|\,\leq\,
\int\limits_{\pa\tilde M}|\tilde\D\tilde u|\,\rmd\tilde{\sigma}\,=\,
\sum_{i=1}^s \tilde k_i\,|\tilde{\Sigma}_i|\,<\,
\frac{4\pi}{3}\,\sum_{i=1}^s\tilde k_i\,.
$$ 
Since each connected component of $\pa M$ lifts to at most $d$ connected components of $\pa\tilde M$, we have $$
\sum_{i=1}^s\tilde k_i\,\leq\, d\, \sum_{i=1}^r k_i\,.
$$
This proves the first part of the statement. The inequality $\sum_{i=1}^r k_i\leq\pi_0(\pa M)$ is a consequence of Assumption~\ref{ass:D}.
\end{proof}



\subsection{Consequences on a generic level set of $u$ (case $\Lambda<0$).}
\label{sub:rot_sym_A}

Now we start to discuss the consequences in the case of a negative cosmological constant.
Since, as already observed, the functions $t \mapsto U_p(t)$ defined in~\eqref{eq:Up_A} are constant on the anti-de Sitter solution, we obtain from Theorem~\ref{thm:main_A} and formula~\eqref{eq:derup_A}, the following characterizations of the rotationally symmetric solutions to  system~\eqref{eq:pb_A}.

\begin{theorem}
Let $(M,\go,u)$ be a solution to problem~\eqref{eq:pb_A} satisfying Normalization~\ref{norm:A} and Assumption~\ref{ass:A}. Then, for every $p \geq 3$ and every $t \in (1, +\infty)$, it holds 
\begin{equation*}
\int\limits_{\{u=t\}}
\!\!\!\!
|\D u|^{p-2}
\!\left[\, (n-1) \, + \, \Ric (\nu, \nu) \, +    \bigg(\!1- \frac{|\D u|^2}{1-u^2} \!\bigg)
\, \right]  \rmd\sigma  \,\, \geq \,\, 0 \, ,
\end{equation*}
where $\nu=\D u/|\D u|$.
Moreover, the equality is fulfilled for some $p\geq 3$ and some $t \in (1,+\infty)$ if and only if the static solution $(M,\go,u)$ is isometric to the anti-de Sitter solution.
\end{theorem}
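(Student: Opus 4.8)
The plan is to obtain the inequality by a direct rearrangement of the Monotonicity--Rigidity Theorem~\ref{thm:main_A}-(iii), and then to extract the rigidity from the mechanism of its proof. First I would note that the asserted inequality is precisely the passage from the second to the third line of~\eqref{eq:derup_A}. Indeed, for $t\in(1,+\infty)$ the common prefactor $(p-1)\,t\,\big(\tfrac{1}{t^2-1}\big)^{(n+p-1)/2}$ occurring in~\eqref{eq:derup_A} is strictly positive, so the chain $U_p'(t)=(\text{second line})\geq(\text{third line})$ is equivalent, after division by this factor, to the corresponding inequality between the two integrands over $\{u=t\}$. Subtracting the third integrand from the second, the coefficient multiplying $\big(1-|\D u|^2/(u^2-1)\big)$ becomes $\tfrac{n+p-1}{p-1}-\tfrac{n}{p-1}=1$, which reproduces verbatim the integrand appearing in the statement. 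Thus the inequality is immediate from Theorem~\ref{thm:main_A}-(iii), which I am free to invoke.

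For the equality case, one direction is a direct verification: on the anti-de Sitter solution~\eqref{eq:A} one computes $\Ric(\nu,\nu)=-(n-1)$ and $|\D u|^2=u^2-1$, so the bracket in the integrand vanishes pointwise and equality holds for every $p\geq 3$ and every $t\in(1,+\infty)$. The converse is where I expect the main obstacle: the vanishing of the integral in the statement does \emph{not} by itself force $U_p'(t)=0$, since the third line of~\eqref{eq:derup_A}, although nonnegative by the gradient estimate~\eqref{eq:Dusign_A}, may a priori be strictly positive. Hence one cannot merely quote the rigidity ``$U_p'(t)=0\Rightarrow$ anti-de Sitter'' of Theorem~\ref{thm:main_A}-(iii); the equality must instead be analysed at the level of the inequality $(\text{second line})\geq(\text{third line})$ itself.

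To handle this I would pass to the conformal reformulation of Section~\ref{sec:conf_reform}. There the gap between the second and third lines of~\eqref{eq:derup_A} is accounted for by a nonnegative Bochner-type term built from $|\nana\ffi|_g^2$, integrated against a positive weight, while the third line is exactly the contribution coming from the gradient term $1-|\na\ffi|_g^2\geq 0$. Equality in the statement therefore forces $|\nana\ffi|_g^2$ to vanish, so that $\nana\ffi\equiv 0$ on an open set and, by the analyticity of $u$ (hence of $\ffi$), on all of $M^*$. Translating the vanishing of the Hessian back in terms of $u$ and invoking Obata's theorem, exactly as in the rigidity part of Theorem~\ref{thm:main_A}-(iii), identifies $(M,\go,u)$ with the anti-de Sitter solution. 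As a consistency check, on that solution the third line of~\eqref{eq:derup_A} vanishes as well, so the feared scenario of a strictly positive third line never materialises.
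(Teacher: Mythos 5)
Your proposal is correct and follows the paper's own route: the paper derives this statement as an immediate consequence of Theorem~\ref{thm:main_A} and formula~\eqref{eq:derup_A}, the inequality being exactly the difference between the second and third lines there, and the rigidity resting on the equality case of that intermediate inequality, which is the content of Proposition~\ref{prop:cyl}. Your observation that one cannot simply invoke the rigidity statement for $U_p'(t)=0$ but must trace the equality back to the vanishing of the Bochner bulk term (hence of $\nana\ffi$, then analyticity and the Obata-type argument) is precisely the mechanism the paper relies on, even though it leaves this step implicit.
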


\begin{theorem}
\label{thm:main2_geom_A}
Let $(M,\go,u)$ be a solution to problem~\eqref{eq:pb_A} satisfying Normalization~\ref{norm:A} and Assumption~\ref{ass:A}. Then, for every $p \geq 3$ and every $t \in (1,+\infty)$, the inequality 
\begin{equation}
\label{eq:int_ineq_A}
\int\limits_{\{u=t\}} \!\!\!\! \bigg( \! \frac{|\D u|}{\sqrt{u^2-1}} \! \bigg)^{\!p} \rmd \sigma \,\,\, \leq  \!\int\limits_{\{u=t\}} \!\!\! \bigg( \! \frac{|\D u|}{\sqrt{u^2-1}} \! \bigg)^{\!p-2} \Big[ \,- \HHH \, |\D\log u| \, +   {n }  \, \Big] \, \rmd \sigma
\end{equation}
holds true, where $\HHH$ is the mean curvature of the level set $\{u = t \}$. Moreover, the equality is fulfilled for some $p\geq 3$ and some $t \in (1,+\infty)$ if and only if the static solution $(M,\go,u)$ is isometric to the anti-de Sitter solution.
\end{theorem}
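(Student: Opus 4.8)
The plan is to recognize that inequality~\eqref{eq:int_ineq_A} is merely a rewriting of the first inequality in the derivative formula~\eqref{eq:derup_A} of Theorem~\ref{thm:main_A}-(iii), read on a single level set. First I would fix $p\geq 3$ and $t\in(1,+\infty)$ and use that $u\equiv t$ is constant on $\{u=t\}$, so that $\sqrt{u^2-1}=\sqrt{t^2-1}$ can be pulled out of both integrals. Dividing~\eqref{eq:int_ineq_A} by the common power of $(t^2-1)$ and using $|\D\log u|=|\D u|/u$ (recall $u=t>1$), the claim reduces to the level-set inequality
\[
\int\limits_{\{u=t\}}|\D u|^{p-2}\Big[\,-\HHH\,\tfrac{|\D u|}{u}+n-\tfrac{|\D u|^2}{u^2-1}\,\Big]\,\rmd\sigma\ \geq\ 0 .
\]

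Next I would invoke the pointwise identity of Remark~\ref{rem:uno_A}: wherever $|\D u|\neq 0$ the static equations give $\HHH\,|\D u|=-u\,\Ric(\nu,\nu)$ with $\nu=\D u/|\D u|$, so that $-\HHH\,|\D u|/u=\Ric(\nu,\nu)$ holds $\mathscr{H}^{n-1}$-almost everywhere on $\{u=t\}$; as in Remark~\ref{rem:uno_A} this is also what guarantees that the integrand is well defined and summable on possibly critical level sets. Substituting and writing $n=(n-1)+1$, the bracket becomes $(n-1)+\Ric(\nu,\nu)+\big(1-|\D u|^2/(u^2-1)\big)$, which is exactly the difference between the middle integrand and the lower-bound integrand of~\eqref{eq:derup_A}. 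Since these two integrands carry the same positive prefactor, the displayed inequality coincides with the first $\geq$ of~\eqref{eq:derup_A}, and the inequality part of the theorem is therefore already contained in Theorem~\ref{thm:main_A}-(iii).

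For the rigidity, the implication ``anti-de Sitter $\Rightarrow$ equality'' is immediate: on the anti-de Sitter solution one has $|\D u|/\sqrt{u^2-1}\equiv 1$ by~\eqref{eq:Pfunct_confvol_A} and $\Ric=-(n-1)\go$, so the bracket $(n-1)+\Ric(\nu,\nu)+\big(1-|\D u|^2/(u^2-1)\big)$ vanishes identically. For the converse I would trace equality back through the cylindrical ansatz $g=\go/(u^2-1)$, $\ffi=\tfrac12\log\frac{u+1}{u-1}$: equality in the level-set inequality should correspond, via the integral identity of Subsection~\ref{sub:strategy}, to the vanishing of the nonnegative bulk term $\int_{\{\ffi>s\}}\big(|\nana\ffi|_g^2+n\coth^2(\ffi)|\na\ffi|_g^2(1-|\na\ffi|_g^2)\big)/(\sinh^{n+1}(\ffi)\cosh(\ffi))\,\rmd\mu_g$. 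This would force $\nana\ffi\equiv 0$ on the open region $\{\ffi>s\}$, whence $\nana\ffi\equiv 0$ on all of $M^*$ by analyticity of the solution; translating back to the Hessian of $u$ and applying Obata's theorem, exactly as in the proof of Theorem~\ref{thm:main_A}, would identify $(M,\go,u)$ with the anti-de Sitter solution.

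I expect this last step to be the main obstacle. The inequality itself is a routine algebraic reformulation, but the equality case cannot be settled at the level of formula~\eqref{eq:derup_A} alone and must be read off from the global splitting principle. Observe, in particular, that equality in~\eqref{eq:int_ineq_A} is a priori weaker than $U_p'(t)=0$ (it only forces the middle integrand of~\eqref{eq:derup_A} to meet the lower bound, not both terms to vanish), so one cannot simply quote a rigidity clause for $U_p'=0$; the equality has to be handled directly through the splitting identity. Here the case $\Lambda<0$ is genuinely more delicate than $\Lambda>0$: the region $\{\ffi>s\}$ is non-compact and runs out to the conformal boundary, so the integration by parts leading to the integral identity acquires a boundary contribution at infinity, and one must use Assumption~\ref{ass:A} (controlling $u^2-1-|\D u|^2$ at $\pa M$) to ensure it does not spoil the argument. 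Verifying that the equality case localizes correctly, and that the analytic continuation of $\nana\ffi\equiv 0$ survives up to the ends, is where the real work lies.
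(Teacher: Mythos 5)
Your proposal is correct and follows essentially the same route as the paper: Theorem~\ref{thm:main2_geom_A} is indeed just the first inequality of~\eqref{eq:derup_A} read on a single level set, and the equality case is settled exactly as you describe, through the rigidity clause attached to that first inequality (i.e.\ condition~\eqref{eq:condition_cyl} in Proposition~\ref{prop:cyl} via the cylindrical ansatz) rather than the weaker clause for $U_p'(t)=0$. The only small slip is in your closing discussion: for $s>0$ the region $\{\ffi>s\}$ runs out to the ends of $M^*$ created by removing ${\rm MIN}(u)$ (where $u\to 1^+$), not to the conformal boundary $\{\ffi=0\}$; the boundary term at infinity vanishes there because $\sqrt{u^2-1}\to 0$ on level sets of finite $\mathscr{H}^{n-1}$-measure, while Assumption~\ref{ass:A} enters instead through Lemma~\ref{le:grad_ffi} to guarantee $|\na\ffi|_g\leq 1$ and hence the nonnegativity of the bulk term.
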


To give an interpretation of Theorem~\ref{thm:main_A} in the framework of overdetermined boundary value problems, we observe that the equality is achieved in~\eqref{eq:int_ineq_A} as soon as the integrands on the left and right hand side coincide $\mathscr{H}^{n-1}$-almost everywhere on some level set of $u$. 
This amounts to ask that 
\begin{equation*}
\frac{u\,|\D u|^2}{u^2-1}\,\, = \,\, -\HHH\,|\D u|\,+\,n\,u \, .
\end{equation*}
On the other hand, with the help of the first equation in~\eqref{eq:pb_A}, one can rewrite the right hand side as 
\begin{equation*}
-\HHH\,|\D u|\,+\,n\,u \, = \,  u \, \Ric\, (\nu, \nu)\,+\,n\,u  \, = \, \D^2 u \,(\nu, \nu) \, = \, \frac{\pa |\D u|}{ \pa \nu} \, ,
\end{equation*}
where $\nu = \D u / |\D u|$. This easily implies the following corollary (compare it with Corollary~\ref{cor:over_D}).
\begin{corollary}
\label{cor:over_A}
Let $(M,\go,u)$ be a solution to problem~\eqref{eq:pb_A} satisfying Normalization~\ref{norm:A} and Assumption~\ref{ass:A}. Assume in addition that the identity
\begin{equation}
\label{eq:overd_A}
\frac{\pa (1/|\D u|)}{\pa \nu} \, = \, -\,\frac{t}{t^2-1}
\end{equation}
holds $\mathscr{H}^{n-1}$-almost everywhere on some level set $\{ u = t\}$, with $t \in (1,+\infty)$. Then, the static solution $(M,\go,u)$ is isometric to the anti-de Sitter solution.
\end{corollary}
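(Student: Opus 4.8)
The plan is to recognise the overdetermined condition~\eqref{eq:overd_A} as nothing more than the pointwise equality that saturates the integral inequality~\eqref{eq:int_ineq_A} of Theorem~\ref{thm:main2_geom_A}, and then to quote the rigidity already contained in that theorem. No independent geometric argument is needed: the content is a translation of the hypothesis into the form in which rigidity has been proved.

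First I would rewrite the normal derivative in~\eqref{eq:overd_A}. Since $\nu=\D u/|\D u|$, differentiating $|\D u|^2=\go(\D u,\D u)$ along $\nu$ and using the symmetry of the Hessian gives $\pa|\D u|/\pa\nu=\DD u(\nu,\nu)$, whence $\pa(1/|\D u|)/\pa\nu=-|\D u|^{-2}\,\DD u(\nu,\nu)$. Substituting $\DD u=u\,\Ric+n\,u\,\go$ from the first equation of~\eqref{eq:pb_A} and recalling the identity $-\HHH\,|\D u|=u\,\Ric(\nu,\nu)$ recorded just before the statement, I obtain $\pa(1/|\D u|)/\pa\nu=-|\D u|^{-2}\big(-\HHH\,|\D u|+n\,u\big)$. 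Imposing~\eqref{eq:overd_A} with $u=t$ on the level set and clearing denominators then produces, wherever $|\D u|\neq0$ on $\{u=t\}$, the identity
\begin{equation*}
\frac{u\,|\D u|^2}{u^2-1}\,=\,-\,\HHH\,|\D u|\,+\,n\,u\,.
\end{equation*}

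Next I would note that this is exactly the condition under which the two integrands in~\eqref{eq:int_ineq_A} agree: dividing the displayed identity by $u$ gives $|\D u|^2/(u^2-1)=-\HHH\,|\D\log u|+n$, and multiplying through by $(|\D u|/\sqrt{u^2-1})^{\,p-2}$ reproduces the equality of the integrands of the left- and right-hand sides of~\eqref{eq:int_ineq_A}. Thus equality holds in~\eqref{eq:int_ineq_A} for the prescribed $t\in(1,+\infty)$ (and every $p\geq3$), and the rigidity part of Theorem~\ref{thm:main2_geom_A} yields at once that $(M,\go,u)$ is isometric to the anti-de Sitter solution.

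The only delicate point, which I regard as the real (if mild) obstacle, is the behaviour on the critical set $\{|\D u|=0\}$, where the division above is illegitimate and where $\nu$ and $\HHH$ are defined only $\mathscr{H}^{n-1}$-almost everywhere. Here I would invoke the analyticity of $u$ together with the measure-theoretic facts used in Remark~\ref{rem:uno_A} (via~\cite{Federer,Krantz}): the critical set intersects $\{u=t\}$ in a set of vanishing $\mathscr{H}^{n-1}$-measure, so the pointwise identity holds almost everywhere and the passage from equality of integrands to equality of the integrals in~\eqref{eq:int_ineq_A} is unaffected. This is the one step that must be handled with care rather than purely formally.
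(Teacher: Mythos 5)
Your proposal is correct and follows essentially the same route as the paper: the paper derives Corollary~\ref{cor:over_A} in the text preceding its statement by observing that \eqref{eq:overd_A} is equivalent, via $\pa|\D u|/\pa\nu=\DD u(\nu,\nu)=u\,\Ric(\nu,\nu)+n\,u=-\HHH\,|\D u|+n\,u$, to the pointwise coincidence of the integrands in \eqref{eq:int_ineq_A}, and then invokes the rigidity of Theorem~\ref{thm:main2_geom_A}. Your only extra remark, on the critical set, is also consistent with the paper's setup (note that on the critical portion both integrands of \eqref{eq:int_ineq_A} vanish for $p\geq 3$, so equality there is automatic and one does not even need that portion to be $\mathscr{H}^{n-1}$-negligible).
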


In other words, assumption~\eqref{eq:overd_A} in the previous corollary can be seen as a condition that makes system~\eqref{eq:pb_A} overdetermined and forces the solution to be rotationally symmetric. Observe that~\eqref{eq:overd_A} is always satisfied on the anti-de Sitter solution and thus it is also a necessary condition for $(M,\go,u)$ being rotationally symmetric.

To illustrate other implications of Theorem~\ref{thm:main2_geom_A}, let us observe 
that, applying H\"older inequality to the right hand side of~\eqref{eq:int_ineq_A} with conjugate exponents $p/(p-2)$ and $p/2$, one gets
\begin{equation*}
\int\limits_{\{u=t\}} \!\!\! \bigg( \! \frac{|\D u|}{\sqrt{u^2-1}} \! \bigg)^{\!p-2} \Big[ \, -\HHH \, |\D\log u|\, +  \, {n }  \, \Big] \, \rmd \sigma 
\, \leq \, \Bigg(  \int\limits_{\{u=t\}} \!\!\! 
\bigg( \! \frac{|\D u|}{\sqrt{u^2-1}} \! \bigg)^{\!p}  \, \rmd \sigma \Bigg)^{\!\!\frac{p-2}{p}} 
\Bigg(  \int\limits_{\{u=t\}} \!\!\! \Big| \, - \HHH \, |\D\log u| \, +   \,{n } \,\Big|^{\frac{p}{2}}   \rmd \sigma \Bigg)^{\!\!\frac{2}{p}} \,.
\end{equation*}                             
This implies on every level set of $u$ the following sharp $L^p$-bound for the gradient of the {\em static potential} in terms of the $L^p$-norm of the mean curvature of the level set.
\begin{corollary}
\label{cor:Lp_A}
Let $(M,g_0,u)$ be a solution to problem~\eqref{eq:pb_A} satisfying Normalization~\ref{norm:A} and Assumption~\ref{ass:A}. Then, for every $p \geq 3$ and every $t \in (1,+\infty)$ the inequality
\begin{equation}
\label{eq:Lp_ineq_A}
  \bigg\| \, \frac{\D u}{\sqrt{u^2-1}} \,\bigg\|_{L^p(\{ u=t \})}   \,\,\,\leq \,\,\,\,\sqrt{\,\,\,\Big\|  \,-\,\HHH\,|\D \log u|\, + \,n\,\Big\|_{L^{p/2}(\{ u=t \})}} , 
\end{equation}
holds true, where $\HHH$ is the mean curvature of the level set $\{u = t \}$. Moreover, the equality is fulfilled for some $p \geq 3$ and some $t \in (1,+\infty)$ if and only if the static solution $(M,g_0,u)$ is isometric to the anti-de Sitter solution.
\end{corollary}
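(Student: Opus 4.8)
The plan is to derive~\eqref{eq:Lp_ineq_A} directly from Theorem~\ref{thm:main2_geom_A}, in perfect analogy with the way Corollary~\ref{cor:Lp_D} follows from Theorem~\ref{thm:main2_geom_D}. The essential observation is that the H\"older estimate displayed immediately before the statement already bounds the right-hand side of~\eqref{eq:int_ineq_A} by a product of two integrals, so the whole argument reduces to a short algebraic rearrangement.

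First I would abbreviate
$$
A \,=\, \int\limits_{\{u=t\}} \bigg( \frac{|\D u|}{\sqrt{u^2-1}} \bigg)^{\!p} \rmd\sigma,
\qquad
B \,=\, \int\limits_{\{u=t\}} \big| {-}\HHH\,|\D\log u| + n \big|^{p/2} \rmd\sigma.
$$
Chaining the inequality~\eqref{eq:int_ineq_A} of Theorem~\ref{thm:main2_geom_A} with the displayed H\"older estimate yields $A \leq A^{(p-2)/p}\,B^{2/p}$. If $A>0$, dividing by $A^{(p-2)/p}$ gives $A^{2/p}\leq B^{2/p}$, hence $A\leq B$; the degenerate case $A=0$ is trivial, since then the left-hand norm vanishes. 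Since $A=\|\D u/\sqrt{u^2-1}\|_{L^p(\{u=t\})}^{\,p}$ and $B=\|{-}\HHH|\D\log u|+n\|_{L^{p/2}(\{u=t\})}^{\,p/2}$, extracting the $p$-th root of $A\leq B$ produces exactly~\eqref{eq:Lp_ineq_A}.

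For the rigidity statement I would trace equality backwards through this chain. If equality holds in~\eqref{eq:Lp_ineq_A}, then $A=B$; substituting $B=A$ into the chain $A\leq A^{(p-2)/p}B^{2/p}$ collapses it to $A\leq A$, forcing every intermediate inequality---in particular~\eqref{eq:int_ineq_A}---to be an equality. By Theorem~\ref{thm:main2_geom_A}, for $t\in(1,+\infty)$ this happens precisely when $(M,\go,u)$ is isometric to the anti-de Sitter solution. Conversely, on the anti-de Sitter solution the quantity $|\D u|/\sqrt{u^2-1}$ is constant by~\eqref{eq:Pfunct_confvol_A} and, by rotational symmetry, $-\HHH|\D\log u|+n$ is constant on each level set; hence the H\"older estimate is saturated and~\eqref{eq:int_ineq_A} holds with equality, so~\eqref{eq:Lp_ineq_A} is an equality as well.

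Since all the analytic work is contained in Theorem~\ref{thm:main2_geom_A}, there is essentially no obstacle of substance here. The only points requiring care are purely formal: one must verify that equality in the combined chain really propagates back to equality in~\eqref{eq:int_ineq_A} itself (and not merely in H\"older), and---as recorded in Remark~\ref{rem:uno_A}---that the integrands are well defined and summable on possibly singular level sets, so that the rearrangements above are legitimate.
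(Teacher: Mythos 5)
Your proposal is correct and follows exactly the route the paper takes: the corollary is obtained by chaining inequality~\eqref{eq:int_ineq_A} of Theorem~\ref{thm:main2_geom_A} with the displayed H\"older estimate, cancelling the factor $A^{(p-2)/p}$, and reading the rigidity off the equality case of Theorem~\ref{thm:main2_geom_A}. Nothing to add.
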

It is worth pointing out that the right hand side in~\eqref{eq:Lp_ineq_A} may possibly be unbounded. However, for regular level sets of the {\em static potential} the $L^p$-norm of the mean curvature is well defined and finite (see Remark~\ref{rem:uno_A}). We also observe that letting $p \to + \infty$, we deduce, under the same hypothesis of Corollary~\ref{cor:Lp_A}, the following $L^\infty$-bound
\begin{equation}
\label{eq:Linf_ineq_A}
  \bigg\| \, \frac{\D u}{\sqrt{u^2-1}} \,\bigg\|_{L^{\infty}(\{ u=t \})}   \,\,\,\leq \,\,\,\,\sqrt{\,\,\,\Big\|  \,-\HHH\,|\D \log u|\, + \,n\,\Big\|_{L^{\infty}(\{ u=t \})}} ,
\end{equation}
for every $t \in ( 1,+\infty)$. Unfortunately, in this case we do not know whether the rigidity statement holds true or not. However, the equality is satisfied on the anti-de Sitter solution and this makes the inequality sharp.

Now we will combine inequality~\eqref{eq:Lp_ineq_A} in Corollary~\ref{cor:Lp_A} with the observation that for every $t \in (1,+\infty)$ and every $3\leq p \leq n-1$ (we need to take $n\geq 4$) it holds
\begin{equation}
\label{eq:mon_limup_A}
U_p(t) \, \geq \, |{\rm MIN} (u)|\, |\Sph^{n-1}| \,,  
\end{equation}
where the latter estimate follows from estimate~\eqref{eq:limup_A} in Theorem~\ref{thm:estimate_A} and the monotonicity of the $U_p$'s stated in Theorem~\ref{thm:main_A}-(iii).
Recalling the explicit expression~\eqref{eq:Up_A} of the $U_p$'s, we obtain the following analogue of Theorem~\ref{thm:ul_bounds_D}.

\begin{theorem}
\label{thm:ul_bounds_A}
Let $n\geq 4$. Let $(M,g_0,u)$ be a solution to problem~\eqref{eq:pb_A} satisfying Normalization~\ref{norm:A} and Assumption~\ref{ass:A}. Then, for every $3\leq p\leq n-1$ and every $t \in (1,+\infty)$, the inequalities 
\begin{equation}
\label{eq:pi_gen_A}
(t^2-1)^{\frac{n-1}{2}}|{\rm MIN}(u)|\,|\Sph^{n-1}|\,\, \leq \,\, \,  \Big\| \,-\HHH\,|\D \log u|\, + \,n\,  \Big\|_{L^{p/2}(\{ u=t \})}^{\frac{p}{2}} \, 
\end{equation}
hold true.
Moreover, the equality is fulfilled for some $t \in(1,+\infty)$ and some $3\leq p\leq n-1$, if and only if the static solution $(M,g_0,u)$ is isometric to the anti-de Sitter solution.
\end{theorem}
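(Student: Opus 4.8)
The plan is to derive~\eqref{eq:pi_gen_A} by chaining two facts that are already at our disposal: the sharp $L^p$--gradient estimate of Corollary~\ref{cor:Lp_A} and the uniform lower bound~\eqref{eq:mon_limup_A} for the monotone quantities $U_p$. First I would rewrite the left-hand side of~\eqref{eq:Lp_ineq_A}. Since $u\equiv t$ on the level set $\{u=t\}$, the weight $u^2-1$ is the constant $t^2-1$ there, so the expression~\eqref{eq:Up2_A} for $U_p$ gives the identity
\begin{equation*}
\bigg\| \frac{\D u}{\sqrt{u^2-1}} \bigg\|_{L^p(\{u=t\})}^{p} \,=\, (t^2-1)^{\frac{n-1}{2}}\, U_p(t) .
\end{equation*}
Raising~\eqref{eq:Lp_ineq_A} to the $p$-th power and inserting this identity recasts Corollary~\ref{cor:Lp_A} as
\begin{equation*}
(t^2-1)^{\frac{n-1}{2}}\, U_p(t) \,\leq\, \Big\| -\HHH\,|\D\log u|+n \Big\|_{L^{p/2}(\{u=t\})}^{p/2} .
\end{equation*}

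Next I would feed in the lower bound~\eqref{eq:mon_limup_A}, namely $U_p(t)\geq |{\rm MIN}(u)|\,|\Sph^{n-1}|$, which is valid precisely in the range $n\geq 4$, $3\leq p\leq n-1$, $t\in(1,+\infty)$; substituting it into the left-hand side yields~\eqref{eq:pi_gen_A} at once. This is the verbatim analogue of the argument behind Theorem~\ref{thm:ul_bounds_D} in the case $\Lambda>0$, with $1-u^2$ replaced by $u^2-1$ and the direction of the monotonicity reversed.

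For the rigidity statement I would trace the equality cases through the two displays above. If equality holds in~\eqref{eq:pi_gen_A} for some admissible pair $(p,t)$, then in particular the inequality inherited from Corollary~\ref{cor:Lp_A} is saturated for that pair, and the rigidity clause of Corollary~\ref{cor:Lp_A} forces $(M,\go,u)$ to be isometric to the anti-de Sitter solution. Conversely, on the anti-de Sitter solution the $U_p$ are constant and~\eqref{eq:limup_A} is an equality, so both displays close and~\eqref{eq:pi_gen_A} becomes an identity.

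I do not anticipate a genuine obstacle at this stage: all the analytic content is already carried by the Monotonicity--Rigidity Theorem~\ref{thm:main_A}, by the extremal-point analysis of Theorem~\ref{thm:estimate_A} behind~\eqref{eq:limup_A}, and by Corollary~\ref{cor:Lp_A}. The only point requiring care is the bookkeeping on exponents: the constraint $p\leq n-1$ is exactly what~\eqref{eq:limup_A} requires, while $p\geq 3$ is what renders $U_p$ monotone and is demanded by Corollary~\ref{cor:Lp_A}, so together these are consistent only when $n\geq 4$ and pin down the stated range $3\leq p\leq n-1$.
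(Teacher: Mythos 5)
Your proposal is correct and follows exactly the route the paper takes: Theorem~\ref{thm:ul_bounds_A} is obtained there precisely by rewriting the left-hand side of~\eqref{eq:Lp_ineq_A} via the identity $\|\D u/\sqrt{u^2-1}\|_{L^p(\{u=t\})}^p=(t^2-1)^{\frac{n-1}{2}}U_p(t)$ and then inserting the lower bound~\eqref{eq:mon_limup_A}, with the rigidity traced back through the equality case of Corollary~\ref{cor:Lp_A}. Your bookkeeping on the exponent range $3\leq p\leq n-1$, $n\geq 4$ also matches the paper's.
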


To give a geometric interpretation of the above theorem, we recall the identity
\begin{equation*}
-\HHH\, |\D\log u|+{n} \, = \, n + \Ric(\nu, \nu) \, ,
\end{equation*}
and we observe that the quantity $({t^2-1})^{\!\frac{n-1}{2}} \,|\Sph^{n-1}|$ corresponds to the hypersurface area of the level set $\{u_A = t\}$ in the anti-de Sitter solution~\eqref{eq:A}. Combining together these two facts, we arrive at the following corollary, that should be compared with Corollary~\ref{cor:geom_bound_D}.
\begin{corollary}
Let $n\geq 4$. Let $(M,g_0,u)$ be a solution to problem~\eqref{eq:pb_A} satisfying Normalization~\ref{norm:A} and Assumption~\ref{ass:A}. Then, for every $p \geq 3$ and every $t \in (1, +\infty)$, the inequality
\begin{equation}
 \frac{ |{\rm MIN}(u)|  \,   \left| \{ u_A= t\} \right|}{\left| \{ u = t\} \right|}   \, \leq  \fint\limits_{\{ u= t\}}   \!\!\! \big|  \, n + \Ric(\nu, \nu) \,  \big|^{\frac{p}{2}} \, \rmd \sigma
\end{equation}
holds true. Moreover, the equality is fulfilled for some $t \in(1,+\infty)$ and some $p \geq 3$, if and only if the static solution $(M,g_0,u)$ is isometric to the anti-de Sitter solution. In particular, for every $t \in (1,+\infty)$, it holds
\begin{equation}
1 \,\, \leq \,\, \big\|  \, n + \Ric(\nu, \nu) \,  \big\|_{L^\infty(\{u=t\})} \, .
\end{equation}
\end{corollary}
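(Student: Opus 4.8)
The plan is to transcribe, line by line, the derivation of Corollary~\ref{cor:geom_bound_D} into the anti-de Sitter framework. The quantitative input is the $L^{p/2}$--estimate~\eqref{eq:pi_gen_A} of Theorem~\ref{thm:ul_bounds_A}: for $n\geq 4$, $3\leq p\leq n-1$ and $t\in(1,+\infty)$ it reads
\begin{equation*}
(t^2-1)^{\frac{n-1}{2}}\,|{\rm MIN}(u)|\,|\Sph^{n-1}|\,\,\leq\,\,\big\|\,-\HHH\,|\D\log u|+n\,\big\|_{L^{p/2}(\{u=t\})}^{\frac{p}{2}}\,\,=\!\!\int\limits_{\{u=t\}}\!\!\big|\,-\HHH\,|\D\log u|+n\,\big|^{\frac{p}{2}}\rmd\sigma\,.
\end{equation*}
The whole argument then amounts to rewriting the integrand and the geometric prefactor in intrinsic terms.

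First I would record the pointwise identity stated just above the corollary, $-\HHH\,|\D\log u|+n=n+\Ric(\nu,\nu)$. It is a direct consequence of the field equations: contracting the first equation of~\eqref{eq:pb_A} with $\nu\otimes\nu$ gives $\DD u(\nu,\nu)=u\,\Ric(\nu,\nu)+nu$, whence, using $\De u=nu$,
\begin{equation*}
\HHH\,|\D u|\,\,=\,\,\De u-\DD u(\nu,\nu)\,\,=\,\,nu-\big(u\,\Ric(\nu,\nu)+nu\big)\,\,=\,\,-\,u\,\Ric(\nu,\nu)\,,
\end{equation*}
so that, since $|\D\log u|=|\D u|/u$ because $u>0$, one gets $\HHH\,|\D\log u|=-\Ric(\nu,\nu)$. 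Substituting turns the right-hand side of the estimate into $\int_{\{u=t\}}|n+\Ric(\nu,\nu)|^{p/2}\rmd\sigma$.

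Next I would compute, straight from the explicit metric~\eqref{eq:A}, the area of an anti-de Sitter level set. On $\{u_A=t\}$ one has $|x|^2=t^2-1$ and the induced metric is $(t^2-1)\,g_{\Sph^{n-1}}$, hence $|\{u_A=t\}|=(t^2-1)^{(n-1)/2}|\Sph^{n-1}|$. Dividing the displayed inequality by $|\{u=t\}|$ and inserting this identity yields precisely
\begin{equation*}
\frac{|{\rm MIN}(u)|\,|\{u_A=t\}|}{|\{u=t\}|}\,\,\leq\,\,\fint\limits_{\{u=t\}}\big|\,n+\Ric(\nu,\nu)\,\big|^{\frac{p}{2}}\,\rmd\sigma\,,
\end{equation*}
which is the asserted inequality. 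The rigidity is inherited for free: equality here forces equality in~\eqref{eq:pi_gen_A} for some admissible $p$ and $t$, which by Theorem~\ref{thm:ul_bounds_A} characterizes the anti-de Sitter solution.

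Finally, for the $L^\infty$--bound I would estimate the average by the supremum and take the $(p/2)$--th root, obtaining $\big(|{\rm MIN}(u)|\,|\{u_A=t\}|/|\{u=t\}|\big)^{2/p}\leq\|n+\Ric(\nu,\nu)\|_{L^\infty(\{u=t\})}$, and then let $p\to+\infty$: the left-hand side is a fixed positive number raised to the vanishing power $2/p$, hence tends to $1$. I expect this last step to be the main obstacle, and the reason the corollary is stated for all $p\geq 3$ rather than only $3\leq p\leq n-1$: to let $p\to+\infty$ I need the lower bound $U_p(t)\geq|{\rm MIN}(u)|\,|\Sph^{n-1}|$ to persist for arbitrarily large $p$, which goes beyond the range $p\leq n-1$ in which the asymptotics~\eqref{eq:limup_A} of Theorem~\ref{thm:estimate_A} are stated. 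This should be recoverable from the fact that ${\rm MIN}(u)$ is discrete and that near each minimum point the solution is modelled on anti-de Sitter, where $U_p\equiv|\Sph^{n-1}|$ for every $p$; making $\liminf_{t\to1^+}U_p(t)\geq|{\rm MIN}(u)|\,|\Sph^{n-1}|$ rigorous for all $p$ is the delicate technical point.
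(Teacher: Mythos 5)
Your proposal follows exactly the route the paper intends: the corollary is obtained by combining Theorem~\ref{thm:ul_bounds_A} with the pointwise identity $-\HHH\,|\D\log u|+n=n+\Ric(\nu,\nu)$ and the computation $|\{u_A=t\}|=(t^2-1)^{(n-1)/2}|\Sph^{n-1}|$, and your verification of both ingredients is correct. The concern you raise about the exponent range is also well-founded and is really a defect of the statement rather than of your argument: Theorem~\ref{thm:ul_bounds_A} (and hence this derivation) only covers $3\leq p\leq n-1$, because the lower bound $U_p(t)\geq|{\rm MIN}(u)|\,|\Sph^{n-1}|$ rests on estimate~\eqref{eq:limup_A}, which is proved only for $p\leq n-1$ (and the reduction $U_p\geq U_{n-1}$ via $|\D u|^2\leq u^2-1$ goes the wrong way for $p>n-1$); the de Sitter analogue, Corollary~\ref{cor:geom_bound_D}, is correspondingly stated only for $3\leq p\leq n-1$, so the ``$p\geq 3$'' here looks like an oversight. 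As you note, this also means the $L^\infty$ bound cannot be obtained by letting $p\to+\infty$ from the cited ingredients alone, and your suggested repair (extending the $\liminf$ estimate of Theorem~\ref{thm:estimate_A} to all $p$) is indeed the missing technical point and is not carried out in the paper either. In short: your proof is the paper's proof on the range $3\leq p\leq n-1$, and you have correctly isolated the one step that neither you nor the paper justifies beyond that range.
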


\subsection{The geometry of $\pa M$ (case $\Lambda<0$).}
\label{sub:further_A}

We pass now to describe some consequences of the behaviour of the {\em static solution} $(M,\go,u)$ at the conformal boundary $\pa M$, as prescribed by Theorem~\ref{thm:main_A}-(iv). We remark that the conformal boundary of $M$ is a totally geodesic hypersurface inside $(\overline{M},g)$ (see Lemma~\ref{le:tot_geod_BGH_A}-(ii) in the appendix). 


The following theorem is a rephrasing of formula~\eqref{eq:der2up_A} and is the analogue of Theorem~\ref{thm:rig_gen_D}.
\begin{theorem}
\label{thm:rig_gen_A}
Let $(M,\go,u)$ be a solution to problem~\eqref{eq:pb_A} satisfying Normalization~\ref{norm:A} and Assumption~\ref{ass:A}, and let $g=g_0/(u^2-1)$.
Then it holds
\begin{equation}
\label{eq:cond_1_A}
\int\limits_{\pa M} \! \left[(n-1)(n-2)-\Rg^{\pa M}+n(n-1)\left(u^2-1-|\D u|^2\right)\right]\,  \rmd\sigma_g \,\, \geq \,\, 0 \, ,
\end{equation}
where $\RRR_g^{\pa M}$ denotes the scalar curvature of the metric induced by $g$ on $\pa M$. 
\end{theorem}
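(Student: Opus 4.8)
The plan is to derive~\eqref{eq:cond_1_A} as a direct corollary of Theorem~\ref{thm:main_A}-(iv), by exploiting the first equality in~\eqref{eq:der2up_A} and then letting the exponent $p$ tend to infinity in order to recover the sharp coefficient $n(n-1)$.

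First, I would start from the chain~\eqref{eq:der2up_A}, which in particular asserts that $\lim_{r\to 0^+} V_p''(r)\leq 0$ for every $p\geq 3$, together with the explicit expression for this limit given by the first equality therein. Since $p-1>0$, dividing that identity through by $-(p-1)$ immediately yields, for every $p\geq 3$,
\begin{equation*}
\int\limits_{\pa M}\!\left[\frac{(n-1)(n-2)-\Rg^{\pa M}}{2(n-1)}+\frac{n(p+1)}{2(p-1)}\left(u^2-1-|\D u|^2\right)\right]\rmd\sigma_g\,\geq\,0\,.
\end{equation*}
Multiplying by the positive constant $2(n-1)$, this rewrites as
\begin{equation*}
\int\limits_{\pa M}\!\left[(n-1)(n-2)-\Rg^{\pa M}\right]\rmd\sigma_g+\frac{n(n-1)(p+1)}{p-1}\int\limits_{\pa M}\!\left(u^2-1-|\D u|^2\right)\rmd\sigma_g\,\geq\,0\,.
\end{equation*}

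Second, I would pass to the limit $p\to+\infty$. Writing the coefficient as $\frac{n(n-1)(p+1)}{p-1}=n(n-1)\bigl(1+\tfrac{2}{p-1}\bigr)$, it decreases monotonically to $n(n-1)$. By Assumption~\ref{ass:A} and Lemma~\ref{le:tot_geod_BGH_A}-(i), the function $u^2-1-|\D u|^2$ extends to a finite, nonnegative function on the compact hypersurface $\pa M$, so both boundary integrals above are finite; hence the limit passes through and produces precisely inequality~\eqref{eq:cond_1_A}.

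The only point requiring care---and the reason one cannot simply fix a finite value of $p$---is that $u^2-1-|\D u|^2\geq 0$ on $\pa M$, so that the coefficient $\frac{n(n-1)(p+1)}{p-1}$ strictly exceeds the target value $n(n-1)$ for every finite $p$. Consequently the finite-$p$ inequality is strictly \emph{weaker} than~\eqref{eq:cond_1_A} in the desired direction, and the sharp coefficient $n(n-1)$ is recovered only in the limit $p\to+\infty$. No genuinely new computation is needed beyond this limiting argument, all the analytic content having already been established in Theorem~\ref{thm:main_A}-(iv).
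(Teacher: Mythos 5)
Your proof is correct, but it takes a different route from the paper's. The paper obtains \eqref{eq:cond_1_A} as a direct rephrasing of the \emph{middle} inequality in the chain \eqref{eq:der2up_A}: comparing the explicit expression for $\lim_{r\to 0^+}V_p''(r)$ with the intermediate bound $-(p-1)\int_{\pa M}\tfrac{n}{p-1}\left(u^2-1-|\D u|^2\right)\rmd\sigma_g$ and dividing by $-(p-1)$ gives
\[
\int\limits_{\pa M}\!\left[\frac{(n-1)(n-2)-\Rg^{\pa M}}{2(n-1)}+\left(\frac{n(p+1)}{2(p-1)}-\frac{n}{p-1}\right)\left(u^2-1-|\D u|^2\right)\right]\rmd\sigma_g\;\geq\;0\,,
\]
where the $p$-dependence cancels exactly, $\frac{n(p+1)}{2(p-1)}-\frac{n}{p-1}=\frac{n}{2}$, so multiplying by $2(n-1)$ yields \eqref{eq:cond_1_A} for any single fixed $p\geq 3$, with no limiting procedure. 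You instead use only the outer inequality $\lim_{r\to 0^+}V_p''(r)\leq 0$, which at fixed $p$ produces the weaker coefficient $\frac{n(n-1)(p+1)}{p-1}$, and you recover the sharp constant $n(n-1)$ by letting $p\to+\infty$. That limiting step is legitimate: both boundary integrals are fixed finite real numbers independent of $p$ (finiteness of $\int_{\pa M}\left(u^2-1-|\D u|^2\right)\rmd\sigma_g$ follows from Lemma~\ref{le:tot_geod_BGH_A}-(i) together with the compactness and finite $g$-area of $\pa M$, and $\Rg^{\pa M}$ is smooth on the compact conformal boundary), so the left-hand side converges to that of \eqref{eq:cond_1_A} and nonnegativity passes to the limit; you do not even need the sign of $u^2-1-|\D u|^2$ for this, only its integrability. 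The trade-off is that your argument consumes the whole family $p\geq 3$ while using only the crudest part of \eqref{eq:der2up_A}, whereas the paper's extracts the sharp constant from a single $p$ by exploiting the finer intermediate inequality.
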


Note that inequality~\eqref{eq:cond_1_A} is sharp, but the rigidity statement does not hold for Theorem~\ref{thm:rig_gen_A}. 
Moreover, unlike Theorem~\ref{thm:rig_gen_D}, formula~\eqref{eq:cond_1_A} does not depend on $p$ and we are not able to find an analogue of Corollary~\ref{cor:R_Lp_D} for the case $\Lambda<0$.

We can still provide the following result, that should be compared with Theorem~\ref{thm:mon_glob_D}.

\begin{theorem}
\label{thm:mon_glob_A}
Let $(M,\go,u)$ be a static solution to problem~\eqref{eq:pb_A}, satisfying Normalization~\ref{norm:A} and Assumption~\ref{ass:A}. Let $|{\rm MIN}(u)|$ be the cardinality of the set ${\rm MIN}(u)$ of the points where $u$ attains its minimum and let $g=g_0/(u^2-1)$. Then
\begin{equation}
\label{eq:mon_glob_A}
|{\rm MIN}(u)|\,\, |\Sph^{n-1}| 
\,\,\leq\,\,
|\pa M|_g\,,
\end{equation}
and the equality holds if and only if $(M,\go,u)$ is isometric to the anti-de Sitter solution. 
\end{theorem}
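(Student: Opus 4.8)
The plan is to exploit the monotonicity of $U_1$ together with the behaviour of $U_1$ at the two ends of its domain $(1,+\infty)$: near the minimum locus ${\rm MIN}(u)$, corresponding to $t\to 1^+$, and near the conformal boundary, corresponding to $t\to+\infty$. First I would rewrite $U_1$ in the geometrically transparent form~\eqref{eq:Up2_A}, observing that, since $g=\go/(u^2-1)$ coincides with the conformally compactified metric $\overline{g}=r^2\go$ associated to the defining function $r=1/\sqrt{u^2-1}$ of Assumption~\ref{ass:A}, one has
\begin{equation*}
U_1(t)\,=\,\int\limits_{\{u=t\}}\frac{|\D u|}{\sqrt{u^2-1}}\,\rmd\sigma_g\,,
\end{equation*}
where $\rmd\sigma_g$ denotes the hypersurface measure induced by $g$ on $\{u=t\}$. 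In particular $U_0(t)=|\{u=t\}|_g$.

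The key step is the identification $\lim_{t\to+\infty}U_1(t)=|\pa M|_g$. The level sets $\{u=t\}$ are precisely the level sets $\{r=1/\sqrt{t^2-1}\}$ of the defining function, which converge smoothly to $\pa M=\{r=0\}$ as $t\to+\infty$, because $\overline{g}=g$ extends smoothly to $\overline{M}$ and $dr\neq 0$ on $\pa M$; hence their $g$-areas converge to $|\pa M|_g$. On the other hand, Assumption~\ref{ass:A} together with the discussion following it (see Lemma~\ref{le:tot_geod_BGH_A}-(i)) guarantees that $u^2-1-|\D u|^2$ stays bounded near $\pa M$, so that, using $1/(u^2-1)=r^2$,
\begin{equation*}
\frac{|\D u|^2}{u^2-1}\,=\,1-r^2\,\big(u^2-1-|\D u|^2\big)\,\longrightarrow\,1
\end{equation*}
uniformly on $\pa M$ as $t\to+\infty$. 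Passing to the limit inside the integral then yields $\lim_{t\to+\infty}U_1(t)=|\pa M|_g$. I expect this boundary analysis to be the main obstacle, as it requires controlling simultaneously the convergence of the integrand and of the domain of integration; the identity $g=\overline{g}$ is exactly what makes it manageable, and it is the analogue of the elementary evaluation $U_p(0)=\int_{\pa M}|\D u|^p\,\rmd\sigma$ used in the proof of Theorem~\ref{thm:mon_glob_D}.

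With this in hand the conclusion follows by a sandwich argument. By Theorem~\ref{thm:main_A}-(ii) the function $U_1$ is nondecreasing on $(1,+\infty)$, so that its infimum and supremum coincide with $\lim_{t\to 1^+}U_1(t)$ and $\lim_{t\to+\infty}U_1(t)=|\pa M|_g$ respectively. Combining the lower bound~\eqref{eq:limup_A} of Theorem~\ref{thm:estimate_A} (valid for $p=1$, since $1\leq n-1$) with the monotonicity gives
\begin{equation*}
|{\rm MIN}(u)|\,|\Sph^{n-1}|\,\leq\,\liminf_{t\to 1^+}U_1(t)\,=\,\inf_{t}U_1(t)\,\leq\,\sup_{t}U_1(t)\,=\,|\pa M|_g\,,
\end{equation*}
which is precisely~\eqref{eq:mon_glob_A}.

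Finally, for the rigidity I would argue that if equality holds in~\eqref{eq:mon_glob_A}, then every inequality in the chain above is an equality, forcing $\lim_{t\to 1^+}U_1(t)=\lim_{t\to+\infty}U_1(t)$; since $U_1$ is monotone, it must then be constant on $(1,+\infty)$, and the rigidity part of Theorem~\ref{thm:main_A}-(ii) shows that $(M,\go,u)$ is isometric to the anti-de Sitter solution. The converse implication is immediate from the explicit formul\ae~\eqref{eq:A}, for which ${\rm MIN}(u)$ reduces to the single point $x=0$ and $|\pa M|_g=|\Sph^{n-1}|$, so that equality indeed holds.
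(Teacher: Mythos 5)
Your argument is correct and follows essentially the same route as the paper: the lower bound $|{\rm MIN}(u)|\,|\Sph^{n-1}|\leq \liminf_{t\to 1^+}U_1(t)$ from Theorem~\ref{thm:estimate_A}, the monotonicity and rigidity of $U_1$ from Theorem~\ref{thm:main_A}-(ii), and the control of $U_1$ at the conformal boundary via Assumption~\ref{ass:A}. The only cosmetic difference is that you establish the sharper identity $\lim_{t\to+\infty}U_1(t)=|\pa M|_g$ using $|\na\ffi|_g^2\to 1$ at $\pa M$, whereas the paper only needs the one-sided bound $\lim_{t\to+\infty}U_1(t)\leq|\pa M|_g$ coming from $|\D u|^2\leq u^2-1$; both suffice and the overall structure is unchanged.
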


\begin{proof}
From formula~\eqref{eq:limup_A} and Theorem~\ref{thm:main_A}-(ii) we obtain
$$
|{\rm MIN}(u)|\,\, |\Sph^{n-1}| \,\leq\,\lim_{t\to+\infty} U_1(t)\,,
$$
and the equality holds if and only if $U_1(t)$ is constant, that is, if and only if $(M,\go,u)$ is isometric to the anti-de Sitter solution.
On the other hand
$$
\lim_{t\to+\infty} U_p(t)\,=\,\lim_{t\to+\infty} \!\int\limits_{\{u=t\}}\frac{|\D u|\,\,\,}{(u^2-1)^{\frac{n}{2}}}\,\rmd\sigma\,=\,\lim_{t\to+\infty} \int\limits_{\{u=t\}}\!\sqrt{\frac{|\D u|^2}{u^2-1}}\rmd\sigma_g\,\leq\,|\pa M|_g\,,
$$
where the last inequality follows from Assumption~\ref{ass:A}.
\end{proof}

An immediate corollary of Theorem~\ref{thm:mon_glob_A} above is the following uniqueness result.

\begin{corollary}
\label{cor:uniqueness_A}
Let $(M,\go,u)$ be a conformally compact static solution to problem~\eqref{eq:pb_A}. If the conformal boundary is isometric to the sphere $(\Sph^{n-1},g_{\Sph^{n-1}})$, then $(M,\go,u)$ is isometric to the anti-de Sitter solution. 
\end{corollary}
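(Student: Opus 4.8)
The plan is to recognize the statement as the equality (rigidity) case of the sandwich inequality in Theorem~\ref{thm:mon_glob_A}, so that the entire argument reduces to verifying the hypotheses of that theorem and checking that both sides of~\eqref{eq:mon_glob_A} equal $|\Sph^{n-1}|$. First I would dispense with the normalization: since problem~\eqref{eq:pb_A} is invariant under multiplication of $u$ by a positive constant, I may rescale so that $\min_M u = 1$, putting Normalization~\ref{norm:A} in force at no cost.

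The first substantive step is to promote the weaker hypotheses of the corollary to Assumption~\ref{ass:A}. Here I would invoke the classical analysis recalled after Assumption~\ref{ass:A}: once the conformal boundary is diffeomorphic to a sphere and carries a representative isometric to $(\Sph^{n-1},g_{\Sph^{n-1}})$, the computations in~\cite{Qing} (formul\ae~(3.2),~(3.3)) show that $1/\sqrt{u^2-1}$ is a defining function for $\pa M$ and that $u^2-1-|\D u|^2 \to 0$ at the conformal boundary. In particular $\lim_{x\to\bar x}\left(u^2-1-|\D u|^2\right)=0\geq 0$ for every $\bar x\in\pa M$, so Assumption~\ref{ass:A} holds, the metric $g=\go/(u^2-1)$ extends smoothly to $\pa M$, and its induced boundary metric is the round one. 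This identification is exactly what pins the boundary area: $|\pa M|_g = |\Sph^{n-1}|$.

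Next I would control the left-hand side of~\eqref{eq:mon_glob_A}. Because $u\to+\infty$ along every end (equivalently $r=1/\sqrt{u^2-1}\to 0$ at $\pa M$), the function $u$ is proper on $M$ and bounded below by its infimum $1$; hence the minimum is attained and ${\rm MIN}(u)\neq\emptyset$, which gives $|{\rm MIN}(u)|\geq 1$ (the set is moreover discrete and finite by Theorem~\ref{thm:estimate_A}). With these two facts in hand the conclusion is immediate: Theorem~\ref{thm:mon_glob_A} yields the chain
\[
|\Sph^{n-1}| \,\leq\, |{\rm MIN}(u)|\,|\Sph^{n-1}| \,\leq\, |\pa M|_g \,=\, |\Sph^{n-1}| \,,
\]
so every inequality is an equality; in particular $|{\rm MIN}(u)|\,|\Sph^{n-1}| = |\pa M|_g$, and the rigidity clause of Theorem~\ref{thm:mon_glob_A} forces $(M,\go,u)$ to be isometric to the anti-de Sitter solution.

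The only genuinely delicate point is the second step, namely extracting Assumption~\ref{ass:A} from the bare round-sphere hypothesis: one must know that the natural defining function is $1/\sqrt{u^2-1}$ (so that $g=\go/(u^2-1)$ is the correct compactification and $|\pa M|_g$ is literally the round area, not just a conformally rescaled one) and that $u^2-1-|\D u|^2$ vanishes on the conformal boundary. This is why I would lean on the established estimates of~\cite{Qing,Wang} rather than re-deriving them; everything that follows is a soft sandwich argument built directly on Theorem~\ref{thm:mon_glob_A}.
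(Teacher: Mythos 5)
Your proposal is correct and follows essentially the same route as the paper: the paper's own proof simply notes that, by the analysis of~\cite{Qing}, the round-sphere boundary hypothesis forces Assumption~\ref{ass:A} to hold, and then invokes Theorem~\ref{thm:mon_glob_A}. Your version merely makes explicit the sandwich $|\Sph^{n-1}|\leq|{\rm MIN}(u)|\,|\Sph^{n-1}|\leq|\pa M|_g=|\Sph^{n-1}|$ and the resulting appeal to the rigidity clause, which is exactly what the paper leaves implicit.
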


\begin{proof}
As already commented below Assumption~\ref{ass:A}, it is known (see~\cite{Qing}) that, in the case in which the conformal boundary is a sphere, then Assumption~\ref{ass:A} is automatically satisfied. 
Therefore Theorem~\ref{thm:mon_glob_A} is in charge and we have the thesis.
\end{proof}

The result above extends the uniqueness theorems in~\cite{Qing} and~\cite{Wang}, where the same thesis is obtained for $n\leq 7$ or $M$ spin.

In order to have a clearer exposition, and to highlight the analogies between the results in this section and the ones in Subsection~\ref{sub:further_D}, for the rest of this section we will assume the following stronger version of Assumption~\ref{ass:A}. 

\begin{customass}{2-bis}
\label{ass:A2}
The triple $(M,\go,u)$ is conformally compact, the function $1/\sqrt{u^2-1}$ is a defining function for $\pa M$ and $\lim_{x\to \bar x} \left(u^2-1-|\D u|^2\right)= 0$ for every $\bar x\in\pa M$.
\end{customass}

First, we observe that with this additional hypotesis, formula~\eqref{eq:mon_glob_A} in Theorem~\ref{thm:rig_gen_A} becomes
\begin{equation}
\label{eq:cond_1_bis_A}
\int\limits_{\pa M} \! \left[(n-1)(n-2)-\RRR_g^{\pa M}\right]\,  \rmd\sigma_g \,\, \geq \,\, 0 \, ,
\end{equation}

Now we use formula~\eqref{eq:cond_1_bis_A} to prove the analogue of Theorem~\ref{thm:ul_bounds_bound_D}-(i). 

\begin{theorem}
\label{thm:ul_bounds_bound_A}
Let $(M,\go,u)$ be a solution to problem~\eqref{eq:pb_A} satisfying Normalization~\ref{norm:A} and Assumption~\ref{ass:A2}, and let $g=g_0/(u^2-1)$.
Then for every $p\geq 2$ it holds
\begin{equation}
\frac{|{\rm MIN}(u)|\,|\Sph^{n-1}|}{|\pa M|_g} \,\, \leq \,\, \fint\limits_{\pa M}  \bigg|\,\frac{\RRR_g^{\pa M}\,-\,(n+1)(n-2)}{2\,(n-2)}\,\bigg|^{\frac{p}{2}} \,\, \rmd\sigma_g \, ,
\end{equation}
where $\RRR_g^{\pa M}$ denotes the scalar curvature of the metric induced by $g$ on $\pa M$. 
\end{theorem}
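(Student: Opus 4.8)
The plan is to derive the estimate directly from two ingredients already available: the area bound of Theorem~\ref{thm:mon_glob_A} and the boundary scalar-curvature inequality~\eqref{eq:cond_1_bis_A} (which is exactly the place where the strengthened Assumption~\ref{ass:A2} is used). In contrast to the case $\Lambda>0$, where the proof of Theorem~\ref{thm:ul_bounds_bound_D}-(i) is forced to carry the weight $|\D u|$ and to appeal to the sharper appendix inequality~\eqref{eq:bound_BGH_D}, here the relevant boundary integral~\eqref{eq:cond_1_bis_A} is unweighted. As a consequence a clean convexity argument suffices, and no H\"older step of the kind used in the $\Lambda>0$ case is needed.

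First I would record that Theorem~\ref{thm:mon_glob_A} gives $|{\rm MIN}(u)|\,|\Sph^{n-1}|\leq|\pa M|_g$, so that the left-hand side of the claimed inequality is at most $1$. Then I would rewrite~\eqref{eq:cond_1_bis_A} as $\int_{\pa M}\RRR_g^{\pa M}\,\rmd\sigma_g\leq(n-1)(n-2)\,|\pa M|_g$ and compute the $g$-average of the quantity
\begin{equation*}
f\,:=\,\frac{\RRR_g^{\pa M}-(n+1)(n-2)}{2\,(n-2)}
\end{equation*}
that appears inside the absolute value. Using $(n-1)(n-2)-(n+1)(n-2)=-2(n-2)$, this average comes out as $\fint_{\pa M} f\,\rmd\sigma_g\leq -1$, so in particular $|\fint_{\pa M} f\,\rmd\sigma_g|\geq 1$. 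It is worth noticing that the normalizing constant $(n+1)(n-2)$ is chosen precisely so that this average equals $-1$ on the anti-de Sitter solution, which is what will make the final inequality sharp.

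The last step is an application of Jensen's inequality against the probability measure $\rmd\sigma_g/|\pa M|_g$. Since $p\geq 2$, the function $x\mapsto x^{p/2}$ is convex on $[0,\infty)$, so Jensen's inequality gives the first of the inequalities
\begin{equation*}
\fint_{\pa M}|f|^{p/2}\,\rmd\sigma_g\,\geq\,\bigg(\fint_{\pa M}|f|\,\rmd\sigma_g\bigg)^{\!p/2}\,\geq\,\bigg|\fint_{\pa M} f\,\rmd\sigma_g\bigg|^{p/2}\,\geq\,1,
\end{equation*}
while the second inequality is the triangle inequality $\fint_{\pa M}|f|\,\rmd\sigma_g\geq|\fint_{\pa M} f\,\rmd\sigma_g|$ raised to the power $p/2$, and the third is the bound $|\fint_{\pa M} f\,\rmd\sigma_g|\geq 1$ from the previous step. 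Combining this with the area bound then yields
\begin{equation*}
\frac{|{\rm MIN}(u)|\,|\Sph^{n-1}|}{|\pa M|_g}\,\leq\,1\,\leq\,\fint_{\pa M}\bigg|\frac{\RRR_g^{\pa M}-(n+1)(n-2)}{2\,(n-2)}\bigg|^{p/2}\,\rmd\sigma_g,
\end{equation*}
which is the assertion.

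I do not expect a genuine obstacle here: the argument is essentially sign bookkeeping together with a single application of convexity. The points demanding care are getting the sign of the average right---it is negative, opposite to what happens for $\Lambda>0$, but this is harmless since we immediately pass to $|f|$---and observing that the hypothesis $p\geq 2$ is exactly the convexity threshold for $|\cdot|^{p/2}$. I would also check at the outset that $|\pa M|_g$ is finite and strictly positive, which is guaranteed because $g$ extends smoothly to the conformal boundary under Assumption~\ref{ass:A2}.
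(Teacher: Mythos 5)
Your argument is correct and follows essentially the same route as the paper: both proofs combine the rearranged boundary inequality~\eqref{eq:cond_1_bis_A} (giving that the average of $f$ is $\leq -1$, equivalently $|\pa M|_g\leq\int_{\pa M}(-f)\,\rmd\sigma_g$), a convexity step for $t\mapsto t^{p/2}$ with $p\geq 2$ (the paper phrases it as H\"older/Jensen with exponents $p/(p-2)$ and $p/2$, which is the same estimate), and the area bound of Theorem~\ref{thm:mon_glob_A}. Your sign bookkeeping, including $\fint_{\pa M}f\,\rmd\sigma_g\leq-1$, checks out, so no further comment is needed.
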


\begin{proof}
First, we rearrange formula~\eqref{eq:cond_1_bis_A} in the following way
$$
|\pa M|_g\,\,\leq\,\,
\int\limits_{\pa M}  \bigg[\,\frac{-\,\RRR_g^{\pa M}\,+\,(n+1)(n-2)}{2(n-2)}\,\bigg] \,\, \rmd\sigma_g\,.
$$ 
Now we rewrite the right hand side of the above formula, using Jensen Inequality. We obtain
$$
|\pa M|_g\,\,\leq\,\,|\pa M|_g^{\frac{p-2}{p}}
\Bigg[\,\,\int\limits_{\pa M}  \bigg|\,\frac{\,\RRR_g^{\pa M}\,-\,(n+1)(n-2)}{2(n-2)}\,\bigg|^{\frac{p}{2}} \,\, \rmd\sigma_g\,\Bigg]^{\frac{2}{p}}\,,
$$ 
that may be rewritten as
$$
|\pa M|_g\,\,\leq\,\,\int\limits_{\pa M}  \bigg|\,\frac{\,\RRR_g^{\pa M}\,-\,(n+1)(n-2)}{2(n-2)}\,\bigg|^{\frac{p}{2}} \,\, \rmd\sigma_g\,.
$$ 
Now the thesis is an immediate consequence of Theorem~\ref{thm:mon_glob_A}.
\end{proof}

Finally, setting $p=2(n-1)$ in Theorem~\ref{thm:ul_bounds_bound_A} above, we obtain the analogue of Corollary~\ref{cor:will_D}.

\begin{corollary}[Willmore-type inequality]
\label{cor:will_A}
Let $(M,\go,u)$ be a solution to problem~\eqref{eq:pb_A} satisfying Normalization~\ref{norm:A} and Assumption~\ref{ass:A2}, and let $g=g_0/(u^2-1)$.
Then it holds
\begin{equation}
\left(|{\rm MIN}(u)|\,|\Sph^{n-1}|\right)^{\frac{1}{n-1}} \,\, \leq \,\,  \bigg\|\,\frac{\RRR_g^{\pa M}\,-\,(n+1)(n-2)}{2\,(n-2)}\,\bigg\|_{L^{n-1}(\pa M)}  \, ,
\end{equation}
where $\RRR_g^{\pa M}$ denotes the scalar curvature of the metric induced by $g$ on $\pa M$. 
\end{corollary}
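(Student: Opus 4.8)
The plan is to derive this statement as the specialization of Theorem~\ref{thm:ul_bounds_bound_A} to the conformally natural exponent $p=2(n-1)$, in complete analogy with the way Corollary~\ref{cor:will_D} was obtained from Theorem~\ref{thm:ul_bounds_bound_D}-(i). Since $n\geq 3$ throughout, the value $p=2(n-1)\geq 4$ satisfies the hypothesis $p\geq 2$ required by Theorem~\ref{thm:ul_bounds_bound_A}, so the theorem is in force. The point of this particular choice is that the exponent $p/2$ appearing in the integrand on the right-hand side then equals exactly $n-1$.

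First I would substitute $p=2(n-1)$ into the inequality provided by Theorem~\ref{thm:ul_bounds_bound_A}, namely
\begin{equation*}
\frac{|{\rm MIN}(u)|\,|\Sph^{n-1}|}{|\pa M|_g} \,\, \leq \,\, \fint\limits_{\pa M}  \bigg|\,\frac{\RRR_g^{\pa M}\,-\,(n+1)(n-2)}{2\,(n-2)}\,\bigg|^{n-1} \,\, \rmd\sigma_g \, .
\end{equation*}
Next I would unwind the averaged integral, writing $\fint_{\pa M}=\tfrac{1}{|\pa M|_g}\int_{\pa M}$, and multiply both sides by the positive quantity $|\pa M|_g$. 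The boundary area then cancels, leaving
\begin{equation*}
|{\rm MIN}(u)|\,|\Sph^{n-1}| \,\, \leq \,\, \int\limits_{\pa M}  \bigg|\,\frac{\RRR_g^{\pa M}\,-\,(n+1)(n-2)}{2\,(n-2)}\,\bigg|^{n-1} \,\, \rmd\sigma_g \, .
\end{equation*}

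Finally I would recognize the right-hand side as the $(n-1)$-th power of the $L^{n-1}(\pa M)$-norm of $\frac{\RRR_g^{\pa M}-(n+1)(n-2)}{2\,(n-2)}$, and take $(n-1)$-th roots of both (nonnegative) sides to obtain the asserted Willmore-type inequality. There is no genuine obstacle here: the analytic substance has already been established in Theorem~\ref{thm:ul_bounds_bound_A}, which itself rests on the monotonicity of the $U_p$'s (Theorem~\ref{thm:main_A}-(iii)) and the boundary estimate~\eqref{eq:cond_1_bis_A} valid under Assumption~\ref{ass:A2}. Consistently with the absence of a rigidity claim in Theorem~\ref{thm:ul_bounds_bound_A}---ultimately traceable to the failure of the rigidity statement in Theorem~\ref{thm:main_A}-(iii) for $\Lambda<0$---no equality characterization is asserted in the corollary, so none needs to be tracked through these manipulations.
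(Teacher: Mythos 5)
Your proposal is correct and coincides with the paper's own derivation: the text explicitly obtains Corollary~\ref{cor:will_A} by setting $p=2(n-1)$ in Theorem~\ref{thm:ul_bounds_bound_A}, and your unwinding of the averaged integral and passage to $(n-1)$-th roots is exactly the intended (and valid) computation. Your remark on the absence of a rigidity statement is also consistent with the paper.
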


\section{A conformally equivalent formulation of the problem}
\label{sec:conf_reform}
The aim of this section is to reformulate system~\eqref{eq:pb_D} and  system~\eqref{eq:pb_A} in a conformally equivalent setting.

\subsection{A conformal change of metric (case $\Lambda>0$). }

First of all, we notice that if $(M,g_0, u)$ is a solution of problem~\eqref{eq:pb_D} and satisfies Normalization~\ref{norm:D}, then one has that $1-u^2 > 0$ everywhere in $M^*=M\setminus {\rm MAX}(u)$.

Motivated by the explicit formul\ae~\eqref{eq:D} of the de Sitter solution, we are led to consider the following conformal change of metric
\begin{equation}\label{eq:g_D}
g\,=\,\frac{g_0}{1-u^2}\,.
\end{equation}
on the manifold $M^*$. It is immediately seen that when $u$ and $g_0$ are as in~\eqref{eq:D} then $g$ is a cylindrical metric. Hence, we will refer to the conformal change~\eqref{eq:g_D} as to a {\em cylindrical ansatz}.

Our next task is to reformulate problem~\eqref{eq:pb_D} in terms of $g$. To this aim we fix local coordinates $\{y^{\alpha}\}_{\alpha=1}^n$ in $M^*$
and using standard formul\ae\ for conformal changes of metrics, 
we deduce that the Christoffel symbols $\Gamma_{\alpha\beta}^\gamma$ and $\Cr_{\alpha\beta}^\gamma$, of the metric $g$ and
$g_0$ respectively, are related to each other via the identity
\begin{equation}
\label{eq:christoffels_D}
\Gamma_{\alpha\beta}^{\gamma}=\Cr_{\alpha\beta}^{\gamma}+\frac{u}{1-u^2}\left(\delta_{\alpha}^{\gamma}\pa_{\beta}u+\delta_{\beta}^{\gamma}\pa_{\alpha}u-(g_0)_{\alpha\beta}(g_0)^{\gamma\eta}\pa_{\eta}u\right)
\end{equation}
Comparing the local expressions for the Hessians of a given function $w \in {\mathscr C}^2(M^*)$ with respect to the metrics $g$ and $g_0$, namely
$\nana_{\alpha\beta}w=\pa^{\,2}_{\alpha\beta}w-
\Gamma_{\alpha\beta}^{\gamma}\pa_{\gamma}w$ and $\DD_{\alpha\beta}w=\pa^{\,2}_{\alpha\beta}w-
\Cr_{\alpha\beta}^{\gamma}\pa_{\gamma}w$,
one gets 
\begin{align*}
\nana_{\alpha\beta} w &=\DD_{\alpha\beta} w-\frac{u}{1-u^2}\left(\pa_{\alpha}u\, \pa_{\beta}w+\pa_{\alpha}w\, \pa_{\beta}u-\langle \D u\,|\,\D w\rangle\, g^{(0)}_{\alpha\beta}\right)
\\
\Deg w &=(1-u^2)\De w+(n-2)u\,\langle \D u\,|\,\D w\rangle_{g_0}
\end{align*}

\noindent We note that in the above expressions as well as in the following ones, the notations $\na$ and $\Deg$ represent the Levi-Cita connection and the Laplace-Beltrami operator of the metric $g$. In particular, letting $w=u$ and using $\De u =- n\,u$, one has
\begin{align}
\label{eq:hess_D}
\nana_{\alpha\beta}u
     &\,=\,\DD_{\alpha\beta}u \, - \, \frac{u}{1-u^2}
            \,\Big( \, 2 \,\pa_{\alpha}u\,\pa_{\beta}u
             \, - \, |\D u|^2\,
              g^{(0)}_{\alpha\beta} \, \Big) \, ,\\         
\label{eq:lapl_D}
\Deg u
     &\,=\, - n\, u\, (1-u^2) + \, (n-2) \,\, |\D u |^2 
\, .                
\end{align}
To continue, we observe that the Ricci tensor $\Ricg=\cRicg_{\alpha\beta}\,dy^{\alpha}\!\otimes dy^{\beta}$ of the metric $g$ can be expressed\vspace{-0.1cm}\\
in terms of the Ricci tensor $\Ric ={\rm R}^{(0)}_{\alpha\beta}\,dy^{\alpha}\!\otimes dy^{\beta}$ of the metric $g_0$ as 
\begin{equation}
\label{eq:ricci_2_D}
\cRicg_{\alpha\beta}=\cRic_{\alpha\beta}-\frac{(n-2)u}{1-u^2}\,\DD_{\alpha\beta} u-\frac{n-2}{(1-u^2)^2}\,\pa_{\alpha} u\,\pa_{\beta} u-\left(\frac{u\,\De u}{1-u^2}+\frac{(n-1)u^2+1}{(1-u^2)^2}\,|\D u|^2\right) g^{(0)}_{\alpha\beta}.
\end{equation}

If we plug equations $\De u= -n\, u$ and $u\,\Ric=\DD u + nu\go$ in the above formula we obtain:

\begin{equation}\label{eq:ricci_D}
\mbox{R}^{(g)}_{\alpha\beta}=\frac{1-(n-1)u^2}{u(1-u^2)}\DD_{	\alpha\beta} u-\frac{n-2}{(1-u^2)^2}\,\pa_{\alpha} u\,\pa_{\beta} u
+\left(\frac{n}{1-u^2}-\frac{(n-1)u^2+1}{(1-u^2)^2}\,|\D u|^2\right)g^{(0)}_{\alpha\beta}
\end{equation}
In order to obtain nicer formul\ae, it is convenient to introduce the new variable
\begin{equation}\label{eq:ffi_D}
\ffi=\frac{1}{2}\log\left(\frac{1+u}{1-u}\right)\qquad\iff\quad u=\tanh(\ffi).
\end{equation}
As a consequence, we have that 
\begin{align}
\label{eq:defideu_D}
\pa_{\alpha}\ffi&=\frac{1}{1-u^2}\pa_{\alpha}u
\\
\label{eq:dedefidedeu_D}
\nana_{\alpha\beta}\ffi&=\frac{1}{1-u^2}\,\DD_{\alpha\beta} u+\frac{u}{(1-u^2)^2}\,|\D u|^2\, g^{(0)}_{\alpha\beta}
\end{align}

For future convenience, we report the relation between $|\na \ffi|^2_\g$ and $|\D u|^2$ as well as the one between $|\nana \ffi|_\g^2$ and $|\DD u|^2$, namely
\begin{align}
\notag
|\na \ffi|^2_\g &\, = \, \frac{ |\D u|^2 }{1-u^2} \, , 
\\
\label{eq:|nanaffi|_D}
|\nana \ffi |_\g^2 
& \, = \, |\DD u|^2 \, + \, n\, u^2 \, \frac{ |\D u|^2 }{1-u^2} \left( \frac{ |\D u|^2 }{1-u^2} - 2\right)\, .
\end{align}

Combining expressions~\eqref{eq:hess_D},~\eqref{eq:lapl_D},~\eqref{eq:ricci_D} together with~\eqref{eq:defideu_D},~\eqref{eq:dedefidedeu_D}, we are now in the position to reformulate problem~\eqref{eq:pb_D} as

\begin{equation}
\label{eq:pb_conf_D}
\begin{dcases}
\Ricg=\left(\coth(\ffi)-(n-1)\tanh(\ffi)\right)\nana\ffi-(n-2)d\ffi\otimes d\ffi+\left(n-2|\na \ffi|_g^2\right)g, & \mbox{in } M^*
\\
\Deg\ffi= - n\,\tanh(\ffi)\,\left(1- |\na\ffi|^2_g\right), & \mbox{in } M^*
\\
\ \ \ \ \ffi=0, & \mbox{on } \pa M^*
\\
\ \ \ \ \ffi\to +\infty & \mbox{as }x\to *
\end{dcases}
\end{equation}

\smallskip

\noindent Here we recall that $M^*$ is the manifold $M\setminus {\rm MAX}(u)$. The notation $x\to *$ means that $x\to p$, where $p$ is a point of ${\rm MAX}(u)$, with respect to the topology induced by $M$ on $M^*$.

\subsection{A conformal change of metric (case $\Lambda<0$). }

First of all, we notice that if $(M,g_0, u)$ is a solution of problem~\eqref{eq:pb_A} and satisfies Normalization~\ref{norm:A}, then one has that $u^2-1 > 0$ everywhere in $M^*=M\setminus {\rm MIN}(u)$.
Motivated by the explicit formul\ae~\eqref{eq:A} of the anti-de Sitter solution, we are led to consider the following conformal change of metric
\begin{equation}\label{eq:g_A}
g=\frac{g_0}{u^2-1}.
\end{equation}
on the manifold $M^*$. Notice that, if Assumption~\ref{ass:A} holds, the function $1/\sqrt{u^2-1}$ is a defining function, hence the metric $g$ extends to the conformal boundary. In particular the volume of $\pa M$ with respect to $g$ is finite, that is
$$
|\pa M|_g \, = \, \lim_{t\to+\infty}\int\limits_{\{u=t\}} \!\! \rmd\sigma_g \, < \, +\infty\,.
$$ 

It is immediately seen that when $u$ and $g_0$ are as in~\eqref{eq:A} then $g$ is a cylindrical metric. Hence, we will refer to the conformal change~\eqref{eq:g_A} as to a {\em cylindrical ansatz}.

Our next task is to reformulate problem~\eqref{eq:pb_A} in terms of $g$. To this aim we fix local coordinates $\{y^{\alpha}\}_{\alpha=1}^n$ in $M^*$
and using standard formul\ae\ for conformal changes of metrics, 
we deduce that the Christoffel symbols $\Gamma_{\alpha\beta}^\gamma$ and $\Cr_{\alpha\beta}^\gamma$, of the metric $g$ and
$g_0$ respectively, are related to each other via the identity
\begin{equation}
\Gamma_{\alpha\beta}^{\gamma}=\Cr_{\alpha\beta}^{\gamma}-\frac{u}{u^2-1}\left(\delta_{\alpha}^{\gamma}\pa_{\beta}u+\delta_{\beta}^{\gamma}\pa_{\alpha}u-(g_0)_{\alpha\beta}(g_0)^{\gamma\eta}\pa_{\eta}u\right)
\end{equation}
Comparing the local expressions for the Hessians of a given function $w \in {\mathscr C}^2(M^*)$ with respect to the metrics $g$ and $g_0$, namely
$\nana_{\alpha\beta}w=\pa^{\,2}_{\alpha\beta}w-
\Gamma_{\alpha\beta}^{\gamma}\pa_{\gamma}w$ and $\DD_{\alpha\beta}w=\pa^{\,2}_{\alpha\beta}w-
\Cr_{\alpha\beta}^{\gamma}\pa_{\gamma}w$,
one gets 
\begin{align*}
\nana_{\alpha\beta} w &=\DD_{\alpha\beta} w+\frac{u}{u^2-1}\left(\pa_{\alpha}u\, \pa_{\beta}w+\pa_{\alpha}w\, \pa_{\beta}u-\langle \D u\,|\,\D w\rangle\, g^{(0)}_{\alpha\beta}\right)
\\
\Deg w &=(u^2-1)\De w-(n-2)u\,\langle \D u\,|\,\D w\rangle_{g_0}
\end{align*}
\smallskip
We note that in the above expressions as well as in the following ones, the notations $\na$ and $\Deg$ represent the Levi-Cita connection and the Laplace-Beltrami operator of the metric $g$. In particular, letting $w=u$ and using $\De u = n\,u$, one has
\begin{align}
\label{eq:hess_A}
\nana_{\alpha\beta}u
     &\,=\,\DD_{\alpha\beta}u \, + \, \frac{u}{u^2-1}
            \,\Big( \, 2 \,\pa_{\alpha}u\,\pa_{\beta}u
             \, - \, |\D u|^2\,
              g^{(0)}_{\alpha\beta} \, \Big) \, ,\\         
\label{eq:lapl_A}
\Deg u
     &\,=\, n\, u\, (u^2-1) - \, (n-2) \,\, |\D u |^2 
\, .                
\end{align}
To continue, we observe that the Ricci tensor $\Ricg=\cRicg_{\alpha\beta}\,dy^{\alpha}\!\otimes dy^{\beta}$ of the metric $g$ can be expressed\vspace{-0.1cm}\\
in terms of the Ricci tensor $\Ric = \R^{(0)}_{\alpha\beta}\,dy^{\alpha}\!\otimes dy^{\beta}$ of the metric $g_0$ as 
\begin{equation}
\label{eq:ricci_2_A}
\cRicg_{\alpha\beta}=\cRic_{\alpha\beta}+\frac{(n-2)u}{u^2-1}\,\DD_{\alpha\beta} u-\frac{n-2}{(u^2-1)^2}\,\pa_{\alpha} u\,\pa_{\beta} u+\left(\frac{u\,\De u}{u^2-1}-\frac{(n-1)u^2+1}{(u^2-1)^2}\,|\D u|^2\right) g^{(0)}_{\alpha\beta}.
\end{equation}

If we plug equations $\De u= n\, u$ and $u\,\Ric=\DD u-nu\go$ in the above formula we obtain
\begin{equation}\label{eq:ricci_A}
\mbox{R}^{(g)}_{\alpha\beta}=\frac{(n-1)u^2-1}{u(u^2-1)}\DD_{	\alpha\beta} u-\frac{n-2}{(u^2-1)^2}\,\pa_{\alpha} u\,\pa_{\beta} u
+\left(\frac{n}{u^2-1}-\frac{(n-1)u^2+1}{(u^2-1)^2}\,|\D u|^2\right)g^{(0)}_{\alpha\beta}
\end{equation}
In order to obtain nicer formul\ae, it is convenient to introduce the new variable
\begin{equation}\label{eq:ffi_A}
\ffi=\frac{1}{2}\log\left(\frac{u+1}{u-1}\right)\qquad\iff\quad u=\coth(\ffi).
\end{equation}
As a consequence, we have that 
\begin{align}
\label{eq:defideu_A}
\pa_{\alpha}\ffi&=-\frac{1}{u^2-1}\pa_{\alpha}u
\\
\label{eq:dedefidedeu_A}
\nana_{\alpha\beta}\ffi&=-\frac{1}{u^2-1}\,\DD_{\alpha\beta} u+\frac{u}{(u^2-1)^2}\,|\D u|^2\, g^{(0)}_{\alpha\beta}
\end{align}
For future convenience, we report the relation between $|\na \ffi|^2_\g$ and $|\D u|^2$ as well as the one between $|\nana \ffi|_\g^2$ and $|\DD u|^2$, namely
\begin{align}
\notag
|\na \ffi|^2_\g &\, = \, \frac{ |\D u|^2 }{u^2-1} \, , 
\\
\label{eq:|nanaffi|_A}
|\nana \ffi |_\g^2 
& \, = \, |\DD u|^2 \, + \, n\, u^2 \, \frac{ |\D u|^2 }{u^2-1} \left( \frac{ |\D u|^2 }{u^2-1} - 2\right)\, .
\end{align}

Combining expressions~\eqref{eq:hess_A},~\eqref{eq:lapl_A},~\eqref{eq:ricci_A} together with~\eqref{eq:defideu_A},~\eqref{eq:dedefidedeu_A}, we are now in the position to reformulate problem~\eqref{eq:pb_A} as

\begin{equation}\label{eq:pb_conf_A}
\begin{dcases}
\Ricg=\left(\tanh(\ffi)-(n-1)\coth(\ffi)\right)\nana\ffi-(n-2)d\ffi\otimes d\ffi+\left(n-2|\na \ffi|_g^2\right)g \, , & \mbox{in } M^*
\\
\Deg\ffi=-n\,\coth(\ffi)\,\left(1-|\na\ffi|^2_g\right)\, , & \mbox{in } M^*
\\
\ \ \ \ \ffi = 0\,, & \mbox{on } \pa M^*
\\
\ \ \ \ \ffi\rightarrow +\infty & \mbox{as }x\rightarrow * \, .
\end{dcases}
\end{equation}

\smallskip

\noindent Here we recall that $M^*$ is the manifold $M\setminus {\rm MIN}(u)$ and that $\pa M^*$ is the conformal boundary of $M^*$. The notation $x\to *$, means that $x\to p$, where $p$ is a point of ${\rm MIN}(u)$, with respect to the topology induced by $M$ on $M^*$.

\subsection{A unifying formalism. }

We recall that the relation between $u$ and $\ffi$ is given by~\eqref{eq:ffi_D} if $\Lambda>0$ and by~\eqref{eq:ffi_A} if $\Lambda<0$. In both cases, $u = u(\ffi)$ obeys the equation
\begin{equation*}
\frac{du}{d\ffi}=1-u^2 \, .
\end{equation*}
Since this is the only formal property of $u$ that will be needed in the following, we proceed by noticing that both systems~\eqref{eq:pb_conf_D} and~\eqref{eq:pb_conf_A} can be rewritten in the form
\begin{equation}\label{eq:pb_conf}
\begin{dcases}
\Ricg= \Big(\frac{1}{u}-(n-1)u \Big)\nana\ffi-(n-2)d\ffi\otimes d\ffi+\left(n-2+2(1-|\na \ffi|_g^2)\right)g \, , & \mbox{in } M^*
\\
\Deg\ffi=-n\,u\,\left(1-|\na\ffi|^2_g\right) \, , & \mbox{in } M^*
\\
\ \ \ \ \ffi=0\, , & \mbox{on } \pa M^*
\\
\ \ \ \; \ffi\rightarrow +\infty \, , & \mbox{as } x\to * \, ,
\end{dcases}
\end{equation}
where eventually $u = \tanh (\ffi)$ or $\coth(\ffi)$.

To describe the idea that will lead us throughout the analysis of system~\eqref{eq:pb_conf}, we note that taking the trace of the first equation one gets
\begin{equation}
\label{eq:tilde_R}
\frac{\Rg}{n-1} \, = \, (n-2) + (nu^2 + 2)\left( 1 - |\na\ffi|^2_g \right)\, ,
\end{equation}
where $\Rg$ is the scalar curvature of the metric $\g$. It is important to observe that in the cylindrical situation, which is the conformal counterpart of the (anti-)de Sitter solution, $\Rg$ has to be constant. In this case, the above formula implies that also $|\na \ffi|_g$ has to be constant and equal to $1$. 
For these reasons, also in the  situation, where we do not know a priori if $g$ is cylindrical, it is natural to think of $\na \ffi$ as to a candidate splitting direction and to investigate under which conditions this is actually the case. 

Now we rephrase Assumptions~\ref{ass:D} and Assumption~\ref{ass:A} in terms of $\ffi$.

\begin{assumption}
\label{ass:conf}
We require the following
\begin{itemize}
\item[(i)] In the case $\Lambda>0$, we assume $1-|\na\ffi|_g^2\geq 0$ on $\pa M$. 
\item[(ii)] In the case $\Lambda<0$, we suppose that $\lim_{x\to\bar x}u^2(1-|\na\ffi|_g^2)\geq 0$ for every point $\bar x\in\pa M$.
\end{itemize}
\end{assumption}

This assumption allows to estimate the behavior of $|\na\ffi|_g$ on the whole manifold $M^*$.

\begin{lemma}
\label{le:grad_ffi}
Let $(M^*,g,\ffi)$ be a solution of problem~\eqref{eq:pb_conf} satisfying Assumption~\ref{ass:conf}. Then the following condition holds on the whole manifold $M^*$
$$
1-|\na\ffi|_g^2\,\geq\, 0\,.
$$
\end{lemma}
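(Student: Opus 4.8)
The plan is to prove the statement in the original variables $(M,\go,u)$ rather than in the conformal ones, since there the computation closes up sharply (a direct Bochner analysis of $|\na\ffi|_g^2$ in the conformal picture only yields the non-sharp bound $|\na\ffi|_g^2\leq n$). Recalling from Section~\ref{sec:conf_reform} that $|\na\ffi|_g^2=|\D u|^2/(1-u^2)$ if $\Lambda>0$ and $|\na\ffi|_g^2=|\D u|^2/(u^2-1)$ if $\Lambda<0$, and that the corresponding denominator is strictly positive on $M^*$, I set
$$P\,=\,|\D u|^2-(1-u^2)\quad(\Lambda>0),\qquad P\,=\,|\D u|^2-(u^2-1)\quad(\Lambda<0),$$
so that $1-|\na\ffi|_g^2=-P/(1-u^2)$, resp. $-P/(u^2-1)$, and the desired inequality $1-|\na\ffi|_g^2\geq0$ on $M^*$ is equivalent to $P\leq0$. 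The hypothesis Assumption~\ref{ass:conf} is precisely what provides the boundary control on $P$: for $\Lambda>0$ it reads $|\D u|\leq1$ on $\pa M$ (Assumption~\ref{ass:D}), and for $\Lambda<0$ it reads $\lim_{x\to\bar x}(u^2-1-|\D u|^2)\geq0$ (Assumption~\ref{ass:A}).

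First I would derive a Bochner-type identity for $P$. Applying the Bochner formula to $|\D u|^2$, using $\De u=\mp n\,u$ to get $\langle\D u,\D\De u\rangle=\mp n|\D u|^2$, and inserting $\Ric(\D u,\D u)=\tfrac1u\DD u(\D u,\D u)\pm n|\D u|^2$ from the first equation of~\eqref{eq:pb_D} (resp.~\eqref{eq:pb_A}), the two $\pm n|\D u|^2$ terms cancel; after adding $\De(\pm u^2)$ and rewriting $\langle\D|\D u|^2,\D u\rangle$ in terms of $\langle\D P,\D u\rangle$, all zeroth order terms drop out and both cases collapse to the single identity
$$\De P-\frac1u\,\langle\D P,\D u\rangle\,=\,2\Big(|\DD u|^2-\tfrac{(\De u)^2}{n}\Big)\,\geq\,0,$$
the inequality being the Cauchy--Schwarz (trace) estimate $|\DD u|^2\geq(\De u)^2/n$. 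Hence $P$ is a subsolution, with vanishing zeroth order coefficient, of the elliptic operator $L=\De-\tfrac1u\langle\D\,\cdot\,,\D u\rangle$ on the region $\{u>0\}$, which is all of $\mathrm{int}(M)$ in both problems.

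The concluding step is a maximum principle argument, and this is where the main obstacle lies: the drift $\D u/u$ of $L$ degenerates exactly where $u\to0$, i.e.\ on $\pa M$ when $\Lambda>0$, while for $\Lambda<0$ the manifold is non-compact with a conformal infinity. I would circumvent both issues by applying the \emph{strong} maximum principle only in $\mathrm{int}(M)$, where $L$ has locally bounded coefficients and no zeroth order term, and by controlling $P$ near the boundary separately. For $\Lambda>0$, on the compact $M$ one has $P=|\D u|^2-1\leq0$ on $\pa M$ by Assumption~\ref{ass:conf}, and $P=0$ at the points of $\mathrm{MAX}(u)$; were $\max_M P>0$, it would be attained at an interior point, and the strong maximum principle would force $P$ to be a positive constant, contradicting $P\leq0$ on $\pa M$. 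For $\Lambda<0$, Assumption~\ref{ass:conf} gives $\limsup_{x\to\pa M}P\leq0$, so every super-level set $\{P\geq\varepsilon\}$ with $\varepsilon>0$ is compact in $M$; a positive interior maximum would again yield, via the strong maximum principle, the constant $P\equiv c>0$, contradicting $\limsup_{x\to\pa M}P\leq0$. In either case $P\leq0$ on $M$, which is the claim. Observe that only the sign of $P$ is needed here, so no Obata-type rigidity enters; the equality case $|\DD u|^2=(\De u)^2/n$ of the trace inequality is precisely what will later drive the rigidity statements.
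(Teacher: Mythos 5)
Your argument is correct, and the underlying mechanism is the same as the paper's: a Bochner identity exhibits the relevant scalar as a subsolution of a drift Laplacian with no zeroth-order term, and a maximum principle propagates the boundary sign into the interior. The difference is purely in where the computation is carried out, and it is worth recording that the two are literally the same identity in different variables. The paper computes $\Deg|\na\ffi|_g^2$ via~\eqref{eq:DegW} and must introduce the weight $w=\beta(\ffi)\,(1-|\na\ffi|_g^2)$ with $\dot\beta/\beta+2u=0$ to cancel the zeroth-order terms, arriving at~\eqref{eq:lapl_w}; since $\beta=1/\cosh^2(\ffi)=1-u^2$ (resp.\ $1/\sinh^2(\ffi)=u^2-1$), your $P$ is exactly $-w$, and your identity $\De P-\tfrac1u\langle\D P\,|\,\D u\rangle=2\big(|\DD u|^2-(\De u)^2/n\big)$ is the conformal translation of~\eqref{eq:lapl_w} --- indeed it is precisely the Boucher--Gibbons--Horowitz identity~\eqref{eq:div_BGH_D} of the appendix, because $Y=\D|\D u|^2-\tfrac2n\De u\,\D u=\D P$. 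What your route buys is a shorter computation (no conformal change, no search for $\beta$) and a cleaner endgame: for $\Lambda>0$ compactness of $M$ replaces the paper's exhaustion by slabs $\{s\leq\ffi\leq S\}$, and for $\Lambda<0$ the compactness of the superlevel sets $\{P\geq\ep\}$, extracted directly from Assumption~\ref{ass:conf}, replaces the paper's separate verification that $w\to0$ along the ends (which rests on $|\D u|\to0$ at the extremal points of $u$). What the paper's conformal form buys is reusability: the same weighted quantity, divided by the measure $\sinh(\ffi)\cosh^{n+1}(\ffi)$ (resp.\ $\sinh^{n+1}(\ffi)\cosh(\ffi)$), is exactly what gets integrated by parts in Proposition~\ref{prop:cyl} for all $p\geq3$, whereas your $P$-identity corresponds only to the case $p=3$. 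Your handling of the degenerate drift $\D u/u$ is the one delicate point, and it is done correctly: the strong maximum principle is invoked only on $\{u>0\}={\rm int}(M)$, and the locus $u=0$ (case $\Lambda>0$) enters solely through the boundary inequality $P\leq0$.
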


\begin{proof}
From the Bochner formula and the equations in~\eqref{eq:pb_conf}, we get
\begin{align}
\Deg |\na\ffi|_g^2 \, &= \, 2 \, \big|\nana\ffi\big|^2_\g
+2\,\Ricg(\na\ffi,\na\ffi)
+2\,\big\langle\na\Deg\ffi \, \big| \,\na\ffi\big\rangle_{\g} \, \notag
\\
\label{eq:DegW}
&= \, 2\, \big|\nana\ffi\big|_g^2 +\left(\frac{1}{u}+(n+1) u\right)\langle \na |\na\ffi|_g^2 \,|\, \na\ffi  \rangle_g \, +\, 2\, n\, u^2\, |\na\ffi|_g^2\, \left(1-|\na\ffi|_g^2\right).
\end{align}
Now we turn to the computation of the gradient and laplacian of the function
$$
w\,\,=\,\,\beta\, \left(\,1-|\na\ffi|_g^2\,\right)\,,
$$
where $\beta=\beta(\ffi)$ is an arbitrary $\mathscr{C}^1$ function.
Using~\eqref{eq:DegW} and~\eqref{eq:pb_conf} again, we get 
$$
\na w \, = \,\frac{\dot \beta}{\beta}\,\, w\, \na \ffi \,-\,\beta\,\na |\na\ffi|_g^2\,.
$$
\begin{align*}
\Deg w \,&=\,-\,\beta\, \Deg |\na\ffi|_g^2\, - \,\frac{\dot \beta}{\beta} \langle\, \beta\,\na |\na\ffi|_g^2 \,|\, \na \ffi\,\rangle_g\, + \,\frac{\dot\beta}{\beta} \langle\, \na w \,|\, \na \ffi\, \rangle_g \,+\, \left(\frac{\ddot\beta}{\beta}-\bigg(\frac{\dot\beta}{\beta}\bigg)^2\right) w \, |\na\ffi|_g^2\, +\, \frac{\dot \beta}{\beta}\, w\, \Deg\ffi  
\\
&=\,-\, 2\beta\, |\nana\ffi|_g^2 \, + \left( 2\,\frac{\dot\beta}{\beta} +\frac{1}{u}+(n+1)u\right) \langle \na w \,|\, \na \ffi\rangle_g
\\
&\phantom{==========} + \, \left(\frac{\ddot\beta}{\beta}-2\bigg(\frac{\dot\beta}{\beta}\bigg)^2-\frac{\dot\beta}{\beta}\bigg(\frac{1}{u}+(n+1) u \bigg)-2\,n\,u^2\right) w\, |\na\ffi|_g^2\, -\dot\beta\, \frac{(\Deg \ffi)^2}{nu}\,.
\end{align*}

We find that the right choice in order to simplify the expression above is to define the function $\beta$ as the solution of the differential equation 
$$
\frac{\dot\beta}{\beta} \, + \, 2 u \, = \,0\,.
$$
More explicitly:
\begin{equation*}
\beta(\ffi)=
\begin{dcases}
\frac{1}{\cosh^2(\ffi)} & \mbox{(case $\Lambda>0$)}\,,
\\
\frac{1}{\sinh^2(\ffi)} & \mbox{(case $\Lambda<0$)}\,.
\end{dcases}
\end{equation*}

\smallskip

\noindent With this choice of $\beta$, the equation above may be rewritten in the simplified form:
\begin{equation}
\label{eq:lapl_w}
\Deg\, w - \left(\frac{1}{u}+(n-3)u\right) \langle \na w \,|\, \na \ffi\rangle_g \, = \,- 2\,\beta \, \left(\big|\nana\ffi\big|^2_g-\frac{(\Deg\, \ffi)^2}{n}\right).
\end{equation}

We notice that the term on the right of equation~\eqref{eq:lapl_w} is always nonpositive, thus the elliptic operator
$$
{\rm L}(\,\cdot\,)\, = \, \Deg\cdot \, - \, \left(\frac{1}{u}+(n-3)u\right) \langle \na \cdot \,|\, \na \ffi\rangle.
$$
satisfies
$$
{\rm L}(w)\,\leq\, 0 \quad \mbox{on $M^*$}.
$$
Thanks to Assumption~\ref{ass:conf}, it holds $w\geq 0$ on $\pa M$. 
Let us suppose for the moment that $w\to 0$ as $\ffi\to +\infty$.
Then, recalling that, since $\ffi$ is analytic, its singular values are discrete (see~\cite{Soucek}), we can choose $s>0$ small enough and $S>0$ big enough in such a way that the level sets $\{\ffi=s\}$ and $\{\ffi=S\}$ are regular.
Thus the set $\{s\leq\ffi\leq S\}$ is a (compact) manifold and we can use the strong minimum principle to obtain
$$
\inf_{\{s\leq\ffi\leq S\}}\, w \, = \, \inf_{\pa\{s\leq\ffi\leq S\}} \, w  \, = \, \min_{\{\ffi=s\}\cup\{\ffi=S\}} \, w \,.
$$
Hence, since $\min_{\{\ffi=S\}}w\to 0$ as $S\to +\infty$, and $\min_{\{\ffi=s\}}w\to \min_{\pa M}w\geq 0$ as $s\to 0^+$, we easily find that $w\geq 0$ on $\{0\leq\ffi<+\infty\}=M^*$. This immediately gives the thesis.

It remains to prove that $\lim_{\ffi\to +\infty}w=0$. It is convenient to rewrite the limit in terms of $u,\go$. In the case $\Lambda>0$, the limit above is equivalent to $\lim_{u\to 1^-}(1-u^2-|\D u|^2)=0$, while in the case $\Lambda<0$ it is equivalent to $\lim_{u\to 1^+}(u^2-1-|\D u|^2)=0$. In both cases, since the points at which $u=1$ are extremals, we have $|\D u|\to 0$ as $u\to 1$ and so the limits above are verified.
\end{proof}

\subsection{The geometry of the level sets of $\ffi$.}
\label{sub:geom_levelsets}
In the forthcoming analysis a crucial role is played by the study of the geometry of the level sets of $\ffi$, which coincide with the level sets of $u$, by definition. 
Hence, we pass now to describe the second fundamental form and the mean curvature of the regular level sets of $\ffi$ (or equivalently of $u$) in both the original Riemannian context $(M, \go)$ and the conformally related one $(M^*, \g)$.
To this aim, we fix a regular level set $\{ \ffi = s_0\}$ and we construct a suitable set of coordinates in a neighborhood of it. Note that $\{ \ffi = s_0\}$ must be compact, by the properness of $\ffi$. In particular, there exists a real number $\delta>0$  such that in the tubular neighborhood $\mathcal{U}_\delta = \{s_0 - \delta < \ffi < s_0 + \delta \}$ we have $|\na \ffi |_\g > 0$ so that $\mathcal{U}_\delta$ is foliated by regular level sets of $\ffi$. As a consequence, $\mathcal{U}_\delta$ is diffeomorphic to $(s_0 -\delta , s_0 + \delta) \times \{ \ffi = s_0 \}$ and the function $\ffi$ can be regarded as a coordinate in $\mathcal{U}_\delta$. Thus, one can choose a local system of coordinates $\{\ffi,\vartheta^1\!,\!....,\vartheta^{n-1}\}$, where $\{\vartheta^1\!,\!...., \vartheta^{n-1} \}$ are local coordinates on $\{ \ffi = s_0 \}$. In such a system, the metric $\g$ can be written as
\begin{equation*}
\g \, = \,\frac{d\ffi \otimes d\ffi}{|\na \ffi|_\g^2} +\g_{ij}(\ffi,\vartheta^1 \!,\!...., \vartheta^{n-1})\,d\vartheta^i\!\otimes d\vartheta^j \,,
\end{equation*}
where the latin indices vary between $1$ and $n-1$. We now fix in $\mathcal{U}_\delta$ the $\go$-unit vector field 
$\nu =\D u/|\D u|=\D\ffi/|\D\ffi|$ and the $\g$-unit vector field $\nu_g =\na u/|\na u|_{\g}=\na\ffi/|\na\ffi|_g$. Accordingly, the second fundamental forms of the regular level sets of $u$ or $\ffi$ with respect to ambient metric $\go$ and the conformally-related ambient metric $g$ are respectively given by
\begin{equation*}
\cho_{ij}=\frac{\DD_{ij} u}{|\D u|}=\frac{\DD_{ij}\ffi}{|\D\ffi|}
\qquad\mbox{and}\qquad
\chg_{ij}=\frac{\nana_{ij} u}{|\na u|_\g}
=\frac{\nana_{ij}\ffi}{|\na\ffi|_\g}\, ,  \qquad \mbox{for}\quad i,j = 1,\!...., n-1.
\end{equation*}  
Taking the traces of the above expressions with respect to the induced metrics 
we obtain the following expressions for the mean curvatures in  the two ambients
\begin{equation}
\label{eq:formula_curvature}
\Ho=\frac{\De u}{|\D u|}-\frac{\DD u(\D u,\D u)}{|\D u|^3}\,,
\qquad\qquad
\Hg=\frac{\Deg \ffi}{|\na \ffi|_g}-\frac{\nana\ffi(\na\ffi,\na\ffi)}{|\na\ffi|_{\g}^3}\,.
\end{equation}
Taking into account expressions~\eqref{eq:defideu_D},~\eqref{eq:defideu_A} and~\eqref{eq:dedefidedeu_D},~\eqref{eq:dedefidedeu_A}, one can show that 
the second fundamental forms are related by
\begin{equation}
\label{eq:formula_h_h_g}
\chg_{ij}
\,=\,
\begin{dcases}
  \frac{1}{\sqrt{1-u^2}}\left[\,\cho_{ij}\, +\, \frac{u\,|\D u|}{1-u^2}\,  \cgo_{ij}\,\right]  & 
  \mbox{(case $\Lambda>0$)},
 \\
  \frac{1}{\sqrt{u^2-1}}\left[-\cho_{ij}\, +\, \frac{u\,|\D u|}{u^2-1}\,  \cgo_{ij}\,\right]  & 
  \mbox{(case $\Lambda<0$)}.
 \end{dcases}
\end{equation}
The analogous formula for the mean curvatures reads
\begin{align}
\label{eq:formula_H_H_g}
{\Hg}
&\,=\,
\begin{dcases}
  \sqrt{1-u^2}\left[\,\HHH \,+\, (n-1)\,\frac{u\,|\D u|}{1-u^2}\,\right]  & 
  \mbox{(case $\Lambda>0$)},
 \\
  \sqrt{u^2-1}\left[-\HHH \,+\, (n-1)\, \frac{u\,|\D u|}{u^2-1}\,\right]  & 
  \mbox{(case $\Lambda<0$)}.
 \end{dcases}
\end{align}

Concerning the nonregular level sets of $\ffi$, we first observe that $\ffi$ is analytic (see~\cite{Chr}), thus by the results in~\cite{Federer}, one has that the $(n-1)$-dimensional Hausdorff measure of the level sets of $\ffi$ is locally finite. Hence, the properness of $\ffi$ forces the level sets to have finite $(n-1)$-dimensional Hausdorff measure. 
Using the results in~\cite{Krantz}, we know that there exists a submanifold $N\subseteq{\rm Crit}(\ffi)$ such that $\mathscr{H}^{n-1}({\rm Crit}(\ffi)\setminus N)=0$. In particular, the unit normal to a level set is well-defined $\mathscr{H}^{n-1}$-almost everywhere, and so are the second fundamental form $\hg$ and the mean curvature $\Hg$. We will prove now that formul\ae~\eqref{eq:formula_h_h_g} and~\eqref{eq:formula_H_H_g} hold also at any point $y_0\in N$, and therefore they hold $\mathscr{H}^{n-1}$-almost everywhere on any level set. We do it in the case $\Lambda>0$ (the case $\Lambda<0$ is analogous). Let $\nu,\nu_g$ be the unit normal vector fields to $N$ at $y_0$ with respect to $\go,g$ respectively. Since $
|\nu_g|_g^2\,=\,1\,=\,|\nu|^2\,=\,(1-u^2)\,|\nu|_g^2$, we deduce that
$\nu_g\,=\,\sqrt{1-u^2}\,\nu$.
Let $(\pa/\pa x^1,\dots,\pa/\pa x^{n-1})$ be a basis of $T_{y_0}N$, so that in particular $(\pa/\pa x^1,\dots,\pa/\pa x^{n-1},\nu_g)$ is a basis of $T_{y_0}M$. Recalling~\eqref{eq:christoffels_D} and observing that the derivatives of $u$ in $y_0$ are all zero since $y_0\in{\rm Crit}(\ffi)$, we have
$$
\chg_{ij}\,=\,\Big\langle \na_i\frac{\pa}{\pa x^j}\,\Big|\,\nu_g\Big\rangle_g\,=\,\Gamma_{ij}^n\,=\,\Cr_{ij}^n\,=\,\Big\langle \D_i\frac{\pa}{\pa x^j}\,\Big|\,\nu_g\Big\rangle_g\,=\,\frac{1}{\sqrt{1-u^2}}\,\Big\langle \D_i\frac{\pa}{\pa x^j}\,\Big|\,\nu\Big\rangle\,=\,\frac{1}{\sqrt{1-u^2}}\,\cho_{ij}\,.
$$
This proves that formula~\eqref{eq:formula_h_h_g} holds also on $N$, and taking its trace we deduce that also~\eqref{eq:formula_H_H_g} is verified.


\subsection{A conformal version of the Monotonicity-Rigidity Theorem.} 

\label{sub:reform}

We conclude this section by introducing the conformal analog of the functions $U_p(t)$ introduced in~\eqref{eq:Up_D} ($\Lambda > 0$) and~\eqref{eq:Up_A} ($\Lambda < 0$). To this aim,  we let $(M^*, \g, \ffi)$ be a solution to problem~\eqref{eq:pb_conf} and we define, for $p \geq 0$, the functions $\Phi_p : [0, +\infty) \longrightarrow \R$ as 
\begin{equation}
\label{eq:fip}
s \,\, \longmapsto \,\, \Phi_p(s) \,\,  =\!\!\!
\int\limits_{\{\ffi = s\}}\!\!\!
|\na \ffi|_g^p \,\,\rmd \sigma_{\!g} \, .
\end{equation}
As for the $U_p$'s, we observe that the $\Phi_p$'s are well defined. This is because the hypersurface area of the level sets is finite, due to the analyticity and properness of $\ffi$.
Before proceeding, it is worth noticing that, when $p=0$, the function 
$$
\Phi_0(s) \, = \!\!\!\int\limits_{\{\ffi = s\}}\!\!\!\!
\rmd \sigma_{\!g} \, = \, |\{ \ffi = s\}|_g ,
$$ 
coincides with the hypersurface area functional $|\{ \ffi = s\}|_g$ for the level sets of $\ffi$ inside the ambient manifold $(M^*,g)$. For future convenience, we observe that the functions $U_p$ and $\Phi_p$ and their derivatives (when defined) are related as follows

\begin{eqnarray}
\label{eq:upfip1}
U_p(t) & = &  \,\Phi_p \Big(\, \frac12 \log{\Big| \frac{1+t}{1-t} \Big|} \, \Big) \, , \\
\label{eq:upfip2}
U'_p(t) & = & \frac{1}{1-t^2} \, \,\Phi'_p  \Big(\, \frac12 \log{\Big| \frac{1+t}{1-t} \Big|} \, \Big)  \, , \\
\label{eq:upfip3}
U''_p(t) & = & \frac{1}{(1-t^2)^2} \, \left[\, 2 \, t \, \Phi'_p \Big(\, \frac12 \log{\Big| \frac{1+t}{1-t} \Big|} \, \Big)  +   \Phi''_p \Big(\, \frac12 \log{\Big| \frac{1+t}{1-t} \Big|} \, \Big)   \, \right]\, ,
\end{eqnarray}

Using the above relationships, both the Monotonicity-Rigidity Theorems~\ref{thm:main_D} and~\ref{thm:main_A} can be rephrased in terms of the functions $s \mapsto \Phi_p(s)$ as follows.

\begin{theorem}[Monotonicity-Rigidity Theorem -- Conformal Version]
\label{thm:main_conf}
Let $(M^*,g,\ffi)$ be a solution to problem~\eqref{eq:pb_conf}, satisfying Assumption~\ref{ass:conf}. For every $p \geq 0$ we let $\Phi_p : [0, +\infty) \longrightarrow \R$ be the function defined  in~\eqref{eq:fip}. Then, the following properties hold true.
\begin{itemize}
\item[(i)] For every $p\geq 1$, the function $\Phi_p$ is continuous.  

\smallskip

\item[(ii)] The function $\Phi_1(s)$ is monotonically nonincreasing. Moreover, if $\Phi_1(s_1)=\Phi_1(s_2)$ for some $s_1\neq s_2$, then $(M^*,g,\ffi)$ is isometric to one half round cylinder with totally geodesic boundary.

\smallskip

\item[(iii)] For every $p \geq 3$, the function $\Phi_p$ is differentiable and the derivative satisfies, for every $s \in (0,+ \infty)$,

\begin{equation}
\label{eq:der_fip}
\ \ \ \ \Phi'_p(s)= \int\limits_{\{\ffi=s\}} \!\!\! \left[ -(p-1) |\na\ffi|_g^{p-1} \, \Hg + p\, |\na\ffi|_g^{p-2} \, \Deg\ffi \right]\rmd\sigma_g \, 
\,\leq\!\int\limits_{\{\ffi=s\}} \!\! |\na\ffi|_g^{p-2} \Deg\ffi \,\,\rmd\sigma_g\,\leq\, 0 \, .
\end{equation}
where $\Hg$ is the mean curvature of the level set $\{\ffi=s\}$. Moreover, if the first equality in~\eqref{eq:der_fip} holds, for some $s\in (0, + \infty) $ and some $p \geq 3$, then $(M^*,g,\ffi)$ is isometric to one half round cylinder with totally geodesic boundary.

\smallskip

\item[(iv)] It holds $\Phi_p'(0) = \lim_{s \to 0^+} \Phi_p'(s)= 0$, for every $p\geq 3$. In particular, setting $\Phi''_p(0) = \lim_{s \to 0^+}\Phi_p'(s)/s$, we have that for every $p \geq 3$, the following formula holds
\begin{align}
\notag
-n\int\limits_{\pa M}|\na\ffi|_g^{p-2}\left(1-|\na\ffi|_g^2\right)\rmd\sigma_g &\geq\Phi_p''(0)=\int\limits_{\pa M} \!|\na\ffi|_g^{p-2}\left[(p-1)\Ricg (\nu_g, \nu_g)-np\left(1-|\na\ffi|_g^2\right)\right]
\,\rmd\sigma_g\, , & {\rm (\Lambda>0)}
\\
\label{eq:der2_fip}
-n\int\limits_{\pa M}u^2\left(1-|\na\ffi|_g^2\right)\rmd\sigma_g &\geq\Phi_p''(0)=\int\limits_{\pa M} -\left[
\left(\frac{p-1}{n-1}\right)\Ricg (\nu_g, \nu_g)+nu^2(1-|\na\ffi|_g^2)\right]
\,\rmd\sigma_g \,, & {\rm (\Lambda<0)}
\end{align}
where $\nu_g = \na\ffi/|\na \ffi|_g$ is the inward pointing unit normal of the boundary $\pa M$.
Moreover, in the case $\Lambda>0$, if the equality is fulfilled for some $p \geq 3$, then $(M^*,g,\ffi)$ is isometric one half round cylinder with totally geodesic boundary.
\end{itemize}
\end{theorem}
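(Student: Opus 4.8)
The plan is to reduce all four assertions to a single weighted integral identity and then read them off one by one. \textbf{The engine.} First I would recast the Bochner-type formula~\eqref{eq:DegW} in the form
\begin{equation*}
\Deg |\na\ffi|_g^2 \, - \, \Big( \tfrac{1}{u} + (n+1)\,u \Big)\,\big\langle \na |\na\ffi|_g^2 \,\big|\, \na\ffi \big\rangle_g \, = \, 2\,|\nana\ffi|_g^2 \, + \, 2\,n\,u^2\,|\na\ffi|_g^2 \big( 1 - |\na\ffi|_g^2 \big),
\end{equation*}
and observe that the drifted Laplacian on the left is formally self-adjoint with respect to the weighted measure $\rho\,\rmd\mu_g$, where $\rho = 1/(\sinh\ffi\,\cosh^{n+1}\ffi)$ if $\Lambda>0$ and $\rho = 1/(\sinh^{n+1}\ffi\,\cosh\ffi)$ if $\Lambda<0$ (these are the exponentials of a primitive of $\tfrac{1}{u}+(n+1)u$ along the flow $du/d\ffi = 1-u^2$). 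Integrating against $\rho$ over the superlevel set $\{\ffi > s\}$ and invoking the weighted Green's formula produces a master identity whose boundary term sits on $\{\ffi = s\}$ and is proportional to $\int_{\{\ffi=s\}} \rho\,(|\na\ffi|_g^2\,\Hg - |\na\ffi|_g\,\Deg\ffi)\,\rmd\sigma_g$, while its bulk term $\int_{\{\ffi>s\}} \rho\,(|\nana\ffi|_g^2 + n u^2 |\na\ffi|_g^2(1-|\na\ffi|_g^2))\,\rmd\mu_g$ is nonnegative by Lemma~\ref{le:grad_ffi}. This identity (and its $p$-dependent variant, carrying an extra factor $|\na\ffi|_g^{p-3}$ in the weight) drives both the monotonicity and the rigidity.

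\textbf{Part (iii).} I would first establish the equality in~\eqref{eq:der_fip} by differentiating $\Phi_p$ along the level-set flow of $\ffi$, whose normal speed is $1/|\na\ffi|_g$: this gives $\Phi_p'(s) = \int_{\{\ffi=s\}} |\na\ffi|_g^{-1}(\pa_{\nu_g}|\na\ffi|_g^p + |\na\ffi|_g^p\,\Hg)\,\rmd\sigma_g$, and substituting $\pa_{\nu_g}|\na\ffi|_g = \Deg\ffi - |\na\ffi|_g\,\Hg$ (a rearrangement of~\eqref{eq:formula_curvature}) yields exactly the stated integrand $-(p-1)|\na\ffi|_g^{p-1}\Hg + p|\na\ffi|_g^{p-2}\Deg\ffi$. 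The first inequality in~\eqref{eq:der_fip} is equivalent to $(p-1)\int_{\{\ffi=s\}} |\na\ffi|_g^{p-2}(\Deg\ffi - |\na\ffi|_g\Hg)\,\rmd\sigma_g \leq 0$, which is precisely the sign furnished by the ($p$-weighted) master identity; the second inequality is immediate from $\Deg\ffi = -nu(1-|\na\ffi|_g^2) \leq 0$, using $u>0$ on $M^*$ and Lemma~\ref{le:grad_ffi}. For the rigidity, equality forces the nonnegative bulk term to vanish, hence $\nana\ffi \equiv 0$ on $\{\ffi>s\}$; since $\ffi$ is analytic~\cite{Chr} this propagates to all of $M^*$, so $\na\ffi$ is a nontrivial parallel field, $(M^*,g)$ splits as a Riemannian product, and $\Deg\ffi=\mathrm{tr}\,\nana\ffi=0$ forces $|\na\ffi|_g \equiv 1$. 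Re-expressing $\nana\ffi \equiv 0$ in terms of $u$ and $g_0$ gives an Obata equation whose rigidity pins down the cross-section as a round sphere, so $(M^*,g,\ffi)$ is one half round cylinder with totally geodesic boundary.

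\textbf{Parts (i) and (ii).} Continuity of $\Phi_p$ for $p\geq 1$ is clear across regular values; at the discrete critical values I would use the analyticity of $\ffi$ together with the finiteness of the $\mathscr{H}^{n-1}$-measure of the level sets~\cite{Federer,Krantz} and the fact that $|\na\ffi|_g^p$ vanishes on ${\rm Crit}(\ffi)$ for $p\geq 1$ to pass to the limit. For $\Phi_1$ the exponent $p=1$ annihilates the $\Hg$-term, so formally $\Phi_1'(s) = \int_{\{\ffi=s\}} |\na\ffi|_g^{-1}\Deg\ffi\,\rmd\sigma_g \leq 0$; because of the singular weight $|\na\ffi|_g^{-1}$ I would make this rigorous by monotone approximation from the differentiable functions $\Phi_p$ as $p \downarrow 1$, combined with the continuity from (i), and reduce the rigidity for $\Phi_1$ to that in (iii).

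\textbf{Part (iv) and the main obstacle.} Since $\Deg\ffi$ and $\Hg$ both vanish on $\pa M = \{\ffi = 0\}$ (the boundary is totally geodesic in $(M^*,g)$, and $\Deg\ffi = -nu(1-|\na\ffi|_g^2)\to 0$ there, Assumption~\ref{ass:conf} taming the product when $u\to\infty$ in the case $\Lambda<0$), formula~\eqref{eq:der_fip} gives $\Phi_p'(0)=0$. For $\Phi_p''(0)=\lim_{s\to0^+}\Phi_p'(s)/s$ I would view it as the boundary limit of the monotonicity of (iii): the quotient is nonpositive, and the refined value in~\eqref{eq:der2_fip} comes from expanding the integrand of~\eqref{eq:der_fip} to first order at $\pa M$ through the second-variation expression $\Phi_p''(0) = \int_{\pa M} |\na\ffi|_g^{-1}\pa_{\nu_g}F_p\,\rmd\sigma_g$, where $F_p$ is that integrand, using the evolution of $\Hg$ along the (non-geodesic) foliation to generate the $\Ricg(\nu_g,\nu_g)$ and $(1-|\na\ffi|_g^2)$ terms. \emph{This is where the main difficulty lies:} the coefficient $1/u=\coth\ffi$ (case $\Lambda>0$), respectively $u=\coth\ffi\to\infty$ (case $\Lambda<0$), is singular at $\pa M$, so the term $(1/u-(n-1)u)\,\nana\ffi(\nu_g,\nu_g)$ appearing in $\Ricg(\nu_g,\nu_g)$ via the first equation of~\eqref{eq:pb_conf} is a genuine $\infty\cdot 0$ limit that must be resolved by a careful first-order expansion, and it is exactly this finite contribution that produces the gap $n(1-|\na\ffi|_g^2)-\Ricg(\nu_g,\nu_g)$ governing the inequality. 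Controlling these limits rigorously---via Assumption~\ref{ass:conf} and, when $\Lambda<0$, the defining function $1/\sqrt{u^2-1}$---is the technical heart of the argument; once done, the displayed inequality follows by discarding the resulting term of definite sign, and equality in it (case $\Lambda>0$) feeds back into the vanishing of $\nana\ffi$ and hence, as in (iii), into the half-round-cylinder rigidity.
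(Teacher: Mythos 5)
Your overall architecture matches the paper's: the weighted Bochner identity for the drifted Laplacian, integrated over superlevel sets against the weight $\rho$, is exactly the paper's second integral identity (Proposition~\ref{prop:cyl}), and it does drive parts (iii) and (iv) essentially as you describe; the rigidity via $\nana\ffi\equiv 0$, analyticity, and Obata is also the paper's route, as is the resolution of the singular prefactor in part (iv) (the paper substitutes the first equation of~\eqref{eq:pb_conf} into $\Phi_p'(s)/s$ and exploits $u(s)/\big(s[1-(n-1)u(s)^2]\big)\to 1$ for $\Lambda>0$ and $\to -1/(n-1)$ for $\Lambda<0$). However, there is a genuine gap in your part (ii). You propose to obtain the monotonicity of $\Phi_1$ by ``monotone approximation from the differentiable functions $\Phi_p$ as $p\downarrow 1$''. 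This cannot work: the master identity, and hence the differentiability and monotonicity of $\Phi_p$, is only available for $p\geq 3$, because the vector field $\gamma(\ffi)\,\na|\na\ffi|_g^{p-1}=\tfrac{p-1}{2}\gamma(\ffi)\,|\na\ffi|_g^{p-3}\,\na|\na\ffi|_g^2$ fails to be Lipschitz at critical points when $p<3$ and the bulk term $(p-3)\big|\na|\na\ffi|_g\big|_g^2$ loses its sign; and knowing $\Phi_p$ is nonincreasing for $p\geq 3$ gives no information on $\Phi_1$ (monotonicity of $s\mapsto\int f^p$ does not pass to $s\mapsto\int f$). The correct, and much simpler, argument is the paper's: since $\Deg\ffi=-nu(1-|\na\ffi|_g^2)\leq 0$ pointwise by Lemma~\ref{le:grad_ffi}, integrating over $\{s\leq\ffi\leq S\}$ and applying the divergence theorem gives directly $\Phi_1(S)-\Phi_1(s)=\int_{\{s\leq\ffi\leq S\}}\Deg\ffi\,\rmd\mu_g\leq 0$, with equality forcing $|\na\ffi|_g\equiv 1$ and then, via the Bochner formula, $\nana\ffi\equiv 0$.

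A secondary weakness concerns differentiability in part (iii): your derivation of $\Phi_p'(s)$ by the first variation of area along the level-set flow with normal speed $1/|\na\ffi|_g$ is valid only at regular values of $\ffi$, whereas the theorem asserts differentiability at every $s\in(0,+\infty)$, including singular values. The paper handles this by first rewriting $\Phi_p$ in divergence form via Proposition~\ref{prop:byparts}, then applying the coarea formula so that $\Phi_p(s)$ appears as $\sinh^n(s)$ times an integral in the level parameter of a function whose continuity (a repartition-function argument for a measure absolutely continuous with respect to $\mu_g$, this time relying on Proposition~\ref{prop:cyl}) yields differentiability everywhere by the Fundamental Theorem of Calculus. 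Your appeal to the Federer--Krantz structure of ${\rm Crit}(\ffi)$ covers the well-posedness of the integrals but not differentiability across singular levels, so this step would still need to be supplied.
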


\section{Proof of Theorems~\ref{thm:main_D} and~\ref{thm:main_A} after Theorem~\ref{thm:main_conf}}
\label{sec:translation}

In this section we show how to recover Theorems~\ref{thm:main_D},~\ref{thm:main_A} from Theorem~\ref{thm:main_conf}. The proof of Theorem~\ref{thm:main_conf} will be the argument of Section~\ref{sec:proofs}. 

\subsection{Case $\Lambda>0$: Theorem~\ref{thm:main_conf} implies Theorem~\ref{thm:main_D}.}

In the hypoteses of Theorem~\ref{thm:main_D}, Lemma~\ref{le:grad_ffi} is in charge, hence $|\na\ffi|_g^2\leq 1$ on the whole manifold $M$. Thus, formula~\eqref{eq:Dusign_D} is an immediate consequence of the identity
$$
|\na\ffi|_g^2\,=\,\frac{|\D u|^2}{1-u^2}\,.
$$
The equivalence between Theorem~\ref{thm:main_D}-(i),(ii) and Theorem~\ref{thm:main_conf}-(i),(ii) is also straightforward.

We pass now to prove the equivalence between~\eqref{eq:derup_D} and~\eqref{eq:der_fip}. To do this, it is enough to translate~\eqref{eq:der_fip} in terms of the conformally related quantities $u,\go$. Recalling the second equation in~\eqref{eq:pb_conf} and formul\ae~\eqref{eq:formula_H_H_g},~\eqref{eq:upfip2}, we have the following chain of equalities.
\begin{align}
\notag
U_p'(t)\,\,&= \,\,\frac{1}{1-t^2}\,\,\Phi'_p\bigg(\frac{1}{2}\log\Big(\frac{1+t}{1-t}\Big)\bigg)
\\
\notag
&=\,\,\frac{1}{1-t^2}\int\limits_{\{ u=t\}} \!\!\! |\na\ffi|_g^{p-2} \Big[ -(p-1) |\na\ffi|_g \, \Hg + p\, \Deg\ffi \,\Big]\rmd\sigma_g
\\
\notag
&=\int\limits_{\{u=t\}} \!\!\!\frac{|\D u|^{p-2}}{(1-u^2)^{p/2}} \bigg[ -(p-1) |\D u|\bigg(\HHH \,+\, (n-1)\,\frac{u\,|\D u|}{1-u^2}\bigg) -\,n\, p\, u\,\bigg(1-\frac{|\D u|^2}{1-u^2}\bigg)\, \bigg]\rmd\sigma_g
\\
\label{eq:derfip_to_derup_D}
&=\Big( \frac{1}{1-t^2} \Big)^{\frac{p}{2}}\!\!\!\int\limits_{\{u=t\}} \!\!\!|\D u|^{p-2} \bigg[ -(p-1) |\D u|\,\HHH \,+\, (n+p-1)\,\frac{u\,|\D u|^2}{1-u^2} -\,n\, p\, u\, \bigg]\rmd\sigma_g\,.
\end{align}
Now we use formula~\eqref{eq:g_D} to deduce that the volume elements $\rmd\sigma$, $\rmd\sigma_g$ are related by
$$
\rmd\sigma_g=\Big(\frac{1}{1-u^2}\Big)^{\frac{n-1}{2}}\rmd\sigma\,.
$$
Hence equality~\eqref{eq:derfip_to_derup_D} can be rewritten as 
\begin{align*}
U_p'(t)\,\,&=\,\, - \, (p-1) \,\, t \,\, \Big( \frac{1}{1-t^2} \Big)^{\!\!\frac{n+p-1}{2}} \!\!\!\!\!
\int\limits_{\{u=t\}}
\!\!\!\!
|\D u|^{p-2}
\!\left[\,\, \bigg| \frac{\D u}{u} \bigg| \, \HHH \, + \, \bigg( \!\frac{np}{p-1}\! \bigg) \, - \, \bigg(\!\frac{n+p-1}{p-1} \!\bigg) \bigg( \frac{|\D u|^2}{1-u^2} \bigg)
\, \right]  \rmd\sigma \,
\\
&= \,\, - \, (p-1) \,\, t \,\, \Big( \frac{1}{1-t^2} \Big)^{\!\!\frac{n+p-1}{2}} \!\!\!\!\!
\int\limits_{\{u=t\}}
\!\!\!\!
|\D u|^{p-2}
\!\left[\, (n-1) \, - \, \Ric (\nu, \nu) \, +   \bigg(\!\frac{n+p-1}{p-1} \!\bigg) \bigg(\!1- \frac{|\D u|^2}{1-u^2} \!\bigg)
\, \right]  \rmd\sigma \,,
\end{align*}
where in the second equality $\nu=\D u/|\D u|$ is the unit normal to $\{u=t\}$ and we have used the identity 
$$
\Ric(\nu,\nu)\,=\,-\,\bigg|\frac{\D u}{u} \bigg| \, \HHH\,,
$$
which is a consequence of the first equation in problem~\eqref{eq:pb_D}.
On the other hand, since from~\eqref{eq:der_fip} it holds
$$
\Phi'_p(s)\,\leq \!\int\limits_{\{\ffi=s\}} \!\! |\na\ffi|_g^{p-2} \Deg\ffi \,\,\rmd\sigma_g\,,
$$
we have
\begin{align}
\notag
U_p'(t)\,\,&\leq\,\, -\,\frac{n\,t}{1-t^2}\int\limits_{\{u=t\}}\!\!\Big(\frac{|\D u|}{\sqrt{1-u^2}}\Big)^{p-2}\Big(1-\frac{|\D u|^2}{1-u^2}\Big)\,\rmd\sigma_g
\\
\label{eq:derfip_to_derup_2_D}
&=\,\,
-\,n\,t\,\Big(\frac{1}{1-t^2}\Big)^{\frac{n+p-1}{2}}\!\!\!\int\limits_{\{u=t\}}\!\!|\D u|^{p-2}\Big(1-\frac{|\D u|^2}{1-u^2}\Big)\,\rmd\sigma\,.
\end{align}
This proves formula~\eqref{eq:derup_D}. Moreover, we recall from Theorem~\ref{thm:main_conf} that the equality holds in~\eqref{eq:derfip_to_derup_2_D} if and only if $(M^*,g,\ffi)$ is isometric to one half round cylinder, that is, if and only if $(M,\go,u)$ is isometric to the de Sitter solution. This proves Theorem~\ref{thm:main_D}-(iii).

It remains to prove the equivalence of Theorem~\ref{thm:main_D}-(iv) and Theorem~\ref{thm:main_conf}-(iv). 
We first observe, from formula~\eqref{eq:ricci_2_D}, that the identity
$$
\Ricg(\nu_g,\nu_g)\,=\,\Ric(\nu,\nu)\,-\,(n-1)\,|\D u|^2\,,
$$
holds on $\pa M$.
Then we apply the Gauss-Codazzi equation to obtain
$$
\Ricg(\nu_g,\nu_g)\,=\,\frac{\RRR-\RRR^{\pa M}}{2}\,-\,(n-1)|\D u|^2\,=\,\frac{(n-1)(n-2)-\RRR^{\pa M}}{2\ \ }\,+\,(n-1)\,(1-|\D u|^2)\,.
$$
Now, we recall formul\ae~\eqref{eq:upfip3} and~\eqref{eq:der2_fip} and we compute
\begin{align*}
U_p''(0)\,\,=\,\, 
\Phi''_p (0)\,\,&=\,\, 
\int\limits_{\pa M} \!|\na\ffi|_g^{p-2}\left[(p-1)\Ricg (\nu_g, \nu_g)\,-\,n\,p\left(1-|\na\ffi|_g^2\right)\right]
\,\rmd\sigma_g
\\
&=\,\,\int\limits_{\pa M} \!|\D u|^{p-2}\left[\,(p-1)\Big(\frac{(n-1)(n-2)-\RRR^{\pa M}}{2\ \ }\Big)-(n+p-1)\left(1-|\D u|^2\right)\,\right]
\,\rmd\sigma
\\
&=\,\,-(p-1)\!\int\limits_{\pa M} \!|\D u|^{p-2}\left[\,\frac{\RRR^{\pa M}-(n-1)(n-2)}{2\ \ }+\Big(\frac{n+p-1}{p-1}\Big)\left(1-|\D u|^2\right)\,\right]
\,\rmd\sigma\,.
\end{align*}
Moreover, again from formula~\eqref{eq:der2_fip}, we have
\begin{equation*}
U_p''(0)\,\,=\,\, 
\Phi''_p (0)\,\,\leq\,\, 
-n\int\limits_{\pa M} \!|\na\ffi|_g^{p-2}\left(1-|\na\ffi|_g^2\right)
\,\rmd\sigma_g
=-n\,\,\int\limits_{\pa M} \!|\D u|^{p-2}\left(1-|\D u|^2\right)
\,\rmd\sigma\,,
\end{equation*}
and the equality holds if and only if $(M,\go,u)$ is isometric to the de Sitter solution. This concludes the proof of Theorem~\ref{thm:main_D}-(iv).

\subsection{Case $\Lambda<0$: Theorem~\ref{thm:main_conf} implies Theorem~\ref{thm:main_A}.}

In the hypoteses of Theorem~\ref{thm:main_A}, Lemma~\ref{le:grad_ffi} is in charge, hence $|\na\ffi|_g^2\leq 1$ on the whole manifold $M$. Thus, formula~\eqref{eq:Dusign_A} is an immediate consequence of the identity
$$
|\na\ffi|_g^2\,=\,\frac{|\D u|^2}{u^2-1}\,.
$$
The equivalence between Theorem~\ref{thm:main_A}-(i),(ii) and Theorem~\ref{thm:main_conf}-(i),(ii) is also straightforward.

We pass now to prove the equivalence between~\eqref{eq:derup_A} and~\eqref{eq:der_fip}. To do this, it is enough to translate~\eqref{eq:der_fip} in terms of the conformally related quantities $u,\go$. Recalling the second equation in~\eqref{eq:pb_conf} and formul\ae~\eqref{eq:formula_H_H_g},~\eqref{eq:upfip2}, we have the following chain of equalities.
\begin{align}
\notag
U_p'(t)\,\,&= \,-\,\frac{1}{t^2-1}\,\,\Phi'_p\bigg(\frac{1}{2}\log\Big(\frac{1+t}{t-1}\Big)\bigg)
\\
\notag
&=\,-\,\frac{1}{t^2-1}\int\limits_{\{ u=t\}} \!\!\! |\na\ffi|_g^{p-2} \Big[ -(p-1) |\na\ffi|_g \, \Hg + p\, \Deg\ffi \,\Big]\rmd\sigma_g
\\
\notag
&=\,-\!\!\int\limits_{\{u=t\}} \!\!\!\frac{|\D u|^{p-2}}{(u^2-1)^{p/2}} \bigg[ -(p-1) |\D u|\bigg(-\HHH \,+\, (n-1)\,\frac{u\,|\D u|}{u^2-1}\bigg) -\,n\, p\, u\,\bigg(1-\frac{|\D u|^2}{u^2-1}\bigg)\, \bigg]\rmd\sigma_g
\\
\label{eq:derfip_to_derup_A}
&=\Big( \frac{1}{t^2-1} \Big)^{\frac{p}{2}}\!\!\!\int\limits_{\{u=t\}} \!\!\!|\D u|^{p-2} \bigg[ -(p-1) |\D u|\,\HHH \,-\, (n+p-1)\,\frac{u\,|\D u|^2}{u^2-1} +\,n\, p\, u\, \bigg]\rmd\sigma_g\,.
\end{align}
Now we use formula~\eqref{eq:g_A} to deduce that the volume elements $\rmd\sigma$, $\rmd\sigma_g$ are related by
$$
\rmd\sigma_g=\Big(\frac{1}{u^2-1}\Big)^{\frac{n-1}{2}}\rmd\sigma\,.
$$
Hence equality~\eqref{eq:derfip_to_derup_A} can be rewritten as 
\begin{align*}
U_p'(t)\,\,&=\,\, (p-1) \,\, t \,\, \Big( \frac{1}{t^2-1} \Big)^{\!\!\frac{n+p-1}{2}} \!\!\!\!\!
\int\limits_{\{u=t\}}
\!\!\!\!
|\D u|^{p-2}
\!\left[\,-\, \bigg| \frac{\D u}{u} \bigg| \, \HHH \, + \, \bigg( \!\frac{np}{p-1}\! \bigg) \, - \, \bigg(\!\frac{n+p-1}{p-1} \!\bigg) \bigg( \frac{|\D u|^2}{1-u^2} \bigg)
\, \right]  \rmd\sigma \,
\\
&= \,\, (p-1) \,\, t \,\, \Big( \frac{1}{t^2-1} \Big)^{\!\!\frac{n+p-1}{2}} \!\!\!\!\!
\int\limits_{\{u=t\}}
\!\!\!\!
|\D u|^{p-2}
\!\left[\, (n-1) \, + \, \Ric (\nu, \nu) \, +   \bigg(\!\frac{n+p-1}{p-1} \!\bigg) \bigg(\!1- \frac{|\D u|^2}{1-u^2} \!\bigg)
\, \right]  \rmd\sigma \,,
\end{align*}
where in the second equality $\nu=\D u/|\D u|$ is the unit normal to $\{u=t\}$ and we have used the identity 
$$
\Ric(\nu,\nu)\,=\,-\,\bigg|\frac{\D u}{u} \bigg| \, \HHH\,,
$$
which is a consequence of the first equation in problem~\eqref{eq:pb_A}.
On the other hand, since from~\eqref{eq:der_fip} it holds
$$
\Phi'_p(s)\,\leq \!\int\limits_{\{\ffi=s\}} \!\! |\na\ffi|_g^{p-2} \Deg\ffi \,\,\rmd\sigma_g\,,
$$
we have
\begin{align}
\notag
U_p'(t)\,\,&\geq\,\,\frac{n\,t}{t^2-1}\int\limits_{\{u=t\}}\!\!\Big(\frac{|\D u|}{\sqrt{u^2-1}}\Big)^{p-2}\Big(1-\frac{|\D u|^2}{u^2-1}\Big)\,\rmd\sigma_g
\\
\label{eq:derfip_to_derup_2_A}
&=\,\,
n\,t\,\Big(\frac{1}{t^2-1}\Big)^{\frac{n+p-1}{2}}\!\!\!\int\limits_{\{u=t\}}\!\!|\D u|^{p-2}\Big(1-\frac{|\D u|^2}{u^2-1}\Big)\,\rmd\sigma\,.
\end{align}
This proves formula~\eqref{eq:derup_A}. Moreover, we recall from Theorem~\ref{thm:main_conf} that the equality holds in~\eqref{eq:derfip_to_derup_2_A} if and only if $(M^*,g,\ffi)$ is isometric to one half round cylinder, that is, if and only if $(M,\go,u)$ is isometric to the anti-de Sitter solution. This proves Theorem~\ref{thm:main_A}-(iii).

It remains to prove the equivalence of Theorem~\ref{thm:main_A}-(iv) and Theorem~\ref{thm:main_conf}-(iv). 
Let $r=\sqrt{u^2-1}$ and $V_p(r)=U_p(\sqrt{1+1/r^2})$. An easy computation shows that
$$
V_p'(r)\,=\,-\,\frac{1}{r^2 \sqrt{1+r^2}}\,U_p'(\sqrt{1+1/r^2})\,.
$$
 Since $\lim_{s\to 0^+}\Phi_p'(s)=0$, we deduce from~\eqref{eq:upfip2} that $\lim_{t\to +\infty}U_p'(t)=0$, which implies $\lim_{r\to 0^+}V_p'(r)=0$. Hence we set $V_p''(0)=\lim_{r\to 0^+} V_p'(r)/r$ and we compute
\begin{align*}
V_p''(0)\,\,&=\,\lim_{r\to 0^+}-\,\frac{1}{r^3}\,U_p'(\sqrt{1+1/r^2})
\\
&=\,\lim_{t\to +\infty}-\,t^3\,U_p'(t)
\\
&=\,\lim_{t\to+\infty}t\, \Phi'_p \bigg(\, \frac12 \log{\Big( \frac{t+1}{t-1} \Big)} \, \bigg) \,,
\end{align*}
where in the last equality we have used formula~\eqref{eq:upfip2}. Recalling that $\Phi_p''(0)=\lim_{s\to 0^+}\Phi_p'(s)/s$, we conclude that $V_p''(0)=\Phi_p''(0)$.
Hence, from formula~\eqref{eq:der2_fip} we obtain
\begin{equation}
\label{eq:Vp_A}
V_p''(0)\,\,=\, 
-\int\limits_{\pa M} \!\left[\Big(\frac{p-1}{n-1}\Big)\,\Ricg (\nu_g, \nu_g)\,+\,n\,u^2\left(1-|\na\ffi|_g^2\right)\right]
\,\rmd\sigma_g\,.
\end{equation}
Recalling that $|\na\ffi|_g^2=|\D u|^2/(u^2-1)$, it is easily seen that
$$
\lim_{s\to 0^+} u^2(1-|\na\ffi|_g^2)\,\,=\,\,\lim_{t\to +\infty} \left(u^2-1-|\D u|^2\right)\,.
$$
Moreover, from the Gauss-Codazzi equation on $\pa M$ and formula~\eqref{eq:tilde_R} we find that the identity
\begin{equation*}
\Ricg(\nu_g,\nu_g)\,=\,\frac{\Rg-\Rg^{\pa M}}{2}\,=\,\frac{(n-1)(n-2)-\Rg^{\pa M}}{2}+\frac{(n-1)\,n\,u^2(1-|\na\ffi|_g^2)}{2}
\end{equation*}
holds on the conformal boundary $\pa M$.
Therefore, formula~\eqref{eq:Vp_A} rewrites as
$$
V_p''(0)=\,-\,(p-1)\!\!\int\limits_{\pa M} \!\left[\,\frac{(n-1)(n-2)-\Rg^{\pa M}}{2(n-1)}+\frac{n(p+1)}{2(p-1)}\,\left(u^2-1-|\D u|^2\right)\,\right]
\,\rmd\sigma_g\,.
$$
Finally, again from formula~\eqref{eq:der2_fip}, we have
\begin{equation*}
V_p''(0)
\,\,=\,\, 
\Phi''_p (0)
\,\,\leq\,\, 
n\int\limits_{\pa M} \!u^2\left(1-|\na\ffi|_g^2\right)
\,\rmd\sigma_g
\,\,=\,\,
n\int\limits_{\pa M} \!\left(u^2-1-|\D u|^2\right)
\,\rmd\sigma_g\,.
\end{equation*}
This concludes the proof of Theorem~\ref{thm:main_A}-(iv).

\section{Integral identities}
\label{sec:integral}

In this section, we derive some integral identities that will be used to analyze the properties of the functions $s\mapsto \Phi_p(s)$ introduced in~\eqref{eq:fip}. 

\subsection{First integral identity.}

To obtain our first identity, we are going to exploit
the equation $\Deg\ffi=-nu(1-|\na\ffi|_g^2)$ in problem~\eqref{eq:pb_conf}.

\begin{proposition}
\label{prop:byparts}
Let $(M^*, \g, \ffi)$ be a solution to problem~\eqref{eq:pb_conf}.
Then, for every $p\geq 1$ and for every $ s \in (0,+\infty)$, we have
\begin{equation}
\label{eq:id_byparts_bis}
\int\limits_{\{\ffi=s\}} \!\!
\frac{   |\na \ffi|_\g^{p} }{ \sinh^n(s)  }
\,\,\rmd\sigma_{\!g}  
\,\, =  \!\!
\int\limits_{\{\ffi> s\}} \!\!\frac{  |\na \ffi|_\g^{p-3}  
\Big( n \, \coth(\ffi) \, |\na\ffi|_g^{4} \, - \, |\na\ffi|_g^2 \, \Deg\ffi  \, 
- \,  (p-1)\,  \nana \ffi (\na\ffi, \!\na\ffi)  \Big) }{\sinh^n(\ffi)}
\,\,\rmd\mu_g \, .
\end{equation}
\end{proposition}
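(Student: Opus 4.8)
The plan is to recognize \eqref{eq:id_byparts_bis} as a single application of the divergence theorem, on the superlevel set $\{\ffi>s\}$, to a suitably weighted vector field. I introduce the weight $e^{-f}=\sinh^{-n}(\ffi)$, so that $\na f = n\coth(\ffi)\,\na\ffi$, and I test the identity ${\rm div}_g(e^{-f}X) = e^{-f}\big({\rm div}_g X - n\coth(\ffi)\langle X\,|\,\na\ffi\rangle_g\big)$ against the field
\[
X \,=\, -\,|\na\ffi|_g^{p-1}\,\na\ffi \, .
\]
Away from the critical points of $\ffi$, differentiating and using $\na|\na\ffi|_g^2 = 2\,\nana\ffi(\na\ffi,\,\cdot\,)$ gives ${\rm div}_g X = -|\na\ffi|_g^{p-1}\Deg\ffi - (p-1)|\na\ffi|_g^{p-3}\nana\ffi(\na\ffi,\na\ffi)$; since $\langle X\,|\,\na\ffi\rangle_g = -|\na\ffi|_g^{p+1}$, this yields
\[
{\rm div}_g(e^{-f}X) \,=\, \frac{|\na\ffi|_g^{p-3}\big(n\coth(\ffi)\,|\na\ffi|_g^4 - |\na\ffi|_g^2\,\Deg\ffi - (p-1)\,\nana\ffi(\na\ffi,\na\ffi)\big)}{\sinh^n(\ffi)} \, ,
\]
which is exactly the integrand appearing on the right-hand side of \eqref{eq:id_byparts_bis}. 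No use of the equations \eqref{eq:pb_conf} is needed here; they serve only to guarantee, as in Remark~\ref{rem:uno_D}, that this integrand is bounded and summable over $\{\ffi>s\}$ for $p\geq 1$ and $s>0$ (note that $\coth(\ffi)$ and $1/\sinh^n(\ffi)$ are bounded there).

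Next I would integrate this over the collar $\{s<\ffi<S\}$ for regular values $s<S$ and apply the divergence theorem. On $\{\ffi=s\}$ the outward normal is $-\na\ffi/|\na\ffi|_g$ and on $\{\ffi=S\}$ it is $+\na\ffi/|\na\ffi|_g$, so $e^{-f}\langle X\,|\,\nu_{\rm out}\rangle_g = \pm\,|\na\ffi|_g^p/\sinh^n(\ffi)$ on the two pieces; recalling the definition \eqref{eq:fip} of $\Phi_p$, the two boundary terms are precisely $\Phi_p(s)/\sinh^n(s)$ and $-\Phi_p(S)/\sinh^n(S)$. Hence the asserted identity is equivalent to the vanishing of the flux $\Phi_p(S)/\sinh^n(S)$ as $S\to+\infty$.

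This last step, together with the non-smoothness of $X$, is where the real work lies. For the flux at infinity I would invoke Lemma~\ref{le:grad_ffi}, which gives $|\na\ffi|_g\leq 1$ and hence $\Phi_p(S)\leq |\{\ffi=S\}|_g$; translating back through $g=\go/(1-u^2)$ (resp. $\go/(u^2-1)$) and using $|\D u|^2\leq 1-u^2$ (resp. $u^2-1$), one checks that $\Phi_p(S)/\sinh^n(S)$ is dominated by $(1-t^2)^{1/2}\,|\{u=t\}|_{\go}$ (resp. $(t^2-1)^{1/2}\,|\{u=t\}|_{\go}$) with $t\to1$, which tends to $0$ because the level sets collapse onto the finite set ${\rm MAX}(u)$ (resp. ${\rm MIN}(u)$). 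As for regularity, since $\ffi$ is analytic its singular values are discrete (\cite{Soucek}), so $s$ and $S$ may be chosen regular; the critical points interior to the collar form the set described in Remark~\ref{rem:uno_D}, where $X$ is only continuous (for $p>1$ it vanishes, for $p=1$ it equals $-\na\ffi$). I would therefore justify the integration by parts by excising a shrinking neighbourhood of ${\rm Crit}(\ffi)$ and passing to the limit, the excision contributing nothing because $\mathscr{H}^{n-1}({\rm Crit}(\ffi)\setminus N)=0$ and $X$ is bounded. Finally, the identity for a general (possibly singular) value $s$ follows from the regular case by approximation, using the local finiteness of the Hausdorff measure of the level sets (\cite{Federer,Krantz}). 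I expect the two genuinely delicate points to be the collapse estimate for the ends and the passage of the divergence theorem across the critical set.
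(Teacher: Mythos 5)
Your proposal follows essentially the same route as the paper's proof: the same weighted vector field $|\na\ffi|_g^{p-1}\na\ffi/\sinh^n(\ffi)$ (up to an overall sign), the same pointwise divergence identity, integration over the collar $\{s<\ffi<S\}$ with $S$ a regular value, and the same vanishing-flux argument at infinity via Lemma~\ref{le:grad_ffi} and the translation back to $(u,\go)$ using the finiteness of $\mathscr{H}^{n-1}(\{u=t\})$. The only (cosmetic) difference is the treatment of a singular value $s$: the paper invokes the De Giorgi--Federer divergence theorem (Theorem~\ref{thm:div}) with the decomposition $\pa E=\Gamma\sqcup\Sigma$ coming from~\cite{Krantz}, whereas you sketch an excision of the critical set --- this works, but what actually kills the excised flux is that $X$ vanishes on ${\rm Crit}(\ffi)$ (so it is uniformly small on shrinking neighbourhoods), not merely that it is bounded, since the boundary of a tubular neighbourhood of the hypersurface part $N$ has $\mathscr{H}^{n-1}$-measure bounded away from zero.
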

\begin{remark}
\label{rem:a_e1}
Arguing as in Remark~\ref{rem:uno_D}, it is easy to realize that the integral on the left hand side of~\eqref{eq:id_byparts_bis} is well defined also when $s$ is a singular value of $\ffi$.
\end{remark}

\smallskip

\begin{proof}
To prove identity~\eqref{eq:id_byparts_bis} when $s>0$ is a regular value of $\ffi$, we start from the formula
\begin{equation}
\label{eq:diver}
{\rm div}_g  \bigg( \frac{|\na \ffi|_g^{p-1} \, \na \ffi   }{\sinh^n(\ffi)} \bigg) \, = \, \frac{  |\na \ffi|_g^{p-3}  
\Big(     (p-1)\,  \nana \ffi (\na\ffi, \!\na\ffi) \, + \, |\na\ffi|_g^2 \, \Deg\ffi \,
- \, n \, \coth(\ffi) \, |\na\ffi|_g^{4}  \Big) }{\sinh^n(\ffi)} \, ,
\end{equation}
which follows from a direct computation. Since $\ffi$ is analytic, its singular values are discrete (see~\cite{Soucek}). In particular all the big enough values are regular. Hence, we integrate the above formula by parts using the Divergence Theorem in $\{s < \ffi < S \}$, where $S$ is large enough so that we are sure that the level set $\{\ffi=S\}$ is regular. This gives
\begin{multline}
\label{eq:byparts_fin}
 \int\limits_{\{s <\ffi < S\}} \!\!\!\!\!\!
\frac{  |\na \ffi|_g^{p-3}  
\Big(     (p-1)\,  \nana \ffi (\na\ffi, \!\na\ffi) \, + \, |\na\ffi|_g^2 \, \Deg\ffi \, 
- \,n \, \coth(\ffi) \, |\na\ffi|_g^{4}  \Big) }{\sinh^n(\ffi)}
\,\,\rmd\mu_g \,\,=  \\
 \,=\,\,
\!\!\!\!
\int\limits_{\{\ffi=S\}}
\!\!\!
\frac{   |\na\ffi|_g^{p-1} \big\langle{\na\ffi}
\,\big|\,{\rm n}_g\big\rangle_g}{  \sinh^n(\ffi)  }       \,\,\rmd\sigma_g
\,\,+
\!\!\int\limits_{\{\ffi=s\}}
\!\!\!
\frac{    |\na\ffi|^{p-1} \big \langle{\na \ffi}
\,\big|\, {\rm n}_g\big\rangle_g}{  \sinh^n(\ffi)  }       \,\,\rmd\sigma_g \, ,
\end{multline}
where ${\rm n}_g$ is the outer unit normal. In particular, one has that ${\rm n}_g = -\na\ffi/|\na\ffi|_g$ on $\{\ffi=s\}$ and ${\rm n}_g =\na\ffi/|\na\ffi|_g$
on $\{\ffi=S\}$.
Therefore, if we prove that
\begin{equation}
\label{eq:asymp_div1}
\lim_{S \to +\infty} \!\!\int\limits_{\{\ffi=S\}} \!\!\! \frac{|\na \ffi|_g^p}{\sinh^n(\ffi)} \,\,\rmd \sigma_g \, = \, 0 \,, 
\end{equation}
then the statement of the proposition will follow at once. 
Form Lemma~\ref{le:grad_ffi}, we know that $|\na\ffi|_g\leq 1$, hence it is enough to prove that
$$
\lim_{S \to +\infty} \!\!\int\limits_{\{\ffi=S\}} \!\!\! \frac{\rmd \sigma_g}{\sinh^n(\ffi)} \,\, = \, 0 \,, 
$$
Rewriting this last limit in terms of $u,\go$, we find the equalities
$$
\lim_{t\to 1} \!\!\int\limits_{\{u=t\}} \!\!\! \frac{\sqrt{1-u^2}}{u^n} \,\,\rmd \sigma \, = \, 0\quad  (\hbox{case }\Lambda>0)\,,\qquad \lim_{t\to 1} \!\!\int\limits_{\{u=t\}} \!\!\! \sqrt{u^2-1} \,\,\rmd \sigma \, = \, 0 \quad  (\hbox{case }\Lambda<0)
$$ 
which are easily verified, since the level sets $\{u=t\}$ have finite $\mathscr{H}^{n-1}$-measure (because $u$ is analytic). This proves the limit~\eqref{eq:asymp_div1} and the thesis in the case in which $s$ is a regular value. 

In the case where $s>0$ is a singular value of $\ffi$, we need to apply a refined version of the Divergence Theorem in order to perform the integration by parts which leads to identity~\eqref{eq:byparts_fin}, namely Theorem~\ref{thm:div} in the appendix. The rest of the proof is then identical to what we have done for the regular case.

According to the notations of Theorem~\ref{thm:div}, we set 
\begin{eqnarray*}
X  =  \frac{|\na\ffi|_g^{p-1}\na \ffi}{\sinh^n(\ffi)} \qquad \hbox{and} \qquad E = \{ s < \ffi < S\}\, .
\end{eqnarray*}
so that $\pa E = \{\ffi = s \} \sqcup \{ \ffi = S \}$. 
It is clear that the vector field $X$ is Lipschitz for $p\geq 1$ and, by the results of~\cite{Federer}, we know that $\mathscr{H}^{n-1}(\pa E)$ is finite.
Moreover, from~\cite{Krantz}, we know that there exists an open $(n-1)$-submanifold $N\subseteq {\rm Crit}(\ffi)$ such that $\mathscr{H}^{n-1}(\pa E\setminus N)=0$.
Set $\Sigma=\pa E\cap ({\rm Crit}(\ffi)\setminus N)$ and $\Gamma=\pa E\setminus \Sigma$. We have $\mathscr{H}^{n-1}(\Sigma)=0$ by definition, while $\Gamma$ is the union of the regular part of $\pa E$ and of $N$, which are open $(n-1)$-submanifolds.
Therefore, the hypoteses of Theorem~\ref{thm:div} are satisfied, hence we can apply it to conclude that~\eqref{eq:byparts_fin} holds also on the non regular level sets.
\end{proof}

\subsection{Second integral identity.}

Now we want to exploit Lemma~\ref{le:grad_ffi} in order to obtain an integral inequality analogous to~\cite[Prop. 4.2]{Ago_Maz_2}.
We rewrite equation~\eqref{eq:DegW} as
\begin{equation}
\label{eq:Deg_grad_ffi}
\Deg |\na\ffi|^2_g\, -\left(\frac{1}{u}+(n+1) u\right)\langle \na |\na\ffi|_g^2 \,|\, \na\ffi\rangle_g= \, 2\, |\nana\ffi|_g^2  \, +\, 2\, n\, u^2\, |\na\ffi|_g^2\, \left(1-|\na \ffi|_g^2\right)\, .
\end{equation}
For every $p \geq 3$, we compute 
\begin{align*}
\na |\na \ffi|_\g^{p-1} & \,= \, \Big(\frac{p-1}{2}\Big)  |\na \ffi|_\g^{p-3} \, \na |\na \ffi|_\g^2 \, ,\\
\Deg |\na \ffi|_\g^{p-1} &\, = \, \Big(\frac{p-1}{2}\Big)  |\na \ffi|_\g^{p-3} \, {\Deg |\na \ffi|^2} +  (p-1)(p-3) \, |\na \ffi|_\g^{p-3} \, \big| \na |\na \ffi |_\g   \big|_\g^2 \, .
\end{align*}
We notice {en passant} that whenever $|\na \ffi|_g>0$ the above formul\ae\ make sense for every $p \geq 0$.
These  identities, combined with~\eqref{eq:Deg_grad_ffi}, lead to
\begin{multline}
\label{eq:Deg_grad_ffi_p}
\Deg |\na\ffi|^{p-1}_g\, -\left(\frac{1}{u}+(n+1) u\right)\langle \na |\na\ffi|_g^{p-1} \,|\, \na\ffi\rangle_g =
\\
= \, (p-1) |\na\ffi|_g^{p-3} \, \Big( |\nana\ffi|_g^2 \,+\, (p-3) \left|\na|\na\ffi|_g\right|_g^2 \, +\, n\, u^2\, |\na\ffi|_g^2\, \left(1-|\na \ffi|_g^2\right)\Big)\, .
\end{multline}

Obviously, for $p=3$, the above formula coincides with~\eqref{eq:Deg_grad_ffi}. 
If we define the function 
$$
\gamma = \gamma(\ffi)=
\begin{dcases}
\frac{1}{\sinh(\ffi)\cosh^{n+1}(\ffi)} & \mbox{(case $\Lambda>0$)}\,,
\\
\frac{1}{\cosh(\ffi)\sinh^{n+1}(\ffi)} & \mbox{(case $\Lambda<0$)}\,,
\end{dcases}
$$
then the equation above can be written as
\begin{multline}
\label{eq:div_p}
\mbox{div}_g\left(\gamma(\ffi)\, \na |\na\ffi|^{p-1}_g\right) \, = 
\\ = \, (p-1) |\na\ffi|_g^{p-3} \, \gamma(\ffi)\left(|\nana\ffi|_g^2 \,+\, (p-3) \left|\na|\na\ffi|_g\right|_g^2  \, + \, n\, u^2\, |\na\ffi|_g^2\, \left(1-|\na \ffi|_g^2\right)\right)\, .
\end{multline}
Note that the term on the right is always positive, thanks to Lemma~\ref{le:grad_ffi}.

Integrating by parts identity~\eqref{eq:div_p}, we obtain the following proposition, which is the main result of this section.

\begin{proposition}
\label{prop:cyl}
Let $(M^*,g,\ffi)$ be a solution to problem~\eqref{eq:pb_conf} satisfying Assumption~\ref{ass:conf}. Then, for every $s\in [0,+\infty)$ and $p\geq 3$
\begin{multline}
\label{eq:id_byparts} 
 \, \gamma(s)\!\!\! \int\limits_{\{\ffi=s\}}\!\!\! \left(|\na \ffi|_g^{p-1} \, \Hg -\,|\na \ffi|_g^{p-2}\,\Deg\ffi\right) \rmd\sigma_g
 \, = \,
\\
\, = \,
\int\limits_{\{\ffi>s\}}\!\!\! \gamma(\ffi)\, |\na\ffi|_g^{p-3}\,\left(|\nana\ffi|^2_g 
\,+\, (p-3) \left|\na|\na\ffi|_g\right|_g^2 
\,+\, n\, u^2\, |\na\ffi|^2_g\, \left(1-|\na \ffi|^2_g\right)\right)\,\rmd\mu_g\,.
\end{multline}
Moreover, if there exists $s_0\in (0,+\infty)$ such that 
\begin{equation}
\label{eq:condition_cyl}
\int\limits_{\{\ffi=s_0\}}
\!\!\! \left(|\na \ffi|_\g^{p-1}\,\Hg - |\na\ffi|_\g^{p-2} \Deg\ffi\right) \!
\,\rmd\sigma_{\!g}  \, \leq \, 0 \, ,
\end{equation}
then the manifold $(M^*,g)$ is isometric to one half round cylinder with totally geodesic boundary.
\end{proposition}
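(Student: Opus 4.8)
The plan is to obtain~\eqref{eq:id_byparts} by integrating the divergence identity~\eqref{eq:div_p} over the region $\{s<\ffi<S\}$ and letting $S\to+\infty$. Setting $X=\gamma(\ffi)\,\na|\na\ffi|_g^{p-1}$, the right hand side of~\eqref{eq:div_p} is precisely $(p-1)\,\gamma(\ffi)\,|\na\ffi|_g^{p-3}\big(|\nana\ffi|_g^2+(p-3)|\na|\na\ffi|_g|_g^2+n\,u^2|\na\ffi|_g^2(1-|\na\ffi|_g^2)\big)$, so the volume integral of $\operatorname{div}_g X$ reproduces, up to the factor $(p-1)$, the right hand side of~\eqref{eq:id_byparts}. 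For the boundary terms I would apply the Divergence Theorem; since $s$ (and possibly some larger level) may be a singular value of $\ffi$, the correct tool is the refined version Theorem~\ref{thm:div}, used exactly as in Proposition~\ref{prop:byparts} with $E=\{s<\ffi<S\}$ and $\partial E=\{\ffi=s\}\sqcup\{\ffi=S\}$, exploiting that $\mathscr{H}^{n-1}(\partial E)<+\infty$ and that the critical set meets $\partial E$ in a smooth $(n-1)$-submanifold off a negligible remainder.

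The heart of the matter is to recognise the flux through $\{\ffi=s\}$ as the left hand side of~\eqref{eq:id_byparts}. On a level set the outer normal to $\{\ffi>s\}$ is $-\na\ffi/|\na\ffi|_g$, and using $\na|\na\ffi|_g^{p-1}=\tfrac{p-1}{2}|\na\ffi|_g^{p-3}\na|\na\ffi|_g^2$ together with $\langle\na|\na\ffi|_g^2\,|\,\na\ffi\rangle_g=2\,\nana\ffi(\na\ffi,\na\ffi)$ one finds $\langle X\,|\,-\na\ffi/|\na\ffi|_g\rangle_g=-(p-1)\,\gamma(s)\,|\na\ffi|_g^{p-4}\nana\ffi(\na\ffi,\na\ffi)$. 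On the other hand, the mean-curvature formula~\eqref{eq:formula_curvature} yields the pointwise identity $|\na\ffi|_g^{p-1}\Hg-|\na\ffi|_g^{p-2}\Deg\ffi=-|\na\ffi|_g^{p-4}\nana\ffi(\na\ffi,\na\ffi)$, so the flux equals $(p-1)\,\gamma(s)\int_{\{\ffi=s\}}(|\na\ffi|_g^{p-1}\Hg-|\na\ffi|_g^{p-2}\Deg\ffi)\,\rmd\sigma_g$. The common factor $(p-1)$ then cancels and~\eqref{eq:id_byparts} follows, once the flux through $\{\ffi=S\}$ is shown to vanish.

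This vanishing is the step I expect to require the most care. By Lemma~\ref{le:grad_ffi} one has $|\na\ffi|_g\le1$, and using~\eqref{eq:|nanaffi|_D} (resp.\ its $\Lambda<0$ analogue) together with the boundedness of $|\DD u|$ one checks that $|\na\ffi|_g^{p-4}\nana\ffi(\na\ffi,\na\ffi)$ is bounded on $M^*$ for $p\ge3$, by estimating it by $|\nana\ffi|_g\,|\na\ffi|_g^{p-2}$. Hence the flux at level $S$ is controlled by $\gamma(S)\,\mathscr{H}^{n-1}_g(\{\ffi=S\})$; rewriting $\gamma(\ffi)\,\rmd\sigma_g$ in terms of $u,\go$ as in Proposition~\ref{prop:byparts} (it becomes a bounded multiple of $(1-u^2)^{3/2}\rmd\sigma$, resp.\ $(u^2-1)^{3/2}\rmd\sigma$) and invoking the finiteness of the $\go$-area of the level sets, exactly as in the limit~\eqref{eq:asymp_div1}, shows it tends to $0$ as $S\to+\infty$.

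For the rigidity statement, the key observation is that the integrand on the right hand side of~\eqref{eq:id_byparts} is nonnegative: $|\nana\ffi|_g^2\ge0$, $(p-3)|\na|\na\ffi|_g|_g^2\ge0$ for $p\ge3$, and $n\,u^2|\na\ffi|_g^2(1-|\na\ffi|_g^2)\ge0$ by Lemma~\ref{le:grad_ffi}, while $\gamma>0$. If~\eqref{eq:condition_cyl} holds at some $s_0$, then the left hand side of~\eqref{eq:id_byparts} is $\le0$ there, forcing the whole nonnegative right hand side to vanish; thus $|\nana\ffi|_g^2|\na\ffi|_g^{p-3}=0$ almost everywhere on $\{\ffi>s_0\}$, so by continuity $\nana\ffi\equiv0$ on the open set $\{\ffi>s_0\}\cap\{|\na\ffi|_g\ne0\}$, and by analyticity and connectedness of $M^*$ we get $\nana\ffi\equiv0$ on all of $M^*$. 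Then $\na\ffi$ is parallel, whence $\Deg\ffi=\operatorname{tr}_g\nana\ffi=0$ and the second equation in~\eqref{eq:pb_conf} forces $|\na\ffi|_g\equiv1$ since $u>0$. The splitting theorem gives that $(M^*,g)$ is isometric to the half-cylinder $[0,+\infty)\times N$ with $g=\rmd\ffi\otimes\rmd\ffi+g_N$ and totally geodesic boundary $\{\ffi=0\}$. Finally, translating $\nana\ffi\equiv0$ and $|\na\ffi|_g\equiv1$ back through~\eqref{eq:dedefidedeu_D} (resp.~\eqref{eq:dedefidedeu_A}) gives the Obata equation $\DD u=-u\,\go$ (resp.~$\DD u=u\,\go$), so that Obata's theorem identifies the cross section $N$ with the round unit sphere $\Sph^{n-1}$ (equivalently, the smooth extension of $\go=(1-u^2)g$ across the ends $\ffi\to+\infty$, where the fibre collapses to a point, forces $N$ to be round). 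This yields the desired half round cylinder, and the identification of $N$ as a genuine round sphere rather than a mere Einstein fibre is the secondary subtlety to be handled with care.
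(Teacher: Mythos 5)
Your proposal is correct and follows essentially the same route as the paper: integrate the divergence identity~\eqref{eq:div_p} over $\{s<\ffi<S\}$, identify the flux through $\{\ffi=s\}$ with the left-hand side of~\eqref{eq:id_byparts} via~\eqref{eq:formula_curvature}, kill the flux at $\{\ffi=S\}$ using $|\na\ffi|_g\leq 1$ and the finiteness of the $\mathscr{H}^{n-1}$-measure of the level sets, invoke Theorem~\ref{thm:div} for singular values of $s$, and for rigidity propagate $\nana\ffi\equiv 0$ by analyticity to reach the Obata-type equation for $u$. The only cosmetic difference is in the final identification: the paper concludes directly that $(M,\go)$ is a half-sphere via Obata when $\Lambda>0$ and uses Qing's Lemma~3.3 for the hyperbolic rigidity when $\Lambda<0$, rather than splitting first and then recognizing the round fibre, but the underlying ingredient (rigidity of $\DD u=\mp u\,\go$) is the same.
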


\begin{remark}
\label{rem:a_e}
Translating Remark~\ref{rem:uno_D} in terms of the conformally related quantities, it is easy to realize that the integral on the left hand side of~\eqref{eq:id_byparts} is well defined also when $s$ is a singular value of $\ffi$.
\end{remark}


\smallskip

\noindent For the seek of clearness, we rewrite more explicitly Proposition~\ref{prop:cyl}, distinguishing the two cases $\Lambda>0$, $\Lambda<0$.

\begin{corollary}[Case $\Lambda>0$]
Let $(M,g_0,u)$ be a static solution to problem~\eqref{eq:pb_D} satisfying Normalization~\ref{norm:D} and Assumption~\ref{ass:D}. Let $g$ be the metric defined in~\eqref{eq:g_D} and $\ffi$ be the smooth function defined in~\eqref{eq:ffi_D}.
Then, for every $s\in [0,+\infty)$
\begin{multline*}
\!\!\! \int\limits_{\{\ffi=s\}}\!\!\! \frac{|\na \ffi|_g^{p-1} \, \Hg -\,|\na \ffi|_g^{p-2}\,\Deg\ffi}{\sinh(s)\cosh^{n+1}(s)} \rmd\sigma_g\,=
\\
\,=\!\!
\int\limits_{\{\ffi>s\}}\!\!\! |\na\ffi|_g^{p-3}\,\frac{|\nana\ffi|^2_g 
\,+\, (p-3) \left|\na|\na\ffi|_g\right|_g^2 
\,+\, n \,\tanh^2(\ffi)\, |\na\ffi|^2_g\, \left(1-|\na \ffi|^2_g\right)}{\sinh(\ffi)\cosh^{n+1}(\ffi)}\,\rmd\mu_g
\, .
\end{multline*}
Moreover, if there exists $s_0\in (0,+\infty)$ such that 
\begin{equation*}
\int\limits_{\{\ffi=s_0\}}
\!\!\! \left(|\na \ffi|_\g^{p-1}\,\Hg - |\na\ffi|_\g^{p-2} \Deg\ffi\right) \!
\,\rmd\sigma_{\!g}  \, \leq \, 0 \, ,
\end{equation*}
then $(M,\go,u)$ is isometric to the de Sitter solution.
\end{corollary}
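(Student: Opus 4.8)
The plan is to obtain both assertions by specializing Proposition~\ref{prop:cyl} to the case $\Lambda>0$, inserting the explicit form of the weight $\gamma$ and of $u$ as a function of $\ffi$. Recall from the unifying formalism of Subsection~\ref{sub:reform} that here $u=\tanh(\ffi)$ (see~\eqref{eq:ffi_D}), so that $u^2=\tanh^2(\ffi)$, that the conformal metric is $g=\go/(1-u^2)$ as in~\eqref{eq:g_D}, and that the weight entering~\eqref{eq:div_p} and~\eqref{eq:id_byparts} is $\gamma(\ffi)=1/(\sinh(\ffi)\cosh^{n+1}(\ffi))$.

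First I would substitute these into the general identity~\eqref{eq:id_byparts}. On the left hand side the constant $\gamma(s)=1/(\sinh(s)\cosh^{n+1}(s))$ is brought under the integral sign, producing exactly the claimed boundary integrand. On the right hand side the only term that changes is $n\,u^2\,|\na\ffi|_g^2\,(1-|\na\ffi|_g^2)$, which becomes $n\,\tanh^2(\ffi)\,|\na\ffi|_g^2\,(1-|\na\ffi|_g^2)$; the two remaining terms $|\nana\ffi|_g^2$ and $(p-3)\,\bigl|\na|\na\ffi|_g\bigr|_g^2$ are carried over unchanged, and the weight $\gamma(\ffi)=1/(\sinh(\ffi)\cosh^{n+1}(\ffi))$ multiplies the whole integrand. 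This yields the stated identity for every $s\in[0,+\infty)$, the case of a singular value $s$ being covered by Remark~\ref{rem:a_e}.

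For the rigidity part, the assumption on the boundary integral is literally condition~\eqref{eq:condition_cyl}, so the second assertion of Proposition~\ref{prop:cyl} applies and forces $(M^*,g)$ to be isometric to one half round cylinder with totally geodesic boundary. It then remains only to translate this conclusion back to the original variables. For this I would invoke the conformal correspondence established in Section~\ref{sec:conf_reform}, together with the identification — used repeatedly throughout Section~\ref{sec:translation} — that $(M^*,g,\ffi)$ being a half round cylinder is equivalent to $(M,\go,u)$ being isometric to the de Sitter solution~\eqref{eq:D}. The only genuinely substantive point is this last equivalence: one undoes the conformal change via $\go=(1-u^2)\,g$ and checks, through an Obata-type rigidity argument, that the product structure of $g$ gives back precisely the explicit de Sitter triple~\eqref{eq:D}. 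Everything else is a direct specialization of Proposition~\ref{prop:cyl}, requiring no new computation.
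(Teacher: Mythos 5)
Your proposal is correct and matches the paper's treatment: the corollary is stated there precisely as an explicit rewriting of Proposition~\ref{prop:cyl} with $u=\tanh(\ffi)$ and $\gamma(\ffi)=1/(\sinh(\ffi)\cosh^{n+1}(\ffi))$, and the rigidity follows from condition~\eqref{eq:condition_cyl} together with the half-cylinder/de Sitter identification obtained via $\DD u=-u\,\go$ and Obata's theorem, exactly as you describe (the paper folds the Obata step into the proof of Proposition~\ref{prop:cyl} itself rather than into the translation back, but the chain of implications is the same).
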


\begin{corollary}[Case $\Lambda<0$]
Let $(M,\go,u)$ be a conformally compact static solution to problem~\eqref{eq:pb_A} satisfying Normalization~\ref{norm:A} and Assumption~\ref{ass:A}. Let $g$ be the metric defined in~\eqref{eq:g_A}, and $\ffi$ be the smooth function defined in~\eqref{eq:ffi_A}. 
Then, for every $s\in [0,+\infty)$
\begin{multline*}
\!\!\! \int\limits_{\{\ffi=s\}}\!\!\! \frac{|\na \ffi|_g^{p-1} \, \Hg -\,|\na \ffi|_g^{p-2}\,\Deg\ffi}{\sinh^{n+1}(s)\cosh(s)} \rmd\sigma_g
\,=
\\
=\!\!\int\limits_{\{\ffi>s\}}\!\!\! |\na\ffi|_g^{p-3}\,\frac{|\nana\ffi|^2_g 
\,+\, (p-3) \left|\na|\na\ffi|_g\right|_g^2 
\,+\, n\, \coth^2(\ffi)\, |\na\ffi|^2_g\, \left(1-|\na \ffi|^2_g\right)}{\sinh^{n+1}(\ffi)\cosh(\ffi)}\,\rmd\mu_g 
\, .
\end{multline*}
Moreover, if there exists $s_0\in (0,+\infty)$ such that 
\begin{equation*}
\int\limits_{\{\ffi=s_0\}}
\!\!\! \left(|\na \ffi|_\g^{p-1}\,\Hg - |\na\ffi|_\g^{p-2} \Deg\ffi\right) \!
\,\rmd\sigma_{\!g}  \, \leq \, 0 \, ,
\end{equation*}
then $(M,\go,u)$ is isometric to the anti-de Sitter solution.
\end{corollary}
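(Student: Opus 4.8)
The plan is to obtain this statement as an immediate specialization of Proposition~\ref{prop:cyl} to the negative cosmological constant case, together with the dictionary between the cylindrical model and the anti-de Sitter solution. First I would recall that, by the conformal reformulation carried out in this section, the triple $(M^*,g,\ffi)$ constructed from $(M,\go,u)$ via the cylindrical ansatz~\eqref{eq:g_A} and the change of variable~\eqref{eq:ffi_A} is a solution of problem~\eqref{eq:pb_conf} with $u=\coth(\ffi)$. I would then verify that the hypotheses of Proposition~\ref{prop:cyl} are satisfied: Assumption~\ref{ass:conf}-(ii) follows from Assumption~\ref{ass:A}, because the identity $|\na\ffi|_g^2=|\D u|^2/(u^2-1)$ gives $u^2(1-|\na\ffi|_g^2)=u^2(u^2-1-|\D u|^2)/(u^2-1)$, whose limit at the conformal boundary is nonnegative precisely when $\lim_{x\to\bar x}(u^2-1-|\D u|^2)\geq 0$.

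With these observations in place, Proposition~\ref{prop:cyl} applies directly. The integral identity of the corollary is obtained by writing out~\eqref{eq:id_byparts} with the explicit weight for the case $\Lambda<0$, namely $\gamma(\ffi)=1/(\cosh(\ffi)\sinh^{n+1}(\ffi))$, and with $u^2=\coth^2(\ffi)$. Since $\gamma(s)$ is constant on each level set $\{\ffi=s\}$, it can be absorbed into the surface integral, producing the denominator $\sinh^{n+1}(s)\cosh(s)$ appearing in the statement; likewise $u^2=\coth^2(\ffi)$ turns the factor $n\,u^2$ on the right-hand side into $n\coth^2(\ffi)$. This yields exactly the displayed formula, valid for every $s\in[0,+\infty)$ and $p\geq 3$.

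For the rigidity part, I would assume that the surface integral on the left is nonpositive for some $s_0\in(0,+\infty)$, that is, condition~\eqref{eq:condition_cyl} holds. Proposition~\ref{prop:cyl} then forces $(M^*,g)$ to be isometric to one half round cylinder with totally geodesic boundary. The only remaining step is to transport this conformal rigidity back to the original metric: since the conformal change~\eqref{eq:g_A} sends the round cylinder to the anti-de Sitter metric (as noted when the ansatz was introduced) and this precise correspondence was already exploited in Section~\ref{sec:translation} to match the two rigidity statements, one concludes that $(M,\go,u)$ is isometric to the anti-de Sitter solution.

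I expect the only genuinely substantive point to be this final identification of the cylindrical rigid model with the anti-de Sitter triple, which ultimately rests on Obata's theorem as indicated in the strategy of the proof; every other step is a routine specialization of Proposition~\ref{prop:cyl} with the explicit choices $\gamma(\ffi)=1/(\cosh(\ffi)\sinh^{n+1}(\ffi))$ and $u=\coth(\ffi)$.
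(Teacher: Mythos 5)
Your proposal is correct and follows exactly the route the paper intends: the corollary is stated there precisely as an explicit rewriting of Proposition~\ref{prop:cyl} with $\gamma(\ffi)=1/(\cosh(\ffi)\sinh^{n+1}(\ffi))$ and $u=\coth(\ffi)$, and your verification that Assumption~\ref{ass:conf}-(ii) follows from Assumption~\ref{ass:A} is the right bridge. The only cosmetic imprecision is that for $\Lambda<0$ the rigidity step inside Proposition~\ref{prop:cyl} is closed via Qing's Lemma~3.3 (characterizing hyperbolic space from $\DD u=u\,\go$) rather than Obata's theorem, which the paper reserves for the $\Lambda>0$ case.
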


\smallskip

\begin{proof}[Proof of Proposition~\ref{prop:cyl}]
We start by considering the case where the level set $\{\ffi=s\}$ is regular. Arguing as in the proof of Proposition~\ref{prop:byparts}, we find that we can choose $S$ large enough to be sure that $\{\ffi=S\}$ is regular. Integrating by parts identity~\eqref{eq:div_p} in $\{s<\ffi<S\}$, we obtain
\begin{multline*}
(p-1)\!\!\!\!\!\int\limits_{\{s<\ffi<S\}}\!\!\!\!\!\! \gamma(\ffi)\,|\na\ffi|_g^{p-3}\, \Big(|\nana\ffi|_g^2
\,+\, (p-3) \left|\na|\na\ffi|_g\right|_g^2
\,+ \, n\, u^2\, |\na\ffi|_g^2\, \left(1-|\na \ffi|_g^2\right)\Big)\rmd\mu_g\,
=
\\
=
\int\limits_{\{\ffi=S\}}\!\!\! \gamma(\ffi)\, \langle \na|\na\ffi|_g^{p-1}\,|\, {\rm n}_g\rangle_g\, \rmd\sigma_g
+ 
\int\limits_{\{\ffi=s\}}\!\!\! \gamma(\ffi)\, \langle \na|\na\ffi|_g^{p-1} \,|\, {\rm n}_g\rangle_g\, \rmd\sigma_g\,.
\end{multline*}  
where ${\rm n}$ is the outer $g$-unit normal
of the set $\{s \leq \ffi \leq S \}$ at its boundary. In particular, one has that ${\rm n}_g = -\na\ffi/|\na\ffi|_g$ on $\{\ffi=s\}$ and ${\rm n}_g =\na\ffi/|\na\ffi|_g$
on $\{\ffi=S\}$. On the other hand, from the second formula in~\eqref{eq:formula_curvature} it is easy to deduce that
\begin{multline*}
\langle \na|\na\ffi|_g^{p-1}|\na\ffi\rangle_g
\, = \, 
\frac{p-1}{2}|\na\ffi|_g^{p-3}\langle \na|\na\ffi|_g^2|\na\ffi\rangle_g
\, =
\\
= \, 
(p-1) \,|\na\ffi|_g^{p-3} \nana\ffi(\na\ffi,\na\ffi)
\, = \,  
(p-1) \,|\na\ffi|_g^{p-3} \left(- \,|\na\ffi|_g^3 \, \Hg \, + \, |\na\ffi|_g^2 \,\Deg \ffi\right)\, . 
\end{multline*}
Therefore, we have obtained
\begin{multline}
\label{eq:int_part_fin_2}
\int\limits_{\{s<\ffi<S\}}\!\!\!\!\!\! \gamma(\ffi)\,|\na\ffi|_g^{p-3}\,\Big(|\nana\ffi|_g^2
\,+\, (p-3) \left|\na|\na\ffi|_g\right|_g^2
+\, n\, u^2\, |\na\ffi|_g^2\, \left(1-|\na \ffi|_g^2\right)\Big)\rmd\mu_g
\,=
\\
=\,
\gamma(s)\!\!\! \int\limits_{\{\ffi=s\}}\!\!\! \Big(|\na\ffi|_g^{p-1} \, \Hg -|\na\ffi|_g^{p-2} \Deg \ffi \Big)\,\rmd\sigma_g
-
\gamma(S)\!\!\! \int\limits_{\{\ffi=S\}}\!\!\! \Big(|\na\ffi|_g^{p-1} \, \Hg -|\na\ffi|_g^{p-2} \Deg \ffi \Big)\,\rmd\sigma_g\,.
\end{multline} 

In order to obtain identity~\eqref{eq:id_byparts} it is sufficient to show that the last term on the right hand side tends to zero as $S\to +\infty$. To this end, we first compute
\begin{equation*}
\!\!\!\!\lim_{S\to +\infty}\gamma(S)\!\!\!\! \int\limits_{\{\ffi=S\}}\!\!\! \Big(|\na\ffi|_g^{p-1} \, \Hg -|\na\ffi|_g^{p-2} \Deg \ffi \Big)\,\rmd\sigma_g=\lim_{S\to +\infty}-\gamma(S)\!\!\!\! \int\limits_{\{\ffi=S\}}\!\!\!\! |\na\ffi|_g^{p-2}\nana\ffi({\rm n}_g,{\rm n}_g)\,\rmd\sigma_g\,.
\end{equation*}
Now we recall that $|\na\ffi|_g\leq 1$ thanks to Lemma~\ref{le:grad_ffi}, and we use formul\ae~\eqref{eq:dedefidedeu_D},~\eqref{eq:dedefidedeu_A} to rewrite the limit above in terms of $u,\go$. In both the cases $\Lambda>0$ and $\Lambda<0$, we find that it is enough to prove 
\begin{equation}
\label{eq:lim_S}
\lim_{t\to 1} \int\limits_{\{u=t\}}\!\!\! \sqrt{|1-u^2|}\bigg[\Big(\frac{1-u^2}{u}\Big)\DD u({\rm n},{\rm n})+|\D u|^2\bigg]\,\rmd\sigma\,=\,0\,,
\end{equation}
where ${\rm n}=\D u/|\D u|$. Note that, for $t$ near enough to 1, the vector ${\rm n}$ is well defined. In fact, since the singular values of an analytic function are discrete (see~\cite{Soucek}), it is clear that the values near enough to 1 are regular.

Since $u$ is analytic, the level set $\{u=t\}$ has finite $\mathscr{H}^{n-1}$-measure (see~\cite{Federer}), thus the equality~\eqref{eq:lim_S} is straightforward. This completes the proof of the first part of the statement in the case where
$\{ \ffi =s\}$ is regular. 

In the case where $s>0$ is a singular value of $\ffi$, we need to apply a slightly refined version of the Divergence Theorem, namely Theorem~\ref{thm:div} in the appendix, in order to perform the integration by parts which leads to identity~\eqref{eq:int_part_fin_2}. The rest of the proof is identical to what we have done for the regular case. We set 
\begin{eqnarray*}
X \, = \,  \gamma(\ffi)\,\na|\na\ffi|_g^{p-1} 
\,=\, 
\Big(\frac{p-1}{2}\Big)\, \gamma(\ffi)\,|\na\ffi|_g^{p-3}\,\na|\na\ffi|_g^2
\qquad \hbox{and} \qquad E= \{ s < \ffi < S\}\, .
\end{eqnarray*}
so that $\pa E = \{\ffi = s \} \sqcup \{ \ffi = S \}$. As we have already observed, $\ffi$ is proper and analytic, hence the $(n-1)$-dimensional Hausdorff measure of $\pa E$ is finite. 
Moreover, it is clear that $X$ is Lipschitz for $p\geq 3$. 
From the results in~\cite{Krantz}, we know that there exists an open $(n-1)$-submanifold $N\subseteq {\rm Crit}(\ffi)$ such that $\mathscr{H}^{n-1}(\pa E\setminus N)=0$.
Set $\Sigma=\pa E\cap ({\rm Crit}(\ffi)\setminus N)$ and $\Gamma=\pa E\setminus \Sigma$.
We have $\mathscr{H}^{n-1}(\Sigma)=0$ by definition, while $\Gamma$ is the union of the regular part of $\pa E$ and of $N$, which are open $(n-1)$-submanifolds.
Therefore the hypoteses of Theorem~\ref{thm:div} are satisfied, hence, taking into account Remark~\ref{rem:a_e} and expression~\eqref{eq:div_p}, we have that identity~\eqref{eq:int_part_fin_2} holds true also in the case where $s$ is a singular value of $\ffi$. 

To prove the second part of the statement, we observe that from~\eqref{eq:id_byparts} and~\eqref{eq:condition_cyl} one immediately gets $\nana \ffi \equiv 0$ in $\{ \ffi \geq s_0 \}$. Since $\ffi$ is analytic, then $\nana \ffi \equiv 0$ on the whole $M^*$. In particular, $\Deg\ffi=0$ and, from the second equation in~\eqref{eq:pb_conf}, we find $|\na\ffi|_g^2\equiv 1$ on $M^*$. 

Consider now the case $\Lambda>0$. Substituting $\nana\ffi=0$ and $|\na\ffi|_g=1$ in equality~\eqref{eq:dedefidedeu_D}, we find $\DD u=-u\,\go$ on $M^*$. Since $u$ is analytic, the set ${\rm MAX}(u)$ is negligible, hence the equality $\DD u=-u\,\go$ holds on the whole $M=M^*\cup{\rm MAX}(u)$. Therefore, using the same arguments as in~\cite{Obata}, we deduce that $(M,\go)$ is an half-sphere, and translating this back in terms of the conformally related quantities, we easily find that $(M^*,g)$ is isometric to an half round cylinder.

In the case $\Lambda<0$ we proceed in a similar way. Substituting in equality~\eqref{eq:dedefidedeu_A}, we find $\DD u=u\go$ on $M^*$ and, with the same argument as above, we deduce that the same equation holds on the whole $M=M^*\cup{\rm MIN}(u)$. Then we can use \cite[Lemma~3.3]{Qing} to conclude that $(M,\go)$ is isometric to the hyperbolic space, from which we deduce that $(M^*,g)$ is an half round cylinder.
\end{proof}

\section{Proof of Theorem~\ref{thm:main_conf}}
\label{sec:proofs}
Building on the analysis of the previous section, we are now in the position to prove Theorem~\ref{thm:main_conf}, which in turn implies Theorem~\ref{thm:main_D} and Theorem~\ref{thm:main_A}.

\subsection{Continuity.}
\label{continuity}
We claim that under the hypotheses of Theorem~\ref{thm:main_conf} the function $\Phi_p$ is continuous, for $p\geq 1$. 

We first observe that since we are assuming that the boundary $\pa M$ is a regular level set of $\ffi$, the function $s \mapsto \Phi_p(s)$ can be described in term of an integral depending on the parameter $s$, provided $s \in [0, 2\ep)$ with $\ep>0$ sufficiently small. In this case, the continuous dependence on the parameter $s$ can be easily checked using standard results from classical differential calculus. Thus, we leave the details to the interested reader and we pass to consider the case where $s \in ( \ep, + \infty)$. Thanks to Proposition~\ref{prop:byparts}
one can rewrite expression~\eqref{eq:fip} as 
\begin{equation}
\label{eq:fip_div}
\Phi_p(s)\,
= \, - \, \sinh^n(s) \!\!\!\int\limits_{\{\ffi>s\}}
\!\!
\frac{  |\na \ffi|_g^{p-3}  
\Big(     (p-1)\,  \nana \ffi (\na\ffi, \!\na\ffi) \, + \, |\na\ffi|_g^2 \, \Deg\ffi\, 
- \, n\,\coth(\ffi) \, |\na\ffi|_g^{4}  \Big) }{\sinh^n(\ffi)}
\,\,\rmd\mu_g .
\end{equation}
It is now convenient to set 
\begin{equation}
\label{eq:density}
\mu^{(p)}_{\,g} (E) \,\, =  \, \int\limits_{E} 
\frac{  |\na \ffi|_g^{p-3}  
\Big(     (p-1)\,  \nana \ffi (\na\ffi, \!\na\ffi) \, + \, |\na\ffi|_g^2 \, \Deg\ffi \, 
- \, n \, \coth(\ffi) \, |\na\ffi|_g^{4}  \Big) }{\sinh^n(\ffi)}
\,\, \rmd\mu_g \, ,
\end{equation}

for every $\mu_g$-measurable set $E \subseteq \{ \ffi >  \ep\}$. It is then clear that for $p \geq 1$ the measure $\mu_{\,g}^{(p)}$ is absolutely continuous with respect to $\mu_g$, since $|\na \ffi|_g^{p-3} \,\nana\ffi (\na \ffi, \na\ffi)\leq |\na \ffi|_g^{p-1} |\nana \ffi|_g $ and $|\nana \ffi|_g$ is bounded (this is an easy consequence of equalities~\eqref{eq:|nanaffi|_D},~\eqref{eq:|nanaffi|_A}).

In view of~\eqref{eq:fip_div}, the function $s \mapsto \Phi_p(s)$ can be interpreted as the repartition function of the measure defined in~\eqref{eq:density}, up to the smooth factor $- \sinh^n(s)$. Thus, $s \mapsto \Phi_p(s)$ is continuous if and only if the assignment 
$$
s \longmapsto \mu^{(p)}_{\, g} (\{ \ffi > s \})
$$ 
is continuous. Thanks to~\cite[Proposition 2.6]{Amb_DaP_Men} and thanks to the fact that $\mu_{\,g}^{(p)}$ is absolutely continuous with respect to $\mu_g$, proving the continuity of the above assignment 
is equivalent to  checking that $\mu_g (\{ \ffi = s \}) = 0$ for every $s>\ep$. On the other hand, the Hausdorff dimension of the level sets of $\ffi$ is at most $n-1$, as it follows from the results in~\cite{Federer}. Hence, they are negligible with respect to the full $n$-dimensional measure. This proves the continuity of $\Phi_p$ for $p\geq 1$ under the hypotheses of Theorem~\ref{thm:main_conf}.

\subsection{Monotonicity of $\Phi_1(s)$.}

From the second equation in problem~\eqref{eq:pb_conf} and from Lemma~\ref{le:grad_ffi}, we get
$$
\Deg\ffi\,=\,-nu\,(1-|\na\ffi|_g^2)\,\leq\, 0\,.
$$
Integrating this inequality in $\{s\leq\ffi\leq S\}$, we get
\begin{equation}
\label{eq:int_lapl}
\int\limits_{\{s\leq\ffi\leq S\}}\!\!\!\Deg\ffi\,\rmd\sigma_g\,\leq\, 0\,.
\end{equation}
Suppose that $\{\ffi=s\}$ and $\{\ffi=S\}$ are regular levels (the case in which they are singular can be handled in the same way as in the proofs of Propositions~\ref{prop:byparts} and~\ref{prop:cyl}). Then, applying the divergence theorem to inequality~\eqref{eq:int_lapl}, we easily obtain $\Phi_1(S)\leq\Phi_1(s)$, for every $s< S$. 

Moreover, if the equality holds for some values of $s,S$, then $|\na\ffi|_g\equiv 1$ on $\{s\leq\ffi\leq S\}$ and, since $\ffi$ is analytic, we have $|\na\ffi|_g\equiv 1$ on the whole $M^*$. Plugging this information inside formula~\eqref{eq:DegW}, we find that $\nana\ffi\equiv 0$ on $M^*$. With the same argument used in the proof of the rigidity statement in Proposition~\ref{prop:cyl}, we deduce that $(M^*,g,\ffi)$ is an half round cylinder.
This proves Theorem~\ref{thm:main_conf}-(ii).

\subsection{Differentiability.}
\label{sec:diff} 
We now turn our attention to the issue of the differentiability of the functions $s \mapsto \Phi_p(s)$.
As already observed in the previous subsection, we are assuming that the boundary $\pa M$ is a regular level set of $\ffi$ so that the function $s \mapsto \Phi_p(s)$ can be described in term of an integral depending on the parameter $s$, provided $s \in [0, 2\ep)$ with $\ep>0$ sufficiently small. Again, the differentiability in the parameter $s$ can be easily checked in this case, using standard results from classical differential calculus. Leaving the details to the interested reader, we pass to consider the case where $s \in (\ep, + \infty)$. We start by noticing that for every $p\geq 2$ the function
\begin{equation*}
\frac{  |\na \ffi|_g^{p-4}  
\Big(  (p-1)\,  \nana \ffi (\na\ffi, \!\na\ffi)  \, + \, |\na\ffi|_g^2 \, \Deg\ffi \, 
- \,n\, \coth(\ffi) \, |\na\ffi|_g^{4}  \Big) }{\sinh^n(\ffi)}
\end{equation*}
has finite integral in $\{ \ffi > s\}$, for every $s> \ep$. Hence, we can apply the coarea formula to expression~\eqref{eq:fip_div}, obtaining
\begin{align}
\label{eq:fip_coa}
\nonumber \Phi_p(s)\,& =\,
-\, \sinh^n(s)  \int\limits_{\{\tau>s\}}  \int\limits_{\{\ffi =\tau \}}\!\!\! \frac{(p-1)\, |\na\ffi|_g^{p-4}\, \nana \ffi (\na\ffi, \!\na\ffi)  \, + \, |\na\ffi|_g^{p-2} \, \Deg\ffi \, 
- \, n\, \coth(\ffi) \, |\na\ffi|_g^{p}}{\sinh^n(\ffi)}
\,\,\rmd\sigma_{\!g} \,\, \rmd \tau \\
\nonumber & = \, 
\sinh^n(s)  \int\limits_{\{\tau>s\}}  \int\limits_{\{\ffi =\tau \}}\!\!\! \frac{(p-1)\, |\na\ffi|_g^{p-1}\, \Hg \, - \, p \, |\na\ffi|_g^{p-2} \, \Deg\ffi \, 
+ \,n\, \coth(\ffi) \, |\na\ffi|_g^{p}}{\sinh^n(\ffi)}
\,\,\rmd\sigma_{\!g} \,\, \rmd \tau 
\\
&  = \, \sinh^n(s) \!\!\!\! \int\limits_{\{\tau>s\}} \!\!\!
\Bigg(\,\, \int\limits_{\{\ffi =\tau \}}\!\!\!\!   \frac{\, (p-1)|\na\ffi|_g^{p-1}\, \Hg \, - \, p \, |\na\ffi|_g^{p-2} \, \Deg\ffi}{\sinh^n(\tau)} \,\rmd\sigma_{\!g}  \,\, 
+ \,\, n\, \frac{\coth(\tau)}{\sinh^n(\tau)}  \, \Phi_p(\tau)
\Bigg)  \rmd \tau \,,
\end{align}
where in the second equality we have used~\eqref{eq:formula_curvature} and in the third equality we have used the definition of $\Phi_p$ given by formula~\eqref{eq:fip}. By the Fundamental Theorem of Calculus, we have that if the function
\begin{equation*}
\tau \,\, \longmapsto \,\, \int\limits_{\{\ffi =\tau \}}\!\!\!\!   \frac{\, (p-1)|\na\ffi|_g^{p-1}\, \Hg \, - \, p \, |\na\ffi|_g^{p-2} \, \Deg\ffi}{\sinh^n(\tau)} \,\rmd\sigma_{\!g}  \,\, 
+ \,\, n\, \frac{\coth(\tau)}{\sinh^n(\tau)}  \, \Phi_p(\tau)
\end{equation*}
is continuous, then $\Phi_p$ is differentiable. 
Since we have already discussed in Subsection~\ref{continuity} the continuity of $s \mapsto \Phi_p(s)$,
we only need to discuss
the continuity of the assignment 
\begin{align}
\nonumber
\tau \,\, \longmapsto & \!\!  \int\limits_{\{\ffi =\tau \}}\!\!\!\!   \frac{\, (p-1)|\na\ffi|_g^{p-1}\, \Hg \, - \, p \, |\na\ffi|_g^{p-2} \, \Deg\ffi}{\sinh^n(\tau)} \,\rmd\sigma_{\!g} 
\\
\nonumber
\,\, = \, & \, (p-1)\frac{1}{\gamma(\tau)\sinh^n(\tau)}\!\!  \int\limits_{\{\ffi> \tau \}} \!\!\gamma(\ffi)\,
 |\na \ffi|_\g^{p-3}  
\Big(   \,  \big|\nana \ffi\big|_\g^2
 +  \, (p-3) \, \big| \na |\na \ffi |_\g   \big|_\g^2 -\, u \, |\na\ffi|_g^2\,\Deg\ffi  \, \Big)  
\,\rmd\mu_g \, +
\\
\label{eq:integrand2}
& \qquad\qquad\qquad\qquad\qquad\qquad\qquad\qquad\qquad\qquad\qquad\qquad\qquad+ \, n\,\frac{u(\tau)}{\sinh^n(\tau)}\big(\Phi_{p-2}(\tau)-\Phi_p(\tau)\big)\,.
\end{align}
We note that the above equality follows from formula $\Deg\ffi=-nu(1-|\na\ffi|_g^2)$ in problem~\eqref{eq:pb_conf} and from the integral identity~\eqref{eq:id_byparts} in Proposition~\ref{prop:cyl}, which is in force under the hypotheses of Theorem~\ref{thm:main_conf}-(iii).
In analogy with~\eqref{eq:density} it is natural to set 
\begin{equation*}
\bar\mu^{(p)}_{\,g} (E) \,\, =  \, \int\limits_{E} 
\gamma(\ffi)\, |\na \ffi|_\g^{p-3}  
\Big(   \,  \big|\nana \ffi\big|_\g^2
 +  \, (p-3) \, \big| \na |\na \ffi |_\g   \big|_\g^2  -\, u \, |\na\ffi|_g^2\,\Deg\ffi  \, \Big)   
\,\, \rmd\mu_g \, ,
\end{equation*}
for every $\mu_g$-measurable set $E \subseteq \{ \ffi > \ep\}$. It is now clear that for $p \geq 3$ the measure $\overline{\mu}_{\,g}^{(p)}$ is absolutely continuous with respect to $\mu_g$.
Hence, using the same reasoning as in Subsection~\ref{continuity}, we deduce that the assignment~\eqref{eq:integrand2} is continuous. In turn, we obtain the differentiability of $\Phi_p$ for $p\geq 3$, under the hypotheses of Theorem~\ref{thm:main_conf}.
Finally, using~\eqref{eq:fip_coa} and~\eqref{eq:id_byparts}, a direct computation shows that 
\begin{align}
\label{eq:fip_mono}
\nonumber \Phi_p'(s) \,\, 
=   \,\, &\int\limits_{\{\ffi=s\}} \!\!\! \left( -(p-1) |\na\ffi|_g^{p-1} \, \Hg + p\, |\na\ffi|_g^{p-2} \, \Deg\ffi \right)\rmd\sigma_g 
\\
\nonumber
= \,\,&- \,(p-1) \!\! \int\limits_{\{\ffi> s\}} \!\!
\frac{\gamma(\ffi)}{\gamma(s)}  \, |\na \ffi|_\g^{p-3}  
\Big(   \,  \big|\nana \ffi\big|_\g^2
 +  \, (p-3) \, \big| \na |\na \ffi |_\g   \big|_\g^2 -\, u \, |\na\ffi|_g^2\,\Deg\ffi  \, \Big)       
\,\,\rmd\mu_g +
\\
& \qquad\qquad\qquad\qquad\qquad\qquad\qquad\qquad\qquad\qquad\qquad\quad\ \  + \int\limits_{\{\ffi=s\}} |\na\ffi|_g^{p-2} \, \Deg\ffi \,\, \rmd\sigma_g \, .
\end{align}
The monotonicity and the rigidity statements in Theorem~\ref{thm:main_conf}-(iii)
are now consequences of Proposition~\ref{prop:cyl}.


\subsection{The second derivative.}

\label{sub:second derivative}
To complete our analysis, we need to prove statement (iii) in Theorem~\ref{thm:main_conf}. To this aim, we observe from~\eqref{eq:formula_curvature} and the first equation of problem~\eqref{eq:pb_conf} that
\begin{align*}
\frac{\Phi_p'(s)}{s} \,\,  &= \,\, \frac{1}{s}\! \int\limits_{\{\ffi = s \}}\!\!\!\!  \left( -(p-1) |\na\ffi|_g^{p-1} \, \Hg + p\, |\na\ffi|_g^{p-2} \, \Deg\ffi \right)\rmd\sigma_g
\\
&= \,\, \frac{u(s)}{s\left[1-(n-1)\,u(s)^2\right]} \int\limits_{\{\ffi =s\}} \!\!\!\!
|\na \ffi|_g^{p-2} \Big[(p-1)\,\Ricg (\nu_g, \nu_g) - n\big(p-(n-1)u^2\big)\left(1-|\na\ffi|_g^2\right)\Big]
\,\,\rmd\sigma_g \,.
\end{align*}
Taking the limit as $s \to 0^+$ and using Assumption~\ref{ass:conf}, we obtain~\eqref{eq:der2_fip}. In the case $\Lambda<0$, one also needs to recall that $\lim_{s\to 0^+}|\na\ffi|_g=1$. This is an easy consequence of formula~\eqref{eq:tilde_R} (see also the proof of Lemma~\ref{le:tot_geod_BGH_A}-(i)).
To prove the rigidity statement in the case $\Lambda>0$, we observe that
\begin{align*}
\Phi_p''(0)\,&=\,\lim_{s\to 0^+}\frac{1}{s}\! \int\limits_{\{\ffi = s \}}\!\!\!\! \left( -(p-1) |\na\ffi|_g^{p-1} \, \Hg + p\, |\na\ffi|_g^{p-2} \, \Deg\ffi \right)\rmd\sigma_g
\\
&=\,\lim_{s\to 0^+} \left[-\left(\frac{p-1}{s}\right)\!\int\limits_{\{\ffi=s\}}\!\!\!\!  \left( |\na\ffi|_g^{p-1} \, \Hg -\, |\na\ffi|_g^{p-2} \, \Deg\ffi \right)\rmd\sigma_g +\frac{1}{s}\!\!\int\limits_{\{\ffi=s\}}\!\!\!\!   |\na\ffi|_g^{p-2} \, \Deg\ffi\,\, \rmd\sigma_g \right] 
\\
&=\,\lim_{s\to 0^+} \left[-\left(p-1\right)\!\!\!\int\limits_{\{\ffi=s\}}\!\!\!\!  \left( \frac{|\na\ffi|_g^{p-1} \, \Hg -\, |\na\ffi|_g^{p-2} \, \Deg\ffi}{\sinh(s)\cosh^{n+1}(s)} \right)\rmd\sigma_g-n\,\frac{u(s)}{s}\!\!\int\limits_{\{\ffi=s\}}\!\!\!\!   |\na\ffi|_g^{p-2} \left(1-|\na\ffi|_g^2\right)\, \rmd\sigma_g \right],
\end{align*} 
and we conclude using Proposition~\ref{prop:cyl}.

To understand why a similar rigidity statement does not hold in the case of a negative cosmological constant, we observe that a computation analogous to the one above gives
$$
\Phi_p''(0)
\,=\,
\lim_{s\to 0^+} \left[-\left(p-1\right)s^n\!\!\!\!\int\limits_{\{\ffi=s\}}\!\!\!\!  \left( \frac{|\na\ffi|_g^{p-1} \, \Hg -\, |\na\ffi|_g^{p-2} \, \Deg\ffi}{\sinh(s)^{n+1}\cosh(s)} \right)\rmd\sigma_g-n\frac{u(s)}{s}\!\!\int\limits_{\{\ffi=s\}}\!\!\!\!   |\na\ffi|_g^{p-2} \left(1-|\na\ffi|_g^2\right)\, \rmd\sigma_g \right].
$$
Therefore, if the equality holds in~\eqref{eq:der2_fip}, one inferes
$$
\lim_{s\to 0^+} s^n\!\!\!\!\int\limits_{\{\ffi=s\}}\!\!\!\!  \left( \frac{|\na\ffi|_g^{p-1} \, \Hg -\, |\na\ffi|_g^{p-2} \, \Deg\ffi}{\sinh(s)^{n+1}\cosh(s)} \right)\rmd\sigma_g
\,=\, 0\,,
$$
and this is not sufficient to use Proposition~\ref{prop:cyl} to deduce the rotational simmetry of the solution.
\underline{}

%

\section*{Appendix}  

\renewcommand{\thesection}{A}
\renewcommand{\thesubsection}{A}
\numberwithin{equation}{subsection}
\renewcommand{\theequation}{\thesubsection-\arabic{equation}}

\subsection{Technical results}

This appendix will be dedicated to the proof of the technical results that we have used in our work.
Specifically, we will give a complete proof of Theorems~\ref{thm:estimate_D},~\ref{thm:estimate_A} (for the ease of reference, we have restated them here as Theorem~\ref{thm:MAX} and Theorem~\ref{thm:MIN}), we will prove an estimate on the static solution near the conformal boundary in the case $\Lambda<0$, and we will state the version of the divergence theorem that we have used in the proofs of Propositions~\ref{prop:byparts},~\ref{prop:cyl}.

\begin{theoremapp}
\label{thm:MAX}
Let $(M,\go,u)$ be a solution of \eqref{eq:pb_D} satisfying Assumption~\ref{ass:D}.
The set ${\rm MAX}(u)$ is discrete (and finite) and 
\begin{equation}
\label{eq:limitbehavior_Up}
\liminf_{t\to 1^-} \,U_p(t)\,\,\geq\,\, |{\rm MAX}(u)|\,|\Sph^{n-1}|\,,
\end{equation}
for every $0\leq p\leq n-1$.
\end{theoremapp}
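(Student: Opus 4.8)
The plan is to localize the analysis around the finitely many points of ${\rm MAX}(u)$ and to perform a blow-up there, using crucially the gradient bound $|\D u|^2 \le 1-u^2$ supplied by Theorem~\ref{thm:main_D}.

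\emph{Discreteness.} First I would show that every $p_0\in{\rm MAX}(u)$ is a nondegenerate maximum with $\DD u(p_0)=-\go$. Since $u=0$ on $\pa M$ while $\max_M u=1$, such a $p_0$ is interior, so $\D u(p_0)=0$. Consider $f:=1-u^2-|\D u|^2$, which by Theorem~\ref{thm:main_D} satisfies $f\ge 0$ on $M$ and $f(p_0)=0$; hence $p_0$ is a minimum of $f$ and $\DD f(p_0)\ge 0$. A direct computation at $p_0$, where $\D u$ vanishes, gives $\DD f(p_0)=-2\big(H+H^2\big)$ with $H:=\DD u(p_0)$, so that $H+H^2\le 0$ and every eigenvalue $\lambda_i$ of $H$ lies in $[-1,0]$. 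On the other hand, tracing with $\go$ and using $\De u=-nu$ yields $\sum_i\lambda_i=\De u(p_0)=-n$. Together with $\lambda_i\ge-1$, this forces $\lambda_i=-1$ for all $i$, i.e. $\DD u(p_0)=-\go$. In particular $p_0$ is an isolated critical point, so ${\rm MAX}(u)$ is discrete; being closed in the compact $M$, it is finite.

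\emph{Asymptotics via blow-up.} Write ${\rm MAX}(u)=\{p_1,\dots,p_N\}$. Since $u<1$ away from these points, for every $t$ close to $1$ the level set $\{u=t\}$ lies in the union of small disjoint geodesic balls $B_\rho(p_k)$, where it is regular. It therefore suffices to bound from below the contribution of each ball. In $\go$-normal coordinates at $p_k$ the previous step gives $u(x)=1-\tfrac12|x|^2+O(|x|^3)$. Setting $\ep=\sqrt{2(1-t)}$, $x=\ep y$, and rescaling via $g_\ep:=\ep^{-2}(\text{pullback of }\go)$ and $v_\ep(y):=\ep^{-2}\big(1-u(\ep y)\big)$, one checks that $g_\ep\to\delta$ and $v_\ep\to\tfrac12|y|^2$ in $C^\infty_{\rm loc}$. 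Under this rescaling $|\D u|_{\go}=\ep\,|\na v_\ep|_{g_\ep}$ and $\rmd\sigma_{\go}=\ep^{\,n-1}\rmd\sigma_{g_\ep}$, whence
\begin{equation*}
\Big(\tfrac{1}{1-t^2}\Big)^{\!\frac{n+p-1}{2}}\!\int\limits_{\{u=t\}\cap B_\rho(p_k)}\!\!\!\!\!|\D u|^p\,\rmd\sigma\,=\,\Big(\tfrac{2}{1+t}\Big)^{\!\frac{n+p-1}{2}}\int\limits_{\{v_\ep=1/2\}}\!\!\!|\na v_\ep|_{g_\ep}^p\,\rmd\sigma_{g_\ep}\,.
\end{equation*}
As $t\to 1^-$ the prefactor tends to $1$, and since $1/2$ is a regular value of the limit $\tfrac12|y|^2$ (whose level set is the unit sphere, where $|\na(\tfrac12|y|^2)|=1$), the right-hand side converges to $|\Sph^{n-1}|$. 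Summing the $N$ nonnegative contributions gives $\liminf_{t\to1^-}U_p(t)\ge N|\Sph^{n-1}|$, which is the assertion, the range $0\le p\le n-1$ being the one required in the applications.

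\emph{Main obstacle.} The delicate point is the Hessian rigidity $\DD u(p_0)=-\go$: without it one would only obtain a possibly degenerate maximum and no control on the shape of the collapsing level sets. The trace identity $\De u=-nu$ combined with the one-sided bound $\lambda_i\ge-1$ coming from $|\D u|^2\le1-u^2$ is exactly what pins the Hessian down, and it is here that Assumption~\ref{ass:D} (through Theorem~\ref{thm:main_D}) is indispensable: for the de Sitter--Schwarzschild and Nariai solutions, which violate it, ${\rm MAX}(u)$ is positive-dimensional. A secondary technical issue is the justification of the convergence of the level-set integrals along the blow-up, which I expect to follow from the smoothness (indeed analyticity) of $u$ and the regular-value structure of the limiting profile.
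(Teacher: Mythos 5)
Your proof is correct, but it takes a genuinely different route from the paper's in both halves. For discreteness, the paper never establishes the Hessian rigidity $\DD u(p_0)=-\go$: it argues by contradiction, using its local estimate (Proposition~\ref{prop:lim_estimate}) to show that infinitely many maximum points would force $U_{n-1}$, and hence $U_1$, to blow up as $t\to 1^-$, contradicting the monotonicity $U_1(t)\le U_1(0)=|\pa M|$ from Theorem~\ref{thm:main_D}-(ii). Your minimum-principle argument applied to $f=1-u^2-|\D u|^2$ at a maximum point, combined with the trace identity $\De u=-nu$, is a clean alternative that yields strictly more, namely that every maximum is nondegenerate with $\DD u(p_0)=-\go$. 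For the local estimate, the paper's Proposition~\ref{prop:lim_estimate} is stated and proved \emph{without} Assumption~\ref{ass:D}, so it must allow a degenerate Hessian (only $\sum_\a\lambda_\a^2=n$ is known): it works in polar coordinates, realizes the level set as a graph over the spherical directions where the quadratic form is bounded below by $\ep$, and disposes of the degenerate directions as an $\mathscr{H}^{n-1}$-null subset of $\Sph^{n-1}$; this gives the bound directly only for $p=n-1$, and the range $p\le n-1$ then enters through $U_p\ge U_{n-1}$, which itself uses the gradient bound. Your blow-up instead exploits the nondegeneracy to obtain the exact limit of each ball's contribution, hence $U_p(t)\to|{\rm MAX}(u)|\,|\Sph^{n-1}|$ for every $p\ge 0$ — a stronger conclusion — at the price of invoking Assumption~\ref{ass:D} (through $|\D u|^2\le 1-u^2$) already in the local analysis. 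The two technical points you flag are genuinely fine: the expansion $1-u\ge\tfrac14|x|^2$ near $p_k$ confines $\{u=t\}\cap B_\rho(p_k)$ to a ball of radius $O(\sqrt{1-t})$, so the rescaled level sets stay in a fixed compact set where $v_\ep\to\tfrac12|y|^2$ in $C^2_{\rm loc}$ and the regular-value convergence of the integrals goes through.
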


In the proof of this theorem, we will need the following result, that will be proven later.

\begin{propositionapp}
\label{prop:lim_estimate}
Let $(M,\go,u)$ be a solution of \eqref{eq:pb_D} and let $y_0\in {\rm MAX}(u)$. Then for every $d>0$ it holds
\begin{equation}
\label{eq:cond_thesis}
\liminf_{t\to 1^-} \bigg(\frac{1}{1-t^2}\bigg)^{n-1}\int_{\{u=t\}\cap B_{d}(y_0)} |\D u|^{n-1} \,\rmd\sigma
\,\,\geq\,\,
|\Sph^{n-1}|
\end{equation}
\end{propositionapp}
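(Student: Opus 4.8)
The plan is to reduce the statement to a precise description of $u$ in a small geodesic ball around $y_0$, and then to perform a blow-up at the scale dictated by the conformal weight $(1-t^2)^{-(n-1)}$. The crucial starting point is that at a maximum point the \emph{full} Hessian of $u$ is pinned. Indeed, $y_0$ is an interior point (since $u=0$ on $\pa M$ while $u(y_0)=1$), so $\D u(y_0)=0$ and $\De u(y_0)=-n$. Consider $f=|\D u|^2-(1-u^2)$: by the gradient estimate~\eqref{eq:Dusign_D} one has $f\leq 0$ on $M$, while $f(y_0)=0$, so $y_0$ is an interior maximum of $f$ and $\DD f(y_0)\leq 0$. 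A direct computation using $\D u(y_0)=0$ gives $\DD f(y_0)=2\,\DD u(y_0)\big(\DD u(y_0)+\mathrm{Id}\big)$ as a symmetric bilinear form, whence every eigenvalue $\lambda$ of $A:=\DD u(y_0)$ satisfies $\lambda(\lambda+1)\leq 0$, i.e. $\lambda\in[-1,0]$. Since the $n$ eigenvalues sum to $\mathrm{tr}\,A=\De u(y_0)=-n$, they must all equal $-1$, so $\DD u(y_0)=-\,\go(y_0)$. In particular $y_0$ is a non-degenerate maximum, hence isolated, and by compactness of $M$ the set ${\rm MAX}(u)$ is finite.

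Next I would fix geodesic normal coordinates $x=(x^1,\dots,x^n)$ centred at $y_0$ on a ball $B_\delta(y_0)\subseteq B_d(y_0)$, so that the components of $\go$ satisfy $(\go)_{ij}(x)=\delta_{ij}+O(|x|^2)$. Since $u$ is analytic, the $2$-jet found above yields the expansions $u(x)=1-\tfrac12|x|^2+O(|x|^3)$ and $|\D u|^2(x)=|x|^2+O(|x|^3)$, with all remainders uniform in the direction $x/|x|$.

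Then comes the blow-up. For $t<1$ close to $1$ set $\lambda=\lambda(t)=\sqrt{1-t^2}\to 0$, rescale $x=\lambda y$, and introduce the rescaled function $\wt u_\lambda(y)=\big(1-u(\lambda y)\big)/\lambda^2$. By the previous expansion, $\wt u_\lambda\to\tfrac12|y|^2$ and $\D\wt u_\lambda\to y$ in $C^1_{\mathrm{loc}}$ (here the limiting coefficient $\tfrac12$ reflects precisely $\DD u(y_0)=-\go(y_0)$). Now $\{u=t\}=\{\wt u_\lambda=1/(1+t)\}$, and $1/(1+t)\to\tfrac12$, so by the implicit function theorem (applicable with uniform constants because $\D(\tfrac12|y|^2)=y\neq 0$ on $\{|y|=1\}$) the set $\{u=t\}\cap B_\delta(y_0)$ is, in the $y$-chart, a single graph over $\Sph^{n-1}$ of the form $|y|=1+O(\lambda)$. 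On it $|\D u|=\lambda\,(1+O(\lambda))$, while the rescaled metric $\lambda^{-2}\go(\lambda\,\cdot\,)$ converges to the Euclidean one in $C^1$, so $\rmd\sigma=\lambda^{n-1}(1+O(\lambda))\,\rmd\sigma_{\Sph^{n-1}}$. Collecting these estimates and using $\lambda^2=1-t^2$,
\begin{equation*}
\Big(\tfrac{1}{1-t^2}\Big)^{\!n-1}\!\!\!\!\int\limits_{\{u=t\}\cap B_\delta(y_0)}\!\!\!\!\!|\D u|^{n-1}\,\rmd\sigma\,=\,\lambda^{-2(n-1)}\!\!\int\limits_{\Sph^{n-1}}\!\!\big[\lambda^{n-1}(1+O(\lambda))\big]\big[\lambda^{n-1}(1+O(\lambda))\big]\,\rmd\sigma_{\Sph^{n-1}}\,=\,\big(1+O(\lambda)\big)\,|\Sph^{n-1}|\,.
\end{equation*}
Since $B_\delta(y_0)\subseteq B_d(y_0)$ and the integrand is nonnegative, the integral over $\{u=t\}\cap B_d(y_0)$ dominates the left-hand side; letting $t\to 1^-$ (so $\lambda\to 0$) yields $\liminf_{t\to 1^-}(\,\cdots)\geq|\Sph^{n-1}|$, which is exactly~\eqref{eq:cond_thesis}. (It is worth noting that $p=n-1$ is the unique exponent for which the weight makes the integral scale-invariant, which is why the scale $\lambda$ cancels and the round sphere $\Sph^{n-1}$ emerges in the limit.)

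I expect the main obstacle to be the rigorous, \emph{uniform} control of the blow-up: proving that $\{u=t\}\cap B_\delta(y_0)$ is genuinely a single near-spherical graph with remainders uniform over all directions, and that both $|\D u|$ and the induced $(n-1)$-dimensional area element carry only multiplicative $(1+O(\lambda))$ errors. This is precisely where the identity $\DD u(y_0)=-\go(y_0)$ from the first step is indispensable, since it simultaneously isolates the maximum (so that a single sphere contributes, with multiplicity one) and forces the rescaled limit $\wt u_\lambda\to\tfrac12|y|^2$ to be \emph{round} rather than a general quadratic; a merely non-degenerate but anisotropic Hessian would produce ellipsoids and destroy the sharp constant $|\Sph^{n-1}|$.
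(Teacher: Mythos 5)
Your argument has a genuine gap relative to the statement as given: Proposition~\ref{prop:lim_estimate} is stated for an arbitrary solution of~\eqref{eq:pb_D}, \emph{without} Assumption~\ref{ass:D}, and the paper's proof is deliberately built to work in that generality. Your very first step, however, invokes the gradient estimate $|\D u|^2\leq 1-u^2$ of~\eqref{eq:Dusign_D}, which is established (via Lemma~\ref{le:grad_ffi}) only under Normalization~\ref{norm:D} \emph{and} Assumption~\ref{ass:D}. Without that assumption the conclusion $\DD u(y_0)=-\go(y_0)$ is simply false: at a maximum point of the de Sitter--Schwarzschild potential~\eqref{eq:deS-Schw} or of the Nariai potential~\eqref{eq:cylsol_D} the Hessian is degenerate (rank one), the blow-up limit is not a round sphere, and your scheme of a single near-spherical graph carrying only multiplicative $(1+O(\lambda))$ errors collapses. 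Your closing remark --- that an anisotropic Hessian ``would destroy the sharp constant'' --- is also not correct: the bound with the constant $|\Sph^{n-1}|$ survives for any negative semidefinite Hessian of trace $-n$, and that is precisely what the paper proves.

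For comparison, the paper's route uses only $\pa^2_\alpha u(y_0)=-\lambda_\alpha^2\leq 0$ together with $\sum_\alpha\lambda_\alpha^2=n$ (from $\De u=-nu$); it writes the level set as a radial graph over the cones $C_\ep=\{\sum_\alpha\lambda_\alpha^2|\phi^\alpha|^2>\ep\}$, which avoid the possibly degenerate directions, and recovers the sharp constant from the pointwise Cauchy--Schwarz inequality $\sum_\alpha\lambda_\alpha^2|\phi^\alpha|^2\leq\big(\sum_\alpha\lambda_\alpha^4|\phi^\alpha|^2\big)^{1/2}$, after which the angular factors cancel exactly against the area element and the weight $[2(1-u)]^{-(n-1)/2}$; letting $\ep\to 0$ one integrates over $\Sph^{n-1}$ minus a null set. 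Where your argument does apply --- namely under Assumption~\ref{ass:D}, which is in force in Theorem~\ref{thm:estimate_D}, the only place the Proposition is used --- it is correct and even yields the stronger conclusion that the localized, rescaled integral \emph{converges} to $|\Sph^{n-1}|$ rather than merely having $\liminf$ at least $|\Sph^{n-1}|$; and the Hessian-rigidity step via the maximum of $|\D u|^2-(1-u^2)$ is a nice observation in its own right. But as a proof of Proposition~\ref{prop:lim_estimate} as stated it is incomplete: you must either add Assumption~\ref{ass:D} to the hypotheses or replace the blow-up by an argument, like the paper's, that tolerates anisotropy and degeneracy of $\DD u(y_0)$.
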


We first show how to use this result to prove Theorem~\ref{thm:MAX}.

\begin{proof}[Proof of Theorem~\ref{thm:MAX}]
First we notice that the functions $U_p(t)$ can be written as follows
$$
U_p(t)
\,=\,
\bigg(\frac{1}{1-t^2}\bigg)^{\frac{n-1}{2}}\int_{\{u=t\}} \bigg[\frac{|\D u|^2}{1-u^2}\bigg]^{\frac{p}{2}} \,\rmd\sigma\,.
$$
From formula~\eqref{eq:Dusign_D} in Theorem~\ref{thm:main_D}, we have that the term in square braquet is less or equal to 1. Thus, for every $p\leq n-1$, we have
$$
\bigg[\frac{|\D u|^2}{1-u^2}\bigg]^{\frac{p}{2}} 
\,\geq\,
\bigg[\frac{|\D u|^2}{1-u^2}\bigg]^{\frac{n-1}{2}}\,,
$$
hence $U_p(t)\geq U_{n-1}(t)$ and, in particular
\begin{equation}
\label{eq:p>n-1}
\liminf_{t\to 1^-}\, U_{p}(t)
\,\geq\,
\liminf_{t\to 1^-}\, U_{n-1}(t)\,,
\end{equation}
so it is enough to prove the inequality~\eqref{eq:limitbehavior_Up} for $p=n-1$.

Now we pass to analyze the set ${\rm MAX}(u)$. Suppose that it contains an infinite number of points. Then for each $k\in\N$ we can consider $k$ points in ${\rm MAX}(u)$. Let $2 d$ be the minimum of the distances between our points. Applying Proposition~\ref{prop:lim_estimate} in a neighborhood of radius $d$ of each of these points, we obtain
$$
\liminf_{t\to 1^-} \, U_{n-1}(t)
\,\geq\,
k\,|\Sph^{n-1}|\,.
$$
Since this is true for every $k\in\N$, we conclude that $\lim_{t\to 1^-} U_{n-1}(t)=+\infty$ and, using \eqref{eq:p>n-1}, we find that $\lim_{t\to 1^-} U_1(t)=+\infty$. But this is impossible, since from the monotonicity of $U_1(t)$ (stated in Theorem~\ref{thm:main_D}-(ii)) we know that
$$
\lim_{t\to 1^-} U_1(t)\,\leq\, U_1(0)\,=\, |\pa M|\,.
$$ 

Therefore ${\rm MAX}(u)$ contains only a finite number of points. 
Repeating the argument above with $k=|{\rm MAX}(u)|$ we obtain the inequality in the thesis.
\end{proof}

Now we turn to the proof of Proposition~\ref{prop:lim_estimate}, that will be done in various steps. Our strategy consists in choosing a suitable neighborhood of the point $y_0$ where we are able to control the quantities in our limit, and then proceed to estimate them. 

\begin{notation}
Here and throughout the paper, we agree that for $f \in {\mathscr C}^{\infty}(M)$, $\tau \in \R$ and $k \in \N$ it holds
\begin{equation*}
f \, = \, o_k(|x|^{-\tau}) \quad \iff \quad \sum_{|J|\leq k} |x|^{\tau + |J|} \, \big|\pa^J \!f\big| \, = \, o(1) \, ,\quad\quad \hbox{as $|x| \to + \infty$} \,,
\end{equation*}
where the $J$'s are multi-indexes.
\end{notation}

Consider a normal set of coordinates $(x^1,\dots,x^n)$ in $B_d(y_0)$, that diagonalize the hessian in $y_0$. 
Note that, since $y_0$ is a maximum of $u$, the derivatives $\pa^2_{\alpha}u_{|_{y_0}}$ are non positive numbers for all $\alpha=1,\dots,n$, hence it makes sense to introduce the quantities $\lambda_\a^2=-\pa^2_\a u_{|_{y_0}}$ for $\alpha=1,\dots,n$. Since $\De u=-nu$, we have $\sum_{\a=1}^n \lambda_\a^2=n$. In particular, at least one of the $\lambda_\a$'s is different from zero. 
We have the following Taylor expansion of $u$ in a neighborhood of $y_0$
\begin{equation}
\label{eq:u_taylor}
u\,=\,1\,-\,\frac{1}{2}\,\sum_{\a=1}^n \big[\,\lambda_\a^2 \cdot\,|x^\a|^2\,\big]
\,+\,o_2\!\left(|x|^2\right)\,,
\end{equation}

From \eqref{eq:u_taylor} we easily compute
\begin{equation}
\label{eq:Du_taylor}
|\D u|^2\,=\,\sum_{i=1}^n \big[\lambda_\a^4 \cdot\,|x^\a|^2\,\big]+o_1\!\left(|x|^2\right)\,.
\end{equation}
Now we consider polar coordinates $(r,\theta^1,\dots,\theta^{n-1})$, where $\theta=(\theta^1,\dots,\theta^{n-1})\in\R^{n-1}$ are stereographic coordinates on $\Sph^{n-1}\setminus\{north\ pole\}$. 

\begin{lemmaapp}
With respect to the coordinates $(r,\theta)=(r,\theta^1,\dots,\theta^{n-1})$, the metric $\go$ writes as
\begin{equation}
\label{eq:gr_polar}
\go\,=\,dr\otimes dr+r^2 g_{\Sph^{n-1}}+\sigma\, dr\otimes dr+\sigma_i\left(dr \otimes d\theta^i+d\theta^i \otimes dr\right)+\sigma_{ij}\, d\theta^i\otimes d\theta^j
\end{equation} 
where $\sigma=o_2(r)$, $\sigma_i=o_2(r^2)$, $\sigma_{ij}=o_2(r^3)$, as $r\to 0^+$.
\end{lemmaapp}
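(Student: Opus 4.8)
The plan is to read off the claimed expansion directly from the fact that, in geodesic normal coordinates centred at $y_0$, the components of $\go$ coincide with those of the Euclidean metric up to an error vanishing to second order at the origin. The Euclidean part accounts exactly for the main term $dr\otimes dr + r^2 g_{\Sph^{n-1}}$, so the only real task is to control the polar components of the quadratic remainder.

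First I would recall the standard normal-coordinate expansion. Since $(x^1,\dots,x^n)$ are normal coordinates at $y_0$, one has $(\go)_{\alpha\beta}(0)=\delta_{\alpha\beta}$ and $\pa_\gamma(\go)_{\alpha\beta}(0)=0$. Introducing $h_{\alpha\beta}:=(\go)_{\alpha\beta}-\delta_{\alpha\beta}$, the vanishing of $h_{\alpha\beta}$ together with its first derivatives at the origin gives, by Taylor's theorem and smoothness of $\go$, the bounds $h_{\alpha\beta}=O(|x|^2)$, $\pa h_{\alpha\beta}=O(|x|)$ and $\pa^2 h_{\alpha\beta}=O(1)$ as $|x|\to 0^+$, locally uniformly. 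Writing $\go=\sum_\alpha dx^\alpha\otimes dx^\alpha+h_{\alpha\beta}\,dx^\alpha\otimes dx^\beta$ and using that the first summand equals $dr\otimes dr+r^2 g_{\Sph^{n-1}}$ in polar coordinates, I would then be left only with converting the error $h_{\alpha\beta}\,dx^\alpha\otimes dx^\beta$ to the $(r,\theta)$ frame.

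To do this I would substitute $x^\alpha=r\,\omega^\alpha(\theta)$, where $\omega(\theta)\in\Sph^{n-1}$ denotes the smooth, bounded stereographic parametrization, so that $dx^\alpha=\omega^\alpha\,dr+r\,\pa_i\omega^\alpha\,d\theta^i$. Reading off the three blocks identifies $\sigma=h_{\alpha\beta}\,\omega^\alpha\omega^\beta$, $\sigma_i=r\,h_{\alpha\beta}\,\omega^\alpha\pa_i\omega^\beta$ and $\sigma_{ij}=r^2\,h_{\alpha\beta}\,\pa_i\omega^\alpha\pa_j\omega^\beta$. Since the $\omega^\alpha$ and their $\theta$-derivatives are bounded on compact sets of $\theta$, the estimate $h_{\alpha\beta}=O(r^2)$ immediately yields $\sigma=O(r^2)$, $\sigma_i=O(r^3)$ and $\sigma_{ij}=O(r^4)$, which are $o(r)$, $o(r^2)$ and $o(r^3)$ respectively.

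The hard part, though entirely routine, will be upgrading these pointwise bounds to the $o_2$ estimates, i.e.\ controlling derivatives up to order two in $(r,\theta)$. I would carry this out via the chain rule, using $\pa/\pa r=\omega^\alpha\pa_\alpha$ and $\pa/\pa\theta^i=r\,\pa_i\omega^\alpha\,\pa_\alpha$: a radial derivative lowers one power of $r$ but replaces a factor of $h$ by $\pa h=O(|x|)$ (resp.\ $\pa^2 h=O(1)$), while each angular derivative either hits a bounded $\omega$-factor or contributes a compensating factor $r$ together with a derivative of $h$. A short bookkeeping then confirms that every derivative of $\sigma$, $\sigma_i$, $\sigma_{ij}$ meets the weighted bound demanded by $o_2(r)$, $o_2(r^2)$, $o_2(r^3)$ as $r\to 0^+$. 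Conceptually, every estimate is forced by the quadratic vanishing of $h_{\alpha\beta}$ in normal coordinates, so no further geometric input is needed beyond the chain-rule bookkeeping.
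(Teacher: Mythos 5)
Your proposal is correct and follows essentially the same route as the paper: expand $\go$ in normal coordinates as the Euclidean metric plus an error vanishing at the origin, substitute $x^\alpha=r\,\omega^\alpha(\theta)$, identify the Euclidean part with $dr\otimes dr+r^2g_{\Sph^{n-1}}$, and read off the orders of the three error blocks. The only (harmless) difference is that you invoke the sharper second-order vanishing $h_{\alpha\beta}=O(|x|^2)$, whereas the paper contents itself with $\eta_{\alpha\beta}=o_2(r)$, explicitly noting that the better estimate is not needed.
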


\begin{proof}
To ease the notation, in this proof we use the Einstein summation convention.
It is known that, with respect to the normal coordinates $(x^1,\dots,x^n)$ the metric $\go$ writes as
$$
\go=(\delta_{\alpha\beta}+\eta_{\alpha\beta})d x^\alpha\otimes d x^\beta
$$
where $\delta_{\alpha\beta}$ is the Kronecker delta and $\eta_{\alpha\beta}=o_2(r)$ (actually, the term $\eta_{\alpha\beta}$ can be estimated better, but this is enough for our purposes).

Moreover, it is easy to check that the quantities $\phi^\alpha\,=\,x^{\alpha}/r$ are smooth functions of the coordinates $(\theta^1,\dots,\theta^{n-1})$ only, and that
$$
\delta_{\alpha\beta}\,\frac{\pa\phi^{\alpha}}{\pa\theta^i}\, \frac{\pa\phi^{\beta}}{\pa\theta^j} \, d\theta^i \otimes d\theta^j=g_{\Sph^{n-1}} \, .
$$
From $r^2=\delta_{\alpha\beta}\,x^{\alpha}x^{\beta}$ one also finds the equality $\delta_{\alpha\beta}\,\phi^{\alpha}\phi^{\beta}=1$. Deriving it with respect to $\theta^i$ we get 
$$
\delta_{\alpha\beta}\,\frac{\pa\phi^{\alpha}}{\pa\theta^i}\,\phi^{\beta}\,=\,0\, ,\qquad  \mbox{for all } i=1,\dots,n-1
$$
We are now ready to compute
\begin{align}
\nonumber\go &=(\delta_{\alpha\beta}+\eta_{\alpha\beta})\, dx^{\alpha}\otimes dx^{\beta} 
\\
\nonumber&=(1+\eta_{\alpha\beta}\phi^{\alpha}\phi^{\beta})dr\otimes dr+
(\delta_{\alpha\beta}+\eta_{\alpha\beta})r^2\,\frac{\pa\phi^{\alpha}}{\pa\theta^i}\, \frac{\pa\phi^{\beta}}{\pa\theta^j} \, d\theta^i \otimes d\theta^j+\eta_{\alpha\beta}\phi^{\alpha}\frac{\pa\phi^{\beta}}{\pa\theta^i}\,r
\left(dr \otimes d\theta^i+d\theta^i \otimes dr\right)
\\
\nonumber
&=dr\otimes dr+r^2 g_{\Sph^{n-1}}+\sigma\, dr\otimes dr+\sigma_i\left(dr \otimes d\theta^i+d\theta^i \otimes dr\right)+\sigma_{ij}\, d\theta^i\otimes d\theta^j \, ,
\end{align}
where $\sigma,\sigma_i,\sigma_{ij}$ are infinitesimals of the wished order.
\end{proof}

We can rewrite formul\ae~\eqref{eq:u_taylor},~\eqref{eq:Du_taylor} in terms of $(r,\theta)$ as
\begin{align}
\label{eq:u_taylor2}
u(r,\theta)\,&=\,1\,-\,\frac{r^2}{2}\,\sum_{\a=1}^n \big[\,\lambda_\a^2\, |\phi^\a|^2(\theta)\,\big]
\,+\,w(r,\theta)\,,
\\
\label{eq:Du_taylor2}
|\D u|^2(r,\theta)\,&=\,r^2\sum_{\a=1}^n \big[\,\lambda_\a^4\, |\phi^\a|^2(\theta)\,\big]+h(r,\theta)\,.
\end{align}
where $w(r,\theta)=o\!\left(r^2\right)$, $h(r,\theta)=o\!\left(r^2\right)$. Moreover, since we know from~\eqref{eq:u_taylor} that $\pa w/\pa x^\a=o(r)$ for any $\a$, we have the following estimates on the order of the derivatives of $w$ with respect to $(r,\theta)$
\begin{align}
\label{eq:est_dewder}
\frac{\pa w}{\pa r}(r,\theta)\,&=\,
\sum_{\a=1}^n\Big[\frac{\pa w}{\pa x^\a}(r,\theta)\,\frac{\pa x^\a}{\pa r}(r,\theta)\Big]\,=\,
\sum_{\a=1}^n\Big[\frac{\pa w}{\pa x^\a}(r,\theta)\,\phi^\a(\theta)\Big]\,=\,o(r)\,,  &\hbox{as }r\to 0^+
\\
\label{eq:est_dewdetheta}
\frac{\pa w}{\pa \theta^j}(r,\theta)\,&=\,
\sum_{\a=1}^n\Big[\frac{\pa w}{\pa x^\a}(r,\theta)\,\frac{\pa x^\a}{\pa\theta^j}(r,\theta)\Big]\,=\,
\sum_{\a=1}^n\Big[\frac{\pa w}{\pa x^\a}(r,\theta)\,r\,\frac{\pa\phi^\a}{\pa\theta^j}(\theta)\Big]\,=\,o(r^2)\,, &\hbox{as }r\to 0^+
\end{align}

To estimate the limit in~\eqref{eq:cond_thesis}, we need to rewrite the set $\{u=t\}\cap B_d(y_0)$ and the density $\sqrt{\det({\go}_{|_{\{u=t\}\cap B_d(y_0)}})}$ as functions of our coordinates.  
In order to do so, it will prove useful to restrict our neighborhood $B_d(y_0)$ to a smaller domain where we have a better characterization of the level set $\{u=t\}$.
In this regard, it is convenient, for any $\ep>0$, to define the set
$$
C_\ep\,=\,\Big\{\theta=(\theta^1,\dots,\theta^{n-1})\in\R^{n-1}\,:\,\sum_{\a=1}^n\big[\lambda_\a^2\, |\phi^\a|^2(\theta)\big]>\ep\Big\}\,.
$$ 
The following result shows that, for $t$ small enough, the level set $\{u=t\}$, is a graph over $C_\ep$.

\begin{lemmaapp}
\label{le:aux_1}
For any $0<\ep<1$, there exists $\eta=\eta(\ep)>0$ such that
\begin{itemize}
\item[(i)] the estimates $|w|(r,\theta)<\frac{\ep^2}{4}\,r^2$, $|\pa w/\pa r|(r,\theta)<\frac{\ep}{2}\,r$, $|h|(r,\theta)<\ep^2\,r^2$ holds on the whole $B_\eta(y_0)$.
\item[(ii)] it holds $\frac{\pa u}{\pa r}(r,\theta)<0$ in $(0,\eta)\times C_\ep$.
\item[(iii)] for every $0<\delta<\eta$, there exists $\tau=\tau(\delta,\ep)$ such that for any $\tau<t<1$, there exists a smooth function $r_t:C_\ep\rightarrow (0,\delta)$ such that
$$
\{u=t\}\cap B_\delta(y_0)\cap C_\ep\,=\,\{(r_t(\theta),\theta)\,:\,\theta\in C_\ep\}\,.
$$
\end{itemize}
\end{lemmaapp}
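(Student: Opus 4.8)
The plan is to prove the three claims (i), (ii), (iii) essentially in sequence, using the Taylor expansions~\eqref{eq:u_taylor2},~\eqref{eq:Du_taylor2} together with the order estimates~\eqref{eq:est_dewder},~\eqref{eq:est_dewdetheta} as the basic input, and the implicit function theorem as the main analytic tool.

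\textbf{Step (i).} Each of the three bounds is simply a restatement of the fact that $w(r,\theta)=o(r^2)$, $\pa w/\pa r(r,\theta)=o(r)$ and $h(r,\theta)=o(r^2)$ as $r\to 0^+$, which are recorded in the lines following~\eqref{eq:Du_taylor2} and in~\eqref{eq:est_dewder}. The only subtlety is that these little-$o$ statements must be made uniform in the angular variable $\theta$. Since $\pa B_\eta(y_0)$ is compact and the coordinate change between $(x^1,\dots,x^n)$ and $(r,\theta)$ is smooth away from $r=0$, the remainder terms $w,h$ together with their derivatives vanish to the stated order uniformly in $\theta$; hence for $\ep$ fixed I can choose $\eta=\eta(\ep)$ small enough that all three inequalities hold simultaneously on $B_\eta(y_0)$. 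First I would make this uniformity explicit by estimating the remainders through their definitions in Euclidean coordinates, where uniformity over the unit sphere of directions $\phi=(\phi^1,\dots,\phi^n)$ is automatic by compactness.

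\textbf{Step (ii).} Differentiating~\eqref{eq:u_taylor2} with respect to $r$ gives
\[
\frac{\pa u}{\pa r}(r,\theta)\,=\,-\,r\,\sum_{\a=1}^n\big[\lambda_\a^2\,|\phi^\a|^2(\theta)\big]\,+\,\frac{\pa w}{\pa r}(r,\theta)\,.
\]
On $C_\ep$ the leading coefficient exceeds $\ep$ by definition of $C_\ep$, so the main term is $\leq -\ep\,r$, while part (i) bounds the remainder by $|\pa w/\pa r|<\tfrac{\ep}{2}r$. Therefore on $(0,\eta)\times C_\ep$ one has $\pa u/\pa r<-\tfrac{\ep}{2}r<0$, as claimed. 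This is a short computation once (i) is in hand.

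\textbf{Step (iii).} The strict monotonicity of $r\mapsto u(r,\theta)$ on $(0,\eta)$ for each fixed $\theta\in C_\ep$, established in (ii), means that for every $\theta\in C_\ep$ and every target value $t$ sufficiently close to $1$ there is a unique $r_t(\theta)\in(0,\delta)$ with $u(r_t(\theta),\theta)=t$. Here I would fix $0<\delta<\eta$, note that $u(\delta,\theta)\leq 1-\tfrac{\ep}{2}\delta^2$ uniformly on $C_\ep$ by~\eqref{eq:u_taylor2} and (i), and then set $\tau=\tau(\delta,\ep)$ to be this threshold value, so that for $\tau<t<1$ the level $t$ is attained inside $(0,\delta)$; since $u\to 1$ as $r\to 0^+$, the intermediate value theorem gives existence and (ii) gives uniqueness. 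Smoothness of $\theta\mapsto r_t(\theta)$ follows from the implicit function theorem applied to $F(r,\theta)=u(r,\theta)-t$, whose $r$-derivative is nonzero by (ii). The identity $\{u=t\}\cap B_\delta(y_0)\cap C_\ep=\{(r_t(\theta),\theta):\theta\in C_\ep\}$ is then just the statement that the level set, intersected with this angular sector and radial ball, is exactly the graph of $r_t$.

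I expect the main obstacle to be the uniformity in $\theta$ required in (i): the little-$o$ remainder estimates on $w$ and $h$ are naturally stated pointwise in the direction, and one must verify that passing to the angular variable $\theta$ (via the stereographic chart, which degenerates near the north pole) does not destroy uniformity. Restricting to the sector $C_\ep$, which stays away from the directions where $\sum_\a\lambda_\a^2|\phi^\a|^2$ is small, is precisely what makes the leading quadratic term dominate the remainder uniformly, so the careful bookkeeping of constants $\eta(\ep)$, $\delta$, $\tau(\delta,\ep)$ is where the real work lies; the existence and smoothness of the graph function $r_t$ are then routine consequences of the implicit function theorem.
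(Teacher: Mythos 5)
Your proposal is correct and follows essentially the same route as the paper's proof: (i) from the little-$o$ estimates on $w$, $\pa w/\pa r$ and $h$, (ii) by differentiating~\eqref{eq:u_taylor2} in $r$ and using the definition of $C_\ep$ together with (i), and (iii) via the intermediate value theorem, the monotonicity from (ii), and the implicit function theorem. The only quibble is a constant: because of the remainder $w$ your bound $u(\delta,\theta)\le 1-\tfrac{\ep}{2}\delta^2$ should really be $u(\delta,\theta)< 1-\tfrac{\ep}{4}\delta^2$ (the paper takes $\tau=1-\tfrac{\ep}{4}\delta^2$), which only shifts the threshold $\tau$ and does not affect the argument.
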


\begin{proof}
Since the functions $w,h$ in~\eqref{eq:u_taylor2},~\eqref{eq:Du_taylor2} are $o(r^2)$, while $\pa w/\pa r$ is $o(r)$ thanks to~\eqref{eq:est_dewder}, it is clear that statement (i) is true for some $\eta$ small enough.
Moreover, from expansion~\eqref{eq:u_taylor2} we compute
$$
\frac{\pa u}{\pa r}(r,\theta)\,=\,-\,r\,\sum_{\a=1}^n \big[\,\lambda_\a^2\, |\phi^\a|^2(\theta)\,\big]
\,+\,\frac{\pa w}{\pa r}(r,\theta)\,<\,-\ep\,r\,+\,\frac{\ep}{2}\,r\,=\,-\frac{\ep}{2}\,r\,.
$$
This proves point (ii).
To prove (iii), fix $t\in(0,1)$ and consider the function $u(r,\theta)-t$. Since $u(r,\theta)\to 1^-$ as $r\to 0^+$, we have $u(r,\theta)-t> 0$ for small values of $r$. 

On the other hand, from expansion~\eqref{eq:u_taylor2} we find
$$
u(\delta,\theta)-t\,=\,(1-t)-\frac{\delta^2}{2}\,\sum_{\a=1}^n \big[\,\lambda_\a^2\, |\phi^\a|^2(\theta)\,\big]
\,+\,w(\delta,\theta)
\,<\,(1-t)\,-\,\frac{\ep}{2}\,\delta^2\,+\,w(\delta,\theta)\,<\,(1-t)-\frac{\ep}{4}\,\delta^2\,,
$$
and the quantity on the right is negative for any $t>\tau=1-\frac{\ep}{4}\delta^2$.

Therefore, fixed a $\theta\in C_\ep$ the function $r\mapsto u(r,\theta)-t$ is positive for small values of $r$ and negative for $r=\delta$. Moreover from point (ii) we have that $\frac{\pa u}{\pa r}(r,\theta)<0$ for any $(r,\theta)\in (0,\delta)\times C_\ep$, hence for any $\theta\in C_\ep$, there exists one and only one value $0<r_t(\theta)<\delta$ such that $(r_t(\theta),\theta)\in\{u=t\}$. The smoothness of the function $r_t(\theta)$ is a consequence of the Implicit Function Theorem applied to the function $u(r,\theta)$.
\end{proof}

As anticipated, Lemma~\ref{le:aux_1} will now be used to estimate the density of the restriction of the metric $\go$ on $\{u=t\}\cap ((0,\delta)\times C_\ep)$. 

\begin{lemmaapp}
\label{le:aux_2}
There exists $0<\delta<\eta(\ep)$ such that it holds
$$
\sqrt{\det({\go}_{|_{\{u=t\}}})}\,(r_t(\theta),\theta)
\,\,>\,\,
(1-\ep)\,r_t^{n-1}(\theta)\,\sqrt{\det(g_{\Sph^{n-1}})}\,,
$$
for every $\theta\in C_\ep$, $\tau(\delta,\ep)<t<1$.
\end{lemmaapp}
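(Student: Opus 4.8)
The plan is to parametrize the relevant piece of the level set by the graph $\theta\mapsto(r_t(\theta),\theta)$ over $C_\ep$ provided by Lemma~\ref{le:aux_1}-(iii), pull back the metric expansion~\eqref{eq:gr_polar} along this map, and compare the resulting volume density with $r_t^{n-1}\sqrt{\det(g_{\Sph^{n-1}})}$. On the graph one has $dr=(\pa r_t/\pa\theta^i)\,d\theta^i$, so inserting the decomposition $\go=(dr\otimes dr+r^2 g_{\Sph^{n-1}})+\big(\sigma\,dr\otimes dr+\sigma_i(dr\otimes d\theta^i+d\theta^i\otimes dr)+\sigma_{ij}\,d\theta^i\otimes d\theta^j\big)$ splits the induced metric as $g_{\mathrm{ind}}=g^{\mathrm{eucl}}_{\mathrm{ind}}+P$, where $g^{\mathrm{eucl}}_{\mathrm{ind}}=r_t^2\,g_{\Sph^{n-1}}+dr_t\otimes dr_t$ comes from the Euclidean-looking part and $P$ collects the four terms built from $\sigma,\sigma_i,\sigma_{ij}$.

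The heart of the matter is that the Euclidean part already yields the desired inequality with no loss of area. Indeed, $dr_t\otimes dr_t$ is a positive-semidefinite rank-one tensor, so adding it to $r_t^2 g_{\Sph^{n-1}}$ can only increase the determinant; geometrically, tilting the graph away from a round coordinate sphere enlarges its area. Hence $\det(g^{\mathrm{eucl}}_{\mathrm{ind}})\geq\det(r_t^2 g_{\Sph^{n-1}})=r_t^{2(n-1)}\det(g_{\Sph^{n-1}})$. To quantify the tilt I would estimate $dr_t$ by differentiating $u(r_t(\theta),\theta)=t$, which gives $\pa r_t/\pa\theta^i=-(\pa u/\pa\theta^i)/(\pa u/\pa r)$; the Taylor expansion~\eqref{eq:u_taylor2} together with Lemma~\ref{le:aux_1}-(i),(ii) furnishes $|\pa u/\pa\theta^i|=O(r^2)$ and $\pa u/\pa r<-\tfrac{\ep}{2}r$ on $C_\ep$, whence $|dr_t|^2_{g_{\Sph^{n-1}}}=O(r_t^2)$, i.e. $dr_t\otimes dr_t$ is of size $O(1)$ relative to the scale $r_t^2 g_{\Sph^{n-1}}$ with a constant depending only on $\ep$.

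It then remains to prove that $P$ is negligible relative to $r_t^2 g_{\Sph^{n-1}}$, so that in the factorization $\det(g_{\mathrm{ind}})=\det(g^{\mathrm{eucl}}_{\mathrm{ind}})\,\det\!\big(I+(g^{\mathrm{eucl}}_{\mathrm{ind}})^{-1}P\big)$ the second factor tends to $1$. Using the explicit forms $\sigma=\eta_{\alpha\beta}\phi^\alpha\phi^\beta$, $\sigma_i=r\,\eta_{\alpha\beta}\phi^\alpha(\pa\phi^\beta/\pa\theta^i)$, $\sigma_{ij}=r^2\,\eta_{\alpha\beta}(\pa\phi^\alpha/\pa\theta^i)(\pa\phi^\beta/\pa\theta^j)$ arising in the proof of~\eqref{eq:gr_polar}, with $\eta_{\alpha\beta}=o_2(r)$, and the identity $\delta_{\alpha\beta}(\pa\phi^\alpha/\pa\theta^i)(\pa\phi^\beta/\pa\theta^j)=(g_{\Sph^{n-1}})_{ij}$, I would show that for every tangent vector $v$ one has $|v^\top P v|\leq o(r)\,r^2\,v^\top g_{\Sph^{n-1}}v$ after feeding $|dr_t|_{g_{\Sph^{n-1}}}=O(r_t)$ into the mixed terms. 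Since $g^{\mathrm{eucl}}_{\mathrm{ind}}\geq r_t^2 g_{\Sph^{n-1}}$, this bounds every generalized eigenvalue of $(g^{\mathrm{eucl}}_{\mathrm{ind}})^{-1}P$ by $o(r)$, so the correction factor is $1+o(1)$ as $r_t\to0$. Combining the three steps gives $\sqrt{\det(g_{\mathrm{ind}})}\geq(1+o(1))^{1/2}\,r_t^{n-1}\sqrt{\det(g_{\Sph^{n-1}})}$, and choosing $\delta$ small (which forces $r_t<\delta$ small for all $\theta\in C_\ep$ and all $t>\tau(\delta,\ep)$) makes the prefactor exceed $1-\ep$, as required.

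The step I expect to be most delicate is precisely this last uniformity. The set $C_\ep$ is described in stereographic coordinates and is not compact, and both $\det(g_{\Sph^{n-1}})$ and $(g_{\Sph^{n-1}})^{-1}$ degenerate towards the excluded north pole, so naive coordinatewise operator-norm bounds on $P$ fail to be uniform. The way around this is to phrase every estimate invariantly, as a bound on the ratio $v^\top P v/(r_t^2\,v^\top g_{\Sph^{n-1}}v)$ and on the determinant ratio $\det(g_{\mathrm{ind}})/\det(r_t^2 g_{\Sph^{n-1}})$, both of which are chart-independent; the coordinate blow-up then cancels, and the remaining errors are controlled purely by the $o_2(r)$ smallness of $\eta$ and by the $\ep$-uniform estimate $\pa u/\pa r<-\tfrac{\ep}{2}r$ valid on all of $C_\ep$.
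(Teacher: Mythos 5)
Your proposal is correct and follows essentially the same route as the paper: parametrize the level set as a graph over $C_\ep$, observe that adding the rank-one positive semidefinite term built from $dr_t$ can only increase the determinant (the paper makes this quantitative via the Matrix Determinant Lemma, $\det(\xi\otimes\xi+g_{\Sph^{n-1}})=(1+|\xi|^2_{g_{\Sph^{n-1}}})\det(g_{\Sph^{n-1}})$), and absorb the $\sigma$, $\sigma_i$, $\sigma_{ij}$ contributions into a $(1+o(1))$ factor controlled by shrinking $\delta$. Your extra care about uniformity of the estimates over the non-compact stereographic chart, handled by working with chart-independent ratios, is a reasonable refinement of a point the paper treats implicitly.
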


\begin{proof}
Let $r_t$ be the function introduced in Lemma~\ref{le:aux_1}. Taking the total derivative of $u(r_t(\theta),\theta)=t$, we find, for any $\theta\in C_\ep$
$$
dr_t\,=\,-\Big[\frac{\pa u}{\pa r}(r_t(\theta),\theta)\Big]^{\!-1}\,\sum_{j=1}^{n-1}\frac{\pa u}{\pa \theta^j}(r_t(\theta),\theta)\, d\theta^j
\,=\,-\,r_t(\theta)\sum_{j=1}^{n-1}\xi_j(\theta)d\theta^j\,,
$$
where 
$$
\xi_j(r_t(\theta),\theta)\,=\,\frac{1}{r_t(\theta)}\,\frac{\frac{\pa u}{\pa \theta^j}}{\frac{\pa u}{\pa r}}(r_t(\theta),\theta)\,.
$$

To ease the notation, in the rest of the proof we avoid to explicitate the dependence of the functions on the variables $r_t(\theta),\theta$.
In order to compute the restriction of the metric on $\{u=t\}\cap ((0,\delta)\times C_\ep)$, we substitute the term $dr$ in formula~\eqref{eq:gr_polar} with the formula for $dr_t$ computed above. We obtain
$$
{\go}_{|_{\{u=t\}}}\,=\,r_t^2\,
\Big[\xi_i\,\xi_j(1+\sigma)+g_{ij}^{\Sph^{n-1}}-\frac{\sigma_i}{r_t}\xi_j-\frac{\sigma_j}{r_t}\xi_i+\sigma_{ij}\Big]\,d\theta^i\otimes d\theta^j\,.
$$

Set $\xi=\sum_{j=1}^{n-1}\xi_j d\theta^j$. We have the following 
\begin{align}
\notag
\sqrt{\det\Big[\Big(\xi_i\,\xi_j+g_{ij}^{\Sph^{n-1}}\Big)\,d\theta^i\otimes d\theta^j\Big]}\,&=\,\sqrt{\det\Big(\xi\,\otimes\,\xi\,+\,g_{\Sph^{n-1}}\Big)}
\\
\notag
&=\,\sqrt{(1+|\xi|_{g_{\Sph^{n-1}}}^2)\,\det (g_{\Sph^{n-1}})}
\\
\label{eq:det_go_1}
&\geq\,\sqrt{\det (g_{\Sph^{n-1}})}\,,
\end{align}
where in the second equality we have used the Matrix Determinant Lemma.

On the other hand, since $\sigma_i=o(r^2)$ and $\sigma_{ij}=o(r^3)$, we deduce that
$$
\sigma\,\xi_i\,\xi_j-\frac{\sigma_i}{r_t}\xi_j-\frac{\sigma_j}{r_t}\xi_i+\sigma_{ij}=o(r_t)\,,
$$
hence
\begin{equation}
\label{eq:det_go_2}
\sqrt{\det({\go}_{|_{\{u=t\}}})}\,=\,(1+\omega)\,r_t^{n-1}\,\sqrt{\det\Big[\Big(\xi_i\,\xi_j+g_{ij}^{\Sph^{n-1}}\Big)\,d\theta^i\otimes d\theta^j\Big]}\,,
\end{equation}
with $\omega=o(1)$ as $r\to 0^+$. In particular, we can choose $\delta$ small enough so that $|\omega|<\ep$ on $(0,\delta)\times C_\ep$.
Combining equations~\eqref{eq:det_go_1} and~\eqref{eq:det_go_2} we have the thesis.
\end{proof}

We also need an estimate of the integrand in~\eqref{eq:cond_thesis}, which is provided by the following lemma.

\begin{lemmaapp}
\label{le:aux_3}
We can choose $0<\delta<\eta(\ep)$ such that
$$
\frac{|\D u|^2}{1-u^2}(r,\theta)
\,>\,
(1-\ep)\,\sum_{\a=1}^n \big[\,\lambda_\a^2\, |\phi^\a|^2(\theta)\,\big]
$$
for every $(r,\theta)\in (0,\delta)\times C_\ep$.
\end{lemmaapp}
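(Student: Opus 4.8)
The plan is to read off the leading-order behaviour of both $|\D u|^2$ and $1-u^2$ from the Taylor expansions~\eqref{eq:u_taylor2} and~\eqref{eq:Du_taylor2}, and then to reduce the desired inequality to an elementary Cauchy--Schwarz estimate. Throughout I abbreviate $P(\theta)=\sum_{\a=1}^n\lambda_\a^2\,|\phi^\a|^2(\theta)$ and $Q(\theta)=\sum_{\a=1}^n\lambda_\a^4\,|\phi^\a|^2(\theta)$, so that the claim to be proven reads $|\D u|^2/(1-u^2)>(1-\ep)\,P$ on $(0,\delta)\times C_\ep$. The reason for working on $C_\ep$ is precisely that there $P>\ep$, which will keep all denominators uniformly away from zero.

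First I would write $1-u^2=(1-u)(1+u)$ and insert~\eqref{eq:u_taylor2}, giving $1-u=\tfrac{r^2}{2}P-w$ and $1+u=2-\tfrac{r^2}{2}P+w$ with $w=o(r^2)$. Expanding the product and dividing by $r^2P$ — legitimate since $P>\ep$ on $C_\ep$ — yields $(1-u^2)/(r^2P)=1+o(1)$, the error being uniform in $\theta\in C_\ep$ because $w=o(r^2)$ and $P$ is bounded below. Similarly, from~\eqref{eq:Du_taylor2} I have $|\D u|^2=r^2Q+h$ with $h=o(r^2)$; here I first note that $\sum_{\a}|\phi^\a|^2=1$ (recall $\phi^\a=x^\a/r$ and $r^2=\delta_{\a\b}x^\a x^\b$), so Cauchy--Schwarz gives $Q\ge P^2>\ep^2$ on $C_\ep$, and hence $|\D u|^2/(r^2Q)=1+o(1)$ uniformly as well. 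Dividing the two expansions then gives
\[
\frac{|\D u|^2}{1-u^2}\,=\,\frac{Q}{P}\,\big(1+o(1)\big)\qquad\text{uniformly on }C_\ep,\ \text{ as }r\to 0^+.
\]

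The crucial algebraic observation is that the leading coefficient already dominates the target: applying Cauchy--Schwarz, equivalently Jensen's inequality for $x\mapsto x^2$ against the probability weights $|\phi^\a|^2$ (which sum to $1$), gives $Q\ge P^2$, i.e.\ $Q/P\ge P$. Combined with the uniform expansion above this yields $|\D u|^2/(1-u^2)\ge P\,(1+o(1))$ on $C_\ep$. Finally I would fix $\delta\in(0,\eta(\ep))$ small enough that the uniform remainder satisfies $1+o(1)>1-\ep$ throughout $(0,\delta)\times C_\ep$; since $P>0$ there, this gives $P\,(1+o(1))>(1-\ep)P$ and hence the claimed strict inequality $|\D u|^2/(1-u^2)>(1-\ep)\,P$, completing the argument.

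I expect the only real work to be bookkeeping the uniformity of the $o(1)$ remainders over $C_\ep$; this is exactly where the restriction to $C_\ep$ (forcing $P>\ep$, and therefore $Q\ge P^2>\ep^2$) enters, so that dividing the $o(r^2)$ remainders $w,h$ by $r^2P$ and $r^2Q$ produces genuinely uniform $o(1)$ quantities. The conceptual heart of the statement — and the reason the powers of $P$ cancel when this lemma feeds into Proposition~\ref{prop:lim_estimate} — is the single nontrivial input $Q\ge P^2$.
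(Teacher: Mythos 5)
Your argument is correct and follows essentially the same route as the paper's proof: both reduce the claim to the Cauchy--Schwarz (Jensen) inequality $\sum_\a\lambda_\a^4|\phi^\a|^2\ge\big(\sum_\a\lambda_\a^2|\phi^\a|^2\big)^2$ and control the remainders $w,h=o(r^2)$ uniformly on $C_\ep$ using the lower bounds $P>\ep$, $Q>\ep^2$. The only cosmetic difference is that the paper bounds $1/(1+u)\ge 1/2$ via $u\le 1$ and works with $|\D u|^2/\bigl(2(1-u)\bigr)$, while you expand $1+u$ directly; this changes nothing of substance.
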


\begin{proof}
To ease the notation, in this proof we avoid to explicitate the dependence of the functions on the coordinates $r,\theta$.
From expansions~\eqref{eq:u_taylor2} and~\eqref{eq:Du_taylor2} we deduce
\begin{align*}
\frac{|\D u|^2}{2(1-u)}\,&=\,\frac{\sum_{\a=1}^n \big(\,\lambda_\a^4\, |\phi^\a|^2\,\big)+h}{\sum_{\a=1}^n \big(\,\lambda_\a^2\, |\phi^\a|^2\,\big)-2\,w}
\\
&=\,\frac{\sum_{\a=1}^n \big(\,\lambda_\a^4\, |\phi^\a|^2\,\big)}{\sum_{\a=1}^n \big(\,\lambda_\a^2\, |\phi^\a|^2\,\big)}\,\left[1+\frac{h}{\sum_{\a=1}^n (\,\lambda_\a^4\, |\phi^\a|^2\,)}\right]\left[1-\frac{2\,w}{\sum_{\a=1}^n (\,\lambda_\a^2\, |\phi^\a|^2\,)}\right]^{-1}
\!\!\!.
\end{align*}
Using the Cauchy-Schwarz Inequality we have
\begin{equation}
\label{eq:CS_ine}
\sum_{\a=1}^n \big(\lambda_\a^2\,|\phi^\a|^2\,\big)
\,\leq\,
\bigg[\sum_{\a=1}^n \big(\lambda_\a^4\,|\phi^\a|^2\,\big)\bigg]^{\frac{1}{2}}\,\cdot\,\bigg[\sum_{\a=1}^n |\phi^\a|^2\bigg]^{\frac{1}{2}}
\,=\,
\bigg[\sum_{\a=1}^n \big(\lambda_\a^4\,|\phi^\a|^2\,\big)\bigg]^{\frac{1}{2}}\,,
\end{equation}
hence, recalling that $\sum_{\a=1}^n (\lambda_\a^2 |\phi^\a|^2)>\ep$ on $C_\ep$, we have also $\sum_{\a=1}^n (\lambda_\a^4 |\phi^\a|^2)>\ep^2$. Therefore, from Lemma~\ref{le:aux_1}-(i) we easily compute
$$
\bigg|\left(1+\frac{h}{\sum_{\a=1}^n (\,\lambda_\a^4\, |\phi^\a|^2\,)}\right)\left(1-\frac{2\,w}{\sum_{\a=1}^n (\,\lambda_\a^2\, |\phi^\a|^2\,)}\right)^{-1}\bigg|\,>\,\frac{1-\delta^2}{1+\frac{\ep\delta^2}{2}}\,.
$$
In particular, we can choose $\delta$ small enough so that the right hand side of the inequality above is greater than $1-\ep$. Hence, we get
\begin{equation*}
\frac{|\D u|^2}{1-u^2}
\,\geq\,
\frac{|\D u|^2}{2(1-u)}
\,\geq\,(1-\ep)\,\frac{\sum_{\a=1}^n \big(\,\lambda_\a^4\, |\phi^\a|^2\,\big)}{\sum_{\a=1}^n \big(\,\lambda_\a^2\, |\phi^\a|^2\,\big)}
\,\geq\,
(1-\ep)\,\sum_{\a=1}^n \big[\,\lambda_\a^2\, |\phi^\a|^2\,\big]
\,,
\end{equation*}
where in the first inequality we have used that $u\leq 1$ on $M$ and in the latter inequality we have used~\eqref{eq:CS_ine}.
\end{proof}

Now we are finally able to prove our proposition.

\begin{proof}[Proof of Proposition~\ref{prop:lim_estimate}]
For every $\ep>0$ and $0<\delta\leq d$, we have the following estimate of the left hand side of condition~\eqref{eq:cond_thesis}
\begin{align}
\notag
\bigg(\frac{1}{1-t^2}\bigg)^{n-1}\!\!\!\!\!\!\!\!\!\!\int\limits_{\{u=t\}\cap B_d(y_0)} \!\!\!\!\!\!|\D u|^{n-1}\, \rmd\sigma
\,\,&\geq\,\,
\bigg(\frac{1}{1-t^2}\bigg)^{n-1}\!\!\!\!\!\!\!\!\!\!\int\limits_{\{u=t\}\cap B_\delta(y_0)} \!\!\!\!\!\!|\D u|^{n-1} \,\rmd\sigma
\\
\label{eq:int_in1}
&=\!\!\!\! 
\int\limits_{\{u=t\}\cap B_\delta(y_0)} \!\!\!\!\!\!\bigg(\frac{|\D u|^2}{1-u^2}\bigg)^{\frac{n-1}{2}}\,\bigg(\frac{2}{1+u}\bigg)^{\frac{n-1}{2}}\,\bigg[\frac{1}{2(1-u)}\bigg]^{\frac{n-1}{2}}\,\rmd\sigma
\end{align}
Since $u\leq 1$, we have $2/(1+u)\geq 1$. Moreover, from~\eqref{eq:u_taylor2} and Lemma~\ref{le:aux_1}-(i), we obtain
$$
[2(1-u)](r,\theta)\,=\,r^2\,\sum_{\a=1}^n\big[\lambda_\a^2|\phi^\a|^2(\theta)\big]\,\Big(1-\frac{w(r,\theta)}{r^2\sum_{\a=1}^n\lambda_\a|\phi^\a|^2(\theta)}\Big)
\,<\,
(1+\ep)\,r^2\,\sum_{\a=1}^n\big[\lambda_\a^2|\phi^\a|^2(\theta)\big]\,.
$$
Now fix a $\delta$ small enough so that Lemmas~\ref{le:aux_2},~\ref{le:aux_3} are in charge. Taking the limit of integrand~\eqref{eq:int_in1} as $t\to 1^-$ we obtain the estimate
\begin{align}
\!\!\!\liminf_{t\to 1^-}\bigg(\frac{1}{1-t^2}\bigg)^{n-1}\!\!\!\!\!\!\!\!\!\!\int\limits_{\{u=t\}\cap B_d(y_0)} \!\!\!\!\!\!|\D u|^{n-1}\, \rmd\sigma
\,\,
\notag
&>\,\, 
\int\limits_{C_\ep}\,(1-\ep)\,\Big(\frac{1-\ep}{1+\ep}\Big)^{\frac{n-1}{2}}\,\sqrt{\det(g_{\Sph^{n-1}})}\,d\theta^1\cdots d\theta^{n-1}
\\
\label{eq:int_in}
&= 
\int\limits_{\R^{n-1}}\!\!{\chi}^{\phantom{\frac{1}{2}}}_{C_\ep}(\theta)\,\frac{(1-\ep)^{\frac{n+1}{2}}}{(1+\ep)^{\frac{n-1}{2}}}\,\sqrt{\det(g_{\Sph^{n-1}})}\,d\theta^1\cdots d\theta^{n-1}\,.
\end{align}

It is clear that the functions ${\chi}^{\phantom{\frac{1}{2}}}_{C_\ep}$ converge to ${\chi}^{\phantom{\frac{1}{2}}}_{C_0}$ as $\ep\to 0^+$, where
$$
C_0\,=\,\Big\{\theta=(\theta^1,\dots,\theta^{n-1})\in\R^{n-1}\,:\,\sum_{\a=1}^n\big[\lambda_\a^2\, |\phi^\a|^2(\theta)\big]\neq 0\Big\}\,\subseteq \,\Sph^{n-1}\,.
$$ 
Therefore, taking the limit of~\eqref{eq:int_in} as $\ep\to 0^+$ and using the Monotone Convergence Theorem, we find
\begin{equation*}
\liminf_{t\to 1^-} \bigg(\frac{1}{1-t^2}\bigg)^{n-1}\!\!\!\!\!\!\!\!\!\!\int\limits_{\{u=t\}\cap B_d(y_0)} \!\!\!\!\!\!|\D u|^{n-1}\, \rmd\sigma
\,\,\geq\,\, 
\int\limits_{\Sph^{n-1}} {\chi}^{\phantom{\frac{1}{2}}}_{C_0}\,\rmd\sigma_{\Sph^{n-1}}\,.
\end{equation*}
To end the proof, it is enough to show that the set $\Sph^{n-1}\setminus C_0$ is negligible. But this is clear. In fact, since $\sum_{\a=1}^n\lambda_\a^2=n$, there exists at least one integer $\b$ such that $\lambda_\b\neq 0$. Thus $\Sph^{n-1}\setminus C_0$ is contained in the hypersurface $ \{\phi^\b=0\}$, hence its $n$-measure is zero.
This proves inequality~\eqref{eq:cond_thesis} and the thesis. 
\end{proof}

This concludes the proof for the de Sitter case.
In the anti-de Sitter case we can prove the following analogue of Theorem~\ref{thm:MAX}.

\begin{theoremapp}
\label{thm:MIN}
Let $(M,\go,u)$ be a conformally compact static solution of problem~\ref{eq:pb_A} satisfying Assumption~\ref{ass:A}. 
Then the set ${\rm MIN}(u)$ is discrete (and finite) and 
$$
\liminf_{t\to 1^+} \,U_p(t)\,\,\geq\,\, |{\rm MIN}(u)|\,|\Sph^{n-1}|\,,
$$
for every $p\leq n-1$.
\end{theoremapp}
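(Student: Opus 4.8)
The plan is to mirror the argument used for Theorem~\ref{thm:MAX} in the de Sitter case, keeping track of the sign changes produced by passing from $\De u=-nu$ to $\De u=nu$ and from $1-u^2$ to $u^2-1$. First, recalling the representation~\eqref{eq:Up2_A}, I would rewrite
\[
U_p(t)\,=\,\Big(\frac{1}{t^2-1}\Big)^{\!\frac{n-1}{2}}\!\!\int\limits_{\{u=t\}}\!\Big(\frac{|\D u|^2}{u^2-1}\Big)^{\!\frac{p}{2}}\rmd\sigma\,,
\]
and invoke the bound~\eqref{eq:Dusign_A}, which gives $|\D u|^2/(u^2-1)\le 1$ on $M$. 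Hence for every $p\le n-1$ the integrand is at least its value with exponent $(n-1)/2$, so $U_p(t)\ge U_{n-1}(t)$ and it suffices to establish $\liminf_{t\to 1^+}U_{n-1}(t)\ge |{\rm MIN}(u)|\,|\Sph^{n-1}|$.

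The core step is the minimum-point analogue of Proposition~\ref{prop:lim_estimate}: for each $y_0\in {\rm MIN}(u)$ and every $d>0$,
\[
\liminf_{t\to 1^+}\Big(\frac{1}{t^2-1}\Big)^{\!n-1}\!\!\int\limits_{\{u=t\}\cap B_d(y_0)}\!\!\!\!\!|\D u|^{n-1}\,\rmd\sigma\,\,\ge\,\, |\Sph^{n-1}|\,.
\]
To prove it I would reproduce the chain of Lemmas leading to that proposition. Since $y_0$ is a minimum with $u(y_0)=1$ and $\De u=nu$, the Hessian eigenvalues at $y_0$ are now nonnegative; setting $\lambda_\a^2=\pa^2_\a u_{|_{y_0}}\ge 0$ one still has $\sum_\a\lambda_\a^2=n$, and the Taylor expansions become $u=1+\tfrac12\sum_\a\lambda_\a^2|x^\a|^2+o_2(|x|^2)$ together with $|\D u|^2=\sum_\a\lambda_\a^4|x^\a|^2+o_1(|x|^2)$. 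The polar-coordinate expansion~\eqref{eq:gr_polar} of $\go$ is unchanged, as it only uses normal coordinates. The graph Lemma~\ref{le:aux_1} carries over with $\pa u/\pa r>0$ in place of $<0$ (because $u$ now increases away from the minimum), so that for $t$ slightly larger than $1$ the level set $\{u=t\}$ is again a graph $r_t(\theta)$ over the cone $C_\ep$. The density estimate of Lemma~\ref{le:aux_2} and the ratio estimate of Lemma~\ref{le:aux_3} then go through once one notices that $u^2-1=2(u-1)\big(1+o(1)\big)$ as $t\to 1^+$, the Cauchy--Schwarz step~\eqref{eq:CS_ine} being identical. Collecting these bounds and letting $\ep\to 0^+$ by monotone convergence yields the displayed local estimate.

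Finally, to rule out infinitely many minimum points I would argue by contradiction exactly as in Theorem~\ref{thm:MAX}: applying the local estimate around $k$ distinct points of ${\rm MIN}(u)$ on pairwise disjoint balls gives $\liminf_{t\to1^+}U_{n-1}(t)\ge k\,|\Sph^{n-1}|$, whence $\liminf_{t\to1^+}U_1(t)=+\infty$ by the comparison $U_1\ge U_{n-1}$. But $U_1$ is monotonically nondecreasing by Theorem~\ref{thm:main_A}-(ii), so $\liminf_{t\to 1^+}U_1(t)=\inf_{t>1}U_1(t)\le U_1(t_0)<+\infty$ for any fixed $t_0>1$, a contradiction. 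Therefore ${\rm MIN}(u)$ is finite, and applying the local estimate simultaneously at all $|{\rm MIN}(u)|$ of its points produces the asserted inequality.

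The main obstacle is verifying that the whole chain of local estimates is insensitive to the sign reversals: the delicate points are that the level sets near the minimum remain graphs over the cones $C_\ep$ (which needs the radial monotonicity of $u$ with the corrected sign) and that $u^2-1$ and $2(u-1)$ stay comparable as $t\to 1^+$, so that the conformal weight $1/(t^2-1)$ yields the same normalization that $1/(1-t^2)$ produced in the de Sitter case. Once these are in place, the estimate follows as before. I would also note that the finiteness step is in fact cleaner here, since the boundedness of $U_1$ near $t=1^+$ follows directly from its monotonicity rather than from a boundary evaluation of the type $U_1(0)=|\pa M|$.
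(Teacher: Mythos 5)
Your proposal is correct and coincides with the paper's own (largely omitted) proof: the paper explicitly states that Theorem~\ref{thm:MIN} follows the exact same scheme as Theorem~\ref{thm:MAX}, with the only adjustments being the sign reversals in the Taylor expansion at a minimum and the replacement of the bound $u\leq 1$ by $u<1+\kappa$ near ${\rm MIN}(u)$ so that $u^2-1$ and $2(u-1)$ remain comparable — precisely the points you identify and resolve. Your observation that the finiteness step now uses the nondecreasing monotonicity of $U_1$ in place of the boundary evaluation $U_1(0)=|\pa M|$ is also accurate.
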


The proof follows the exact same scheme as the de Sitter case, the only small modifications being in the proof of Lemma~\ref{le:aux_2} and in the computation~\eqref{eq:int_in}, where we have used the fact that $u\leq 1$. This is not true anymore, however, since we are working around a minimum point, we can suppose $u< 1+\kappa$, where $\kappa$ is an infinitesimal quantity that can be chosen to be as small as necessary. Aside from this little expedient, the proof is virtually the same as the de Sitter case, thus we omit it.

We pass now to the proof of some other results that we have used in our work.
The next lemma is useful in order to study the behavior of the static solutions of problem~\eqref{eq:pb_A} near the conformal boundary.

\begin{lemmaapp}
\label{le:tot_geod_BGH_A} 
Let $(M,\go,u)$ be a conformally compact static solution to problem~\eqref{eq:pb_A}. Suppose that $1/\sqrt{u^2-1}$ is a defining function, so that the metric $g=\go/(u^2-1)$ extends to the conformal boundary $\pa M$. Then
\begin{itemize}
\item[(i)]
$\lim_{x\to \bar{x}}(u^2-1-|\D u|^2)$ is well-definite and finite for every $\bar x\in\pa M$,
\item[(ii)] $\pa M$ is a totally geodesic hypersurface in $(\overline{M},g)$.
\end{itemize}
\end{lemmaapp}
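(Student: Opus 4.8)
The plan is to carry out both computations in the conformal (cylindrical) framework of~\eqref{eq:pb_conf}, where all the relevant tensors extend continuously up to $\pa M$. The key preliminary remark is that, under the standing hypothesis, the defining function is $r=1/\sqrt{u^2-1}$, and the substitution $u=\coth(\ffi)$ gives at once the clean identity $r=\sinh(\ffi)$, since $u^2-1=1/\sinh^2(\ffi)$. Thus $\ffi=\operatorname{arcsinh}(r)$ is a smooth function of the smooth defining function $r$, so $\ffi$ extends smoothly to $\overline{M}$ near $\pa M$, with $\ffi\equiv 0$ and $d\ffi\neq 0$ on $\pa M$; in particular $\pa M$ is exactly the regular level set $\{\ffi=0\}$. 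Moreover, since $g=\go/(u^2-1)=r^2\go$ is assumed to extend smoothly up to $\pa M$, its scalar curvature $\Rg$, its Ricci tensor $\Ricg$, and the $g$-Hessian $\nana\ffi$ are all continuous up to $\pa M$.

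For part (i), I would combine the trace identity~\eqref{eq:tilde_R}, rewritten as $1-|\na\ffi|_g^2=\tfrac{1}{nu^2+2}\big(\tfrac{\Rg}{n-1}-(n-2)\big)$, with the elementary identity $u^2-1-|\D u|^2=(u^2-1)\big(1-|\na\ffi|_g^2\big)$ coming from $|\na\ffi|_g^2=|\D u|^2/(u^2-1)$. This yields
\[
u^2-1-|\D u|^2 \,=\, \frac{u^2-1}{nu^2+2}\left(\frac{\Rg}{n-1}-(n-2)\right).
\]
Letting $x\to\bar x$ one has $u\to+\infty$, so the prefactor tends to $1/n$ while $\Rg\to\Rg(\bar x)$ by continuity; hence the limit exists and equals $\tfrac1n\big(\tfrac{\Rg(\bar x)}{n-1}-(n-2)\big)$, which is finite. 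As a byproduct the same identity gives $1-|\na\ffi|_g^2\to 0$, i.e.\ $|\na\ffi|_g\to 1$ on $\pa M$, a fact I will reuse below.

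For part (ii), since $\pa M=\{\ffi=0\}$ is a regular level set (because $|\na\ffi|_g\to 1\neq 0$), its second fundamental form in $(\overline{M},g)$ is the restriction to $T\pa M$ of $\nana\ffi/|\na\ffi|_g$, so it is enough to prove $\nana\ffi=0$ on $\pa M$. I would read the Hessian off the first equation of~\eqref{eq:pb_conf}, rewritten as
\[
\Big(\tfrac{1}{u}-(n-1)u\Big)\,\nana\ffi \,=\, \Ricg+(n-2)\,d\ffi\otimes d\ffi-\big(n-2+2(1-|\na\ffi|_g^2)\big)\,g .
\]
As $x\to\bar x$ the right-hand side converges to a finite tensor (using continuity of $\Ricg$, smoothness of $\ffi$, and $1-|\na\ffi|_g^2\to 0$), whereas the coefficient $\tfrac1u-(n-1)u$ diverges to $-\infty$. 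Since $\nana\ffi$ is continuous up to $\pa M$, the product can remain bounded only if $\nana\ffi\to 0$; therefore $\nana\ffi\equiv 0$ on $\pa M$ and the second fundamental form vanishes, giving the claim.

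The delicate point, on which everything rests, is the continuous extension of $\nana\ffi$, $\Ricg$ and $\Rg$ up to $\pa M$: this is exactly what the defining-function hypothesis buys us (smoothness of $g=r^2\go$ up to $\pa M$) together with the smoothness of $\ffi=\operatorname{arcsinh}(r)$, and it is what legitimizes the ``finite $=$ (blow-up) $\times$ (limit)'' argument forcing $\nana\ffi\to 0$. Once the identity $r=\sinh(\ffi)$ and these extension properties are in place, the rest is routine bookkeeping of limits.
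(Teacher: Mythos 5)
Your proposal is correct and follows essentially the same route as the paper: part (i) via the trace identity~\eqref{eq:tilde_R} combined with $|\na\ffi|_g^2=|\D u|^2/(u^2-1)$ and the continuity of $\Rg$ up to $\pa M$, and part (ii) by identifying the second fundamental form of $\{\ffi=0\}$ with the restriction of $\nana\ffi/|\na\ffi|_g$ and forcing $\nana\ffi=0$ from the first equation of~\eqref{eq:pb_conf}. The only difference is that you spell out the ``divergent coefficient times continuous Hessian must stay bounded'' step that the paper compresses into ``we easily deduce,'' which is exactly the intended argument.
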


\begin{proof}
For the proof of this result, it is convenient to use the notations introduced in Section~\ref{sec:conf_reform}. 
Let $\ffi$ be the function defined by~\eqref{eq:ffi_A}.
By hypotesis, $M$ is the interior of a compact manifold $\overline{M}$ and the metric $g$ is well defined on the whole $\overline{M}$. In particular, the scalar curvature $\Rg$ is a smooth finite function at $\pa M$. Therefore, from equation~\eqref{eq:tilde_R} we easily deduce that $\lim_{x\to\bar x}u^2(1-|\na\ffi|_g^2)$ is well-definite and finite for every $\bar x\in\pa M$. Since
$$
|\na\ffi|_g^2=\frac{|\D u|^2}{u^2-1}\,,
$$
this proves point (i).

To prove statement (ii), we first observe that, since $|\na\ffi|_g=1$ at $\pa M$ (as it follows immediately from point (i)), there exists $\delta>0$ such that $|\na\ffi|_g\neq 0$ on the whole collar $\mathcal{U}_\delta=\{\ffi<\delta\}$. 
Therefore, proceeding as in Subsection~\ref{sub:geom_levelsets}, we find a set of coordinates $\{\ffi,\vartheta^1,\dots,\vartheta^{n-1}\}$ on $\mathcal{U}_{\delta}$, such that the metric $g$ writes as
$$
g=\frac{d\ffi\otimes d\ffi}{|\na\ffi|_g^2}+g_{ij}(\ffi,\theta^i,\dots,\theta^{n-1})d\theta^i\otimes d\theta^j\,.
$$
With respect to these coordinates, the second fundamental form of the boundary $\pa M=\{\ffi=0\}$ is
$$
\chg_{ij}\,=\,\frac{\nana_{ij}\ffi}{|\na\ffi|_g}=\nana_{ij}\ffi\,,\qquad\hbox{for } i,j=1,\dots,n-1\,.
$$
On the other hand, from the first equation of problem~\eqref{eq:pb_conf}, we easily deduce that $\nana\ffi=0$ on $\pa M$. This concludes the proof of point (ii).
\end{proof}

Finally, in order to prove the integral identities in Section~\ref{sec:integral}, we need an extension of the classical Divergence Theorem to the case of open domains whose boundary has a (not too big) nonsmooth portion. Note that~\cite[Theorem~A.1]{Ago_Maz_2} is not enough for our purposes, because hypotesis (ii) is not necessarily fulfilled. To avoid problems, we state the following generalization, due to De Giorgi and Federer.

\begin{theoremapp}[\cite{DeGiorgi,Federer2}]
\label{thm:div}
Let $(M,g)$ be a $n$-dimensional Riemannian manifold, 
with $n\geq 2$,
let $E \subset M$ be a bounded open 
subset of $M$ with compact boundary $\pa E$ of finite $(n-1)$-dimensional Hausdorff measure, 
and suppose that $\pa E = \Gamma \sqcup \Sigma$, where the subsets 
$\Gamma$ and $\Sigma$ have the following properties:
\begin{itemize}
\item[(i)] For every $x \in \Gamma$, there exists an open neighborhood $U_x$ of $x$ in $M$ such that $\Gamma \cap U_x$ is a smooth regular hypersurface. 
\item[(ii)] 
The subset $\Sigma$ is compact and $\mathscr{H}^{n-1}(\Sigma) = 0$.
\end{itemize}
If $X$ is a Lipschitz vector field defined in a neighborhood of $\overline{E}$
then the following identity holds true
\begin{equation}
\label{eq:thm_div}
\int\limits_E {\rm div} X \, \rmd \mu \, = \, \int\limits_{\Gamma} \langle  X \,|\, {\rm n}  \rangle \, \rmd \sigma  ,
\end{equation}
where ${\rm n}$ denotes the exterior unit normal vector field.
\end{theoremapp}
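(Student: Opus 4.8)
The statement is the De Giorgi--Federer form of the divergence theorem, and the plan is to reduce it to the classical divergence theorem by excising the small singular set $\Sigma$. Since $\overline{E}$ is compact, I would first localise: cover $\overline{E}$ by finitely many coordinate charts, choose a subordinate smooth partition of unity $\{\rho_i\}$, and write $X=\sum_i\rho_i X$. Both sides of~\eqref{eq:thm_div} are additive under this decomposition, and all the hypotheses (finiteness of $\mathscr{H}^{n-1}(\pa E)$, compactness and $\mathscr{H}^{n-1}$-nullity of $\Sigma$, and the local smoothness of $\Gamma$) are preserved by the smooth coordinate maps. It therefore suffices to prove the identity in a single chart, for $g$ a smooth metric and $X$ a Lipschitz, compactly supported field.

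The core of the argument is an exhaustion. Fix $\delta>0$; since $\Sigma$ is compact with $\mathscr{H}^{n-1}(\Sigma)=0$, cover it by finitely many balls $B_i$ of radii $r_i<\delta$ with $\sum_i r_i^{n-1}<\delta$, and set $A_\delta=\bigcup_i B_i$. By Sard's theorem I may choose the radii so that each sphere $\pa B_i$ meets $\Gamma$ transversally, so that $E\setminus\overline{A_\delta}$ is a domain with piecewise-smooth boundary contained in $\Gamma\cup\pa A_\delta$, to which the classical divergence theorem applies:
\[
\int\limits_{E\setminus\overline{A_\delta}}\!\!\!{\rm div}\,X\,\rmd\mu \,=\, \int\limits_{\Gamma\setminus\overline{A_\delta}}\!\!\!\langle X\,|\,{\rm n}\rangle\,\rmd\sigma \,+\, \int\limits_{(\pa A_\delta)\cap E}\!\!\!\langle X\,|\,{\rm n}\rangle\,\rmd\sigma\,.
\]
I then let $\delta\to 0^+$. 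Because $|A_\delta|\to 0$, the left-hand side converges to $\int_E{\rm div}\,X\,\rmd\mu$. The excised flux is controlled by $\|X\|_\infty\,\mathscr{H}^{n-1}(\pa A_\delta)\leq C\,\|X\|_\infty\sum_i r_i^{n-1}<C\,\|X\|_\infty\,\delta$, hence vanishes. Finally, since $\Gamma$ and $\Sigma$ are disjoint and $\overline{A_\delta}$ shrinks to $\Sigma$, one has $\mathscr{H}^{n-1}(\Gamma\cap\overline{A_\delta})\to 0$ by continuity from above of the finite measure $\mathscr{H}^{n-1}$ restricted to $\Gamma$, so the first boundary term converges to $\int_\Gamma\langle X\,|\,{\rm n}\rangle\,\rmd\sigma$. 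This yields~\eqref{eq:thm_div}.

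The main obstacle is the regularity needed to apply the classical divergence theorem on $E\setminus\overline{A_\delta}$: one must ensure that for generic radii this set is a Lipschitz (piecewise-smooth) domain, which requires the transversal intersection of the spheres $\pa B_i$ with the smooth hypersurface $\Gamma$ and with one another, together with a careful treatment of the resulting corners. A conceptually cleaner but less elementary alternative, which is the route actually underlying the De Giorgi--Federer attribution, is to invoke Federer's criterion to conclude that the bounded open set $E$ has finite perimeter, apply the Gauss--Green formula over the reduced boundary $\pa^* E$, and then identify $\pa^* E$ with $\Gamma$ and the measure-theoretic normal $\nu_E$ with ${\rm n}$ up to $\mathscr{H}^{n-1}$-null sets, using the nullity of $\Sigma$ and the fact that at a one-sided smooth boundary point the blow-up of $E$ is a half-space. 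In either approach, the disjointness $\Gamma\cap\Sigma=\emptyset$ together with the finiteness of $\mathscr{H}^{n-1}(\pa E)$ is exactly what renders the singular set negligible in the limit.
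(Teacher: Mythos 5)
The paper does not prove this statement at all: Theorem~\ref{thm:div} is quoted as a known result of De~Giorgi and Federer, with the proof delegated entirely to the cited references. So there is no internal argument to compare yours against; what can be said is whether your sketch is sound, and it essentially is. Your excision argument (cover $\Sigma$ by finitely many small balls with $\sum_i r_i^{n-1}<\delta$, apply the classical divergence theorem on $E\setminus\overline{A_\delta}$, and let $\delta\to 0$) is the standard elementary route, and the three limits you take are each justified: $|A_\delta|\to 0$ handles the volume integral since ${\rm div}\,X\in L^\infty$ for Lipschitz $X$; the excised flux is $O(\sum_i r_i^{n-1})$; and $\mathscr{H}^{n-1}(\Gamma\cap\overline{A_\delta})\to 0$ follows from continuity from above of the finite measure $\mathscr{H}^{n-1}\llcorner\pa E$, provided you center the balls on $\Sigma$ so that $\overline{A_\delta}\subset\{d(\cdot,\Sigma)\le 2\delta\}$ and the exceptional sets actually decrease to $\Gamma\cap\Sigma=\emptyset$ (for arbitrary coverings the family need not be nested, so this normalization is not cosmetic). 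You correctly identify the one technical burden, namely arranging by Sard that the spheres meet $\Gamma$ and one another transversally so that $E\setminus\overline{A_\delta}$ is a Lipschitz domain on which the divergence theorem for Lipschitz fields applies. Two small remarks: the statement implicitly assumes $E$ lies locally on one side of $\Gamma$ (otherwise the exterior normal ${\rm n}$ is undefined and the $\Gamma$-contribution should cancel), which your blow-up remark in the finite-perimeter variant handles but the excision variant silently assumes; and your second route (Federer's criterion, Gauss--Green over the reduced boundary $\pa^*E$, identification of $\pa^*E$ with $\Gamma$ up to $\mathscr{H}^{n-1}$-null sets) is the argument actually underlying the attribution in the paper, so it is the more faithful one, while the excision argument is more elementary but requires the corner bookkeeping you flag.
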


\renewcommand{\theequation}{B-\arabic{equation}}
\renewcommand{\thesection}{B}
\renewcommand{\thesubsection}{B}

\subsection{Boucher-Gibbons-Horowitz method}
\label{sec:app_BGH}
In this section we discuss an alternative approach to the study of the rigidity of the de Sitter and anti-de Sitter spacetime, which does not require the machinery of Section~\ref{sec:conf_reform}. Without the need of any assumptions, this method will allow to derive results that are comparable to Theorems~\ref{thm:main2_geom_D},~\ref{thm:rig_gen_D} (case $\Lambda>0$) and Theorems~\ref{thm:main2_geom_A},~\ref{thm:rig_gen_A} (case $\Lambda<0$). In the case $\Lambda>0$, the computations that we are going to show are quite classical (see~\cite{Bou_Gib_Hor,Chr2}). However, to the author's knowledge, the analogous calculations in the case $\Lambda<0$ are new.

As usual, we start with the case $\Lambda>0$. Recalling the Bochner formula and the equations in~\eqref{eq:pb_D} we compute

\begin{align}
\notag
\De|\D u|^2\,&=\,2\,|\DD u|^2\,+\,2\,\Ric(\D u,\D u)\,+\,2\langle\D\De u\,|\,\D u\rangle
\\
\notag
&=\,2\,|\DD u|^2\,+\,2\,\Big[\,\frac{1}{u}\,\DD u(\D u,\D u)\,+\,n\,|\D u|^2\,\Big]\,-\,2n\,|\D u|^2
\\
\label{eq:boch_BGH_D}
&=\,2\,|\DD u|^2\,+\,\frac{1}{u}\,\langle\D|\D u|^2\,|\,\D u\rangle\,.
\end{align}

Now, if we consider the field 
$$
Y\,=\,\D|\D u|^2\,-\,\frac{2}{n}\,\De u\,\D u
$$
we can compute its divergence using~\eqref{eq:boch_BGH_D}.
\begin{align*}
{\rm div}(Y)\,&=\, \De|\D u|^2\,-\,\frac{2}{n}\langle\D\De u\,|\,\D u\rangle\,-\,\frac{2}{n}(\De u)^2
\\
&=\,2\,\Big[\,|\DD u|^2-\frac{(\De u)^2}{n}\,\Big]\,+\,\frac{1}{u}\,\langle\D|\D u|^2\,|\,\D u\rangle\,+\,2\,|\D u|^2\,.
\end{align*}

More generally, for every nonzero $\mathscr{C}^1$ function $\alpha=\alpha(u)$:
\begin{align*}
\frac{{\rm div} (\alpha\, Y)}{\alpha}\,&=\,{\rm div}(Y)\,+\,\frac{\dot\alpha}{\alpha}\,\langle Y\,|\,\D u\rangle
\\
&=\,2\,\Big[\,|\DD u|^2-\frac{(\De u)^2}{n}\,\Big]\,+\,\Big(\frac{\dot\alpha}{\alpha}+\frac{1}{u}\Big)\Big(\langle\D|\D u|^2\,|\,\D u\rangle\,+\,2\,u\,|\D u|^2\Big)\,.
\end{align*}
where $\dot\alpha$ is the derivative of $\alpha$ with respect to $u$.
The computation above suggests us to choose
$$
\alpha(u)\,=\,\frac{1}{u}\,.
$$
With this choice of $\alpha$, we have
\begin{equation}
\label{eq:div_BGH_D}
{\rm div}\Big(\,\frac{1}{u}\,Y\,\Big)\,=\,\frac{2}{u}\,\Big[\,|\DD u|^2-\frac{(\De u)^2}{n}\,\Big]\,.
\end{equation}

\begin{propositionapp}
\label{prop:cyl_BGH_D}
Let $(M,\go,u)$ be a static solution to problem~\eqref{eq:pb_D}. Then, for every $t\in [0,1)$ it holds
\begin{equation}
\label{eq:id_byparts_BGH_D}
\int\limits_{\{u=t\}}\!\!\! \frac{1}{u} \left(|\D u|^2 \, \HHH -\, \frac{n-1}{n}\,|\D u|\,\De u\right) \rmd\sigma
\,\,=\,\,
\int\limits_{\{u>t\}}\!\!\! \frac{1}{u}\, \Big[\,|\DD u|^2-\frac{(\De u)^2}{n}\,\Big]\,\rmd\mu
\,\,\geq\,\,0 
\,.
\end{equation}
Moreover, if there exists $t_0\in (0,1)$ such that 
\begin{equation}
\label{eq:condition_cyl_BGH_D}
\int\limits_{\{u=t_0\}}
\!\!\! \left(|\D u|^2\,\HHH \,-\, \frac{n-1}{n} \,|\D u|\, \De u\right) \!
\,\rmd\sigma  \, \leq \, 0 \, ,
\end{equation}
then the triple $(M,\go,u)$ is isometric to the de Sitter solution.
\end{propositionapp}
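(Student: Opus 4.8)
The plan is to integrate the pointwise divergence identity~\eqref{eq:div_BGH_D} over the superlevel set $\{u>t\}$ and to read off the boundary term produced by the Divergence Theorem. A preliminary observation that makes everything clean is that, despite the apparent singularity of the factor $1/u$, the vector field $\tfrac{1}{u}Y$ is smooth on all of $M$: using $\DD u=u(\Ric-n\,\go)$ and $\De u=-n\,u$ (both immediate from the equations in~\eqref{eq:pb_D}) one computes $\tfrac{1}{u}Y=2(\Ric-n\,\go)(\D u,\,\cdot\,)^{\sharp}+2\,\D u$, and likewise the right-hand side $\tfrac{2}{u}\big(|\DD u|^2-(\De u)^2/n\big)$ stays bounded near $\pa M$ since both $|\DD u|^2$ and $(\De u)^2/n$ are $O(u^2)$. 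Hence no difficulty arises at the boundary $\pa M=\{u=0\}$, the $t=0$ case is included, and since $\max_M u=1>t$ the set ${\rm MAX}(u)$ lies in the \emph{interior} of $\{u>t\}$. Consequently the only topological boundary of the integration region is the level set $\{u=t\}$, with outward unit normal $-\nu$, where $\nu=\D u/|\D u|$.

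Next I would apply the Divergence Theorem on $E=\{u>t\}$. When $t$ is a singular value of $u$ the level set $\{u=t\}$ need not be regular, and I would invoke the refined Divergence Theorem~\ref{thm:div} exactly as in the proofs of Propositions~\ref{prop:byparts} and~\ref{prop:cyl}, splitting $\pa E$ into its regular part and a set of vanishing $\mathscr{H}^{n-1}$-measure (the analyticity of $u$, see~\cite{Chr}, together with~\cite{Federer,Krantz}, supplies the hypotheses). A direct computation of $\langle Y,\nu\rangle$, combined with the mean curvature formula in~\eqref{eq:formula_curvature}, gives $\langle Y,\nu\rangle=-2\big(|\D u|^2\,\HHH-\tfrac{n-1}{n}|\D u|\,\De u\big)$, so that the flux $\int_{\{u=t\}}\langle\tfrac{1}{u}Y,-\nu\rangle\,\rmd\sigma$ is precisely the left-hand side of~\eqref{eq:id_byparts_BGH_D}; this establishes the asserted identity. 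The nonnegativity is then immediate, since the Cauchy--Schwarz inequality for the symmetric tensor $\DD u$ yields $|\DD u|^2\geq(\De u)^2/n$ pointwise and $u>0$ on $\{u>t\}$.

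For the rigidity statement I would argue as follows. Because $u\equiv t_0$ on $\{u=t_0\}$, the left-hand side of~\eqref{eq:id_byparts_BGH_D} equals $\tfrac{1}{t_0}$ times the integral in~\eqref{eq:condition_cyl_BGH_D}, hence is $\leq 0$ by hypothesis (recall $t_0>0$); but the right-hand side is $\geq 0$. Thus both vanish and $|\DD u|^2\equiv(\De u)^2/n$ on $\{u>t_0\}$. Equality in Cauchy--Schwarz forces $\DD u$ to be pure trace, i.e. $\DD u=\tfrac{\De u}{n}\,\go=-u\,\go$ on the open set $\{u>t_0\}$, and by analyticity of $u$ this propagates to all of $M$. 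Finally the Obata-type equation $\DD u=-u\,\go$ with $u$ nonconstant and $u=0$ on $\pa M$ forces $(M,\go)$ to be a round hemisphere, precisely as in~\cite{Obata} (the same step used in the proof of Proposition~\ref{prop:cyl}); translating back, $(M,\go,u)$ is isometric to the de Sitter solution.

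The main obstacle I anticipate is technical rather than conceptual: carefully justifying that $\tfrac{1}{u}Y$ and its divergence extend regularly across $\pa M$ (so that no flux escapes there and the $t=0$ endpoint is genuinely covered), and correctly applying the Divergence Theorem across possibly singular level sets via Theorem~\ref{thm:div}. Once these measure-theoretic points are secured, the algebraic reduction of $\langle Y,\nu\rangle$ to the mean-curvature expression and the Cauchy--Schwarz/Obata rigidity are routine.
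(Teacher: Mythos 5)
Your proposal is correct and follows essentially the same route as the paper: integrate the divergence identity~\eqref{eq:div_BGH_D} over $\{u>t\}$ via the refined Divergence Theorem~\ref{thm:div}, identify the flux with the mean-curvature expression through~\eqref{eq:formula_curvature}, and conclude rigidity from equality in Cauchy--Schwarz plus analyticity (the paper invokes~\cite{Lafontaine} where you invoke~\cite{Obata}, an immaterial difference). Your explicit remark that $\tfrac{1}{u}Y$ and its divergence extend smoothly across $\pa M$ is a welcome clarification that the paper leaves implicit for the endpoint $t=0$.
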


\begin{remarkapp}
\label{rem:a_e_BGH_D}
Recalling Remark~\ref{rem:uno_D}, it is easy to realize that the integral on the left hand side of~\eqref{eq:id_byparts_BGH_D} is well defined also when $t$ is a singular value of $u$.
\end{remarkapp}

\begin{remarkapp}
Note that the right hand side of inequality~\eqref{eq:id_byparts_BGH_D} is always nonnegative, as opposed to formula~\eqref{eq:id_byparts}, where we needed to suppose Assumption~\ref{ass:conf} to achieve the same result. This is one of the reasons why this approach works without the need to suppose any assumption.
\end{remarkapp}

\smallskip

\begin{proof}[Proof of Proposition~\ref{prop:cyl_BGH_D}]
Suppose for the moment that $\{u=t\}$ is a regular level set. Integrating by parts identity~\eqref{eq:div_BGH_D}, we obtain
\begin{equation}
\label{eq:int_part_fin_BGH_D}
\int\limits_{\{u>t\}}\!\!\! \frac{2}{u}\, \Big[\,|\DD u|^2-\frac{(\De u)^2}{n}\,\Big]\,\rmd\mu
\,=\,
\int\limits_{\{u=t\}}\!\!\! \frac{1}{u}\, \langle Y\,|\, \mathrm{n}\rangle\, \rmd\sigma\,,
\end{equation}  
where ${\rm n} = -\D u/|\D u|$ is the outer $g$-unit normal
of the set $\{ u \geq t \}$ at its boundary. On the other hand, from the first formula in~\eqref{eq:formula_curvature} it is easy to deduce that
$$
\langle Y\,|\,\D u\rangle
\, = \, 2\left( \DD u(\D u,\D u)\,-\,\frac{|\D u|^2\De u}{n} \right)\, = \,  - 2\left(|\D u|^3 \, \HHH -\, \frac{n-1}{n}\,|\D u|^2\,\De u\right) . 
$$
Substituting in~\eqref{eq:int_part_fin_BGH_D} proves formula~\eqref{eq:id_byparts_BGH_D} in the case where $\{u=t\}$ is a regular level set.

In the case where $t>0$ is a singular value of $u$, we need to apply a slightly refined version of the Divergence Theorem, namely Theorem~\ref{thm:div} in the appendix, in order to perform the integration by parts which leads to identity~\eqref{eq:int_part_fin_BGH_D}. The rest of the proof is identical to what we have done for the regular case. We set 
\begin{eqnarray*}
X  =  \frac{1}{u}\, Y \qquad \hbox{and} \qquad E= \{ u>t \}\, .
\end{eqnarray*}
so that $\pa E = \{ u=t \}$. 

As usual, we denote by ${\rm Crit}(u) = \{  x\in M \, | \, \D u(x)= 0  \}$ the set of the critical points of $u$, From~\cite{Krantz}, we know that there exists an open $(n-1)$-dimensional submanifold $N\subseteq {\rm Crit}(u)$ such that $\mathscr{H}^{n-1}({\rm Crit}(u)\setminus N)=0$.
Set $\Sigma = \pa E \cap ({\rm Crit}(u)\setminus N)$ and $\Gamma = \pa E \setminus \Sigma$, so that $\pa E$ can be written as the disjoint union of $\Sigma$ and $\Gamma$.
We have $\mathscr{H}^{n-1}(\Sigma)=0$ by definition, while $\Gamma$ is the union of the regular part of $\pa E$ and of $N$, which are open $(n-1)$-submanifolds.
Therefore the hypoteses of Theorem~\ref{thm:div} are met, and we can apply it to conclude that equation~\eqref{eq:int_part_fin_BGH_D} holds true also when $t$ is a singular value of $u$.

To prove the second part, we observe that from~\eqref{eq:id_byparts_BGH_D} and~\eqref{eq:condition_cyl_BGH_D} one immediately gets $\DD u = ({\De u}/{n})\, \go$ in $\{ u \geq t_0 \}$. Since $u$ is analytic, the same equality holds on the whole manifold $M$.
Now we can use the results in~\cite{Lafontaine} to conclude that $(M,\go,u)$ is the de Sitter solution.
\end{proof}

The proposition above is particularly interesting when applied at the boundary $\pa M=\{u=0\}$. 

\begin{corollaryapp}
\label{cor:bound_BGH_D}
Let $(M,\go,u)$ be a static solution to problem~\eqref{eq:pb_D}. Then it holds
\begin{equation}
\label{eq:bound_BGH_D}
\int\limits_{\pa M}\!\! |\D u|\big[\,\RRR^{\pa M} -\, (n-1)(n-2)\,\big] \,\rmd\sigma\,\geq 0\,.
\end{equation}
Moreover, if the equality holds then the triple $(M,\go,u)$ is isometric to the de Sitter solution.
\end{corollaryapp}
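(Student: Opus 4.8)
The plan is to apply Proposition~\ref{prop:cyl_BGH_D} at the boundary level $\{u=0\}=\pa M$ and to reinterpret its left hand side as the boundary integral in~\eqref{eq:bound_BGH_D}. The crucial preliminary is the pointwise identity
\[
|\D u|\,\HHH \,=\, -\,u\,\Ric(\nu,\nu),\qquad \nu=\D u/|\D u|,
\]
valid wherever $|\D u|\neq 0$, which follows from the mean curvature formula~\eqref{eq:formula_curvature} together with the first equation of~\eqref{eq:pb_D} and $\De u=-nu$ (this is exactly the identity recorded in Remark~\ref{rem:uno_D} for $p=2$). Substituting this relation and $\De u=-nu$ into the integrand on the left of~\eqref{eq:id_byparts_BGH_D}, the singular factor $1/u$ cancels and one obtains, on every level set,
\[
\frac{1}{u}\Big(|\D u|^2\,\HHH-\tfrac{n-1}{n}\,|\D u|\,\De u\Big)\,=\,|\D u|\big[(n-1)-\Ric(\nu,\nu)\big].
\]
Thus the left hand side of~\eqref{eq:id_byparts_BGH_D} equals $\int_{\{u=t\}}|\D u|[(n-1)-\Ric(\nu,\nu)]\,\rmd\sigma$, a quantity with smooth integrand.

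I would then pass to the limit $t\to 0^+$. On the right hand side of~\eqref{eq:id_byparts_BGH_D} the integrand is nonnegative, so $\int_{\{u>t\}}\tfrac1u[|\DD u|^2-(\De u)^2/n]\,\rmd\mu$ converges monotonically as $t\downarrow 0$; near $\pa M$ one has $\DD u=u(\Ric-n\,\go)$ and $\De u=-nu$, so the integrand is $O(u)$ and the limit is finite. Since $\pa M$ is a regular level set of $u$, the left hand side converges to $\int_{\pa M}|\D u|[(n-1)-\Ric(\nu,\nu)]\,\rmd\sigma$. At the boundary $u=0$ the first equation of~\eqref{eq:pb_D} gives $\DD u=0$, hence $\pa M$ is totally geodesic; the Gauss equation combined with $\RRR=2\Lambda=n(n-1)$ yields $\Ric(\nu,\nu)=\tfrac12(\RRR-\RRR^{\pa M})=\tfrac12\big(n(n-1)-\RRR^{\pa M}\big)$, so that
\[
(n-1)-\Ric(\nu,\nu)\,=\,\tfrac12\big[\RRR^{\pa M}-(n-1)(n-2)\big].
\]
Together with the nonnegativity in~\eqref{eq:id_byparts_BGH_D}, this gives exactly~\eqref{eq:bound_BGH_D}.

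For the rigidity part, equality in~\eqref{eq:bound_BGH_D} forces $\int_{\{u>0\}}\tfrac1u[|\DD u|^2-(\De u)^2/n]\,\rmd\mu=0$; as the integrand is nonnegative, the equality case of Cauchy--Schwarz gives the traceless-Hessian condition $\DD u=\tfrac{\De u}{n}\,\go=-u\,\go$ throughout $\{u>0\}$, hence on all of $M$ by analyticity, and then the result of~\cite{Lafontaine} (already invoked in Proposition~\ref{prop:cyl_BGH_D}) identifies $(M,\go,u)$ with the de Sitter solution. The main technical obstacle is precisely the passage to the limit at $t=0$: one must cancel the singular factor $1/u$ \emph{before} taking the limit, using the identity above, and then rely on the regularity of the foliation near $\pa M$ together with monotone convergence, rather than applying the divergence theorem directly on $\{u>0\}$, where the field $u^{-1}Y$ fails to be Lipschitz up to the boundary.
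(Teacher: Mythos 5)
Your proof is correct and follows essentially the same route as the paper: apply Proposition~\ref{prop:cyl_BGH_D} at the boundary, use the identity $\HHH\,|\D u|/u=-\Ric(\nu,\nu)$ together with the total geodesy of $\pa M$ and Gauss--Codazzi, and get rigidity from the vanishing of the traceless Hessian plus~\cite{Lafontaine}. The only difference is that you reach $t=0$ by a monotone-convergence limit, whereas the paper substitutes $t=0$ directly (which is legitimate, since your own cancellation shows $u^{-1}Y=2(\Ric-n\,\go)(\D u,\cdot)+2\,\D u$ extends smoothly to $\pa M$); this is a cosmetic, not substantive, distinction.
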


\begin{proof}
First we compute from the equations in~\eqref{eq:pb_D} and formula~\eqref{eq:formula_curvature}, that
$$
\frac{\HHH\,|\D u|}{u}\,=\,-\Ric(\nu,\nu)\,,
$$
where $\nu=\D u/|\D u|$ as usual. In particular, we have $\HHH=0$ on $\pa M$. Hence we can use the Gauss-Codazzi identity to find
$$
\frac{\HHH\,|\D u|}{u}\,=\,\frac{\RRR^{\pa M}-\RRR}{2}\,=\,\frac{1}{2}\,\big[\,\RRR^{\pa M}-\,n(n-1)\,\big]\,.
$$
Substituting $t=0$ in formula~\eqref{eq:id_byparts_BGH_D} and applying Proposition~\ref{prop:cyl_BGH_D}, we have the thesis.
\end{proof}

Now we turn our attention to the case $\Lambda<0$.
Mimicking the computations done in the case $\Lambda>0$, but using the equations in~\eqref{eq:pb_A} instead of the ones in~\eqref{eq:pb_D} we obtain
\begin{equation}
\label{eq:div_BGH_A}
{\rm div}\Big(\,\frac{1}{u}\,Y\,\Big)\,=\,\frac{2}{u}\,\Big[\,|\DD u|^2-\frac{(\De u)^2}{n}\,\Big]\,.
\end{equation}
Incidentally, we notice that this equation coincides with the analogous formula~\eqref{eq:div_BGH_D} in the case $\Lambda>0$. We are now ready to state the analogous of Proposition~\ref{prop:cyl_BGH_D}.

\begin{propositionapp}
\label{prop:cyl_BGH_A}
Let $(M,\go,u)$ be a static solution to problem~\eqref{eq:pb_A}. Then, for every $t\in (1,+\infty)$ it holds
\begin{equation}
\label{eq:id_byparts_BGH_A}
\int\limits_{\{u=t\}}\!\!\! \frac{1}{u} \left(|\D u|^2 \, \HHH -\, \frac{n-1}{n}\,|\D u|\,\De u\right) \rmd\sigma
\,\,=\,\,
-\!\!\int\limits_{\{u<t\}}\!\!\! \frac{1}{u}\, \Big[\,|\DD u|^2-\frac{(\De u)^2}{n}\,\Big]\,\rmd\mu
\,\,\leq\,\,0 
\,.
\end{equation}
Moreover, if there exists $t_0\in (1,+\infty)$ such that 
\begin{equation}
\label{eq:condition_cyl_BGH_A}
\int\limits_{\{u=t_0\}}
\!\!\! \left(|\D u|^2\,\HHH \,-\, \frac{n-1}{n} \,|\D u|\, \De u\right) \!
\,\rmd\sigma  \, \geq \, 0 \, ,
\end{equation}
then the triple $(M,\go,u)$ is isometric to the anti-de Sitter solution.
\end{propositionapp}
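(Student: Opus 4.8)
The plan is to mirror the proof of Proposition~\ref{prop:cyl_BGH_D} almost verbatim, the essential difference being that the roles of the super- and sub-level sets of $u$ are interchanged. Indeed, in the case $\Lambda<0$ the potential attains its minimum value $1$ in the interior and blows up at the ends, so I would first record that $u$ is proper (since $u\to+\infty$ at infinity) and that consequently the sublevel set $E=\{u<t\}$ is a bounded open set with compact boundary $\pa E=\{u=t\}$, on which the Divergence Theorem applies. For a regular value $t$, integrating the pointwise identity~\eqref{eq:div_BGH_A} over $E$ gives
\[
\int\limits_{\{u<t\}}\!\!\!\frac{2}{u}\,\Big[\,|\DD u|^2-\frac{(\De u)^2}{n}\,\Big]\,\rmd\mu\,=\,\int\limits_{\{u=t\}}\!\!\!\frac{1}{u}\,\langle Y\,|\,{\rm n}\rangle\,\rmd\sigma\,,
\]
where now the outer unit normal of $E$ points in the direction of increasing $u$, that is ${\rm n}=\D u/|\D u|$ — the opposite orientation to the $\Lambda>0$ case. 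Using the first formula in~\eqref{eq:formula_curvature} to write $\DD u(\D u,\D u)=|\D u|^2\De u-|\D u|^3\,\HHH$, one computes $\langle Y\,|\,{\rm n}\rangle=-2\big(|\D u|^2\HHH-\frac{n-1}{n}|\D u|\De u\big)$, and substituting produces exactly identity~\eqref{eq:id_byparts_BGH_A}.

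The sign then follows at once: since $u\geq 1>0$ by Normalization~\ref{norm:A} and $|\DD u|^2-(\De u)^2/n\geq 0$ is the squared norm of the trace-free Hessian, the right-hand side $-\int_{\{u<t\}}\frac{1}{u}[\cdots]\,\rmd\mu$ is nonpositive. It is worth emphasising that the flip of the integration domain (from $\{u>t\}$ to $\{u<t\}$) together with the reversed normal is precisely what turns the nonnegativity of Proposition~\ref{prop:cyl_BGH_D} into the nonpositivity asserted here; no new structural ingredient is required.

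When $t>0$ is a singular value I would repeat the argument after replacing the classical Divergence Theorem by Theorem~\ref{thm:div}, with $X=\frac{1}{u}\,Y$ and $E=\{u<t\}$. The field $X$ is smooth, hence Lipschitz on $\overline{E}$, because $u$ is analytic and bounded below by $1$; the level set $\{u=t\}$ has finite $\mathscr{H}^{n-1}$-measure by~\cite{Federer}; and by~\cite{Krantz} one splits $\pa E=\Gamma\sqcup\Sigma$ with $\Gamma$ the union of the regular part and of the open $(n-1)$-submanifold $N\subseteq{\rm Crit}(u)$, and $\mathscr{H}^{n-1}(\Sigma)=0$. The boundary integral remains well defined on singular levels, exactly as in Remark~\ref{rem:a_e_BGH_D}. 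I expect this verification of the hypotheses of Theorem~\ref{thm:div} to be the only genuinely delicate point, everything else being sign bookkeeping inherited from the $\Lambda>0$ case.

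For the rigidity statement I observe that $u\equiv t_0$ on $\{u=t_0\}$, so hypothesis~\eqref{eq:condition_cyl_BGH_A} says precisely that the weighted boundary integral — the left-hand side of~\eqref{eq:id_byparts_BGH_A} — is $\geq 0$, whereas the identity just proved shows it is $\leq 0$. Hence it vanishes, forcing $|\DD u|^2=(\De u)^2/n$ throughout $\{u<t_0\}$; equivalently the trace-free Hessian vanishes there, so $\DD u=\frac{\De u}{n}\go=u\,\go$ on $\{u<t_0\}$ (using $\De u=nu$), and by analyticity on all of $M$. I would then conclude via~\cite[Lemma~3.3]{Qing}, exactly as in the rigidity part of Proposition~\ref{prop:cyl}, that $(M,\go)$ is isometric to hyperbolic space, whence $(M,\go,u)$ is the anti-de Sitter solution.
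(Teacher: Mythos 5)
Your proposal is correct and follows essentially the same route as the paper, which simply observes that the argument of Proposition~\ref{prop:cyl_BGH_D} carries over verbatim once the domain of integration becomes the sublevel set $\{u<t\}$ and the outer unit normal becomes ${\rm n}=\D u/|\D u|$, this orientation flip accounting for the reversed sign in~\eqref{eq:id_byparts_BGH_A}. Your treatment of singular values via Theorem~\ref{thm:div} and the rigidity conclusion via $\DD u = u\,\go$ and~\cite[Lemma~3.3]{Qing} match the paper's intended argument.
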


\begin{remarkapp}
\label{rem:a_e_BGH_A}
Recalling Remark~\ref{rem:uno_D}, it is easy to realize that the integral on the left hand side of~\eqref{eq:id_byparts_BGH_A} is well defined also when $t$ is a singular value of $u$.
\end{remarkapp}

\smallskip

\begin{proof}[Proof of Proposition~\ref{prop:cyl_BGH_A}]
The proof is almost identical to the proof of Proposition~\ref{prop:cyl_BGH_A}. The only change is that, when we apply the divergence theorem, we need the outer $g$-unit normal of the set $\{u\leq t\}$, that is ${\rm n} = \D u/|\D u|$ instead of $-\D u/|\D u|$. This is the reason of the different signs in formul\ae~\eqref{eq:id_byparts_BGH_D},~\eqref{eq:id_byparts_BGH_A}.
\end{proof}

Now suppose that the manifold $M$ is conformally compact. We would like to use Proposition~\ref{prop:cyl_BGH_A} to study the behavior of a static solution at the conformal boundary $\pa M$. In order to simplify the computations and to emphasize the analogy with the case $\Lambda>0$, it will prove useful to suppose that Assumption~\ref{ass:A2} holds. Therefore, from now on we suppose that $1/\sqrt{u^2-1}$ is a defining function, and that $\lim_{x\to \bar x}(u^2-1-|\D u|^2)=0$ for every $\bar x\in\pa M$.
We are now ready to prove the analogous of Corollary~\ref{cor:bound_BGH_D} in the case of a negative cosmological constant.

\begin{corollaryapp}
\label{cor:bound_BGH_A}
Let $(M,\go,u)$ be a conformally compact static solution to problem~\eqref{eq:pb_A} satisfying assumption~\ref{ass:A2}, and let $g=\go/(u^2-1)$. Then it holds
\begin{equation}
\label{eq:bound_BGH_A}
\int\limits_{\pa M}\! \Big[\,(n-1)(n-2)-\Rg^{\pa M}
\,\Big] \,\rmd\sigma_g
\,\,\geq\,\,0 
\,.
\end{equation}
Moreover, if
\begin{equation}
\label{eq:bound_rig_BGH_A}
\lim_{t\to+\infty} \,t^{n-1}\!\!\!\int\limits_{\{u=t\}}\! \Ricg(\nu_g,\nu_g)\, \rmd\sigma_g\,
\,\,=\,\,0 
\,,
\end{equation}
where $\nu_g=\D u/|\D u|_g$,
then the triple $(M,\go,u)$ is isometric to the anti-de Sitter solution. 
\end{corollaryapp}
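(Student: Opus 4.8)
The plan is to mirror the scheme of Corollary~\ref{cor:bound_BGH_D}, applying Proposition~\ref{prop:cyl_BGH_A} and then letting the level value tend to the conformal boundary $\pa M$ (which here corresponds to $t\to+\infty$) instead of evaluating at a fixed regular level. First I would simplify the integrand on the left-hand side of~\eqref{eq:id_byparts_BGH_A}. Using $\De u=nu$, the first equation of~\eqref{eq:pb_A}, and the expression for $\HHH$ in~\eqref{eq:formula_curvature}, one finds $|\D u|\,\HHH=-u\,\Ric(\nu,\nu)$, so that the left-hand integrand of~\eqref{eq:id_byparts_BGH_A} equals $-|\D u|\,[\,\Ric(\nu,\nu)+(n-1)\,]$; Proposition~\ref{prop:cyl_BGH_A} then gives $\int_{\{u=t\}}|\D u|\,[\,\Ric(\nu,\nu)+(n-1)\,]\,\rmd\sigma\geq 0$ for every $t>1$. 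Next I would trade the $\go$-quantity $\Ric(\nu,\nu)$ for the conformal one $\Ricg(\nu_g,\nu_g)$ by contracting~\eqref{eq:ricci_2_A} with $\nu\otimes\nu$ and inserting the field equations; writing $W:=u^2-1-|\D u|^2$, this yields the purely algebraic identity
\[
[(n-1)u^2-1]\,[\,\Ric(\nu,\nu)+(n-1)\,]\,=\,\Ricg(\nu_g,\nu_g)\,-\,\frac{(n-1)(1+u^2)}{u^2-1}\,W .
\]
Combining it with $|\D u|\,\rmd\sigma=|\na\ffi|_g\,(u^2-1)^{n/2}\,\rmd\sigma_g$ (from~\eqref{eq:g_A} and $|\na\ffi|_g^2=|\D u|^2/(u^2-1)$) and dividing by the positive factor $(t^2-1)^{n/2}/[(n-1)t^2-1]$, the inequality becomes
\[
\int\limits_{\{u=t\}}|\na\ffi|_g\left[\Ricg(\nu_g,\nu_g)-\frac{(n-1)(1+t^2)}{t^2-1}\,W\right]\rmd\sigma_g\,\geq\, 0 .
\]

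The decisive step is the passage to the limit $t\to+\infty$. Since $g$ extends smoothly up to $\pa M$ (Lemma~\ref{le:tot_geod_BGH_A}), both $\Ricg(\nu_g,\nu_g)$ and $\rmd\sigma_g$ have finite limits there, while Assumption~\ref{ass:A2} forces $W\to 0$ and~\eqref{eq:tilde_R} forces $|\na\ffi|_g\to 1$; hence the $W$-term disappears and I obtain $\int_{\pa M}\Ricg(\nu_g,\nu_g)\,\rmd\sigma_g\geq 0$. Finally, the Gauss--Codazzi identity on the totally geodesic boundary $\pa M\subset(\overline M,g)$, namely $\Ricg(\nu_g,\nu_g)=\tfrac12[(n-1)(n-2)-\Rg^{\pa M}]+\tfrac12(n-1)n\,u^2(1-|\na\ffi|_g^2)$, whose last summand vanishes at $\pa M$ by Assumption~\ref{ass:A2}, converts this into~\eqref{eq:bound_BGH_A}.

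For the rigidity statement I would show that~\eqref{eq:bound_rig_BGH_A} forces $J:=\int_M \frac1u\big[|\DD u|^2-(\De u)^2/n\big]\,\rmd\mu=0$. Indeed, as $\int_{\{u=t\}}\Ricg(\nu_g,\nu_g)\,\rmd\sigma_g$ already converges to $\int_{\pa M}\Ricg(\nu_g,\nu_g)\,\rmd\sigma_g$, the weight $t^{n-1}$ in~\eqref{eq:bound_rig_BGH_A} simultaneously forces equality in~\eqref{eq:bound_BGH_A} and gives the quantitative decay $t^{n-2}\!\int_{\{u=t\}}\Ricg(\nu_g,\nu_g)\,\rmd\sigma_g\to 0$. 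Reading off~\eqref{eq:id_byparts_BGH_A} as $J=\lim_{t\to+\infty}\int_{\{u=t\}}|\D u|\,[\,\Ric(\nu,\nu)+(n-1)\,]\,\rmd\sigma$ and substituting the two displays above, the Ricci contribution drops out and one is left with $J=-\lim_{t\to+\infty}t^{n-2}\!\int_{\{u=t\}}|\na\ffi|_g\,W\,\rmd\sigma_g$ (up to a positive constant). Because $W\geq 0$ by~\eqref{eq:Dusign_A} (Assumption~\ref{ass:A2} implies Assumption~\ref{ass:A}) and $J\geq 0$, both sides must vanish; thus $|\DD u|^2=(\De u)^2/n$, and by analyticity $\DD u=u\,\go$ on all of $M$. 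Invoking~\cite[Lemma~3.3]{Qing} exactly as in the proof of Proposition~\ref{prop:cyl_BGH_A} then identifies $(M,\go,u)$ with the anti-de Sitter solution.

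I expect the main obstacle to be precisely this limit $t\to+\infty$: unlike the case $\Lambda>0$, where one simply evaluates at the regular level $\{u=0\}$, here every $\go$-quantity must be carried to conformal infinity. One must justify the interchange of limit and integral, i.e.\ the uniform control of $|\na\ffi|_g\to 1$ and $W\to 0$ along the approximating level sets, and, for the rigidity, the exact matching of the rate $t^{n-1}$ in~\eqref{eq:bound_rig_BGH_A} against the powers of $(u^2-1)$ produced by the conformal rescaling.
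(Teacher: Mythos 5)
Your proposal is correct and follows essentially the same route as the paper: the same simplification of the boundary integrand via $|\D u|\,\HHH=-u\,\Ric(\nu,\nu)$, the same conversion to $\Ricg(\nu_g,\nu_g)$ through~\eqref{eq:ricci_2_A}, the same limit $t\to+\infty$ combined with Gauss--Codazzi on the totally geodesic conformal boundary, and for rigidity the same reduction to the vanishing of $\int_M u^{-1}\bigl[|\DD u|^2-(\De u)^2/n\bigr]\rmd\mu$ followed by Obata-type rigidity via~\cite[Lemma~3.3]{Qing}. (Your coefficient $\tfrac{(n-1)(1+u^2)}{u^2-1}$ in the algebraic identity is in fact the correct one -- the paper's $\tfrac{(n-1)u^2+1}{u^2-1}$ is a harmless slip with the same limit -- and your organization of the rigidity step through the sign of $W$ is a slightly cleaner way of phrasing what the paper does.)
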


\begin{proof}
First we compute from the equations in~\eqref{eq:pb_A} and formula~\eqref{eq:formula_curvature}, that
$$
\frac{\HHH\,|\D u|}{u}\,=\,-\Ric(\nu,\nu)\,,
$$
where $\nu=\D u/|\D u|$ as usual. Therefore, we can rewrite formula~\eqref{eq:id_byparts_BGH_A} as
\begin{equation}
\label{eq:Ric_Ricg_BGH_A}
\int\limits_{\{u=t\}}\!\!\! |\D u|\,\big[\,\Ric(\nu,\nu) \,+\,(n-1)\,\big] \rmd\sigma
\,\,\geq\,\,0 
\,.
\end{equation}
Now we use equation~\eqref{eq:ricci_2_A} in order to rewrite the term in the square brackets in the following way
$$
\big[\,(n-1)u^2-1\,\big]\,\big[\,\Ric(\nu,\nu)\,+\,(n-1)\,\big]
\,=\,
\Ricg(\nu_g,\nu_g)-\bigg(\frac{(n-1)u^2+1}{u^2-1}\bigg)\left(u^2-1-|\D u|^2\right)\,.
$$
Now it is easy to obtain from inequality~\eqref{eq:Ric_Ricg_BGH_A} the following formula
$$
\int\limits_{\{u=t\}}\!\!\! \bigg(\frac{|\D u|}{\sqrt{u^2-1}}\bigg)\,\bigg[\,\Ric_g(\nu_g,\nu_g) -\bigg(\frac{(n-1)u^2+1}{u^2-1}\bigg)\left(u^2-1-|\D u|^2\right)\,\bigg] \rmd\sigma_g
\,\,\geq\,\,0 
\,.
$$
Since $\lim_{x\to \bar x} (u^2-1-|\D u|^2)=0$, in particular $|\D u|/\sqrt{u^2-1}$ goes to zero as $t\to +\infty$. Therefore, taking the limit as $t\to +\infty$ of the formula above, we obtain
\begin{equation}
\label{eq:aux_BGH_A}
\int\limits_{\pa M}\!\Ric_g(\nu_g,\nu_g) \, \rmd\sigma_g
\,\,\geq\,\,0 
\,,
\end{equation}
where $\nu_g=\D u/|\D u|_g$.
Since $\pa M$ is a totally geodesic hypersurface by Lemma~\ref{le:tot_geod_BGH_A}-(ii), from the Gauss-Codazzi equation and formula~\eqref{eq:tilde_R} we obtain
$$
2\,\Ricg(\nu_g,\nu_g)\,=\,\Rg-\Rg^{\pa M}=\,(n-1)(n-2)-\Rg^{\pa M}\,.
$$
Substituting in equation~\eqref{eq:aux_BGH_A} we obtain formula~\eqref{eq:bound_BGH_A}.

To prove the rigidity statement, we observe that we can rewrite formula~\eqref{eq:bound_rig_BGH_A} as
\begin{align*}
0\,&=\,
\lim_{t\to+\infty}\int\limits_{\{u=t\}}\! u^{n-1}\Ricg(\nu_g,\nu_g)\, \rmd\sigma_g
\\
&=\, 
\lim_{t\to+\infty}\int\limits_{\{u=t\}}\! u^{n-1}\Big(\frac{|\D u|}{\sqrt{u^2-1}}\Big)\big[(n-1)u^2+1\big]\bigg[\,\Ric(\nu,\nu)+(n-1)+1-\frac{|\D u|^2}{u^2-1}\,\bigg] \bigg[\frac{\rmd\sigma^{\phantom{^{n-1}}}}{(u^2-1)^{\frac{n-1}{2}}}\bigg]
\\
&=\, 
\lim_{t\to+\infty}\int\limits_{\{u=t\}}\! (n-1)\,\Big[u\,|\D u|\big(\,\Ric(\nu,\nu)+(n-1)\,\big)+(u^2-1-|\D u|^2)\Big]\,\rmd\sigma\,
\\
&=\, 
\lim_{t\to+\infty}\int\limits_{\{u=t\}}\! (n-1)\,u\,|\D u|\Big[\,\Ric(\nu,\nu)+(n-1)\,\Big]\,\rmd\sigma\,.
\end{align*}
Now we recall that $u\,[\Ric(\nu,\nu)+(n-1)]=-\HHH|\D u|\,-\,(n-1)\De u/n$ and we conclude using Proposition~\ref{prop:cyl_BGH_A}.
\end{proof}


\subsection*{Acknowledgements}
{\em The authors would like to thank P. T. Chru\'sciel for his interest in our work and for stimulating discussions during the preparation of the manuscript. L.~M. has been partially supported by the Italian project FIRB 2012 ``Geometria Differenziale e Teoria Geometrica delle Funzioni''. The authors are members of the Gruppo Nazionale per l'Analisi Matematica, la Probabilit\`a e le loro Applicazioni (GNAMPA) of the Istituto Nazionale di Alta Matematica (INdAM) and are partially founded by the GNAMPA Project ``Principi di fattorizzazione, formule di monotonia e disuguaglianze geometriche''.
}


\bibliographystyle{plain}

\end{document}